\documentclass[10pt]{article}

\usepackage{amsmath,amsthm,amsfonts,amssymb,bm,bbm,enumerate, graphicx, mathtools,color}
\usepackage{tcolorbox}
\usepackage[english]{babel}
\usepackage[utf8]{inputenc}
\usepackage{fancyhdr}
\usepackage{caption}
\usepackage{subcaption}

\newcommand{\pr}[1]{\mathbb{P}\!\left(#1\right)}
\newcommand{\E}[1]{\mathbb{E}\!\left[#1\right]}

\newcommand{\prstart}[2]{\mathbb{P}_{#2}\!\left(#1\right)}
\newcommand{\prnstart}[3]{\mathbb{P}^{#3}_{#2}\!\left(#1\right)}
\newcommand{\prtilnstart}[3]{\widetilde{\mathbb{P}}^{#3}_{#2}\!\left(#1\right)}
\newcommand{\prcond}[3]{\mathbb{P}_{#3}\!\left(#1\;\middle\vert\;#2\right)}

\newcommand{\prtilstart}[2]{\mathbb{\widetilde{P}}_{#2}\!\left(#1\right)}

%bold versions
\newcommand{\prb}[1]{\mathbf{P}\!\left(#1\right)}
\newcommand{\Eb}[1]{\mathbf{E}\!\left[#1\right]}
\newcommand{\estartb}[2]{\mathbf{E}_{#2}\!\left[#1\right]}

\newcommand{\prcondb}[3]{\mathbf{P}_{#3}\!\left(#1\;\middle\vert\;#2\right)}

\newcommand{\prbl}[1]{\mathbf{P}^{\ell}\!\left(#1\right)}

%\numberwithin{equation}{section}
\newtheorem{theorem}{Theorem}[section]
\newtheorem{lemma}[theorem]{Lemma}
\newtheorem{proposition}[theorem]{Proposition}
\newtheorem{corollary}[theorem]{Corollary}
\newtheorem{definition}[theorem]{Definition}

\newtheorem{remark}[theorem]{Remark}

%for soda
\topmargin 0in
\oddsidemargin .01in
\textwidth 6.5in
\textheight 9in
\evensidemargin 1in
\addtolength{\voffset}{-.6in}
\addtolength{\textheight}{0.22in}
\parskip \medskipamount
\parindent	0pt

\newcommand{\dGH}[1]{d_{GH}\!\left(#1\right)}
\newcommand{\dGHPp}[1]{d_{GHP}\!\left(#1\right)}

%%%%%%%%%%%LETTERS%%%%%%%%%%%
\newcommand{\N}{\mathbb{N}}
\newcommand{\Z}{\mathbb{Z}}
\newcommand{\R}{\mathbb{R}}

\newcommand{\bP}{\mathbb{P}}
\newcommand{\bPb}{\mathbf{P}}
\newcommand{\F}{\mathcal{F}}
\newcommand{\EE}{\mathcal{E}}

\newcommand{\X}{X^{\text{exc}}}
\newcommand{\Xb}{X^{\text{br}}}
\newcommand{\HH}{H^{\text{exc}}}
\renewcommand{\phi}{\varphi}
\renewcommand{\epsilon}{\varepsilon}

\newcommand{\T}{\mathcal{T}}
\newcommand{\Levy}{L\'{e}vy }
\newcommand{\Ito}{It\^o }
\renewcommand{\L}{\mathcal{L}}
\newcommand{\cadlag}{c\`{a}dl\`{a}g }
\newcommand{\Loopp}{\textsf{Loop'}}
\newcommand{\Loop}{\textsf{Loop}}
\newcommand{\Tree}{\textsf{Tree}}

\renewcommand{\d}{\tilde{d}}
\newcommand{\exc}{\text{exc}}
\newcommand{\br}{\text{br}}
\newcommand{\osc}{\textsf{Osc}}

\newcommand{\La}{\L_{\alpha}}
\newcommand{\Ta}{\T_{\alpha}}

\newcommand{\Hm}{H_{\text{max}}}

\newcommand{\Lal}{\L^{\l}_{\alpha}}
\newcommand{\Lai}{\L^{\infty}_{\alpha}}

\newcommand{\Xbl}{X^{\text{br},\ell}}
\newcommand{\Xl}{X^{\text{exc},\ell}}
\renewcommand{\l}{\ell}

\renewcommand{\Xi}{X^{\infty}}
\newcommand{\dGHP}{d_{GHP}}
\newcommand{\Ei}{\EE_{\infty}}
\newcommand{\Fi}{\F_{\infty}}
\newcommand{\Er}{\EE_r}
\newcommand{\Err}{\EE_r'}
\newcommand{\Fr}{\F_{r}}
\newcommand{\Frr}{\F_{r}'}
\newcommand{\bF}{\beta_1}
\newcommand{\bg}{\beta_2}
\newcommand{\bE}{\beta_3}
\newcommand{\bd}{\beta_4}
\newcommand{\Refi}{R_{\text{eff}}^{\infty}}
\newcommand{\Height}{\textsf{Height}}
\newcommand{\PhiJS}{\Phi_{\text{JS}}(T)}
\newcommand{\Reff}{R_{\text{eff}}}
\newcommand{\Tai}{T_{\alpha}^{\infty}}

\allowdisplaybreaks
%
%\fancypagestyle{specialfooter}{%
%  \fancyhf{}
%  \renewcommand\headrulewidth{0pt}
%  \fancyfoot[L]{*E.Archer.1@warwick.ac.uk}
%  %\pagenumbering{arabic}
%}

\begin{document}

\title{Infinite stable looptrees}
\author{Eleanor Archer\thanks{Mathematics Institute, University of Warwick, Coventry CV4 7AL, United Kingdom. Email: {E.Archer.1@warwick.ac.uk}. Supported by EPSRC Grant Number EP/N509796/1.}}
\maketitle

\begin{abstract}
We give a construction of an infinite stable looptree, which we denote by $\Lai$, and prove that it arises both as a local limit of the compact stable looptrees of Curien and Kortchemski (2015), and as a scaling limit of the infinite discrete looptrees of Richier (2017) and Bj\"ornberg and Stef\'ansson (2015). As a consequence, we are able to prove various convergence results for volumes of small balls in compact stable looptrees, explored more deeply in a companion paper. We also establish the spectral dimension of $\Lai$, and show that it agrees with that of its discrete counterpart. Moreover, we show that Brownian motion on $\Lai$ arises as a scaling limit of random walks on discrete looptrees, and as a local limit of Brownian motion on compact stable looptrees, which has similar consequences for the limit of the heat kernel.
\end{abstract}
%
%\tableofcontents
\textbf{AMS 2010 Mathematics Subject Classification:} 60F17 (primary), 60K37, 54E70, 28A80

\textbf{Keywords and phrases:} random stable looptrees, limit theorem, spectral dimension, stable \Levy process.

\section{Introduction}
Stable looptrees are a class of random fractal objects indexed by a parameter $\alpha \in (1,2)$ and can informally be thought of as the dual graphs of stable trees. Motivated by \cite{LeGMiermontScalingLimitsLargeFaces}, they were originally introduced by Curien and Kortchemski in \cite{RSLTCurKort}, and along with their discrete counterparts have been shown to be of increasing significance in the study of statistical mechanics models on random planar maps. For example, the same authors showed in \cite{CurKortUIPTPerc} that a stable looptree arises as the scaling limit of the boundary of a critical percolation cluster on the UIPT, and Richier showed in \cite{RichierIICUIHPT} that the incipient infinite cluster of the UIHPT has the form of an infinite discrete looptree. Further results along these lines can be found in \cite{CurKortUIPTPerc}, \cite{CurKortDuqMan}, \cite{StefStufBolzOuterplanar}, \cite{BaurRichUIPQSkew}, \cite{CurRichDualityRPMPerc} and \cite{KortRichBoundaryRPMLooptrees}, though this is a very non-exhaustive list. More generally, they also arise as the scaling limits of boundaries of stable maps \cite{RichierMapBoundaryLimit}, and are emerging as an important tool in the programme to reconcile the theories of random planar maps and Liouville quantum gravity, demonstrated for example in \cite{MillSheff}, \cite{GwynnePfefferConnectivitySLE} and \cite{BernardiHoldenSun}.

\begin{figure}[h]
\includegraphics[width=14cm]{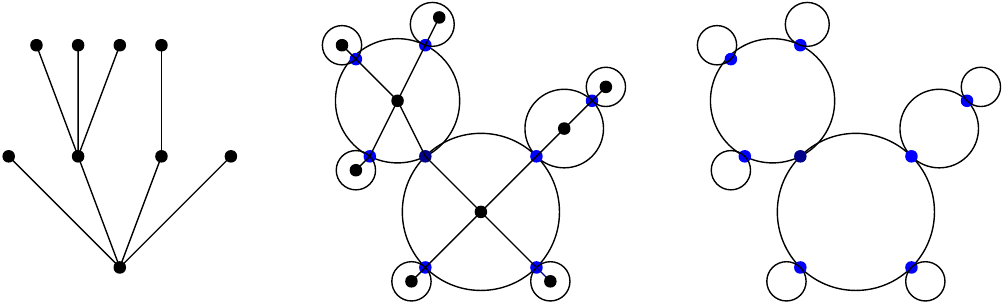}
\centering
\caption{A tree $T$ and the looptree $\Loop (T)$.}\label{fig:disc looptree intro}
\end{figure}

Given a discrete tree $T$, the corresponding discrete looptree ${\Loop}(T)$ as defined in \cite{RSLTCurKort} is constructed by replacing each vertex $u \in T$ with a cycle of length equal to the degree of $u$ in $T$, and then gluing these cycles along the tree structure of $T$. This is illustrated in Figure \ref{fig:disc looptree intro}. This operation can also be applied in the case where $T$ is an infinite tree. If $T$ is rooted, we will take the convention that the root of $\Loop (T)$ is the vertex of $\Loop (T)$ corresponding to the edge of $T$ joining the root of $T$ to its first child.

In this article we will mainly be interested in the case where our tree $T$ has a critical offspring distribution in the domain of attraction of an $\alpha$-stable law, by which we mean that there exists an increasing sequence $a_n \uparrow \infty$ such that, if $(\xi^{(i)})_{i=1}^{\infty}$ are i.i.d. copies of $\xi$, then
\begin{equation*}\label{eqn:dom of att def}
\frac{\sum_{i=1}^n \xi^{(i)} - n}{a_n} \overset{(d)}{\rightarrow} Z_{\alpha}
\end{equation*}
as $n \rightarrow \infty$, where $Z_{\alpha}$ is an $\alpha$-stable random variable (and can be normalised so that $\E{e^{-\lambda Z_{\alpha}}} = e^{-\lambda^{\alpha}}$ for all $\lambda > 0$). It is shown in \cite[Section 8.3.2]{BGTRegVariation} that necessarily $a_n = n^{\frac{1}{\alpha}}L(n)$ for some slowly-varying function $L$, where we recall that slowly varying means that $L(x) > 0$ for all sufficiently large $x$, and $\lim_{x \rightarrow \infty} \frac{L(tx)}{L(x)} = 1$ for all $t>0$.

Equivalently, $\xi([n, \infty)) = k^{-\alpha} L(n)$. In the case where $\xi([n, \infty)) \sim cn^{-\alpha}$ as $n \rightarrow \infty$, we can take $a_n = (c |\Gamma(-\alpha)|n)^{\frac{1}{\alpha}}$.

Throughout the article we will make the assumption that $\alpha \in (1,2)$. In \cite[Theorem 4.1]{RSLTCurKort}, it is shown that if $T_n$ is a Galton Watson tree conditioned to have $n$ vertices with offspring distribution $\xi$ in the domain of attraction of an $\alpha$-stable law, then we can define the $\alpha$-stable looptree (which we denote by $\L_{\alpha}$) to be the random compact metric space such that 
\[
a_n^{-1} {\Loop}(T_n) \overset{(d)}{\rightarrow} \L_{\alpha}
\]
in the Gromov-Hausdorff topology as $n \rightarrow \infty$. A simulation is shown in Figure \ref{fig:stable looptree intro}. In the case $\alpha = 2$, the looptrees instead rescale to the Brownian Continuum Random Tree \cite[Theorem 2]{KortRichCondensationCritical}.

\begin{figure}[h]
\includegraphics[width=10cm]{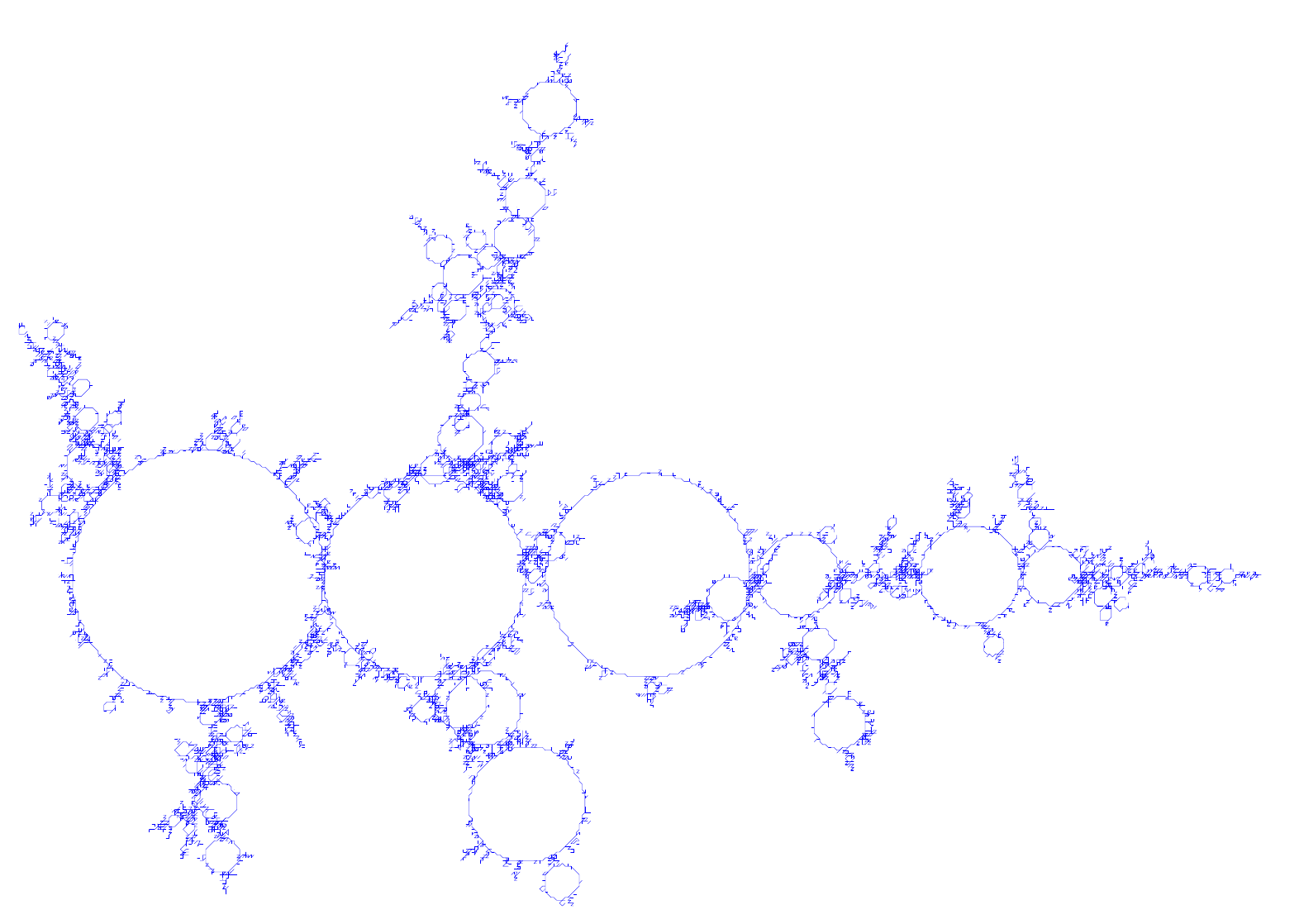}
\centering
\caption{Simulation of a stable looptree, by Igor Kortchemski.}\label{fig:stable looptree intro}
\end{figure}

The main purpose of this paper is to give a construction of infinite stable looptrees. The construction is similar in spirit to Duquesne's construction of stable sin-trees in \cite{DuqSinTree}, which is the continuum analogue of Kesten's discrete construction of an infinite critical tree. Additionally, infinite discrete looptrees have been defined by Bj\"ornberg and Stef\'ansson in \cite{BjornStef} by applying a related loop operation to Kesten's infinite critical tree $T_{\infty}$, and similarly by Richier in \cite{RichierIICUIHPT} by applying a similar operation to a two-type version of Kesten's tree.

As is done for stable sin-trees in \cite{DuqSinTree}, we define the infinite stable looptree $\Lai$ from two independent stable \Levy processes, each of which is used to code the looptree on one side of its singly infinite loopspine. This is the construction suggested in \cite[Section 6]{RichierIICUIHPT} and is the natural extension of the coding mechanism used to define stable looptrees from stable \Levy excursions.

The construction is given in Section \ref{sctn:construction of infinite stable looptrees}. The remainder of the article is devoted to proving various limit theorems to justify the definition, and then using these to make deductions about Brownian motion on compact stable looptrees, which is explored more deeply in the companion paper \cite{ArchBMCompactLooptrees}. In particular, we prove a local limit theorem showing that $\Lai$ can be characterised as the local limit of compact stable looptrees, and also as the scaling limit of infinite discrete looptrees. When combined with earlier results of Curien and Kortchemski, Bj\"ornberg and Stef\'ansson, and Richier, this shows that the diagram of Figure \ref{fig:commuting diag intro} commutes as indicated. 

\begin{figure}[h]
\begin{center}
\includegraphics[width=13cm]{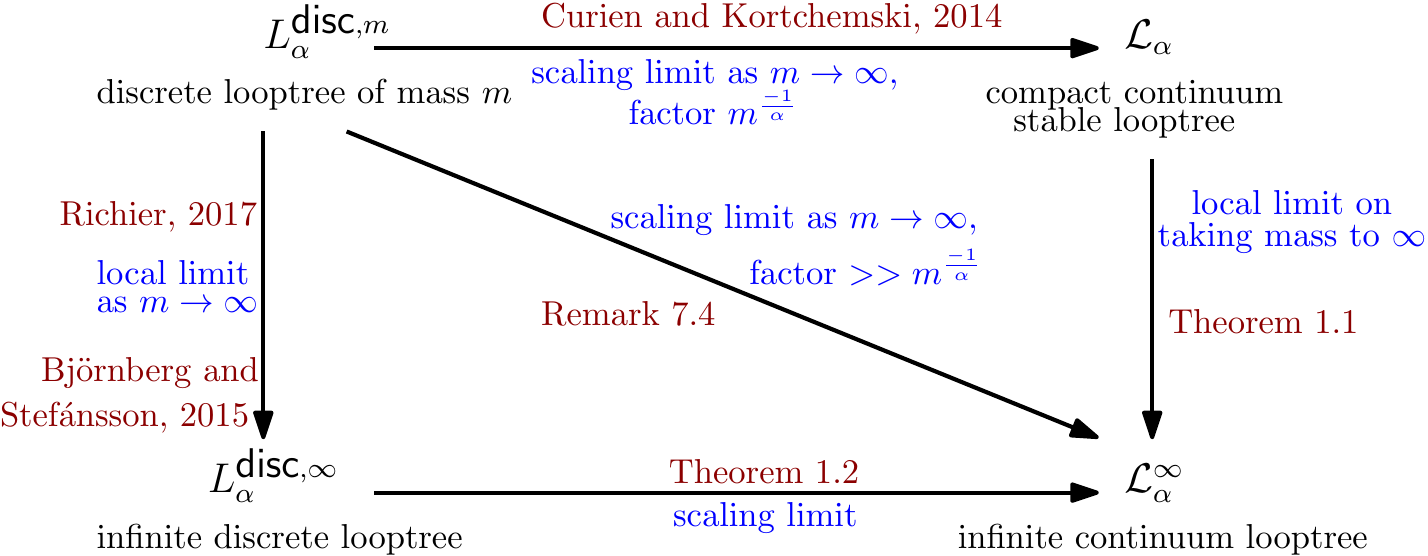}
\caption{Commuting Diagram.}\label{fig:commuting diag intro}
\end{center}
\end{figure}

We start by giving the local limit result. In what follows, we let $\Lal$ be a compact stable looptree conditioned to have total volume $\l$, and let $\La^{\infty}$ be as above. We recall from \cite{RSLTCurKort} that $\Lal$ is endowed with a measure $\nu^{\l}$ which can be thought of as the natural analogue of uniform measure on $\Lal$. We will define a similar measure on $\Lai$ in Section \ref{sctn:construction of infinite stable looptrees}, and denote it by $\nu^{\infty}$. We also recall from \cite{RSLTCurKort} (respectively \cite{ArchBMCompactLooptrees}) that there is a natural way to define shortest-distance metric (respectively a resistance metric) on $\Lal$, and we will define analogous metrics for $\Lai$ in Section \ref{sctn:construction of infinite stable looptrees}.

\begin{theorem}\label{thm:LLT}
Let $\Lal$ be a compact stable looptree conditioned to have mass $\l$, and let $\La^{\infty}$ be as above. Then,
\begin{equation*}
(\Lal, \d^{\l}, \nu^{\l}, \rho^{\l}) \overset{(d)}{\rightarrow} (\La^{\infty}, \d^{\infty}, \nu^{\infty}, \rho^{\infty})
\end{equation*}
as $\l \rightarrow \infty$, with respect to the Gromov-Hausdorff-vague topology. Here $\d^{\l}$ and $\d^{\infty}$ can denote either the geodesic metrics, or the effective resistance metrics on the respective spaces.
\end{theorem}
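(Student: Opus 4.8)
The plan is to transfer the convergence from the coding L\'evy processes to the looptrees themselves. Recall that a compact stable looptree $\La^{\l}$ is built from a normalized $\alpha$-stable excursion $\Xl$ of length $\l$, while $\La^{\infty}$ is built from two independent copies of the $\alpha$-stable L\'evy process run forever; the two half-excursions on either side of the loopspine converge to these two processes. So the first step is a path-convergence statement: after the appropriate time-reversal and recentering used in the construction of Section \ref{sctn:construction of infinite stable looptrees}, the excursion $\Xl$, viewed locally around its (random) spine vertex, converges in distribution in the Skorokhod topology on $D([0,\infty))$ (on each of the two sides) to the pair $(\Xbl)$ of independent L\'evy processes coding $\La^{\infty}$. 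This is the looptree analogue of Duquesne's spinal decomposition for stable trees and should follow from absolute-continuity/Vervaat-type arguments relating the stable excursion to the unconditioned L\'evy process, exactly as in \cite{DuqSinTree}; I would either cite this or reprove it in the looptree setting, keeping track that the jumps (which code the loops) converge jointly with the paths.

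The second step is to upgrade path convergence to Gromov-Hausdorff-vague convergence of the metric-measure spaces. Here I would use a Skorokhod representation to work almost surely, and then show that the map from a coding function to the looptree (equipped with geodesic metric, mass measure, and root) is continuous in the relevant topology at the limiting pair of paths. Concretely: fix a radius $R$; the ball of radius $R$ around $\rho^{\l}$ in $\La^{\l}$ is determined by a finite portion of the coding excursion near the spine (only finitely many loops of size bounded below contribute, the small loops contributing a uniformly small amount), and on that finite portion the looptree is a continuous functional of the path and its jumps with respect to $d_{GHP}$ — this continuity is essentially already established in \cite{RSLTCurKort} for the excursion-to-looptree map. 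Letting $R \to \infty$ and using that the measures are locally finite gives the Gromov-Hausdorff-vague convergence. The measure $\nu^{\l}$ and the root $\rho^{\l}$ are handled by the same functional-continuity argument, since both are read off directly from the coding data.

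The third step is the resistance-metric case. The key point is that, as recalled from \cite{ArchBMCompactLooptrees}, on a looptree the effective resistance between two points is an explicit function of the geodesic distances around the loops on the path between them (loops in series, with each loop a pair of parallel arcs); in particular the resistance metric is a continuous functional of the same truncated coding data as the geodesic metric, and is dominated by it. So the identical truncation-plus-continuity argument applies verbatim, giving convergence of $(\La^{\l}, \d^{\l}_{\mathrm{res}}, \nu^{\l}, \rho^{\l})$ as well; one only needs the elementary fact that truncating to loops of size $\ge \epsilon$ perturbs effective resistance by at most the sum of the truncated loop-perimeters, which is small with high probability.

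The main obstacle I expect is the first step — establishing the spinal path convergence with the correct recentering — together with the uniform control of the small-loop contributions needed to justify the truncation in steps two and three. The small-loop estimate is the technical heart: one must show that, uniformly in $\l$, the total diameter (resp.\ total mass, resp.\ total resistance) carried by loops of perimeter less than $\epsilon$ lying within geodesic distance $R$ of the root tends to $0$ as $\epsilon \to 0$. This should follow from the $\alpha$-stable jump structure (a $\sigma$-finite intensity $\propto x^{-\alpha-1}\,dx$ on jump sizes) and moment bounds analogous to those already used to establish compactness of $\La^{\l}$ in \cite{RSLTCurKort}, but it requires care to make the bounds uniform in the conditioning parameter $\l$, which is where a comparison with the unconditioned L\'evy process (valid on the relevant local scale) again does the work.
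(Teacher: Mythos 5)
Your proposal follows essentially the same route as the paper: the paper proves your Step 1 as Proposition \ref{prop:bridge conv} (convergence of the pre- and post-minimum parts of the bridge to two independent L\'evy processes, via the Vervaat transform and the absolute continuity relation (\ref{eqn:RN deriv levy bridge})), and your Steps 2--3 correspond to the paper's use of the Skorohod representation theorem together with the continuity of the coding-function-to-looptree map (Proposition \ref{thm:compact cont inv princ}), localised to the compact piece between the cut times $t_g(r)$ and $t_d(r)$ determined by the first sufficiently large separating loop. The only cosmetic difference is that the small-loop control you flag as the technical heart is already absorbed into the invariance principle inherited from \cite{RSLTCurKort}, so no separate truncation estimate is needed.
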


Similarly, we prove the following scaling result.

\begin{theorem}\label{thm:main scaling lim}
Let $\Tai$ denote Kesten's tree with critical offspring distribution in the domain of attraction of an $\alpha$-stable law. Also let $\nu^{\text{disc}}$ denote the measure that gives mass $1$ to every vertex of \textsf{Loop}($\Tai$). Then
\[
(\textsf{Loop}(\Tai), a_n^{-1} \d, n^{-1} \nu^{\text{disc}}, \rho) \overset{(d)}{\rightarrow} (\Lai, \d^{\infty}, \nu^{\infty}, \rho^{\infty})
\]
with respect to the Gromov-Hausdorff vague topology as $n \rightarrow \infty$. Here $\d$ and $\d^{\infty}$ can denote either the geodesic metrics, or the effective resistance metrics on the respective spaces.
\end{theorem}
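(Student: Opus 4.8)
The plan is to prove this scaling limit by first establishing convergence of the coding objects, then pushing this convergence through the looptree construction and the various metric/measure functionals. Recall that $\Lai$ is built from two independent $\alpha$-stable \Levy processes coding the two sides of the singly-infinite loopspine, and $\Tree(\Tai)$ (via the loop operation) has an analogous spine decomposition: Kesten's tree $\Tai$ consists of a semi-infinite spine along which are grafted independent critical $\xi$-Galton--Watson trees, with the offspring sizes along the spine distributed according to the size-biased law $\hat\xi$. Applying $\textsf{Loop}$ turns the spine of $\Tai$ into a semi-infinite concatenation of loops whose lengths are the (size-biased) spine degrees, with independent finite discrete looptrees hanging off. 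The first step is therefore to record this spine decomposition on the discrete side and match it with the continuum spine decomposition of $\Lai$ from Section \ref{sctn:construction of infinite stable looptrees}. Concretely, I would show that under the rescaling $a_n^{-1}$, the contour/\L ukasiewicz-type encoding of the two halves of $\textsf{Loop}(\Tai)$ — restricted to the first $n$ vertices, say — converges to the pair of \Levy processes defining $\Lai$. This is essentially Skorokhod-space convergence of a random walk bridge/meander hybrid to a stable process, together with the convergence of the size-biased spine increments, and here one can lean on the already-established compact scaling limit $a_n^{-1}\textsf{Loop}(T_n) \to \La$ of \cite{RSLTCurKort} applied to the bushes, plus a Kesten-type spine argument.

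Next I would transfer this encoding convergence into Gromov--Hausdorff-vague convergence of the rescaled metric spaces. The strategy is the standard one: exhibit a correspondence between $\textsf{Loop}(\Tai)$ and $\Lai$ built from the near-optimal coupling of the encoding processes, and show that the distortion of this correspondence, when restricted to a ball of radius $R$ around the root, tends to $0$ in probability as $n\to\infty$, for every fixed $R$. Because GH-vague convergence only tests bounded regions, it suffices to control, for each fixed $R$, the part of the spine and the bushes that can influence $B(\rho,R)$; the key input is a tightness/locality estimate showing that only finitely many spine loops and finitely many bushes (of uniformly bounded rescaled diameter) intersect $B(\rho,R)$, which follows from the fact that spine loop lengths rescale like the jumps of a stable subordinator-type process and bush diameters are tight by the compact scaling limit. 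For the measure, $n^{-1}\nu^{\text{disc}}$ restricted to $B(\rho,R)$ converges to $\nu^{\infty}|_{B(\rho,R)}$ because $\nu^{\text{disc}}$ counts vertices, $\nu^{\infty}$ is the projection of Lebesgue/local-time measure under the coding, and these are compatible under the correspondence — one invokes the portmanteau criterion for vague convergence of the pushforward measures. The root $\rho$ is handled automatically since it is a distinguished point of the spine in both cases and the correspondence maps root to root.

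For the resistance-metric version of the statement, I would argue separately that effective resistance is well-behaved under this limit: on the discrete looptree each edge has unit resistance, and resistance across a loop of length $k$ is the parallel combination giving a quantity comparable to $k$ after rescaling by $a_n^{-1}$ (the two arcs of the loop in parallel), which matches the continuum resistance metric on $\Lai$ defined in Section \ref{sctn:construction of infinite stable looptrees} via \cite{ArchBMCompactLooptrees}. The point is that resistance is a local, monotone functional: $R_{\text{eff}}(x,y)$ inside $B(\rho,R)$ depends (up to a controlled error) only on the part of the network inside a slightly larger ball, by the usual cutting/shorting bounds, so the same correspondence-plus-locality argument applies once we know resistance converges on the finite bushes — and that is precisely one of the ingredients established in \cite{ArchBMCompactLooptrees} for the compact case. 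Finally I would assemble the pieces: encoding convergence $\Rightarrow$ GH-vague convergence of metrics (both geodesic and resistance) $\Rightarrow$ joint convergence with the measures and roots, using that the relevant maps are continuous on the set of encodings with probability one under the limit law.

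The main obstacle I anticipate is the locality/tightness step: proving that, uniformly in $n$, the ball $B(\rho,R)$ in the rescaled discrete looptree is not affected by "far-away" structure — i.e. that the sum of rescaled spine-loop lengths stays away from $R$ after only finitely many loops, and that no single grafted bush has anomalously large rescaled diameter reaching into $B(\rho,R)$. This requires quantitative control on the lower tail of partial sums of the size-biased spine increments (to ensure the spine escapes every bounded ball in finitely many steps) together with a maximal inequality for bush diameters; both are available from regular variation and the compact scaling limit, but combining them into a clean "only finitely many relevant pieces, each tight" statement — and doing so simultaneously for the geodesic and resistance metrics — is where the real work lies. Everything downstream of that is a routine, if somewhat lengthy, correspondence argument of the type already present in \cite{RSLTCurKort} and \cite{DuqSinTree}.
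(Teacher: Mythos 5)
Your proposal follows essentially the same route as the paper: the paper packages the argument as Proposition \ref{prop:inf conv disc loop}, which assumes convergence of the rescaled two-sided Lukasiewicz path to $X^{\infty}$ on compacts, localises each ball $\mathcal{B}_r$ inside a compact sub-looptree cut off at the first suitably large, well-separated loop on the loopspine (the times $t_g(r), t_d(r)$ — this is exactly the ``only finitely many relevant pieces'' step you flag as the main obstacle, resolved via the Poisson point process of descents), and then applies the compact invariance principle of Proposition \ref{thm:compact disc inv princ res} with a Skorohod-homeomorphism correspondence, treating the geodesic and resistance metrics and the measures identically. The one ingredient you should make explicit is the paper's condition $(ii)$, that $a_n^{-1}\Height(\Tree(\Loop(\Tai)^{ra_n}))\to 0$ in probability — this controls the discrepancy between $\Loop(\tau_n)$ and the looptree coded by its Lukasiewicz path — but this is implicit in your reliance on the compact scaling limit of \cite{RSLTCurKort}.
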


We will see in Section \ref{sctn:scaling lims} that similar results hold for the infinite discrete looptrees defined in \cite{BjornStef} and \cite{RichierIICUIHPT}.

Given these two theorems, we are also in the right setting to apply results of \cite{DavidResForms} regarding limits for stochastic processes on these spaces. In particular, we obtain the following results. Note that we formally define Brownian motion on stable looptrees in the article \cite{ArchBMCompactLooptrees} by defining it to be the stochastic process naturally associated with the effective resistance metric on them. In Section \ref{sctn:construction of infinite stable looptrees}, we similarly define an effective resistance metric on $\Lai$ and in Section \ref{sctn:RW consequences} we define Brownian motion on $\Lai$ to be the associated stochastic process. We denote it by $B^{\infty}$. For convenience, we restrict to the case where $\l$ takes integer values below, but the result holds along any countable subsequence diverging to infinity.

\begin{theorem}\label{thm:main RW LLT conv}
Let $(B^{\l}_t)_{t \geq 0}$ be Brownian motion on $\Lal$, and let $(B^{\infty}_t)_{t \geq 0}$ be Brownian motion on $\Lai$. Then there exists a probability space $(\Omega', \mathcal{F}', \mathbf{P}')$ on which we can almost surely define a metric space $(M, R_M)$ in which the spaces $(\Lal, R^{\l}, \nu^{\l}, \rho^{\l})$ and $(\La^{\infty}, R^{\infty}, \nu^{\infty}, \rho^{\infty})$ can all be embedded and such that
\[
(\Lal, R^{\l}, \nu^{\l}, \rho^{\l}) \rightarrow (\La^{\infty}, R^{\infty}, \nu^{\infty}, \rho^{\infty})
\]
with respect to the Gromov-Hausdorff-vague topology as $\l \rightarrow \infty$, and the required Hausdorff convergence specifically holds in the metric space $(M, R_M)$. Letting $(B^{\l})_{\l \geq 1}$ and $B^{\infty}$ be as above, we have that 
\[
(B^{\l}_t)_{t \geq 0} \overset{(d)}{\rightarrow} (B^{\infty}_t)_{t \geq 0}
\]
as $\l \rightarrow \infty$, considered on the space $C(\R^+, M)$ endowed with the topology of uniform convergence on compact time intervals.
\end{theorem}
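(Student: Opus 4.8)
The plan is to deduce Theorem~\ref{thm:main RW LLT conv} from the general machinery of \cite{DavidResForms}, using Theorem~\ref{thm:LLT} (in its resistance-metric form) as the input. The key observation is that the convergence of associated stochastic processes is essentially a \emph{continuous functional} of the convergence of the underlying resistance metric measure spaces, provided the convergence is upgraded from Gromov-Hausdorff-vague convergence of abstract spaces to Hausdorff-plus-vague convergence inside a common ambient space. So the first step is to invoke a Skorokhod-type representation: Theorem~\ref{thm:LLT} gives convergence in distribution of $(\Lal, R^\l, \nu^\l, \rho^\l)$ to $(\Lai, R^\infty, \nu^\infty, \rho^\infty)$ in the Gromov-Hausdorff-vague topology, and since this topology is Polish on the relevant space of (pointed, measured) resistance metric spaces, we may pass to a probability space $(\Omega', \F', \bPb')$ on which the convergence holds almost surely, and then realise all the spaces isometrically embedded into a single (random, but a.s.\ well-defined) metric space $(M, R_M)$ in such a way that the embedded images converge in the Hausdorff sense for the metric and vaguely for the measures. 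This is exactly the hypothesis format required by the invariance principle for resistance forms in \cite{DavidResForms}.

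The second step is to check that all the structural hypotheses of the relevant theorem in \cite{DavidResForms} are satisfied by our sequence. The main ones are: (i) each $(\Lal, R^\l)$ and $(\Lai, R^\infty)$ is a resistance metric space whose resistance form is a regular Dirichlet form with respect to the corresponding measure --- this follows from the construction of the resistance metrics in Section~\ref{sctn:construction of infinite stable looptrees} and the results of \cite{ArchBMCompactLooptrees}; (ii) a non-explosion / conservativeness condition for the limiting process on the \emph{non-compact} space $\Lai$, so that $B^\infty$ is genuinely defined for all times $t \ge 0$; and (iii) a volume regularity or tightness condition ensuring that balls do not degenerate, which is needed to rule out the process spending anomalous time near the root or escaping to infinity in finite time. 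For (ii) and (iii) I would lean on volume estimates for $\nu^\infty$-balls in $\Lai$ --- these come out of the coding by the two independent stable \Levy processes and the scaling/local-limit identifications, and I expect the companion paper's volume bounds (or the ``convergence results for volumes of small balls'' advertised in the abstract) to supply exactly what is needed. The point-identification hypothesis (the roots $\rho^\l$ converge to $\rho^\infty$) is already built into the pointed Gromov-Hausdorff-vague convergence.

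Given these, the third step is purely an appeal: the main invariance principle of \cite{DavidResForms} then yields that the processes $B^\l$, started from $\rho^\l$, converge weakly to $B^\infty$, started from $\rho^\infty$, in $C(\R^+, M)$ with the topology of uniform convergence on compact time intervals --- which is precisely the assertion of the theorem. One small bookkeeping point is that \cite{DavidResForms} typically phrases things with convergence of the semigroups / heat kernels or with quenched statements along a.s.-convergent sequences; since we have passed to the a.s.\ representation in Step~1, the quenched conclusion there gives almost sure weak convergence of $B^\l \to B^\infty$ on $(\Omega', \F', \bPb')$, and in particular convergence in distribution, which transfers back to the original probability space. The restriction to integer $\l$ (or to a fixed countable subsequence) is only to make ``$\l \to \infty$'' an honest sequential limit for the Skorokhod representation; nothing else depends on it.

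The hard part will be verifying hypothesis (ii)/(iii) for the infinite space $\Lai$ --- namely that Brownian motion on $\Lai$ does not explode and that the resistance-to-infinity and volume-growth are controlled well enough to fall inside the scope of \cite{DavidResForms}. Concretely this amounts to showing something like: the effective resistance from $\rho^\infty$ to the complement of a large $R^\infty$-ball grows, while the $\nu^\infty$-volume of that ball is not too large, so that the product (which controls the expected exit time) tends to infinity; this guarantees conservativeness and also gives the tightness of exit times needed for the functional convergence. I would establish these estimates directly from the two-\Levy-process construction of $\Lai$ in Section~\ref{sctn:construction of infinite stable looptrees}, using the known fluctuation behaviour of $\alpha$-stable processes, and I expect them to be essentially the same estimates underpinning the spectral dimension computation elsewhere in the paper. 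Everything else is a matter of correctly citing and applying the black-box convergence theorem from \cite{DavidResForms}.
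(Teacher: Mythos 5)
Your proposal follows essentially the same route as the paper: upgrade Theorem \ref{thm:LLT} to almost sure convergence in a common ambient space via Polishness (Proposition \ref{thm:GH vague separable HB}) and the Skorohod representation theorem, check that $R^{\infty}$ is a resistance metric whose form is regular (Propositions \ref{prop:res metric} and \ref{prop:regular DF}) so that $B^{\infty}$ is well defined, verify the non-explosion condition via the resistance lower bound of Proposition \ref{prop:resistance results infinite}, and then apply Theorem \ref{thm:scaling lim RW resistance} from \cite{DavidResForms} as a black box. The one point to tighten is that the hypothesis (\ref{eqn:nonexplosion}) is a statement about the approximating compact spaces $\Lal$ rather than about $\Lai$ alone, but, as the paper observes, the argument proving Proposition \ref{prop:resistance results infinite} applies uniformly along the sequence $(\Lal)_{\l \in \N}$, so your planned estimates suffice.
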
 

\begin{theorem}\label{thm:RW conv infinite intro}
Let $(\textsf{Loop}(\Tai), a_n^{-1}\d, n^{-1} \nu^{\text{disc}}, \rho)$ be as in Theorem \ref{thm:main scaling lim}. Then there exists a probability space $(\Omega'', \mathcal{F}'', \mathbf{P}'')$ on which we can almost surely define a metric space $(M, R_M)$ in which the spaces $(\textsf{Loop}(\Tai), Ca_n^{-1} \d, n^{-1} \nu^{\text{disc}}, \rho)$ and $(\La^{\infty}, {\d}^{\infty}, \nu^{\infty}, \rho^{\infty})$ can all be embedded and such that
\[
(\textsf{Loop}(\Tai), a_n^{-1} \d, n^{-1} \nu^{\text{disc}}, \rho) \overset{(d)}{\rightarrow} (\Lai, \d^{\infty}, \nu^{\infty}, \rho^{\infty})
\]
with respect to the Gromov-Hausdorff-vague topology as $n \rightarrow \infty$, and the required Hausdorff convergence specifically holds on the metric space $(M, R_M)$. Letting $Y$ be a simple random walk on $\textsf{Loop}(\Tai)$, and $B^{\infty}$ be as above, we have that 
\[
(a_n^{-1} Y_{\lfloor 4n a_n t \rfloor})_{t \geq 0} \overset{(d)}{\rightarrow} (B^{\infty}_t)_{t \geq 0}
\]
on the space $D(\R^+, M$) endowed with the Skorohod-$J_1$ topology as $n \rightarrow \infty$.
\end{theorem}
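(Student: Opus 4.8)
The proof will follow the general framework of Croydon \cite{DavidResForms} for deducing scaling limits of stochastic processes from Gromov--Hausdorff-vague scaling limits of the underlying spaces equipped with their effective resistance metrics. The first point is that a simple random walk on $\textsf{Loop}(\Tai)$ is (the discrete-time skeleton of) the Markov process canonically associated with the resistance form $(\EE, \F)$ obtained by putting unit conductance on every edge of $\textsf{Loop}(\Tai)$, together with a suitable reference measure; the effective resistance of this electrical network is exactly the metric $\d$ appearing in Theorem \ref{thm:main scaling lim}. Hence the resistance-metric version of Theorem \ref{thm:main scaling lim} already provides the key structural input, namely the convergence of the pointed, measured, resistance-metric spaces
\[
(\textsf{Loop}(\Tai), a_n^{-1}\d, n^{-1}\nu^{\text{disc}}, \rho) \longrightarrow (\Lai, \d^{\infty}, \nu^{\infty}, \rho^{\infty})
\]
in the Gromov--Hausdorff-vague sense. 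Applying the machinery of \cite{DavidResForms} then yields convergence of the associated processes, realised simultaneously in a common metric space $(M, R_M)$ as in the statement, and it remains to (a) identify the correct time parametrisation, passing from the limiting continuous-time process to the discrete-time walk $Y$, and (b) verify the remaining hypotheses of the general theorem in our setting.

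For (a): the process associated with $(\EE, \F)$ and the counting measure $\nu^{\text{disc}}$ is the variable-speed random walk on $\textsf{Loop}(\Tai)$, whereas the discrete-time walk $Y$ is, up to the law-of-large-numbers fluctuations of its holding times, the constant-speed walk, i.e. the process associated with $(\EE, \F)$ and its stationary degree measure. Every vertex of a looptree has degree exactly $4$ — each vertex lies on precisely two of the glued loops (with the convention that a leaf of $\Tai$ contributes a loop of length one, i.e. a self-loop) — so the degree measure equals $4\nu^{\text{disc}}$ and the two processes differ by a deterministic time-change by a factor $4$. Tracking the scalings of the resistance form (by a factor $a_n$) and of the reference measure (by $n^{-1}$) through the associated generator, an Einstein-type relation shows that the rescaled continuous-time process run for time $t$ corresponds to the original variable-speed walk run for time $n a_n t$, hence to the original constant-speed walk run for time $4 n a_n t$, hence to $Y_{\lfloor 4 n a_n t\rfloor}$; combined with the control of the holding-time fluctuations, this gives exactly the normalisation in the statement. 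Since $Y$ is piecewise constant, the convergence takes place in $D(\R^+, M)$ with the Skorohod $J_1$ topology, the limit $B^{\infty}$ (defined via the resistance form on $\Lai$ in Section \ref{sctn:RW consequences}) being continuous.

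For (b), two further conditions must be checked. The first is non-explosion and recurrence of the limiting process: this holds because $\Lai$ is boundedly compact, $\nu^{\infty}$ is locally finite, non-atomic and of full support, and the resistance form on $\Lai$ is recurrent — indeed the effective resistance from $\rho^{\infty}$ to the complement of the ball of radius $r$ diverges as $r \to \infty$, since escaping to infinity requires crossing infinitely many macroscopic loops along the loopspine, each contributing a positive amount of resistance; all of this is read off from the construction in Section \ref{sctn:construction of infinite stable looptrees}. The second, and the genuinely technical point, is the uniform-in-$n$ tightness-type condition in \cite{DavidResForms} ruling out the walk's mass escaping to infinity in bounded time; it reduces to uniform volume lower bounds and resistance lower bounds for the rescaled discrete looptrees $a_n^{-1}\textsf{Loop}(\Tai)$ along the loopspine. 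These estimates are where the real work lies: they follow from the coupling between $\textsf{Loop}(\Tai)$ and $\Lai$ constructed in the proof of Theorem \ref{thm:main scaling lim} together with quantitative tail bounds for the lengths of the loops grafted along the loopspine (equivalently, for the jumps of the coding stable \Levy processes), uniform over the relevant range of scales. I expect the verification of this last uniform estimate, rather than any conceptual difficulty, to be the main obstacle; once it is in place, the general theorem applies and delivers the claimed convergence.
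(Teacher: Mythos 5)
Your overall architecture is the one the paper uses: take the resistance-metric version of Theorem \ref{thm:main scaling lim}, feed it into Croydon's Theorem \ref{thm:scaling lim RW resistance}, and handle the passage from the continuous-time process associated with $(n^{-1}\nu^{\text{disc}}, a_n^{-1}\d)$ to the discrete-time walk $Y$ via the fact that every vertex of $\textsf{Loop}(\Tai)$ has degree $4$, so the variable-speed and constant-speed walks coincide up to exp$(4)$ holding times; the paper controls the holding-time fluctuations with Kolmogorov's maximal inequality, which is exactly the "law-of-large-numbers" step you gesture at. Your time-scaling bookkeeping ($n a_n$ from the measure and resistance scalings, times the factor $4$) matches the statement.

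The one place you fall short is the verification of the non-explosion hypothesis (\ref{eqn:nonexplosion}), which you correctly identify as the only remaining substantive condition but leave unproved, and for which you propose a harder route than is needed. You suggest deriving the uniform-in-$n$ lower bound on $R_n(\rho_n, B_n(\rho_n,r)^c)$ from the coupling with $\Lai$ plus loop-length tail estimates; note that Gromov--Hausdorff-vague convergence of resistance metric spaces does not by itself transfer a resistance-to-the-complement-of-a-ball lower bound from the limit to the approximating sequence, so this would require genuinely new uniform estimates. The paper instead verifies (\ref{eqn:nonexplosion}) directly in the discrete setting by quoting Bj\"ornberg and Stef\'ansson's bound $\mathbf{P}\big(\Reff(\rho, B(\rho,r)^c) \leq r\lambda^{-1}\big) \leq C\lambda^{-q}$ for $\Loopp(\Tai)$, running a Borel--Cantelli argument along a subsequence of scales, and then transferring to $\Loop(\Tai)$ via the elementary monotone comparison $\Reff(\rho, \mathcal{B}_r(\Loop(\Tai))^c) \geq \Reff(\rho, B_{r-1}(\Loopp(\Tai))^c)$. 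So the estimate you flag as "the main obstacle" is available off the shelf; your proof becomes complete once you substitute that citation (or an equivalent direct discrete argument) for the unverified coupling step.
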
 

Again, we will prove a similar result for random walks on the other infinite discrete looptrees in Section \ref{sctn:RW consequences}, along with annealed versions, but the one above is easiest to state as all vertices have degree $4$ in \textsf{Loop}($T_{\infty}$).

The process $B^{\infty}$ is considered further in Section \ref{sctn:RW consequences} where we prove the following results about the spectral dimension of $\Lai$. Recall that the spectral dimension of $\Lai$ is defined as
\begin{equation}\label{eqn:trans dens def}
d_S(\Lai) = \lim_{t \rightarrow \infty} \frac{-2\log( p^{\infty}_t(\rho^{\infty}, \rho^{\infty}))}{\log t},
\end{equation}
where $p^{\infty}_t(\cdot, \cdot)$ is the transition density of the Brownian motion $B^{\infty}$ defined above, i.e. a symmetric $\nu^{\infty} \times \nu^{\infty}$-measurable function on $\Lai \times \Lai$ such that
\[
\estartb{f(B_t)}{x} = \int_{\Lai} f(y) p_t(x,y) \nu^{\infty} (dy)
\]
for all bounded, $\nu^{\infty}$-measurable functions $f$ on $\Lai$ and $\nu^{\infty}$-almost every $x \in \Lai$.

We assume that $\Lai$ is defined on the probability space $(\Omega, \F, \mathbf{P})$, and let $\mathbf{E}$ denote expectation on this space.

\begin{theorem}\label{thm:main spec dim quenched}
$\mathbf{P}$-almost surely, $d_S(\Lai) = \frac{2\alpha }{\alpha + 1}$.
\end{theorem}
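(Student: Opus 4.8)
I would deduce the precise value of $d_S(\Lai)$ from the general heat-kernel principle for strongly recurrent random media: if, for $\mathbf{P}$-almost every realisation, there is a finite (random) constant $c$ such that for all sufficiently large $R$
\[
R^{\alpha}(\log R)^{-c}\le \nu^{\infty}\big(B(\rho^{\infty},R)\big)\le R^{\alpha}(\log R)^{c},\qquad c^{-1}R\le \Refi\big(\rho^{\infty},B(\rho^{\infty},R)^c\big)\le cR ,
\]
then the on-diagonal transition density satisfies $p^{\infty}_{t}(\rho^{\infty},\rho^{\infty})=t^{-\alpha/(\alpha+1)+o(1)}$ as $t\to\infty$, whence the limit in \eqref{eqn:trans dens def} exists and equals $\tfrac{2\alpha}{\alpha+1}$. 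The content of the theorem is therefore to identify the volume growth exponent as $\alpha$ and to verify these two ingredients in the quenched sense above; given them, the conclusion is a black-box application of the resistance-form heat-kernel machinery compatible with \cite{DavidResForms}, which in particular makes the spectral dimension $\mathbf{P}$-a.s. deterministic and recovers the CRT value $4/3$ as $\alpha\uparrow 2$.

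The resistance ingredient is essentially deterministic and easy. Because $\Lai$ is built by gluing loops along a tree structure, a geodesic between two points passes through a chain of loops, traversing the shorter arc of each; on a loop of circumference $\ell$ the two arcs of lengths $a\le\ell/2$ and $\ell-a$ act as parallel resistors contributing $a(\ell-a)/\ell\in[a/2,a]$ in series. Summing along the geodesic yields $\tfrac12\,\d^{\infty}(x,y)\le \Refi(x,y)\le\d^{\infty}(x,y)$ for all $x,y\in\Lai$, which is also the reason Theorems \ref{thm:LLT} and \ref{thm:main scaling lim} and the subsequent process-convergence statements may be phrased with either metric; in particular $\Refi\big(\rho^{\infty},B(\rho^{\infty},R)^c\big)\asymp R$ with universal constants.

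The volume ingredient is where the work lies. From the construction of $\Lai$ in Section \ref{sctn:construction of infinite stable looptrees} out of two independent $\alpha$-stable \Levy processes coding the two sides of the loopspine, $\nu^{\infty}(B(\rho^{\infty},R))$ can be written through the jumps of these processes whose associated loops lie within geodesic distance $R$ of the root; the $\tfrac1\alpha$-self-similarity of the \Levy processes immediately gives $\mathbf{E}[\nu^{\infty}(B(\rho^{\infty},R))]\asymp R^{\alpha}$ and scaling of the higher moments. To upgrade this to the quenched two-sided bound I would (i) prove a tail estimate $\mathbf{P}\big(\nu^{\infty}(B(\rho^{\infty},R))\notin[R^{\alpha}(\log R)^{-c},R^{\alpha}(\log R)^{c}]\big)$ that is summable along the geometric sequence $R=2^{k}$, exploiting the decomposition of $\Lai$ into the loopspine together with the subtrees hanging off it and moment bounds for the relevant \Levy functionals; (ii) invoke Borel--Cantelli; and (iii) fill in the intermediate scales by monotonicity of $R\mapsto\nu^{\infty}(B(\rho^{\infty},R))$. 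Alternatively one may transport the analogous volume-fluctuation estimates for compact stable looptrees from the companion paper \cite{ArchBMCompactLooptrees} through the local limit of Theorem \ref{thm:LLT}.

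The main obstacle is precisely this quenched control of the ball volumes. The annealed estimate follows at once from self-similarity, but excluding anomalous fluctuations along a sparse random sequence of radii requires quantitative concentration for additive functionals of the coding \Levy processes restricted to the geodesic ball of radius $R$ — and the geodesic-distance constraint couples the two coding processes, so the functional in question is not simply a sum of independent contributions and its moments must be controlled directly. Once both ingredients are in place, $d_S(\Lai)=\tfrac{2\alpha}{\alpha+1}$ follows from the heat-kernel machinery cited above, and its $\mathbf{P}$-a.s. constancy is automatic from the explicit formula.
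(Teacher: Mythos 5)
Your overall strategy is the paper's: two-sided quenched volume bounds of order $R^{\alpha}$ and a quenched lower bound of order $R$ on $\Refi(\rho^{\infty},B^{\infty}(\rho^{\infty},R)^c)$, fed into the heat-kernel machinery for strongly recurrent resistance forms (the paper invokes \cite[Theorem 1.5, Part II]{KumMisumiHKStronglyRecurrent}); your volume argument --- tail bounds summable along $R=2^{k}$, Borel--Cantelli, monotonicity to fill in intermediate scales --- is exactly how Theorem \ref{thm:vol bounds} is deduced from Proposition \ref{prop:vol results infinite}, and log-logarithmic rather than logarithmic corrections are what actually come out.

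The gap is in the resistance ingredient, which you dismiss as ``essentially deterministic and easy''. The pointwise comparison $\tfrac12\,\d^{\infty}(x,y)\le R^{\infty}(x,y)\le \d^{\infty}(x,y)$ is correct, but it does not imply $\Refi(\rho^{\infty},B^{\infty}(\rho^{\infty},R)^c)\ge cR$: effective resistance from a point to a \emph{set} is a variational quantity that can be far smaller than $\inf_{y}R^{\infty}(\rho^{\infty},y)$ over $y$ in the complement when there are many disjoint routes out of the ball (in a binary tree with unit edge lengths the resistance from the root to level $R$ stays bounded, not of order $R$). In $\Lai$ the number of macroscopic sublooptrees reaching beyond distance $R$ is random and unbounded, so the point-to-set resistance genuinely fluctuates; this is precisely why the paper proves only the probabilistic estimate of Proposition \ref{prop:resistance results infinite}, $\prb{\Refi(\rho^{\infty}, B^{\infty}(\rho^{\infty}, r)^c) \leq r\lambda^{-1}} \leq Ce^{-c\lambda^{1/4}}$, via a nontrivial cutset construction ($T_{\text{res}}$ together with Lemma \ref{lem:segment length bound infinite}), and why the quenched lower bound in Theorem \ref{thm:res bounds} carries a $(\log \log)$ correction rather than a universal constant. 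Since only polynomial exponents matter for $d_S$, the conclusion still follows once such an estimate is supplied, but as written your justification of the resistance lower bound is invalid, and it is one of the two substantive inputs rather than a triviality.
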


In light of Theorem \ref{thm:main spec dim quenched}, we call $d_S(\Lai)$ the \textit{quenched} spectral dimension. We also define the \textit{annealed} spectral dimension as
\[
d^a_S(\Lai) = \lim_{t \rightarrow \infty} \frac{-2\log( \Eb{p^{\infty}_t(\rho^{\infty}, \rho^{\infty})})}{\log t}.
\]
For a general space, the annealed heat kernel is trickier to bound than the quenched one defined above, since the expected transition density may not be finite. This is the case, for example, for the trees with heavy-tailed offspring distributions considered in \cite{CroyKumRWGWTreeInfiniteVar}. In the case of stable looptrees however we are able to bound this using the volume and resistance estimates of Section \ref{sctn:vol bounds and spectral dim infinite}, and then utilise scaling invariance of $\Lai$ to prove the following (more precise) result.

\begin{theorem}\label{thm:main spec dim annealed}
We have that
\[
d^a_S(\Lai) = \frac{2\alpha }{\alpha + 1}.
\]
Moreover, there exists a constant $c_1 \in (0, \infty)$ such that $\Eb{p^{\infty}_t(\rho^{\infty}, \rho^{\infty})} = c_1 t^{\frac{-\alpha}{\alpha + 1}}$.
\end{theorem}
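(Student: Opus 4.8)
The plan is to combine the quenched result of Theorem \ref{thm:main spec dim quenched} with the scaling invariance of $\Lai$ to pin down not just the annealed spectral dimension but the exact power-law form of $\Eb{p^{\infty}_t(\rho^{\infty}, \rho^{\infty})}$. The starting point is the scaling relation for stable looptrees: $\Lai$ is invariant (in distribution) under the operation that rescales the metric by a factor $\lambda$ and the measure by a factor $\lambda^{\alpha}$, since the coding \Levy processes are themselves self-similar with index $\alpha$. Under such a rescaling, the associated Brownian motion (being the process canonically attached to the resistance form and measure) has its clock changed by $\lambda^{\alpha}\cdot\lambda = \lambda^{\alpha+1}$, and its transition density at the root transforms as $p^{\infty}_{\lambda^{\alpha+1}t}(\rho^{\infty}, \rho^{\infty}) \overset{(d)}{=} \lambda^{-\alpha}p^{\infty}_t(\rho^{\infty}, \rho^{\infty})$. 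Taking $\mathbf{E}$ of both sides (and using that the law is unchanged) gives $\Eb{p^{\infty}_{\lambda^{\alpha+1}t}(\rho^{\infty}, \rho^{\infty})} = \lambda^{-\alpha}\Eb{p^{\infty}_t(\rho^{\infty}, \rho^{\infty})}$ for every $\lambda > 0$ and $t > 0$. Setting $t = 1$ and writing $s = \lambda^{\alpha+1}$, this forces $\Eb{p^{\infty}_s(\rho^{\infty}, \rho^{\infty})} = c_1 s^{-\alpha/(\alpha+1)}$ with $c_1 = \Eb{p^{\infty}_1(\rho^{\infty}, \rho^{\infty})}$, provided we know this expectation is a finite, strictly positive constant.

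So the real content of the proof is twofold: first, justifying that the quenched transition density exists and satisfies the claimed scaling identity $\mathbf{P}$-a.s.; and second, showing $0 < c_1 < \infty$. The existence of $p^{\infty}_t(\cdot,\cdot)$ follows from the general theory of resistance forms in \cite{DavidResForms} applied to $(\Lai, R^{\infty}, \nu^{\infty})$, which is exactly the framework set up via Theorem \ref{thm:main RW LLT conv} and the volume/resistance estimates of Section \ref{sctn:vol bounds and spectral dim infinite}. The scaling identity for the quenched kernel follows from the exact distributional self-similarity of $\Lai$ together with the fact that resistance metric, measure, and the canonical process all transform covariantly under scaling — this should be a clean consequence of how the resistance form is constructed. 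To get $c_1 < \infty$, the plan is to use the on-diagonal upper bound $p^{\infty}_t(\rho^{\infty},\rho^{\infty}) \leq C t^{-d_f/(d_f+1)}$-type estimates that come out of the volume lower bounds and resistance upper bounds for balls around $\rho^{\infty}$; crucially, because looptree volumes have good moment control (unlike the heavy-tailed GW trees of \cite{CroyKumRWGWTreeInfiniteVar}), one can integrate these estimates over $\mathbf{P}$ to get a finite annealed bound. For $c_1 > 0$, one uses a matching on-diagonal lower bound, which follows from volume upper bounds and resistance lower bounds, again with enough moment control to survive taking expectations.

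Once the identity $\Eb{p^{\infty}_t(\rho^{\infty}, \rho^{\infty})} = c_1 t^{-\alpha/(\alpha+1)}$ is established, the formula $d^a_S(\Lai) = 2\alpha/(\alpha+1)$ is immediate from the definition of $d^a_S$, since $-2\log(c_1 t^{-\alpha/(\alpha+1)})/\log t \to 2\alpha/(\alpha+1)$ as $t \to \infty$. I would present the annealed dimension computation as a one-line corollary of the exact power law and devote the bulk of the argument to the two ingredients above.

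The main obstacle I anticipate is controlling the annealed heat kernel at \emph{small} time — or equivalently, establishing that $\Eb{p^{\infty}_1(\rho^{\infty}, \rho^{\infty})}$ is genuinely finite. The point is that for the scaling argument to upgrade into an \emph{exact} formula valid for all $t$, one needs the single value $c_1 = \Eb{p^{\infty}_1(\rho^{\infty}, \rho^{\infty})}$ to be finite; the scaling relation alone only tells you the expectation is either identically $+\infty$ or of the form $c_1 t^{-\alpha/(\alpha+1)}$. Ruling out the degenerate case requires quantitative control on how small the volume of a small ball around $\rho^{\infty}$ can be, with enough uniformity to survive integration against $\mathbf{P}$ — i.e., a lower-tail estimate on $\nu^{\infty}(B(\rho^{\infty}, r))$ with a suitably integrable deviation bound. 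This is where the volume and resistance estimates of Section \ref{sctn:vol bounds and spectral dim infinite} do the heavy lifting, and where the contrast with the infinite-variance GW tree case (where the annealed kernel genuinely blows up) must be carefully exploited; I expect this to be the technically delicate step.
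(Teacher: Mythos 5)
Your proposal is correct and follows essentially the same route as the paper: Proposition \ref{prop:Lai scale inv} gives the exact scaling identity $\Eb{p^{\infty}_t(\rho^{\infty},\rho^{\infty})} = t^{-\alpha/(\alpha+1)}\Eb{p^{\infty}_1(\rho^{\infty},\rho^{\infty})}$, and finiteness of $c_1 = \Eb{p^{\infty}_1(\rho^{\infty},\rho^{\infty})}$ is obtained exactly as you anticipate, by converting the exponential tail bounds of Propositions \ref{prop:vol results infinite} and \ref{prop:resistance results infinite} into an exponential upper tail for the on-diagonal heat kernel via standard resistance-form estimates. The only point you slightly overcomplicate is positivity of $c_1$, which is immediate since $p^{\infty}_1(\rho^{\infty},\rho^{\infty})>0$ almost surely.
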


Both the quenched and annealed spectral dimensions match those obtained for the infinite discrete looptrees defined from offspring distributions in the domain of attraction of an $\alpha$-stable law in \cite{BjornStef}. %In our case, the annealed result follows straightforwardly from scaling invariance of $\Lai$, but in the discrete case the authors have to do some extra work to show that the annealed quantity is actually finite.

The results of this paper and in particular Theorem \ref{thm:LLT} are applied in the paper \cite{ArchBMCompactLooptrees} to prove various limit results for volumes of small balls in compact stable looptrees, and also to obtain limiting heat kernel estimates in the regime $t \downarrow 0$. We refer the reader directly to \cite{ArchBMCompactLooptrees} for more details. Moreover, Richier showed in \cite{RichierIICUIHPT} that the incipient infinite cluster (IIC) of the Uniform Infinite Half-Planar Triangulation has the structure of an infinite discrete looptree, but where each of the loops are filled with independent critically percolated Boltzmann triangulations. The size of the loops of this looptree are given by a distribution in the domain of attraction of a $\frac{3}{2}$-stable law and Theorem \ref{thm:Rich looptree conv} will imply that the boundary of this cluster converges after rescaling to the infinite stable looptree $\L_{3/2}^{\infty}$. The question of the scaling limit of the whole cluster is more subtle but is conjectured to be the $\frac{7}{6}$-stable map \cite[Section 5.4]{BerCurMierPerconTriang}, and we hope the methods used in this article will be a good starting point for studying random walks on the IIC. In particular, we anticipate that such a random walk might fall into a framework similar to the discussions of \cite{AlRuiFreiKigSSG}, in that the looptree forming the boundary of the IIC may play a role somewhat analogous to that of the classical Sierpinski gasket in \cite{AlRuiFreiKigSSG}. If this is the case, then understanding Brownian motion on $\La$ and $\Lai$ is an important preliminary step to understanding the scaling limit of a random walk on the IIC.

This paper is organised as follows. In Section \ref{sctn:prelim} we go over some preliminaries on \Levy processes and stochastic processes associated with resistance forms. In Section \ref{sctn:tree looptree def} we give some background on random trees and looptrees and explain how the stable versions can be coded by \Levy excursions. In Section \ref{sctn:construction of infinite stable looptrees} we give our construction of $\Lai$, which essentially involves replacing the \Levy excursion used to code a compact looptree by two independent \Levy processes. In Section \ref{sctn:vol bounds and spectral dim infinite} we prove some precise volume and resistance bounds for $\Lai$ by making comparisons with arguments of \cite{ArchBMCompactLooptrees}. We then proceed to prove Theorems \ref{thm:LLT} and \ref{thm:main scaling lim} in Section \ref{sctn:LLT}, and explain how these are applied to study compact stable looptrees in \cite{ArchBMCompactLooptrees}. Finally, we conclude with a study of stochastic processes in Section \ref{sctn:RW consequences}, where we use Theorems \ref{thm:LLT} and \ref{thm:main scaling lim} to prove Theorems \ref{thm:main RW LLT conv} and \ref{thm:RW conv infinite intro}, and also prove Theorems \ref{thm:main spec dim quenched} and \ref{thm:main spec dim annealed}.

Throughout this paper, $C, C', c$ and $c'$ will denote constants, bounded above and below, that may change on each appearance. We will use the notation $B^{\infty} (x, r)$ to denote the open ball of radius $r$ around $x$ in $\Lai$, and $\bar{B}^{\infty} (x, r)$ its closure. We will instead use the superscript $\l$ to denote the corresponding quantities on a compact looptree conditioned to have mass $\l$.

\textbf{Acknowledgements.} I would like to thank my supervisor David Croydon for suggesting the problem and for useful discussions, and two anonymous referees for their helpful comments on the original version of this work. I would also like to thank the Great Britain Sasakawa Foundation for supporting a trip to Kyoto during which some of this work was completed, and Kyoto University for their hospitality during this trip.

\section{Preliminaries}\label{sctn:prelim}
\subsection{Gromov-Hausdorff-Prohorov topologies}
In order to prove convergence results for measured metric spaces such as looptrees we will work in the pointed Gromov-Hausdorff-Prohorov topology. To define this, let $\mathbb{F}$ denote the set of quadruples $(F,R,\mu,\rho)$ such that $(F,R)$ is a boundedly finite Heine-Borel metric space, $\mu$ is a locally finite Borel measure of full support on $F$, and $\rho$ is a distinguished point of $F$, which we call the root. Let $\mathbb{F}^c \subset \mathbb{F}$ denote the subset of spaces where $(F,R)$ is compact.

Suppose $(F,R,\mu,\rho)$ and $(F',R',\mu',\rho')$ are elements of $\mathbb{F}^c$. Given a metric space $(M, d_M)$, and isometric embeddings $\phi, \phi'$ of $(F,R)$ and $(F', R')$ respectively into $(M, d_M)$, we define $d^{GHP}_{M}\big((F,R,\mu,\rho, \phi), (F',R',\mu',\rho', \phi')\big)$ to be equal to
\begin{align*}\label{eqn:GHP def}
d_M^H(\phi (F), \phi' (F')) + &d_M^P(\mu \circ \phi^{-1}, \mu' \circ {\phi'}^{-1} ) + d_M(\phi (\rho), \phi' (\rho')).
\end{align*}
Here $d_M^H$ denotes the Hausdorff distance between two sets in $M$, and $d_M^P$ the Prohorov distance between two measures, as defined in \cite[Chapter 1]{BillsleyConv}. The pointed Gromov-Hausdorff-Prohorov distance between $(F,R,\mu,\rho)$ and $(F',R',\mu',\rho')$ is given by
\begin{align}
\begin{split}
\dGHPp{(F,R,\mu,\rho), (F',R',\mu',\rho')} = \inf_{\phi, \phi', M} d^{GHP}_{M}\big((F,R,\mu,\rho, \phi), (F',R',\mu',\rho', \phi')\big)\end{split}
\end{align}
where the infimum is taken over all isometric embeddings $\phi, \phi'$ of $(F,R)$ and $(F', R')$ respectively into a common metric space $(M, d_M)$. It is well-known (for example, see \cite[Theorem 2.3]{AbDelHoschNoteGromov}) that this defines a metric on the space of equivalence classes of $\mathbb{F}^c$, where we say that two spaces $(F,R,\mu,\rho)$ and $(F',R',\mu',\rho')$ are equivalent if there is a measure and root preserving isometry between them.

The pointed Gromov-Hausdorff distance $d_{GH}(\cdot, \cdot)$, which is defined by removing the Prohorov term from (\ref{eqn:GHP def}) above, can also be helpfully defined in terms of \textit{correspondences}. A correspondence $\mathcal{R}$ between $(F,R,\mu,\rho)$ and $(F',R',\mu',\rho')$ is a subset of $F \times F'$ such that for every $x \in F$, there exists $y \in F'$ with $(x,y) \in \mathcal{R}$, and similarly for every $y \in F'$, there exists $x \in F$ with $(x,y) \in \mathcal{R}$. We define the \textit{distortion} of a correspondence by
\[
\textsf{dis} (\mathcal{R}) = \sup_{(x,x'), (y, y') \in \mathcal{R}} |R(x,y) - R(x',y')|.
\]
It is then straightforward to show that
\begin{align*}
\dGH{(F,R,\mu,\rho), (F',R',\mu',\rho')} = \frac{1}{2} \inf_{\mathcal{R}} \textsf{dis}(\mathcal{R}),
\end{align*}
\sloppy{where the infimum is taken over all correspondences $\mathcal{R}$ between $(F,R,\mu,\rho)$ and $(F',R',\mu',\rho')$~that contain the point $(\rho, \rho')$.}

In this article, we will prove pointed Gromov-Hausdorff-Prohorov convergence by first proving  pointed Gromov-Hausdorff convergence using correspondences, and then show Prohorov convergence of the measures on the resulting metric embedding.

For non-compact elements of $\mathbb{F}$, we will need a generalised notion of Gromov-Hausdorff-Prohorov convergence. This is provided by the Gromov-Hausdorff vague topology of \cite{AthLohrWinGromovGap}. To define it, suppose that $(F,R,\mu,\rho)$ and $(F_n,R_n,\mu_n,\rho_n)$ for each $n \geq 0$ are elements of $\mathbb{F}\setminus \mathbb{F}^c$. For $r>0$, we let $\mathcal{B}_r(F)$ denote the quadruple $(\bar{B}_F(\rho, r),R|_{\bar{B}_F(\rho, r)},\mu|_{\bar{B}_F(\rho, r)},\rho)$, where $\bar{B}_F(\rho, r)$ denotes the closed ball of radius $r$ around the root $\rho$ in $F$; similarly for $\mathcal{B}_r(F_n)$. Recall that we are restricting to Heine-Borel metric measure spaces of full support, so that weak convergence is metrized by the Prohorov metric. Following \cite[Definition 5.8]{AthLohrWinGromovGap}, we say that $(F_n,R_n,\mu_n,\rho_n)$ converges to $(F,R,\mu,\rho)$ in the Gromov-Hausdorff-vague topology if
\[
\dGHP \big( \mathcal{B}_r(F_n), \mathcal{B}_r(F) \big) \rightarrow 0
\]
for Lebesgue almost every $r>0$. The following proposition will be useful, as it will allow us to apply the Skorohod Representation Theorem later in Sections \ref{sctn:LLT} and \ref{sctn:RW consequences}.

\begin{proposition}\cite[Proposition 5.12]{AthLohrWinGromovGap}.\label{thm:GH vague separable HB}
The space of Heine-Borel boundedly finite measure spaces equipped with the Gromov-Hausdorff-vague topology is a Polish space.
\end{proposition}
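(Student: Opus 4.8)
The statement to prove is Proposition~\ref{thm:GH vague separable HB}: the space of Heine--Borel boundedly finite (rooted, measured) metric spaces equipped with the Gromov--Hausdorff-vague topology is Polish. Since this is quoted as \cite[Proposition 5.12]{AthLohrWinGromovGap}, a self-contained proof would essentially reconstruct their argument, and the plan is to do exactly that in outline. The overall strategy is the standard one for showing a topological space is Polish: exhibit a complete separable metric that induces the topology. The natural candidate is to fix an enumeration of the ``cut radii'' --- or rather to integrate --- and set
\[
d_{GHv}\big((F,R,\mu,\rho),(F',R',\mu',\rho')\big) = \int_0^\infty e^{-r}\Big(1 \wedge \dGHP\big(\mathcal{B}_r(F),\mathcal{B}_r(F')\big)\Big)\,dr,
\]
using the restriction-to-balls operation $\mathcal{B}_r(\cdot)$ already defined in the excerpt, together with the fact (from \cite{AbDelHoschNoteGromov}) that $\dGHP$ is a genuine metric on equivalence classes of compact pointed measured spaces.

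First I would check that $d_{GHv}$ is well-defined and is a pseudmetric, then that it is a metric on equivalence classes (two spaces at distance zero have isometric balls of all radii outside a null set, hence of a dense set of radii, hence --- by a limiting/exhaustion argument using that the spaces are Heine--Borel and the roots are fixed --- are themselves isometric as rooted measured spaces). Second, I would verify that $d_{GHv}$ metrizes the Gromov--Hausdorff-vague topology: convergence in $d_{GHv}$ is equivalent, by a routine dominated-convergence and subsequence argument, to $\dGHP(\mathcal{B}_r(F_n),\mathcal{B}_r(F))\to 0$ for Lebesgue-a.e.\ $r$, which is the definition given in the excerpt following \cite[Definition 5.8]{AthLohrWinGromovGap}. (One subtlety: one should check that the set of ``good'' radii --- those at which $\mathcal{B}_r$ behaves continuously, e.g.\ $\mu(\partial B_F(\rho,r))=0$ and no mass escapes --- is co-countable, so restricting to a.e.\ $r$ loses nothing.) Third, separability: the countable dense set consists of finite rooted metric spaces with rational edge-lengths and rational atomic measures; density follows because any $\mathcal{B}_r(F)$, being compact, is approximated in $\dGHP$ by a finite $\epsilon$-net carrying a discretized measure, and then one glues these approximations across an increasing sequence of radii. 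Fourth, completeness: given a $d_{GHv}$-Cauchy sequence, for a.e.\ fixed $r$ the sequence $\mathcal{B}_r(F_n)$ is $\dGHP$-Cauchy, hence converges to a compact pointed measured space $K_r$ by completeness of $(\mathbb{F}^c,\dGHP)$ (which is itself classical); the family $(K_r)_r$ is consistent under further restriction, so one takes the limit space to be the ``monotone union'' $\bigcup_r K_r$ with the induced length metric and measure, checks it is Heine--Borel and boundedly finite, and verifies it is the $d_{GHv}$-limit.

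The main obstacle is the completeness step, and specifically the construction and verification of the limiting space as a monotone union: one must show the consistency maps $\mathcal{B}_s(K_r)\cong K_s$ for $s<r$ hold for a.e.\ pair, promote this to all pairs by continuity, build a genuine metric measure space as a direct limit (taking care that geodesics present in large balls are not lost, i.e.\ that the limit is the ``correct'' completion and remains Heine--Borel), and confirm no mass leaks in the limit. A secondary technical point is handling the a.e.-in-$r$ quantifiers uniformly --- intersecting countably many full-measure sets of radii (one per pair from a countable family, plus the continuity radii) to get a single co-null set on which everything is controlled simultaneously. Once these set-theoretic/measure-theoretic bookkeeping issues are dispatched, the remaining verifications (triangle inequality, the metrization equivalence, separability) are routine. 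I would then conclude that $(\mathbb{F}\setminus\mathbb{F}^c, d_{GHv})$ --- and hence, after also incorporating the compact spaces, all of $\mathbb{F}$ --- is a complete separable metric space, which is the assertion of the proposition.
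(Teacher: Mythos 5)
The paper does not prove this statement: it is imported verbatim as a citation of \cite[Proposition 5.12]{AthLohrWinGromovGap}, so there is no internal proof to compare your attempt against. Judged on its own terms, your plan is the natural direct route (metrize the Gromov--Hausdorff-vague topology by integrating truncated $\dGHP$-distances of the restricted balls $\mathcal{B}_r(\cdot)$ over $r$, then check metrizability, separability via finite rooted spaces with rational distance matrices and rational atomic measures, and completeness via a monotone-union limit), and it correctly identifies the two genuinely delicate points: the consistency of the family $(K_r)_r$ under further restriction, where mass or points sitting exactly on a boundary sphere can break $\mathcal{B}_s(K_r)\cong K_s$, and the bookkeeping needed to handle the almost-every-$r$ quantifiers uniformly. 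Note that the cited reference reaches Polishness somewhat differently, by relating Gromov--Hausdorff-vague convergence to Gromov-vague convergence together with a uniform lower mass-bound condition and exploiting Polishness of the Gromov-vague topology, rather than by a bare-hands completeness argument; your approach is more self-contained but correspondingly heavier in the completeness step.

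Two points in your sketch deserve more care than the word ``routine'' suggests. First, $d_{GHv}(F_n,F)\to 0$ only gives convergence in measure of $r\mapsto \dGHP(\mathcal{B}_r(F_n),\mathcal{B}_r(F))$, hence almost-everywhere convergence along subsequences with subsequence-dependent null sets; upgrading this to the full-sequence almost-every-$r$ convergence demanded by the definition requires a regularity-in-$r$ argument (near-monotonicity of the restricted balls in $r$), not just dominated convergence. Second, in the completeness step the limit space should carry the metric inherited from the consistent isometric embeddings of the $K_r$, not an ``induced length metric''---the spaces in $\mathbb{F}$ are not assumed to be length spaces---and you must separately verify that the completed union is Heine--Borel and that its measure restricted to balls agrees with the $K_r$ (no mass leakage). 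Neither issue is fatal, but both are where the actual work lies.
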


\subsection{Stochastic processes associated with resistance metrics}\label{sctn:res forms}
To study Brownian motion and random walks on metric spaces we will be using the theory of resistance forms and resistance metrics, developed by Kigami in \cite{AOF} and  \cite{KigamiResistanceFormsMono}.

Let $G = (V,E)$ be a discrete graph equipped with non-negative symmetric edge conductances $c(x,y)_{(x,y) \in E}$ and a measure $(\mu(x))_{x \in V}$. Effective resistance on $G$ is a function $R$ on $V \times V$ defined by
\begin{equation}\label{eqn:resistance def variational}
R(x,y)^{-1} = \inf \{ \mathcal{E}(f,f)| f: V \rightarrow \R, f(x)=1, f(y)=0 \},
\end{equation}
where we take the convention that $\inf \emptyset = \infty$, and $\mathcal{E}(f,f)$ is an energy functional given by
\begin{equation*}\label{eqn:energy def}
\mathcal{E}(f,g) = \frac{1}{2} \sum_{x, y \in V} c(x,y) (f(y)- f(x))(g(y) - g(x)).
\end{equation*}
$R(x,y)$ corresponds to the usual physical notion of electrical resistance between $x$ and $y$ in $G$. It can be shown (e.g. see \cite{Tetali}) that $R$ is a metric on $G$, so we call it the \textit{resistance metric}.%, and that $\mathcal{E}$ is a \textit{Dirichlet form} on $L^2(V, \mu)$.

%There is classical correspondence between regular Dirichlet forms and reversible Markov processes, detailed in for example in \cite{KigamiResistanceFormsMono}. It can also be shown that $R$ as defined by (\ref{eqn:resistance def variational}) uniquely characterises the edge conductances $(c(x,y))_{(x,y) \in E}$. Therefore, when combined with a measure $\mu$, this uniquely characterises the Dirichlet form $\mathcal{E}$ on the space $L^2(V,\mu)$, and hence also characterises a random walk on $G$.
%
%This theory can be extended to the case where $G$ is a continuum graph endowed with a regular  Borel measure and leads to the following definition.
The notion of a resistance metric can be extended to the continuum as follows.

\begin{definition}\label{def:eff resistance metric}\cite[Definition 2.3.2]{AOF}.
Let $F$ be a set. A function $R : F \times F$ is a \textit{resistance metric} on $F$ if and only if for every finite subset $V \subset F$, there exists a weighted graph with vertex set $V$ such that $R|_{V \times V}$ is the effective resistance on $V$, i.e. is given by (\ref{eqn:resistance def variational}).
\end{definition}

A resistance metric on a set $F$ can be naturally associated with a stochastic process on $F$ via the theory of resistance forms. Roughly speaking, a resistance form is a pair $(\EE, \F)$ where $\EE$ is an energy functional as above, and $\F$ is a subspace of real-valued functions on $F$ with finite energy (additionally it must satisfy the so-called Markov property, see \cite[Definition 3.1]{KigamiResistanceFormsMono}).

\begin{definition}(\cite[Definition 6.2]{KigamiResistanceFormsMono}).\label{def:reg res form}
A resistance form $(\EE, \F)$ is \textit{regular} if $\F \cup C_0(F)$ is dense in $C_0(F)$ with respect to the supremum norm, where $C_0(F)$ represents the space of continuous functions on $F$ with compact support.
\end{definition}

By \cite[Theorems 2.3.4 and 2.3.6]{AOF}, there is a one-to-one correspondence between resistance metrics and resistance forms on $F$, given analogously to (\ref{eqn:resistance def variational}). Moreover, if the corresponding resistance form is regular, then it induces a regular Dirichlet form on the space $L^2(F, \mu)$, which in turn is naturally associated with a Hunt process on $F$ as a consequence of \cite[Theorem 7.2.1]{FOT}. This is automatically the case when $(F,R)$ is a compact resistance metric space endowed with a finite Borel measure $\mu$ of full support, for example, but in the case of infinite looptrees we will have to put some extra work into proving that the resistance form associated with $\Lai$ is regular. This is done in Proposition \ref{prop:regular DF}.

We have tried to keep background on resistance forms and Dirichlet forms to a minimum in this article, but see \cite{KigamiResistanceFormsMono} for more on this. The key point is that, under appropriate regularity conditions on the underlying space (which will always be fulfilled in this paper), there is a one-to-one correspondence between resistance metrics and stochastic processes. The reader should feel free to skip the proof of Proposition \ref{prop:regular DF}, which proves the required regularity in our setting, and merely use this correspondence as a black box throughout the rest of this article.

This correspondence allows us to use results about scaling limits of measured resistance metric spaces to prove results about scaling limits of stochastic processes as detailed in the following result of \cite{DavidResForms}. Before stating it, we note that the notion of effective resistance between points given in (\ref{eqn:resistance def variational}) can be extended to that of effective resistance between two sets $A, B \subset F$ by setting
\begin{equation*}
R(A,B)^{-1} = \inf \{ \mathcal{E}(f,f)| f: F \rightarrow \R, f(x)=1 \ \forall \ x \in A, f(y)=0 \ \forall \ y \in B \}.
\end{equation*}

\begin{theorem}\cite[Theorem 1.2]{DavidResForms}.\label{thm:scaling lim RW resistance}
Suppose that $(F_n, R_n, \mu_n, \rho_n)_{n \geq 0}$ is a sequence in $\mathbb{F}$ such that 
\[
(F_n, R_n, \mu_n, \rho_n) \rightarrow (F, R, \mu, \rho)
\]
Gromov-Hausdorff-vaguely for some $(F, R, \mu, \rho) \in \mathbb{F}$, and $R, (R_n)_{n \geq 1}$ are resistance metrics on the respective spaces. Assume further that 
\begin{equation}\label{eqn:nonexplosion}
\lim_{r \rightarrow \infty} \liminf_{n \rightarrow \infty} R_n (\rho_n, B_n (\rho_n, r)^c) = \infty.
\end{equation}
Let $(Y_t^n)_{t \geq 0}$ and $(Y_t)_{t \geq 0}$ be the stochastic processes respectively associated with $(F_n, R_n, \mu_n, \rho_n)$ and $(F, R, \mu, \rho)$ as described above. Then it is possible to isometrically embed $(F_n, R_n)_{n \geq 1}$ and $(F,R)$ into a common metric space $(M, d_M)$ so that
\[
\prnstart{(Y_t^n)_{t \geq 0} \in \cdot}{\rho_n}{n} \rightarrow \prstart{(Y_t)_{t \geq 0} \in \cdot}{\rho}
\]
weakly as probability measures as $n \rightarrow \infty$ on the space $D(\R_+, M)$ equipped with the Skorohod $J_1$-topology.
\end{theorem}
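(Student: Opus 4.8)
The plan is to prove the statement first under the extra hypothesis that all the spaces involved are compact (i.e.\ lie in $\mathbb{F}^c$), and then to pass to the general case by a localization argument in which the non-explosion condition (\ref{eqn:nonexplosion}) plays the decisive role. The common metric space $(M,d_M)$ is produced directly from the Gromov--Hausdorff--vague convergence: for Lebesgue-a.e.\ $r>0$ we have $\dGHPp{\mathcal{B}_r(F_n),\mathcal{B}_r(F)}\to 0$, and a standard diagonal/gluing construction yields a single Heine--Borel space $(M,d_M)$ carrying compatible isometric copies of all the $(F_n,R_n)$ and of $(F,R)$, with $\rho_n\to\rho$ and $\mu_n\to\mu$ vaguely in $M$; henceforth I identify each space with its image in $M$.

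In the compact case, write $(\mathcal{E}_n,\mathcal{F}_n)$ and $(\mathcal{E},\mathcal{F})$ for the resistance forms of $(F_n,R_n)$ and $(F,R)$, regarded as forms on $L^2(M,\mu_n)$ and $L^2(M,\mu)$ via the embeddings. The first step is to show that $(\mathcal{E}_n,\mathcal{F}_n)$ converges to $(\mathcal{E},\mathcal{F})$ in the Mosco sense adapted to the varying reference measures, equivalently that the associated resolvents converge strongly. The crucial point is that each form is encoded by its resistance metric through the variational identity (\ref{eqn:resistance def variational}) together with the pointwise bound $|f(x)-f(y)|^2\le R(x,y)\,\mathcal{E}(f,f)$ for $f\in\mathcal{F}$; since $R_n\to R$ in the Hausdorff sense in $M$ and $\mu_n\to\mu$ in the Prohorov sense, approximating functions can be transported between the spaces and the $\liminf$-inequality and recovery-sequence conditions checked. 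Mosco convergence gives strong convergence of the transition semigroups, which together with $\delta_{\rho_n}\to\delta_\rho$ yields convergence of the finite-dimensional distributions of $Y^n$ to those of $Y$. To upgrade this to weak convergence in $D(\R_+,M)$ with the $J_1$-topology I would establish tightness via an Aldous-type oscillation estimate: for a resistance form the Green function on a ball is maximized on the diagonal, where it equals the effective resistance to the ball's complement, so $\estartb{\sigma^n_{B_n(x,\epsilon)^c}}{x}\le R_n(x,B_n(x,\epsilon)^c)\,\mu_n(F_n)\le\epsilon\,\mu_n(F_n)$, uniformly in the starting point $x$ and, since $\sup_n\mu_n(F_n)<\infty$ by Prohorov convergence, uniformly in $n$. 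This controls the path modulus of continuity uniformly, and together with the finite-dimensional convergence it forces every subsequential limit to be the process associated with $(F,R,\mu,\rho)$, completing the compact case.

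For the general case, fix $r$ in the full-measure set of radii along which $\mathcal{B}_r(F_n)\to\mathcal{B}_r(F)$ Gromov--Hausdorff--Prohorov. The closed balls $\bar{B}_n(\rho_n,r)$, with the restricted resistance metrics and measures, are compact resistance metric measure spaces converging GHP to $\mathcal{B}_r(F)$, so the compact case applies to the associated processes $Y^{n,r}$; moreover $Y^{n,r}$ is the trace of $Y^n$ on $\bar{B}_n(\rho_n,r)$ and hence coincides with $Y^n$ up to the exit time $\sigma^n_r:=\sigma^n_{B_n(\rho_n,r)^c}$. The non-explosion hypothesis (\ref{eqn:nonexplosion}), combined with the resistance-form exit-time estimates above, yields that for every $T>0$, $\lim_{r\to\infty}\limsup_{n\to\infty}\prnstart{\sigma^n_r\le T}{\rho_n}{n}=0$; the analogous bound on the limit space follows because (\ref{eqn:nonexplosion}) together with lower semicontinuity of effective resistance under the GHP convergence of balls gives $R(\rho,B(\rho,r)^c)\to\infty$. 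On $\{\sigma^n_r>T\}$ the restrictions of $Y^n$ and $Y^{n,r}$ to $[0,T]$ agree, so their $J_1$-distance on $[0,T]$ is dominated by $\mathbbm{1}\{\sigma^n_r\le T\}$; a three-$\epsilon$ argument, sending $n\to\infty$ for fixed large $r$ and then $r\to\infty$, upgrades $Y^{n,r}\to Y^r$ to $Y^n\to Y$ weakly in $D(\R_+,M)$.

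The main obstacle is the interface between the metric-measure convergence and the probabilistic conclusion. Two points need care: first, setting up the Mosco/Dirichlet-form convergence across spaces with \emph{different} reference measures $L^2(M,\mu_n)$ and showing that Hausdorff convergence of the resistance metrics alone drives it; and second --- the real heart of the non-compact part --- extracting from the purely metric condition (\ref{eqn:nonexplosion}) the quantitative statement that the process does not reach infinity within a bounded time horizon, \emph{uniformly in} $n$. The latter is handled through occupation-time and Green-function identities special to resistance forms (in particular the identity $\estartb{L^{\rho_n}_{\sigma^n_r}}{\rho_n}=R_n(\rho_n,B_n(\rho_n,r)^c)$ for the local time at the root), and it is precisely the uniformity over $n$, rather than the statement for each fixed $n$, that is delicate.
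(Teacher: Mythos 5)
First, a point of order: the paper does not prove this statement at all --- it is quoted verbatim, with attribution, from \cite{DavidResForms}, and is used later in Sections \ref{sctn:LLT} and \ref{sctn:RW consequences} purely as a black box. There is therefore no in-paper proof to compare against, and I can only measure your argument against the proof in that reference. Your overall architecture --- compact case first, then localisation to the balls $\bar{B}_n(\rho_n,r)$ with the non-explosion condition (\ref{eqn:nonexplosion}) forcing the exit times $\sigma^n_r$ to diverge uniformly in $n$ --- has the right shape and matches the structure of the cited proof, as does your appeal to the identities $g_B(x,x)=R(x,B^c)$ and $\estartb{L^{\rho_n}_{\sigma^n_r}}{\rho_n}=R_n(\rho_n,B_n(\rho_n,r)^c)$. (The cited proof handles the compact case by tracing the processes onto finite $\epsilon$-nets and passing to the limit in the resulting Markov chains rather than by Mosco convergence; your Mosco route would additionally have to justify convergence of the laws started from the \emph{points} $\rho_n$ rather than from $L^2$ initial data, which for resistance forms can be rescued via the bound $|f(x)-f(y)|^2\le R(x,y)\mathcal{E}(f,f)$ but is not free.)

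The genuine gap is in your tightness step. The inequality $\estartb{\sigma^n_{B_n(x,\epsilon)^c}}{x}\le R_n(x,B_n(x,\epsilon)^c)\,\mu_n(F_n)\le\epsilon\,\mu_n(F_n)$ is a correct upper bound on the expected exit time from a small ball, but it says the process \emph{leaves} small balls quickly --- the opposite of what an Aldous-type oscillation estimate needs. To control the path modulus of continuity you must bound $\prstart{\sigma^n_{B_n(x,\epsilon)^c}\le\delta}{x}$ from above, uniformly in $x$ and $n$, and that requires a \emph{lower} bound on exit times, hence uniform lower bounds on both $R_n(x,B_n(x,\epsilon)^c)$ and $\mu_n(B_n(x,\epsilon'))$. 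Neither follows from your displayed inequality: $R(x,B(x,\epsilon)^c)$ can be far smaller than $\epsilon$ on a strongly branching space, and uniform lower volume bounds have to be extracted from the Prohorov convergence $\mu_n\to\mu$ together with full support and compactness. This uniformity over $x$ and $n$ is precisely where the cited proof does most of its work, so as written the compact case --- and hence everything built on it --- is not established.
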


For more on the Skorohod-$J_1$ topology, see \cite[Chapter 3]{BillsleyConv}. The intuition behind the result above is that the convergence of metrics and measures respectively give the appropriate spatial and temporal convergences of the stochastic processes. We will apply it several times in this paper to take limits of stochastic processes on looptrees. 

By isometrically embedding into the universal Urysohn space $(U, d_U)$, we can get similar results in the annealed setting. This is quite abstract, and we do not give a full background on the Urysohn space, but instead recall that it is a Polish space with the property that any separable metric space can be isometrically embedded into $U$. Moreover, it has a distinguished point $u_0$ and in the case of trees and looptrees we can always assume that the root is mapped to this canonical point. These are the only two properties of $U$ that we will use in this article, but its existence and further properties are discussed in \cite{HuvekUrysohn}.
 % It also satisfies the compact homogeneity property (see \cite[Corollary 1.2]{MellerayUrysohn}, which ensures that any isometry between two compact subsets of $U$ can be extended to a global isometry from $U$ to $U$. 
 
Suppose that $(F_n, R_n, \mu_n, \rho_n, \psi_n)_{n \geq 0}$ is a sequence such that $(F_n, R_n, \mu_n, \rho_n)_{n \geq 0} \in \mathbb{F}$ and $\psi_n$ is an isometric embedding of $(F_n, R_n, \mu_n, \rho_n)_{n \geq 0}$ into $U$ for all $n$. Similarly for $(F, R, \mu, \rho, \psi)$. For the purposes of this paper, if $(F,R,\mu,\rho,\psi)$ is compact we will say that $(F_n, R_n, \mu_n, \rho_n, \psi_n) \rightarrow (F, R, \mu, \rho, \psi)$ in the spatial Gromov-Hausdorff topology if 
\[
d_U^{sp} \big( (F, R, \mu, \rho, \psi), (F_n, R_n, \mu_n, \rho_n, \psi_n)\big) \rightarrow 0
\]
as $n \rightarrow \infty$, where $d_U^{sp} \big( (F, R, \mu, \rho, \psi), (F_n, R_n, \mu_n, \rho_n, \psi_n)\big)$ is defined to be equal to
\begin{align}\label{eqn:spat U dist}
d_U^H(\psi (F), \psi_n (F_n)) + d_U^P(\mu \circ \psi^{-1}, \mu_n \circ {\psi_n}^{-1} ) + d_U(\psi (\rho), \psi_n (\rho_n)).
\end{align}

In the non-compact case, we will say that $(F_n, R_n, \mu_n, \rho_n, \phi_n) \rightarrow (F, R, \mu, \rho, \phi)$ in the spatial Gromov-Hausdorff vague topology if the closed balls of radius $r$ along with their appropriate restrictions converge for Lebesgue-almost every $r>0$.

This definition is a special case of the spatial Gromov-Hausdorff vague topology used in \cite[Section 7]{DavidResForms}, and it follows from the results there that $d_U^{sp}$ is a metric and induces a separable topology on the space of elements of $\mathbb{F}$ isometrically embedded into $U$. The definition can be made more general (and is more meaningful) in the case when we embed non-isometrically into a space other than $U$. In fact the point of restricting to $U$ above is that, in our setting, Gromov-Hausdorff-Prohorov convergence will automatically imply existence of isometries givingconvergence in the spatial topology introduced above, and that $U$ therefore provides a metric space on which we can consider the annealed law for random walks, defined as follows.

Given a sequence of random spaces $(F_n, R_n, \mu_n, \rho_n, \phi_n)_{n \geq 0}$ such that $(F_n, R_n, \mu_n, \rho_n) \in \mathbb{F}$ for all $n$ and $\phi_n: F_n \rightarrow U$ is an isometric embedding, we define the annealed law of the corresponding stochastic process by
\[
\prtilnstart{\phi_n(Y_t^n)_{t \geq 0} \in \cdot}{\phi_n(\rho_n)}{n} = \int \prnstart{\phi_n(Y_t^n)_{t \geq 0} \in \cdot}{\phi_n(\rho_n)}{n}  d\bPb^n,
\]
i.e. as the law of the stochastic process averaged over realisations of the underlying random metric space. (We define this analogously when there is no dependence on $n$).

\begin{theorem}\cite[Theorem 7.2]{DavidResForms}.\label{thm:scaling lim RW resistance annealed}
Suppose that $(F_n, R_n, \mu_n, \rho_n, \phi_n)_{n \geq 0}$ is a sequence such that
\[
(F_n, R_n, \mu_n, \rho_n, \phi_n) \overset{(d)}{\rightarrow} (F, R, \mu, \rho, \phi)
\]
in the spatial Gromov-Hausdorff-vague topology, and $R, (R_n)_{n \geq 1}$ are resistance metrics on the respective spaces. Assume further that 
\begin{equation}\label{eqn:nonexplosion annealed}
\lim_{r \rightarrow \infty} \liminf_{n \rightarrow \infty} \pr{R_n (\rho_n, B_n (\rho_n, r)^c) \geq \lambda} = 1
\end{equation}
for all $\lambda > 0$. Let $(Y_t^n)_{t \geq 0}$ and $(Y_t)_{t \geq 0}$ be the stochastic processes respectively associated with $(F_n, R_n, \mu_n, \rho_n)$ and $(F, R, \mu, \rho)$ as described above. Then
\[
\prtilnstart{\phi_n(Y_t^n)_{t \geq 0} \in \cdot}{\rho_n}{n} \rightarrow \prtilstart{\phi(Y_t)_{t \geq 0} \in \cdot}{\rho}
\]
weakly as probability measures as $n \rightarrow \infty$ on the space $D(\R_+, U)$ equipped with the Skorohod $J_1$-topology.
\end{theorem}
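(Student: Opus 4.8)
The plan is to reduce to the quenched version, Theorem~\ref{thm:scaling lim RW resistance}, via the Skorohod representation theorem; the heart of the matter is to upgrade the in-probability non-explosion hypothesis \eqref{eqn:nonexplosion annealed} to the deterministic condition \eqref{eqn:nonexplosion} along a suitable coupling. Since the spatial Gromov--Hausdorff-vague topology on $U$-embedded elements of $\mathbb{F}$ is separable and $(F_n,R_n,\mu_n,\rho_n,\phi_n)\overset{(d)}{\to}(F,R,\mu,\rho,\phi)$ in it, the Skorohod representation theorem provides a probability space $(\tilde\Omega,\tilde\F,\tilde\bPb)$ carrying copies $(\tilde F_n,\tilde R_n,\tilde\mu_n,\tilde\rho_n,\tilde\phi_n)_{n\ge 0}$ and $(\tilde F,\tilde R,\tilde\mu,\tilde\rho,\tilde\phi)$ with the correct marginal laws, such that $\tilde\bPb$-a.s.\ the former converge to the latter in the spatial Gromov--Hausdorff-vague topology; in particular, inside $U$, the closed $r$-balls about the roots with their restricted measures converge in Hausdorff--Prohorov distance for Lebesgue-a.e.\ $r$.

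Next I would verify that \eqref{eqn:nonexplosion} holds $\tilde\bPb$-a.s.\ along this coupling. Writing $D_n(r)=\tilde R_n(\tilde\rho_n,B_n(\tilde\rho_n,r)^c)$ and $D(r)=\tilde R(\tilde\rho,B(\tilde\rho,r)^c)$, both nondecreasing in $r$, the first ingredient is the (nontrivial) fact, established in \cite{DavidResForms}, that root-to-ball-complement effective resistance is continuous under Gromov--Hausdorff-vague convergence: this gives $\tilde\bPb$-a.s.\ that $D_n(r)\to D(r)$ for every $r$ outside the countable discontinuity set of the monotone function $r\mapsto D(r)$, and hence $\tilde\bPb$-a.s.\ $\lim_{r\to\infty}\liminf_n D_n(r)=\lim_{r\to\infty}D(r)$. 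It then remains to see that this limit is $\tilde\bPb$-a.s.\ infinite. Since $D_n(r)\to D(r)$ in distribution at continuity radii, the portmanteau theorem applied to the closed set $[\lambda,\infty)$ gives $\tilde\bPb(D(r)\ge\lambda)\ge\liminf_n\tilde\bPb(D_n(r)\ge\lambda)$; letting $r\to\infty$ through continuity radii, using monotone convergence of the increasing events $\{D(r)\ge\lambda\}$ together with \eqref{eqn:nonexplosion annealed} for the Skorohod copies (valid, as it depends only on the marginal laws), yields $\tilde\bPb(\exists\,r:D(r)\ge\lambda)=1$ for all $\lambda>0$, i.e.\ $\lim_{r\to\infty}D(r)=\infty$ $\tilde\bPb$-a.s.

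Having established both Gromov--Hausdorff-vague convergence and \eqref{eqn:nonexplosion} for $\tilde\bPb$-a.e.\ realization, I would apply Theorem~\ref{thm:scaling lim RW resistance} realization-by-realization, taking the common metric space to be $U$ and the isometric embeddings to be $\tilde\phi_n$ and $\tilde\phi$ themselves. This is legitimate because of the mild strengthening of Theorem~\ref{thm:scaling lim RW resistance}, implicit in its proof, that the process convergence holds in any common space into which the $F_n$ and $F$ are isometrically embedded so as to realise the underlying Gromov--Hausdorff-vague convergence. This gives, $\tilde\bPb$-a.s.,
\[
\prnstart{\tilde\phi_n(\tilde Y^n_t)_{t\ge 0}\in\cdot}{\tilde\phi_n(\tilde\rho_n)}{n}\;\longrightarrow\;\prstart{\tilde\phi(\tilde Y_t)_{t\ge 0}\in\cdot}{\tilde\phi(\tilde\rho)}
\]
weakly on $D(\R_+,U)$ with the Skorohod $J_1$-topology.

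Finally, I would note that for every bounded continuous $h:D(\R_+,U)\to\R$ the quenched integrals $\int h\,\mathrm{d}\,\prnstart{\tilde\phi_n(\tilde Y^n)\in\cdot}{\tilde\phi_n(\tilde\rho_n)}{n}$ are uniformly bounded by $\|h\|_\infty$ and converge $\tilde\bPb$-a.s., so bounded convergence lets one interchange limit and expectation; since $\prnstart{\tilde\phi_n(\tilde Y^n)\in\cdot}{\tilde\phi_n(\tilde\rho_n)}{n}$ is a deterministic functional of $(\tilde F_n,\tilde R_n,\tilde\mu_n,\tilde\rho_n,\tilde\phi_n)$, whose law is that of $(F_n,R_n,\mu_n,\rho_n,\phi_n)$, the resulting limit is exactly the asserted convergence $\prtilnstart{\phi_n(Y^n_t)_{t\ge 0}\in\cdot}{\phi_n(\rho_n)}{n}\to\prtilstart{\phi(Y_t)_{t\ge 0}\in\cdot}{\phi(\rho)}$ of annealed laws. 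The only genuinely delicate point is the second paragraph: turning the probabilistic bound \eqref{eqn:nonexplosion annealed} into an almost-sure statement requires both the resistance-continuity input from \cite{DavidResForms} and the portmanteau plus monotone-convergence bookkeeping; everything else is the quenched theorem together with standard facts about weak convergence on Polish spaces.
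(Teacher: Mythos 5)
This statement is quoted verbatim from \cite[Theorem 7.2]{DavidResForms}: the present paper gives no proof of it, so there is no internal argument to compare yours against. Judged on its own terms, your sketch follows the route one would expect, and the one that the cited reference itself takes: Skorohod representation in the separable spatial Gromov--Hausdorff-vague topology, verification of the quenched hypotheses realisation-by-realisation, application of the quenched result (in its spatial form, with the prescribed embeddings into $U$ --- you are right that the plain statement of Theorem \ref{thm:scaling lim RW resistance} only asserts the \emph{existence} of a common space, so the strengthening you invoke does need to be sourced from the spatial version in \cite{DavidResForms} rather than asserted), and bounded convergence to pass to the annealed laws. The last step is unproblematic once one accepts that the quenched law is a measurable functional of the embedded space, which is implicit in the paper's definition of the annealed law.

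The genuinely load-bearing step is the one you flag in your second paragraph, and it is worth being precise about what it needs. Your portmanteau argument requires distributional convergence of $D_n(r)=R_n(\rho_n,B_n(\rho_n,r)^c)$ to $D(r)=R(\rho,B(\rho,r)^c)$, i.e.\ essentially two-sided (a.e.-$r$) continuity of root-to-ball-complement resistance under the coupling. Lower semicontinuity alone ($\liminf_n D_n(r)\geq D(r^-)$, say) transfers $\lim_{r}D(r)=\infty$ to the required $\lim_r\liminf_n D_n(r)=\infty$, but does not by itself let you deduce $\lim_r D(r)=\infty$ from \eqref{eqn:nonexplosion annealed}; for that you need the matching upper bound $\limsup_n D_n(r)\leq D(r^+)$ (together with reverse Fatou, or portmanteau as you use it), since with only one-sided control the hypothesis on the marginal laws would only yield $\lim_r\limsup_n D_n(r)=\infty$ almost surely, which is not enough. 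So the continuity input from \cite{DavidResForms} must be cited in its two-sided form; with that in hand, your bookkeeping (monotonicity of $D(\cdot)$, restriction to continuity radii, monotone convergence of the events $\{D(r)\geq\lambda\}$) is correct, and the proof is complete.
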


\subsection{Stable \Levy excursions}\label{sctn:Levy background}
Following the presentations of \cite{DuqContourLimit} and \cite{RSLTCurKort}, we now introduce stable \Levy excursions, which will be used to code stable trees and looptrees in Section \ref{sctn:tree looptree def}.

Given intervals $I,J \subset \R$, we first recall that $D(I, J)$ represents the space of \cadlag functions from $I$ to $J$. For an interval $[0, \l] \subset \R$, we also define the \cadlag excursion space $D^{\text{exc}}([0,\l], \R^{\geq 0})$ by
\[
D^{\text{exc}}([0,\l], \R^{\geq 0}) = \{ e \in D([0,\l], \R^{\geq 0}): e(0)=e(\l)=0, e(t) > 0 \text{ for all } t \in (0, \l) \}.
\]

Throughout this article, we take $\alpha \in (1,2)$, and $X$ will be an $\alpha$-stable spectrally positive \Levy process as in \cite[Section VIII]{BertoinLevy}, normalised so that
\[
\E{e^{-\lambda X_t}} = e^{-{\lambda}^{\alpha}t}
\]
for all $\lambda > 0$. $X$ takes values in the space $D([0, \infty), \R)$ of \cadlag functions, which we endow with the Skorohod-$J_1$ topology, and satisfies the scaling property that for any constant $c>0$, $(c^{-\frac{1}{\alpha}} X_{ct})_{t \geq 0}$ has the same law as $(X_t)_{t \geq 0}$. Moreover $X$ has \Levy measure
\[
\Pi(dx) = \frac{\alpha(\alpha - 1)}{\Gamma(2-\alpha)} x^{-\alpha - 1} \mathbb{1}_{(0, \infty)}(x) dx.
\]

To define a normalised excursion of $X$, we follow \cite{Chaumont} and let $\underline{X}_t = \inf_{s \in [0,t]} X_s$ denote its running infimum process, and set
\begin{align*}
g_1 = \sup \{ s \leq 1: X_s = \underline{X}_s \}, \hspace{10mm} d_1 = \inf \{ s > 1: X_s = \underline{X}_s \}.
\end{align*}

Note that $X_{g_1} = X_{d_1}$ almost surely, since $X$ is spectrally positive. As in \cite[Proposition 1]{Chaumont}, we define the normalised excursion $X^{\text{exc}}$ of $X$ above its infimum at time $1$ by
\[
X_s^{\text{exc}} = (d_1 - g_1)^{\frac{-1}{\alpha}} (X_{g_1 + s(d_1 - g_1)} - X_{g_1})
\]
for every $s \in [0,1]$. Note that $X^{\text{exc}}$ is almost surely an $\alpha$-stable \cadlag function on $[0,1]$ with $X^{\text{exc}}(s)>0$ for all $s \in (0,1)$, and $X_0^{\text{exc}}=X_1^{\text{exc}}=0$.

\subsubsection{\Ito excursion measure}\label{sctn:Ito exc}
We can alternatively define $\X$ using the {\Ito excursion measure}. For full details, see \cite[Chapter IV]{BertoinLevy}, but the measure is defined by applying excursion theory to the process $X - \underline{X}$, which is strongly Markov and for which the point $0$ is regular for itself. We normalise local time so that $-\underline{X}$ denotes the local time of $X - \underline{X}$ at its infimum, and let $(g_j, d_j)_{j \in \mathcal{I}}$ denote the excursion intervals of $X - \underline{X}$ away from zero. For each $i \in \mathcal{I}$, the process $(e^i)_{0 \leq s \leq d_i-g_i}$ defined by $e^i(s) = X_{g_i + s} - X_{g_i}$ is an element of the excursion space
\[
E = \bigcup_{\l > 0} D^{\text{exc}}([0,\l], \R^{\geq 0}).
\]
We let $\zeta (e) = \sup \{s>0: e(s)>0\}$ denote the \textit{lifetime} of the excursion $e$. It was shown in \cite{ItoPP} that the measure
\[
N(dt, de) = \sum_{i \in \mathcal{I}} \delta (-\underline{X}_{g_i}, e^i)
\]
is a Poisson point measure of intensity $dt N(de)$, where $N$ is a $\sigma$-finite measure on the set $E$ known as the \textit{\Ito excursion measure}.

Moreover, the measure $N(\cdot)$ inherits a scaling property from the $\alpha$-stability of $X$. Indeed, for any $\lambda > 0$ we define a mapping  $\Phi_{\lambda}: E \rightarrow E$ by  $\Phi_{\lambda}(e)(t) = \lambda^{\frac{1}{\alpha}} e(\frac{t}{\lambda})$, so that $N \circ \Phi_{\lambda}^{-1} = \lambda^{\frac{1}{\alpha}} N$ (e.g. see \cite{WataIto}). It then follows from the results in \cite[Section IV.4]{BertoinLevy} that we can uniquely define a set of conditional measures $(N_{(s)}, s>0)$ on $E$ such that:
\begin{enumerate}[(i)]
\item For every $s > 0$, $N_{(s)}( \zeta=s)=1$.
\item For every $\lambda > 0$ and every $s>0$, $\Phi_{\lambda}(N_{(s)}) = N_{(\lambda s)}$.
\item For every measurable $A \subset E$
\[
N(A) = \int_0^{\infty} \frac{N_{(s)}(A)}{\alpha \Gamma(1 - \frac{1}{\alpha}) s^{\frac{1}{\alpha}+1}} ds.
\]
\end{enumerate}

$N_{(s)}$ is therefore used to denote the law $N( \cdot | \zeta = s)$. The probability distribution $N_{(1)}$ coincides with the law of $X^{\exc}$ as constructed above.

\subsubsection{Relation between $X$ and $X^{\exc}$}
It is easier to analyse an unconditioned \Levy process rather than an excursion, so throughout this paper we will use the following two tools to compare the probability of an event defined in terms of $\X$ to that of the same event defined in terms of $X$. The first tool is the Vervaat transform of the following proposition, which allows us to compare to a stable bridge $\Xb$ as an intermediate step. This is particularly useful as we will at times consider our looptrees to be rooted at a uniform point.

\begin{theorem}\cite[Th\'eor\`eme 4]{Chaumont}. Vervaat Transform.\label{thm:Vervaat}
\begin{enumerate}
\item Let $\X$ be as above, and take $U \sim$ \textsf{Uniform}$([0,1])$. Then the process $(\Xb_t)_{0 \leq t \leq 1}$ defined by
\[
\Xb_t = \begin{cases} \X_{U+t} & \text{ if } U+t \leq 1,\\
\X_{U+t-1} & \text{ if } U+t > 1.
\end{cases}
\]
has the law of a spectrally positive stable \Levy bridge on $[0,1]$.
\item Now let $\Xb$ be a spectrally positive stable \Levy bridge on $[0,1]$, and let $m$ be the (almost surely unique) time at which it attains its minimum. Define an excursion $\X$ by
\[
\X_t = \begin{cases} \Xb_{m+t} & \text{ if } m+t \leq 1,\\
\Xb_{m+t-1} & \text{ if } m+t > 1.
\end{cases}
\]
Then $\X$ has the law of a spectrally positive stable \Levy excursion.
\end{enumerate}
\end{theorem}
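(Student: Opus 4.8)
The two assertions are inverses of one another, so the plan is to treat them together by setting up a law-preserving bijection between (excursion, independent uniform mark) and (bridge). For a \cadlag loop $w$ on $[0,1]$ (i.e. $w_0=w_1$) that attains its infimum at a unique time $m(w)$ with $w_{m(w)-}=w_{m(w)}$, write $V(w)$ for the cyclic shift of $w$ to its infimum, recentred to start at $0$, which is then an element of $D^{\exc}([0,1],\R^{\geq 0})$; for $e\in D^{\exc}([0,1],\R^{\geq 0})$ and $u\in[0,1]$ write $\Theta_u(e)$ for the cyclic shift of $e$ by $u$, recentred to start at $0$ --- that is, the operations in the statement up to the harmless additive constant that makes the output a genuine excursion, resp. a path vanishing at both endpoints. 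Since the cyclic group acts on loops, one checks directly that $V(\Theta_u(e))=e$ and that $\Theta_u(e)$ has its infimum at $1-u$, so $(e,u)\mapsto\Theta_u(e)$ and $w\mapsto(V(w),1-m(w))$ are mutually inverse bijections between $D^{\exc}([0,1],\R^{\geq 0})\times[0,1]$ and the set of loops starting at $0$ with a unique non-jump argmin. It therefore suffices to prove the single identity that, under the law of the bridge $\Xb$, the law of $V(\Xb)$ coincides with that of $\X=N_{(1)}$ while $1-m(\Xb)$ is uniform and independent of it; both parts of the theorem then follow by transporting this through the two bijections.

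Two ingredients feed into this. The first is cyclic invariance of the bridge: for deterministic $a$, the recentred cyclic shift $t\mapsto\Xb_{(a+t)\bmod 1}-\Xb_a$ has the law of $\Xb$. I would prove this on finite-dimensional distributions via the standard bridge disintegration --- for $0=t_0<\dots<t_{k+1}=1$ the density of $(\Xb_{t_1},\dots,\Xb_{t_k})$ at $(x_1,\dots,x_k)$ equals $p_1(0)^{-1}\prod_{i=0}^{k}p_{t_{i+1}-t_i}(x_{i+1}-x_i)$ with $x_0=x_{k+1}=0$, where $p_t$ is the continuous strictly positive density of $X_t$ --- together with the observation that a recentred cyclic rotation of the partition permutes the increments $x_{i+1}-x_i$ cyclically, preserving their zero sum and the associated time-gaps, hence merely permutes the factors of this product. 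A monotone-class argument promotes this to invariance of the law of the whole path, and averaging $a$ against a uniform shows that shifting $\Xb$ by an independent uniform again yields the bridge. The second ingredient is that $\Xb$ a.s. attains its infimum over $[0,1]$ at a unique time $m\in(0,1)$ and does not jump there: on each $[\epsilon,1-\epsilon]$ the bridge is mutually absolutely continuous with the unconditioned $X$ (Markov property, with density $p_\epsilon(-X_{1-\epsilon})/p_1(0)$), a spectrally positive stable \Levy process a.s. attains its infimum over a compact interval at a unique non-jump time, and the bridge a.s. dips strictly below $0$ near both endpoints (again by absolute continuity, using regularity of $0$ for the lower half-line), which forces $m\in(0,1)$; a union over $\epsilon\downarrow0$ finishes it.

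Granting these, let $U$ be uniform and independent of $\Xb$ and set $W_t=\Xb_{(U+t)\bmod 1}-\Xb_U$, so $W$ has the bridge law by the first ingredient. Its infimum sits at $(m-U)\bmod 1$, which conditionally on $\Xb$ is uniform, hence uniform and independent of $\Xb$, and in particular of $V(W)$; and $V(W)=V(\Xb)$ because extracting the excursion commutes with cyclic shifts. Reading this through the bijection, the bridge law is the pushforward of $\mathcal{L}(V(\Xb))\otimes\mathsf{Unif}([0,1])$ under $\Theta$, so everything reduces to identifying $\mathcal{L}(V(\Xb))$ with $\mathcal{L}(\X)$. For this I would use the construction of $\X$ via the \Ito excursion measure of Section~\ref{sctn:Ito exc}: $\X$ is the normalised excursion of $X-\underline{X}$ straddling time $1$, and a computation with the Itô measure and the first-passage decomposition of the bridge at its minimum shows that the normalised excursion straddling $m$ is again governed by $N_{(1)}$. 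A cleaner route to this last point is discretisation: for left-continuous random walks with steps in the domain of attraction of the spectrally positive $\alpha$-stable law the transform is \emph{exactly} the Dvoretzky--Motzkin cycle lemma, and the invariance principles of \cite{Chaumont} and \cite{DuqContourLimit} give joint convergence of the discrete bridges and excursions --- together with the shift and argmin maps --- to $\Xb$ and $\X$, whence the continuous identity in the limit. I expect this identification to be the main obstacle, alongside the routine but genuine measure-theoretic bookkeeping: uniqueness and non-jump character of the bridge's argmin, continuity and positivity of the stable transition densities entering the disintegration, and the additive recentring suppressed in the cyclic-shift formulas.
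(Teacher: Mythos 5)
The paper does not prove this statement: it is quoted verbatim from Chaumont's Th\'eor\`eme 4 and used as a black box, so there is no in-paper argument to compare yours against. Judged on its own terms, your reduction is correct and is the standard skeleton of every known proof: the cyclic-shift bookkeeping, the cyclic exchangeability of the bridge's increments via the finite-dimensional disintegration $p_1(0)^{-1}\prod_i p_{t_{i+1}-t_i}(x_{i+1}-x_i)$, the a.s.\ uniqueness and non-jump character of the bridge's argmin in $(0,1)$, and the observation that shifting by an independent uniform makes the argmin location uniform and independent of the extracted excursion $V(\Xb)$ are all sound. You also correctly observe that after this reduction parts (1) and (2) become equivalent to each other and to the single identity $\mathcal{L}(V(\Xb))=\mathcal{L}(\X)$.

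That last identity is, however, exactly where all of the mathematical content lives, and your proposal does not discharge it. The It\^o-measure route you sketch ("the normalised excursion straddling $m$ is again governed by $N_{(1)}$") is not a computation one can simply read off: the path obtained by reading the bridge cyclically from its minimum is not an excursion of $X-\underline{X}$ straddling any time, and turning this into a statement about $N_{(1)}$ requires something substantive, e.g.\ the Markovian splitting of the bridge at its pre-/post-minimum parts together with an identification of the resulting pair of laws with the corresponding decomposition of $N_{(1)}$ (a Bismut-- or agreement-formula--type argument). Your second route, via the Dvoretzky--Motzkin cycle lemma for left-continuous walks plus the invariance principles for conditioned walks, is the one that actually closes the argument cleanly (and is essentially how the literature proceeds); but as written it too is only named, and the joint convergence of the discrete bridge together with the argmin and cyclic-shift functionals needs the continuity of those functionals in the Skorokhod topology at paths with a unique, continuous argmin --- which your second ingredient supplies, but which you should say explicitly. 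So: right architecture, honest about where the difficulty sits, but the proof is incomplete at precisely that point. (A minor remark: the theorem as stated in the paper omits the recentring by $-\X_U$, so the displayed $\Xb$ does not start at $0$; your "harmless additive constant" caveat is the right reading, and the defect is in the quoted statement rather than in your argument.)
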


An event defined for the stable bridge on the interval $[0,T]$ can then be transferred to the unconditioned process using the fact that the law of the bridge is absolutely continuous with respect to the law of the process, with Radon-Nikodym derivative
\begin{equation}\label{eqn:RN deriv levy bridge}
\frac{p_{1-{T}}(-X_{T})}{p_{1}(0)}
\end{equation}
for $T \in (0,1)$ (see \cite[Section VIII.3, Equation (8)]{BertoinLevy}). Here the transition density $p_t(\cdot, \cdot)$ for the \Levy process $X$ is defined analogously to that in (\ref{eqn:trans dens def}), but with respect to Lebesgue measure on the real line. We note here that $||p_1||_{\infty} < \infty$ for all $\alpha \in (1,2)$ since $p_1(\cdot)$ is continuous and vanishes at infinity (e.g. see \cite[Section VIII.1]{BertoinLevy}).

\subsubsection{Descents}\label{sctn:Useful results}
Next, we introduce the notion of a descent of a \Levy process, following the presentation of \cite[Section 3.1.3]{RSLTCurKort}. Let $X^1$ and $X^2$ be two independent spectrally positive $\alpha$-stable \Levy processes as defined above, and define a two-sided process $X$ by setting
\[
X_t = \begin{cases} X^1_t & \text{ if } t \geq 0 \\
-X^2_{-t^-} & \text{ if } t < 0.
\end{cases}
\]
For every $s, t \in \R$, we write $s \preceq t$ if and only if $s \leq t$ and $X_{s^-} \leq \inf_{[s,t]}X$, and in this case we set
\begin{align*}
\Delta X_s = X_s - X_{s^-}, \hspace{2mm} x_s^t(X) = \inf_{[s,t]}X - X_{s^-}, \text{     and      } u_s^t(X) = \frac{x_s^t(X)}{\Delta X_s}.
\end{align*}
We write $s \prec t$ if $s \preceq t$ and $s \neq t$. As in \cite{RSLTCurKort}, for any $t \in \R$, we will call the collection $\{x_s^t(X), u_s^t(X): s \preceq t \}$ the \textit{descent} of $t$ in $X$.

The next proposition describes the law of descents from a typical point of $X$, and will be useful in the proofs of the limit theorems. We let $\overline{X}_t = \sup\{X_s: 0 \leq s \leq t\}$ denote the running supremum process of $X$. The process $\overline{X} - X$ is strong Markov and $0$ is regular for itself, allowing the use of excursion theory. Let $(L_t)_{t \geq 0}$ denote the local time of $\overline{X} - X$ at 0. Note that, by \cite[Chapter VIII, Lemma 1]{BertoinLevy}, $L^{-1}$ is a $(1 - \frac{1}{\alpha})$-stable subordinator, and $( \overline{X}_{L^{-1}(t)})_{t \geq 0}$ is an $(\alpha - 1)$-stable subordinator, so we can normalise local time so that $\E{\exp (-\lambda \overline{X}_{L^{-1}(t)}} = \exp (-t\lambda^{\alpha - 1})$ for all $\lambda > 0$. Finally, if $\overline{X}_s > \overline{X}_{s^{-}}$, set
\begin{align*}
x_s = \overline{X}_s - \overline{X}_{s^{-}}, \hspace{10mm} u_s = \frac{\overline{X}_s - \overline{X}_{s^{-}}}{\overline{X}_s - {X}_{s^{-}}}.
\end{align*}

\begin{proposition}(\cite[Proposition 3.1]{RSLTCurKort}, \cite[Corollary 1]{BertoinPitmanExt}).\label{prop:descent PP}
Let $X$ be a two-sided spectrally positive $\alpha$-stable process as above. Then
\begin{enumerate}[(i)]
\item
\begin{align*}
\{(-s, x_s^0(X), u_s^0(X)): s \preceq 0 \} \overset{(d)}{=} \{s,x_s, u_s: s \geq 0 \text{ such that } \overline{X}_s > \overline{X}_{s^-} \}.
\end{align*}
\item The point measure
\begin{align*}
\sum_{\overline{X}_s > \overline{X}_{s^-}} \delta \big(L_s, \frac{x_s}{u_s}, u_s \big)
\end{align*}
is a Poisson point measure with intensity $dl \cdot x\Pi(dx) \cdot \mathbb{1}_{[0,1]}(u) du$.
\end{enumerate}
\end{proposition}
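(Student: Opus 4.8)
The plan is to treat the two parts separately, since part (i) is a purely distributional identity relating descents below $0$ to records of the running supremum, while part (ii) is an explicit Poisson point process computation. For part (i), the key observation is that the process $X$ restricted to $(-\infty, 0]$, read backwards from $0$, is (by the definition of the two-sided process and the independence of $X^1$ and $X^2$) a spectrally positive $\alpha$-stable \Levy process started afresh at $0$; more precisely, $(-X_{(-t)^-})_{t \geq 0} = (X^2_t)_{t \geq 0}$ in law. Under this time-reversal, the condition $s \preceq 0$, i.e. $X_{s^-} \leq \inf_{[s,0]} X$, translates into the statement that the reversed process attains a new running maximum at the corresponding time. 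First I would make this dictionary precise: if we set $t = -s$ and $\tilde X_u = -X_{(-u)^-}$, then $s \preceq 0$ becomes $\tilde X_{t} \geq \sup_{[0,t]} \tilde X$ (a record time of $\tilde X$), the jump $\Delta X_s$ becomes the jump $\Delta \overline{\tilde X}_t = x_t$ of the supremum, and the overshoot $x_s^0(X) = \inf_{[s,0]} X - X_{s^-}$ becomes $\overline{\tilde X}_t - \tilde X_{t^-}$ so that $u_s^0(X) = x_s^0(X)/\Delta X_s$ matches $u_t = x_t / (\overline{\tilde X}_t - \tilde X_{t^-})$ after the obvious bookkeeping. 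This is essentially the content of \cite[Corollary 1]{BertoinPitmanExt}, and I would cite that for the distributional identity once the translation of notation is in place.

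For part (ii), the strategy is standard excursion theory for the reflected process $\overline{X} - X$. Since $\overline{X} - X$ is strong Markov with $0$ regular for itself, excursion theory gives a Poisson point process of excursions indexed by the local time $L$; the records of $\overline{X}$ correspond exactly to the left-endpoints of excursion intervals, and the pair $(x_s, u_s)$ is a functional of the excursion started at such a time. So the point measure $\sum_{\overline{X}_s > \overline{X}_{s^-}} \delta(L_s, x_s/u_s, u_s)$ is the image of the Itô excursion point process under $e \mapsto (\Delta e(0), \text{something})$, hence a Poisson point measure with intensity $dl \otimes \mu(dx\,du)$ for some measure $\mu$. To identify $\mu$ I would use the normalization already fixed in the paper: $(\overline{X}_{L^{-1}(t)})_{t \geq 0}$ is an $(\alpha-1)$-stable subordinator normalized so that $\E{\exp(-\lambda \overline{X}_{L^{-1}(t)})} = \exp(-t\lambda^{\alpha-1})$. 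The jumps of this subordinator are precisely the $x_s = \overline{X}_s - \overline{X}_{s^-}$, so its \Levy measure — which by the normalization and the known \Levy measure of an $(\alpha-1)$-stable subordinator equals $\frac{\alpha-1}{\Gamma(2-\alpha)} x^{-\alpha}\,dx$, i.e. $x\,\Pi(dx)$ up to the constant — gives the $x$-marginal of $\mu$. It then remains to compute the conditional law of $u_s$ given $x_s$, which by the structure of descents at a record (the overshoot is $\overline{X}_s - X_{s^-}$ and $x_s/u_s = \overline{X}_s - X_{s^-}$) is the law of $x_s$ divided by the undershoot-plus-overshoot at the record; a direct computation using the joint law of the overshoot and undershoot at a first passage (or equivalently the bivariate renewal structure at records, as in \cite[Proposition 3.1]{RSLTCurKort}) shows this conditional law is $\mathbb{1}_{[0,1]}(u)\,du$, i.e. uniform. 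Combining, the intensity is $dl \cdot x\Pi(dx) \cdot \mathbb{1}_{[0,1]}(u)\,du$ as claimed.

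The main obstacle I anticipate is getting the constant in the \Levy measure of the supremum subordinator to come out exactly right and consistent with the stated normalization of local time — one has to be careful that the choice normalizing $(\overline{X}_{L^{-1}(t)})$ to have Laplace exponent $\lambda^{\alpha-1}$ is the same choice that makes $L^{-1}$ a $(1-\frac1\alpha)$-stable subordinator, and that the resulting \Levy measure is $x\Pi(dx)$ and not a scalar multiple of it. The cleanest route is probably not to recompute from scratch but to quote \cite[Corollary 1]{BertoinPitmanExt} and \cite[Proposition 3.1]{RSLTCurKort} directly, as the statement already does, and simply verify that the normalizations chosen in this paper (fixed in the paragraph preceding the proposition) agree with those references; the uniform law of $u$ on $[0,1]$ is the robust, normalization-independent part and follows from the renewal/ladder structure regardless. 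So in practice the proof reduces to: (a) the time-reversal dictionary for part (i), and (b) a citation-plus-normalization-check for part (ii).
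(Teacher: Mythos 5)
The paper gives no proof of this proposition — it is quoted directly from \cite[Proposition 3.1]{RSLTCurKort} and \cite[Corollary 1]{BertoinPitmanExt} — and your sketch (time reversal of the negative half for (i), excursion theory of $\overline{X}-X$ plus the ladder-height normalisation for (ii)) is precisely the argument underlying those references, so you are taking essentially the same route. Your normalisation worry resolves cleanly: the second coordinate of the point measure is the \emph{full} jump $x_s/u_s=\overline{X}_s-X_{s^-}$ with intensity $x\Pi(dx)=\frac{\alpha(\alpha-1)}{\Gamma(2-\alpha)}x^{-\alpha}\,dx$, and pushing this forward by $(x,u)\mapsto ux$ yields $\frac{\alpha-1}{\Gamma(2-\alpha)}y^{-\alpha}\,dy$, which is exactly the L\'evy measure of the $(\alpha-1)$-stable ladder-height subordinator with Laplace exponent $\lambda^{\alpha-1}$ as normalised in the paper.
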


We also give a technical lemma which will be used at various points in the paper. This appeared previously in \cite[Section 3.3.1]{RSLTCurKort} and uses an argument from \cite{BertoinLevy}. The final claim follows by bounded convergence.

First recall that for a function $f: [0, \infty) \rightarrow \R$ and $[a,b] \subset [0, \infty)$, we define
\[
\osc_{[a,b]} f := \sup_{s, t \in [a,b]} |f(t) - f(s)|.
\]

\begin{lemma}\label{lem:osc}
Let $\mathcal{E}$ be an exponential random variable with parameter $1$, and let $X$ be a spectrally positive $\alpha$-stable \Levy process conditioned to have no jumps of size greater than $1$ on $[0, \mathcal{E}]$. Let $\tilde{\osc} = \osc_{[0, \mathcal{E}]} X$. Then there exists $\theta > 0$ such that $\E{e^{\theta \tilde{\osc}}} < \infty$. Moreover, $\E{e^{\theta \tilde{\osc}}} \downarrow 1$ as $\theta \downarrow 0$.
\end{lemma}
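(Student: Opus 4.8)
The plan is to reduce the oscillation of $X$ over the random interval $[0,\mathcal{E}]$ to a sum of increments over deterministic unit intervals, and to exploit the fact that, after removing large jumps, the process has exponential moments. First I would recall from \cite[Section VIII]{BertoinLevy} that a spectrally positive $\alpha$-stable \Levy process with its jumps of size greater than $1$ removed has \Levy measure $\Pi(dx)\mathbb{1}_{(0,1]}(x)$ of finite total mass and finite mean; since the Gaussian part is zero and the small-jump part already has exponential moments of all orders (the tail of $\Pi$ near $0$ is polynomial but the truncation at $1$ makes all positive exponential moments finite, and the negative direction is handled by the downward drift plus the fact that $-X$ has no large downward jumps either as $X$ is spectrally positive), we get that for the truncated process $\tilde X$ there is $\theta_0 > 0$ with $\E{e^{\theta_0 |\tilde X_1|}} < \infty$, and indeed $\E{e^{\theta|\tilde X_1|}} \to 1$ as $\theta \downarrow 0$ by dominated convergence since $e^{\theta|\tilde X_1|} \downarrow 1$ pointwise and is dominated by $e^{\theta_0|\tilde X_1|}$ for $\theta \le \theta_0$. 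Here I am writing $\tilde X$ for $X$ conditioned (equivalently, with compensated truncation) to have no jumps exceeding $1$; one must be slightly careful that ``conditioned to have no large jumps on $[0,\mathcal{E}]$'' is exactly the law of the process obtained by deleting jumps of size $>1$ from $X$ up to an independent exponential time, which is standard.

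Next I would bound $\tilde{\osc} = \osc_{[0,\mathcal{E}]} X$ by
\[
\tilde{\osc} \le 2 \sup_{0 \le t \le \lceil \mathcal{E}\rceil} |\tilde X_t| \le 2 \sum_{k=1}^{\lceil \mathcal{E} \rceil} \Big( \sup_{k-1 \le t \le k} |\tilde X_t - \tilde X_{k-1}| + |\tilde X_{k-1} - \tilde X_k| \Big),
\]
or more cleanly by writing $M_k := \sup_{k-1\le t\le k}|\tilde X_t - \tilde X_{k-1}|$, so that $\tilde{\osc} \le 2\sum_{k=1}^{N} M_k$ where $N = \lceil \mathcal{E}\rceil$ is a geometric random variable independent of the i.i.d. sequence $(M_k)_{k\ge1}$ (independence uses the Markov property and that $\mathcal{E}$ is exponential, hence $N$ geometric and, conditionally on $N$, the $M_k$ for $k \le N$ are still i.i.d. with the unconditional law). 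Then
\[
\E{e^{\theta \tilde{\osc}}} \le \E{\prod_{k=1}^{N} e^{2\theta M_k}} = \E{\big(\E{e^{2\theta M_1}}\big)^{N}},
\]
and since $N$ is geometric, $\E{r^N} < \infty$ precisely when $r < r_c$ for the relevant critical ratio, so it suffices to show $\E{e^{2\theta M_1}} < \infty$ and that it tends to $1$ as $\theta \downarrow 0$; the latter then forces $\E{e^{\theta\tilde{\osc}}} \downarrow 1$ as $\theta \downarrow 0$ by dominated convergence in the last display (the integrand $(\E{e^{2\theta M_1}})^N$ decreases to $1$ pointwise in $N$).

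The remaining point, and the one I expect to be the main obstacle, is the bound $\E{e^{2\theta M_1}} < \infty$ for small $\theta$, i.e.\ an exponential moment for the running maximum of the modulus of the truncated process over a unit interval, rather than just for $|\tilde X_1|$. For the upward direction this follows from a maximal inequality: $\sup_{[0,1]} \tilde X$ has an exponential moment because $e^{\lambda \tilde X_t}$ (for $\lambda \le \theta_0$) is a submartingale with $\E{e^{\lambda \tilde X_1}} < \infty$, so Doob's $L^p$/exponential maximal inequality — or the cruder observation that $\sup_{[0,1]}\tilde X \le \tilde X_1 + \sup_{[0,1]}(\tilde X_t - \tilde X_1)$ combined with time-reversal — controls $\E{e^{\lambda \sup_{[0,1]}\tilde X}}$. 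For the downward direction, $-\tilde X$ has no jumps, only a negative drift and small downward movements bounded below by $-\tilde X_1$ plus the total variation of the compensator, so $\inf_{[0,1]}\tilde X$ is bounded below by $\tilde X_1$ minus a constant, and again has an exponential moment; actually since $\tilde X$ has only positive jumps, $\inf_{0 \le t \le 1} \tilde X_t \ge \tilde X_1 - (\text{largest upward jump before time }1)$ is not quite what we want, so the cleanest route is: for spectrally positive $\tilde X$ with only bounded jumps, $-\inf_{[0,1]} \tilde X \le c + \sup_{[0,1]} \tilde X$ fails in general, so instead use that $-\tilde X$ is spectrally negative with exponential moments, apply the same submartingale argument to $\tilde X$ itself (so $e^{-\lambda \tilde X_t}$ is a submartingale for appropriate $\lambda$, since $\E{e^{-\lambda \tilde X_1}}<\infty$ trivially as $\tilde X$ has no large negative jumps and a Laplace exponent finite for all $\lambda > 0$), and Doob again. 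Combining the two directions, $M_1 \le \sup_{[0,1]}|\tilde X_t|$ has the required exponential moment, with value tending to $1$ as $\theta \downarrow 0$ by dominated convergence; plugging this back finishes the proof. The care needed in handling the two-sided (up and down) maximal estimates for the truncated process, and in making the conditioning on ``no large jumps'' precise, is where the real work lies, though each piece is routine.
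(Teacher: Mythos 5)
Your argument is correct and is essentially the standard proof the paper implicitly invokes: the paper gives no proof of its own, deferring to \cite[Section 3.3.1]{RSLTCurKort} and an argument from \cite{BertoinLevy}, which rests on exactly the facts you establish — the jump-truncated process has bounded jumps, hence exponential moments of its running supremum over unit intervals (via Doob, or the equivalent classical statement for L\'evy processes with bounded jumps), combined with the geometric tail of the number of unit blocks in $[0,\mathcal{E}]$ and bounded convergence for the final claim. One harmless imprecision: conditioning on the absence of jumps of size greater than $1$ on $[0,\mathcal{E}]$ both removes those jumps while leaving their compensating drift in place (so it is not quite the same as deleting them) and tilts the law of $\mathcal{E}$ to an exponential of rate $1+\Pi\!\left((1,\infty)\right)$; neither change affects your argument, since the resulting process still has bounded jumps and $\lceil \mathcal{E}\rceil$ is still geometric and independent of it.
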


\begin{remark}\label{rmk:osc deterministic}
The same results holds if $\EE$ is set to be deterministically equal to $1$ rather than an exponential random variable. The proof is almost identical to the proof of the result above, with one minor modification.% The result also holds if the exponential random variable $\mathcal{E}$ has any other constant parameter, by exactly the same proof as above.
\end{remark}

\section{Background on stable trees and looptrees}\label{sctn:tree looptree def}

\subsection{Discrete Trees}\label{sctn:trees background discrete}
Before defining stable trees and looptrees, we briefly recap some notation for discrete trees, following the formalism of \cite{Neveu}. Firstly, let
\[
\mathcal{U}=\bigcup_{n=0}^{\infty} {\N}^n
\]
be the Ulam-Harris tree. By convention, ${\N}^0=\{ \emptyset \}$. If $u=(u_1, \ldots, u_n)$ and $v=(v_1, \ldots, v_m) \in \mathcal{U}$, we let $uv= (u_1, \ldots, u_n, v_1, \ldots, v_m)$ be the concatenation of $u$ and $v$.

\begin{definition} A plane tree $T$ is a finite subset of $\mathcal{U}$ such that
\begin{enumerate}[(i)]
\item $\emptyset \in T$,
\item If $v \in T$ and $v=uj$ for some $j \in \N$, then $u \in T$,
\item For every $u \in T$, there exists a number $k_u(T) \geq 0$ such that $uj \in T$ if and only if $1 \leq j \leq k_u(T)$.
\end{enumerate}
\end{definition}

We let $\mathbb{T}$ denote the set of all plane trees. A plane tree $T \in \mathbb{T}$ with $n+1$ vertices labelled according to the lexicographical order as $u_0, u_1, \ldots, u_n$ can be coded by its \textit{height function}, \textit{contour function}, or \textit{Lukasiewicz path}, defined as follows.
\begin{itemize}
\item The height function $(H^{T}_i)_{0 \leq i \leq n}$ is defined by considering the vertices $u_0, u_1, \ldots, u_n$ in lexicographical order, and then setting $H^{T}_i$ to be the generation of vertex $u_i$.
\item The contour function $(C^{T}_t)_{0 \leq t \leq 2n}$ is defined by considering a particle that starts at the root $\emptyset$ at time zero, and then continuously traverses the boundary of ${T}$ at speed one, respecting the lexicographical order where possible, until returning to the root. $C^{T}(t)$ is equal to the height of the particle at time $t$.
\item The Lukasiewicz path $(W^{T}_m)_{0 \leq m \leq n}$ is defined by setting $W^{T}_0 = 0$, then by considering the vertices $u_0, u_1, \ldots, u_n$ in lexicographical order and setting $W^T_{m+1} = W^T_m + k_{u_m}(T)-1$.
\end{itemize}

These are illustrated in Figure \ref{fig:contourheightfns}, together with points corresponding to specific vertices in the tree, and the part of each excursion coding the subtree rooted at the red vertex, which we denote by $\theta_1(T)$. For further details, see \cite[Section 0.1]{LeGDuqMono}.

\begin{figure}[h]
\includegraphics[width=15cm, height=5.6cm]{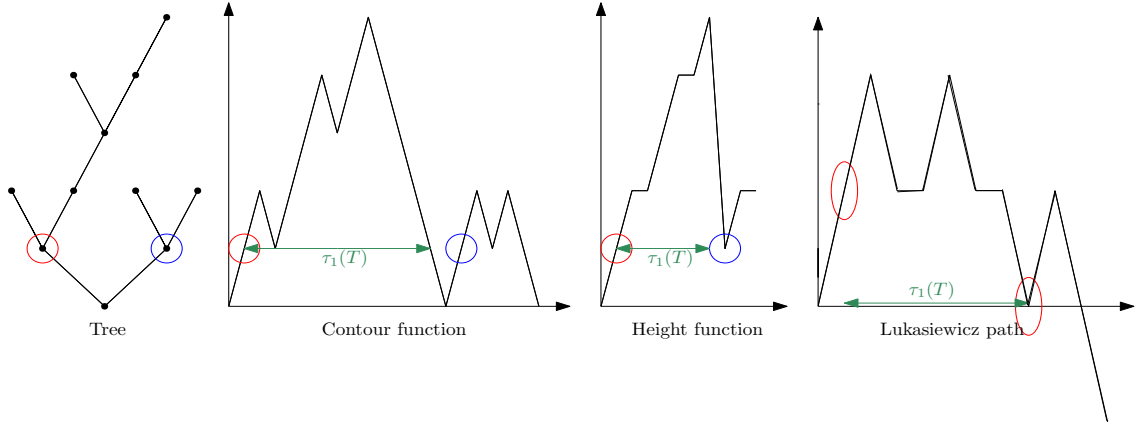}
\centering
\caption{Example of contour function, height function and Lukasiewicz path for the given tree.}\label{fig:contourheightfns}
\end{figure}

These functions all uniquely define the tree $T$. This can be written particularly conveniently in the case of the contour function, since for any $s, t \in \{0, \ldots, 2(n-1)\}$, we can write the tree distance as a function on $\{0, \ldots, 2(n-1)\} \times \{0, \ldots, 2(n-1)\}$ by setting
\[
d^T(s,t) = C^T(s) + C^T(t) - 2\inf_{s \leq r \leq t} C^T(r).
\]

We will work mainly with the Lukasiewicz path $(W^{T}_m)_{0 \leq m \leq n}$ in this paper. It is not too hard to see that $W^{T}_m \geq 0$ for all $0 \leq m \leq n-1$, and $W^T_n = -1$. Moreover, the height function can be defined as a function of the Lukasiewicz path (see \cite[Equation (1)]{LeGDuqMono}) by setting
\begin{equation}\label{eqn:height Luk def}
H^T(m) = \Big|\Big\{ k \in \{0, 1, \ldots, m-1\}: W^T_k = \inf_{k \leq l \leq m} W^T_l \Big\}\Big|.
\end{equation}

\subsubsection{Multi-type Galton-Watson trees}\label{sctn:GW multi}
We will consider scaling limits of looptrees defined from both one and two-type Galton-Watson trees in Section \ref{sctn:LLT}. Accordingly, let $\xi, \xi_{\circ}$ and $\xi_{\bullet}$ be probability distributions on $\Z^{\geq 0}$.

\begin{definition} A Galton-Watson tree with offspring distribution $\xi$ is a random plane tree $\mathcal{T}$ with law $\mathbb{P}_{\xi}$ satisfying the following properties.
\begin{enumerate}[(i)]
\item $\prstart{k_{\emptyset}=j}{\xi}=\xi(j)$ for all $j \in \Z^{\geq 0}$,
\item For every $j \geq 1$ with $\xi(j)>0$, the shifted trees $\theta_1(\mathcal{T}), \ldots, \theta_j(\mathcal{T})$ are independent under the conditional probability $\prcond{\cdot}{k_{\emptyset}=j}{\xi}$, with law $\bP_{\xi}$, where $\theta_i(\T) = \{v \in \mathcal{U}: iv \in \T \}$.
\end{enumerate}
\end{definition}
We say that $\T$ is \textit{critical} if $\E{\xi}=1$. Additionally, we say a random plane tree is an alternating two-type Galton-Watson tree with offspring distribution $(\xi_{\circ}, \xi_{\bullet})$ if all vertices at even (respectively odd) height have offspring distribution $\xi_{\circ}$ (respectively $\xi_{\bullet}$). We say that the tree is \textit{critical} if $\E{\xi_{\circ}}\E{\xi_{\bullet}}=1$.

\subsection{Stable trees}
We now introduce stable trees. These are closely related to stable looptrees, and were introduced by Le Gall and Le Jan in \cite{LeGLeJanExploration} then further developed by Duquesne and Le Gall in \cite{LeGDuqMono,DuqLeGPFALT}. For $\alpha \in (1,2)$ we define the stable tree $\Ta$ from a spectrally positive $\alpha$-stable \Levy excursion, which plays the role of the Lukasiewicz path introduced above. By analogy with (\ref{eqn:height Luk def}), given such an excursion $\X$, we define the height function $\HH$ to be the continuous modification of the process satisfying
\begin{equation*}\label{eqn:height def}
\HH(t) = \lim_{\epsilon \rightarrow 0} \frac{1}{\epsilon} \int_0^t \mathbb{1} \{X^{\exc}_s < I_s^t + \epsilon \} ds,
\end{equation*}
where $I_s^t = \inf_{r \in [s,t]} \X_r$ for $s \leq t$, and the limit exists in probability (e.g. see \cite[Lemma 1.1.3]{LeGDuqMono}). We define a distance function on $[0,1]$ by
\[
d(s,t) = \HH(s) + \HH(t) - 2 \inf_{s \leq r \leq t} \HH(r),
\]
and an equivalence relation on $[0,1]$ by setting $s \sim t$ if and only if $d(s,t) = 0$. $\T_{\alpha}$ is the quotient space $([0,1]/ \sim, d)$, and we let $\pi$ denote the canonical projection from $[0,1]$ to $\Ta$. If $u, v \in \Ta$, we let $[[u,v]]$ denote the unique geodesic between $u$ and $v$ in $\Ta$.

This construction also provides a natural way to define a measure $\mu$ on $\T_{\alpha}$ as the image of Lebesgue measure on $[0,1]$ under the quotient operation.

Stable trees arise naturally as scaling limits of discrete plane trees with appropriate offspring distributions. More specifically, let $T_n$ be a discrete tree conditioned to have $n$ vertices and with critical offspring distribution $\xi$ in the domain of attraction of an $\alpha$-stable law, and such that $\xi$ is aperiodic. It is shown in \cite[Theorem 3.1]{DuqContourLimit} that
\begin{equation}\label{eqn:stable tree scaling limit def}
a_n n^{-1} T_n \rightarrow \T_{\alpha}
\end{equation}
in the Gromov-Hausdorff topology as $n \rightarrow \infty$, where $a_n$ is as defined in (\ref{eqn:dom of att def}).

\subsection{Random looptrees}\label{sctn:looptree def}
Discrete looptrees are best described by Figure \ref{fig:disc looptree intro} in the introduction. Moreover, as outlined there, stable looptrees can be defined as scaling limits of their discrete counterparts. That is, if $T_n$ is a Galton Watson tree conditioned to have $n$ vertices with critical offspring distribution $\xi$ in the domain of attraction of an $\alpha$-stable law, then
\[
a_n^{-1} {\Loop}(T_n) \overset{(d)}{\rightarrow} \L_{\alpha}
\]
with respect to the Gromov-Hausdorff topology as $n \rightarrow \infty$ \cite[Theorem 4.1]{RSLTCurKort}, where again in the case that $\xi([n, \infty)) \sim cn^{-\alpha}$, we can take $a_n = (c |\Gamma(-\alpha)|n)^{\frac{1}{\alpha}}$.

By comparison with (\ref{eqn:stable tree scaling limit def}), $\L_{\alpha}$ can therefore be thought of as the looptree version of the \Levy tree $\T_{\alpha}$. We now explain how this intuition can be used to code $\La$ from a stable \Levy excursion, in such a way that $\La$ can be heuristically obtained from the corresponding stable tree $\T_{\alpha}$ by replacing each branch point by a loop with length proportional to the size of the branch point, gluing these loops together along the tree structure of $\T_{\alpha}$, and then taking the closure of the resulting metric space.

The following construction was introduced in \cite[Section 2.3]{RSLTCurKort}. The \Levy excursion itself plays the role of a continuum Lukasiewicz path. It was shown in \cite[Proposition 2]{MiermontSplittingNodes} that if we define the width of a branch point in $\T_{\alpha}$, coded by a jump at $t \in [0,1]$ of size $\Delta_t$, by 
\[
\lim_{\epsilon \downarrow 0} \frac{1}{\epsilon} \mu (\{v \in \Ta, d(\pi(t), v) \leq \epsilon\}),
\]
then the limit almost surely exists and is equal to $\Delta_t$. It is therefore natural that a jump of size $\Delta$ in $\X$ should code a loop of length $\Delta$ in $\La$.

Accordingly, using the notation of Section \ref{sctn:Useful results}, for every $t \in [0,1]$ with $\Delta_t > 0$, the authors in \cite[Section 2.3]{RSLTCurKort} equip the segment $[0, \Delta_t]$ with the pseudodistance
\begin{equation}\label{eqn:delta def}
\delta_t(a,b) = \min \{|a-b|,(\Delta_t - |a-b|) \}, \ \ \ \ \ \ \ \ \ \text{for} \ a, b \in [0, \Delta_t],
\end{equation}
and define a distance function on $[0,1]$ by first setting 
\begin{equation*}
d_0(s,t) = \sum_{s \prec u \preceq t} \delta_u(0, x_u^t)
\end{equation*}
whenever $s \preceq t$, and
\begin{equation} \label{eqn:d}
d(s,t) = \delta_{s \wedge t}(x_{s \wedge t}^s,x_{s \wedge t}^t) + d_0(s \wedge t, s) + d_0(s \wedge t, t)
\end{equation}
for arbitrary $s, t \in [0,1]$.

They show that $d$ as defined above is almost surely a continuous pseudodistance on $[0,1]$, and define an equivalence relation $\sim$ on $[0, 1]$ by setting $s \sim t$ if $d(s,t)=0$. They then define the stable looptree $\La$ as the quotient space
\[
\mathcal{L}_{\alpha} = ([0,1]/ \sim, d)
\]
in \cite[Definition 2.3]{RSLTCurKort}. We let $p:[0,1] \rightarrow \La$ denote the canonical projection under the quotient operation, and let $\nu$ denote the image of Lebesgue measure on $[0,1]$ under $p$. $\nu$ therefore denotes the natural analogue of uniform measure on $\La$.

In \cite{ArchBMCompactLooptrees}, we also define a resistance metric $R$ on stable looptrees. By analogy with the construction above, this is done by first replacing $\delta_t$ with the quantity $r_t$ defined by
\begin{equation}\label{eqn:r def}
r_t(a,b) = {\Big( \frac{1}{|a-b|}+\frac{1}{\Delta_t - |a-b|} \Big) }^{-1}= \frac{|a-b|(\Delta_t - |a-b|)}{\Delta_t}, \ \ \ \ \ \ \ \ \ \text{for} \ a, b \in [0, \Delta_t].
\end{equation}

Note that this corresponds to the effective resistance across two parallel edges of lengths $|a-b|$ and $\Delta_t - |a-b|$. For $s, t \in [0,1]$ with $s \preceq t$, we then set
\begin{equation}\label{eqn:R0}
R_0(s,t) = \sum_{s \prec u \preceq t} r_u(0, x_u^t).
\end{equation}
For arbitrary $s, t \in [0,1]$, we set 
\begin{equation} \label{eqn:R}
R(s,t) = r_{s \wedge t}(x_{s \wedge t}^s,x_{s \wedge t}^t) + R_0(s \wedge t, s) + R_0(s \wedge t, t).
\end{equation}

We show in \cite[Proposition 4.4]{ArchBMCompactLooptrees} that $R$ defined in this way is a resistance metric on $\La$ in the sense of Definition \ref{def:eff resistance metric}. Moreover, in \cite[Lemma 4.1]{ArchBMCompactLooptrees} we show that for any  $s,t \in [0,1]$, we have that $\frac{1}{2}d(s,t) \leq R(s,t) \leq d(s,t)$, and define the resistance looptree $\La^R$ (which we will often denote $(\La, R)$) as
\[
\mathcal{L}^R_{\alpha} = ([0,1]/ \sim, R).
\]

As a consequence, we also show in \cite[Corollary 4.2]{ArchBMCompactLooptrees} that the looptrees $(\La, d)$ and $(\La, R)$ are homeomorphic.

The construction above is such that a jump of size $\Delta$ corresponds naturally to a cycle of length $\Delta$ in $\La$, which we will call a ``loop".

A key result of \cite{RSLTCurKort} is a Gromov-Hausdorff invariance principle. We extended the result to include convergence of measures in \cite[Proposition 4.6]{ArchBMCompactLooptrees}. Moreover, the Gromov-Hausdorff convergence of \cite[Theorem 4.1]{RSLTCurKort} was originally stated with the geodesic metric $d$ in place of the resistance metric $R$, but equally holds for $R$. This results in the following proposition.

\begin{proposition}(cf \cite[Theorem 4.1]{RSLTCurKort}, \cite[Proposition 4.6]{ArchBMCompactLooptrees}).\label{thm:compact disc inv princ res}
Let $(\tau_n)_{n=1}^{\infty}$ be a sequence of trees with $|\tau_n| \rightarrow \infty$ and corresponding Lukasiewicz paths $(W^n)_{n = 1}^{\infty}$, and let $R_n$ denote the effective resistance metric on $\Loop(\tau_n)$ obtained via (\ref{eqn:resistance def variational}) by letting an edge between any two adjacent vertices have conductance $1$. Additionally let $\nu_n$ be the uniform measure that gives mass $1$ to each vertex of $\Loop(\tau_n)$, and let $\rho_n$ be the root of ${\Loop}(\tau_n)$, defined to be the vertex representing the edge joining the root of $\tau_n$ to its first child. Suppose that $(C_n)_{n=1}^{\infty}$ is a sequence of positive real numbers such that  
\begin{enumerate}[(i)]
\item $\Big( \frac{1}{C_n} W^n_{\lfloor |\tau_n| t \rfloor} (\tau_n) \Big)_{0 \leq t \leq 1} \overset{(d)}{\rightarrow} \X$ as $n \rightarrow \infty$,
\item $\frac{1}{C_n} \textsf{Height}(\tau_n) \ \overset{\mathbb{P}}{\rightarrow} \ 0$ as $n \rightarrow \infty$.
\end{enumerate}
Then
\[
\Big({\Loop}(\tau_n), \frac{1}{C_n}R_n, \frac{1}{|\tau_n|} \nu_n, \rho_n \Big) \overset{(d)}{\rightarrow} \Big( \La, R, \nu, \rho \Big)
\]
as $n \rightarrow \infty$ with respect to the Gromov-Hausdorff-Prohorov topology.
\end{proposition}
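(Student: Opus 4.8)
The plan is to deduce Proposition \ref{thm:compact disc inv princ res} from the original Gromov--Hausdorff statement of \cite[Theorem 4.1]{RSLTCurKort} by upgrading it in two independent directions: replacing the geodesic metric $d$ by the resistance metric $R$, and adding convergence of the measures and roots. First I would recall that by the bound $\frac12 d(s,t) \le R(s,t) \le d(s,t)$ proved in \cite[Lemma 4.1]{ArchBMCompactLooptrees}, and its discrete analogue (on $\Loop(\tau_n)$, effective resistance between adjacent vertices equals $1$, and more generally any resistance along the looptree is comparable to graph distance since each loop contributes a pair of parallel paths), the resistance metric on $\Loop(\tau_n)$ is bi-Lipschitz equivalent to the graph metric with universal constants. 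This lets me transfer the correspondence-based Gromov--Hausdorff argument of \cite{RSLTCurKort}: given a correspondence between $\big(\Loop(\tau_n), C_n^{-1} d_n\big)$ and $(\La, d)$ with small distortion, the same correspondence has controlled distortion when both sides are equipped with the resistance metrics, because distances on each side only change by bounded factors and, crucially, the scaling limit statement is really a statement about the coding functions $W^n \to \X$ from which both $d$ and $R$ are built by the explicit formulas \eqref{eqn:d} and \eqref{eqn:R}. Concretely, one checks that $R_0$ and $d_0$ (and $r_t$ versus $\delta_t$) are continuous functionals of the excursion in the same way, so the convergence $C_n^{-1} W^n_{\lfloor |\tau_n| t\rfloor} \to \X$ together with $C_n^{-1}\Height(\tau_n) \to 0$ forces joint convergence of the resistance-metric looptrees by the same Skorohod-coupling argument used for the geodesic metric.

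The second ingredient, convergence of measures and roots, is exactly \cite[Proposition 4.6]{ArchBMCompactLooptrees}, and I would invoke it essentially verbatim: on the Skorohod coupling where $C_n^{-1} W^n \to \X$ almost surely, one produces a correspondence between $\Loop(\tau_n)$ and $\La$ whose distortion tends to $0$, built by matching a vertex of $\Loop(\tau_n)$ lying ``at position $\lfloor |\tau_n| t\rfloor$'' to the point $p(t) \in \La$; the pushforward of $|\tau_n|^{-1}\nu_n$ under this correspondence is then shown to converge to $\nu$ in the Prohorov sense because $\nu_n$ is (up to the uniform vertex count) the image of the uniform measure on $\{0,\dots,|\tau_n|\}$ and $\nu$ is the image of Lebesgue measure on $[0,1]$, and the coding map is continuous on the coupling. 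The root $\rho_n$, being the vertex coding the first loop, maps to $p(0) = \rho$, so $(\rho, \rho')$ lies in the correspondence and the pointed distance is controlled. Assembling the correspondence for $(\Loop(\tau_n), C_n^{-1} R_n)$ with the Prohorov control on measures and the root-matching then yields the claimed Gromov--Hausdorff--Prohorov convergence via the standard fact that a correspondence of small distortion containing the roots, together with an almost-optimal measure coupling supported near its diagonal, gives a small $d_{GHP}$ distance.

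The main obstacle I anticipate is making rigorous the claim that the resistance metric on $\Loop(\tau_n)$ is uniformly comparable to $C_n^{-1}$ times graph distance in a way that plays well with the correspondence: one must be careful that although $R \le d$ always, the reverse inequality $R \ge \frac12 d$ in the discrete setting relies on each loop of $\Loop(\tau_n)$ having length at least $2$ (so the two arcs realizing a parallel connection are genuinely there), and for loops of length $2$ or $3$ one should double-check the constants. A cleaner route, which I would ultimately prefer, is to avoid re-running the correspondence argument from scratch and instead observe that \eqref{eqn:R} expresses $R(s,t)$ as the \emph{same} sum-over-descents structure as \eqref{eqn:d} with $\delta_u$ replaced by $r_u$, and that $r_u(a,b)$ is a continuous function of $(\Delta_u, a, b)$ with $\frac12 \delta_u \le r_u \le \delta_u$; hence every estimate in \cite{RSLTCurKort} bounding oscillations of $d$ in terms of the excursion (in particular those invoking Lemma \ref{lem:osc}) carries over with at most a factor of $2$, and the whole Gromov--Hausdorff convergence proof goes through mutatis mutandis for $R$. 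Combining this with \cite[Proposition 4.6]{ArchBMCompactLooptrees} for the measure and root then completes the proof; the only genuinely new verification is the elementary comparison $\frac12 d \le R \le d$ in both the discrete and continuum settings, which is already recorded in the cited lemmas.
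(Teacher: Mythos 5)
Your proposal is correct and follows essentially the same route as the paper, which likewise obtains this proposition by observing that the correspondence argument of \cite[Theorem 4.1]{RSLTCurKort} goes through verbatim when $\delta_u$ is replaced by $r_u$ (since $\tfrac12\delta_u \le r_u \le \delta_u$ and $R$ has the same sum-over-descents structure as $d$), and by invoking \cite[Proposition 4.6]{ArchBMCompactLooptrees} for the Prohorov convergence of measures and the roots. One small caution: your first suggested route---reusing a small-distortion correspondence for $d$ and arguing that distortion for $R$ is controlled because the metrics are bi-Lipschitz equivalent---does not work on its own (a factor-of-two comparability does not make $|C_n^{-1}R_n(x,y)-R(x',y')|$ small), so it is your second, preferred argument of re-running the distortion estimates with $r_u$ in place of $\delta_u$ that actually carries the proof, exactly as the paper intends.
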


We now state a continuous version of this convergence. More generally, if $f$ is a function in $D^{\text{exc}}([0,\l])$ for some $\l \in (0, \infty)$, with only positive jumps, we can replace $\X$ with $f$ in the construction above to define the associated continuum looptree $\L_f$. Moreover, if $f_n$ is a sequence in $D^{\text{exc}}([0,\l])$ converging to $f$, also all with only positive jumps, then we can prove a similar invariance principle for the sequence of corresponding continuum looptrees.

There are minor differences in the assumptions required for the continuum convergence. In particular, note that the second condition of Proposition \ref{thm:compact disc inv princ res} that $\frac{1}{C_n} \textsf{Height}(\tau_n) \ {\rightarrow} \ 0$ in probability as $n \rightarrow \infty$ is important there because it ensures that in the limit, distances in the rescaled discrete looptrees come from the loop structure and not from distances in the corresponding tree. More formally, in the proof of \cite[Theorem 4.1]{RSLTCurKort} it is used to make a comparison between the expressions $\frac{1}{C_n} \sum_{u_n \preceq v_n} x_{u_n}^{v_n}$ and $\sum_{u \preceq v} x_u^v$ for the discrete and continuum trees respectively, where $x_{u_n}^{v_n}$ is the discrete analogue of $x_{u}^{v}$. For a sequence of trees $\tau_n$ with $\frac{1}{C_n} W^n \rightarrow f$ in the setting of Proposition \ref{thm:compact disc inv princ res}, we have for any $v_n \in \Loop (\tau_n)$ and $v \in \L_f$ that
\begin{align}\label{eqn:height correction exp}
\sum_{u_n \preceq v_n} x_{u_n}^{v_n} = \textsf{Height}(v_n) + W^n(v_n), \hspace{10mm}
\sum_{u \preceq v} x_{u}^{v} = f(v).
\end{align}
If $v$ and $v_n$ are in correspondence with each other, after being careful with left and right limits we can essentially apply the result that $\frac{1}{C_n} W^n(v_n) \rightarrow f(v)$ to deduce that the $\frac{1}{C_n} \sum_{u_n \preceq v_n} x_{u_n}^{v_n}$ also converges to $\sum_{u \preceq v} x_u^v$ in the limit to prove the invariance. To obtain this result, it is therefore crucial that the contribution from the rescaled height function goes to zero.

If, however, we replace the sequence of rescaled discrete looptrees with a sequence of continuum looptrees, say coded by the functions $(f_n)_{n=1}^{\infty}$ each with support $[0,1]$ and such that $f_n \rightarrow f$ in the Skorohod-$J_1$ topology as $n \rightarrow \infty$, then the height function won't appear in any of the new terms in (\ref{eqn:height correction exp}) and so the continuum analogue of condition $(ii)$ of Proposition \ref{thm:compact disc inv princ res} is not required for convergence of the corresponding looptrees.

In this sense, condition $(ii)$ reflects the fact the looptree $\Loop (\tau_n)$ isn't quite the same as the looptree $\L_{W^n}$. Condition $(ii)$ is precisely what is required to say that the difference between $\Loop (\tau_n)$ and $\L_{W^n}$ becomes negligible in the limit.

Hence, in the continuum, the same proof gives the following result.

\begin{proposition}\label{thm:compact cont inv princ}
Let $(f_n)_{n \geq 1}$ be a sequence in $D^{\exc}([0,1], \R^{\geq 0})$, and $f \in D^{\exc}([0,1], \R^{\geq 0})$ be such that $f_n \rightarrow f$ as $n \rightarrow \infty$ with respect to the Skorohod-$J_1$ topology. Additionally let $\nu$ and $\nu_n$ be the projections of Lebesgue measure via $p_f$ and $p_{f_n}$ onto the spaces $\L_f$ and $\L_{f_n}$ respectively. Then
\[
d_{\text{GHP}} \Bigg( \Big(\L_{f_n}, \d_n, \nu_n, \rho_n \Big), \Big( \mathcal{L}_f, \d_f, \nu_f, \rho_f \Big) \Bigg) \rightarrow 0
\]
as $n \rightarrow \infty$.

Here $\d$ can denote either the shortest-distance metric of \cite{RSLTCurKort}, or the resistance metric of (\ref{eqn:R}), but defined using the function $f$ in place of $\X$. Similarly for $\d_n$ and $f_n$.
\end{proposition}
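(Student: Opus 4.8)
The plan is to build, for each $n$, a correspondence between $\L_{f_n}$ and $\L_f$ and a coupling of the two pushforward measures, and to show both the distortion of the correspondence and the discrepancy of the measures tend to zero using only that $f_n \to f$ in Skorokhod-$J_1$. By the Skorokhod representation, since $f_n \to f$ in $J_1$, we may pass to increasing bijections $\lambda_n : [0,1] \to [0,1]$ with $\|\lambda_n - \mathrm{id}\|_\infty \to 0$ and $\|f_n \circ \lambda_n - f\|_\infty \to 0$. The natural candidate correspondence is $\mathcal{R}_n = \{ (p_{f_n}(\lambda_n(t)), p_f(t)) : t \in [0,1] \}$, which contains $(\rho_{f_n}, \rho_f)$ provided we root consistently. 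The measure coupling is then the pushforward of Lebesgue measure on $[0,1]$ under $t \mapsto (p_{f_n}(\lambda_n(t)), p_f(t))$; since $\lambda_n$ is a bijection preserving Lebesgue measure up to a factor tending to $1$ (indeed $\lambda_n' \to 1$ in the appropriate sense, or one argues directly via $\|\lambda_n - \mathrm{id}\|_\infty \to 0$), its marginals are $\nu_n$ and $\nu_f$ up to a Prohorov error tending to zero, so it suffices to control the distortion of $\mathcal{R}_n$.

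For the distortion, fix $s, t \in [0,1]$ and compare $\d_{f_n}(\lambda_n(s), \lambda_n(t))$ with $\d_f(s,t)$ using the explicit formulas \eqref{eqn:d}--\eqref{eqn:r def} (as appropriate to whether $\d$ is the geodesic or resistance metric), both of which are built from the descent quantities $x_u^t$, $\Delta_u$ and the per-loop pseudodistances $\delta_u$ or $r_u$. The key analytic input is that the jumps of $f_n \circ \lambda_n$ converge to the jumps of $f$: for each jump time $u$ of $f$ of size $\Delta_u$, there is a jump time $u_n$ of $f_n$ with $\lambda_n(u_n) \to u$ and jump size $\to \Delta_u$, and the total mass of the remaining (small) jumps of $f_n$ is uniformly controllable because $\|f_n \circ \lambda_n - f\|_\infty \to 0$ forces the small-jump sums to be small (this is exactly the kind of estimate underlying Lemma \ref{lem:osc} and its use in \cite{RSLTCurKort}). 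One then splits the sum defining $d_0$ (or $R_0$) into the finitely many ``large'' loops — on which term-by-term convergence holds since $\delta_u$ and $r_u$ are continuous functions of $(\Delta_u, x_u^t)$ — and a tail of ``small'' loops, whose contribution is bounded by a quantity going to zero uniformly in $s,t$. This gives $\sup_{s,t} |\d_{f_n}(\lambda_n(s), \lambda_n(t)) - \d_f(s,t)| \to 0$, i.e.\ $\mathrm{dis}(\mathcal{R}_n) \to 0$, hence $d_{GH}$-convergence; combined with the measure coupling above this yields $d_{GHP}$-convergence.

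Two points require care. First, the left/right-limit bookkeeping at jump times: $J_1$-convergence only controls $f_n$ near jumps after a time change, so one must match the descent structures $\{s \preceq t\}$ of $f_n$ and $f$ carefully, and in particular handle the situation where a point $t$ sits ``inside'' a loop (i.e.\ $x_{s\wedge t}^t$ takes an intermediate value in $(0,\Delta_{s\wedge t})$) — here continuity of $\delta_u$ and $r_u$ in the second argument saves the day, but only after one checks that the intermediate values also converge. Second, and this is where I expect the main obstacle, controlling the \emph{tail} of small loops uniformly in $s$ and $t$: one needs a bound on $\sum_{u \preceq t,\, \Delta_u < \epsilon} \Delta_u$ (which dominates both $\sum \delta_u(\cdot)$ and $\sum r_u(\cdot)$ over small loops) that holds simultaneously for $f$ and for all $f_n$ with $n$ large, and that vanishes as $\epsilon \to 0$. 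For a \emph{fixed} excursion with only positive jumps this is automatic, but getting it uniformly along the sequence $f_n$ is the crux; this is precisely the continuum analogue of the estimate that makes the discrete proof of \cite[Theorem 4.1]{RSLTCurKort} work, and the remark in the excerpt that ``the same proof gives the following result'' is exactly the assertion that this uniform tail control goes through unchanged once condition $(ii)$ (height negligibility) is no longer needed. Since the excerpt explicitly invokes that the proof of \cite[Theorem 4.1]{RSLTCurKort} carries over, I would organize the write-up as: (1) set up $\lambda_n$ and $\mathcal{R}_n$; (2) quote the jump-convergence consequence of $J_1$; (3) do the finite-loop term-by-term convergence; (4) invoke the uniform small-loop tail bound exactly as in \cite{RSLTCurKort}; (5) assemble the distortion estimate and add the measure coupling.
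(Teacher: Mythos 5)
Your proposal matches the paper's proof: the paper likewise takes the correspondence consisting of all pairs $(t,\lambda_n(t))$ with $\lambda_n$ the Skorohod homeomorphism minimising the $J_1$ distance between $f_n$ and $f$, controls the distortion exactly as in the proof of \cite[Theorem 4.1]{RSLTCurKort} (large-loop/small-loop splitting included), and obtains the Prohorov convergence of the pushforward measures as in \cite[Proposition 4.6]{ArchBMCompactLooptrees}. The uniform small-jump tail control you flag as the crux is indeed the point the paper delegates wholesale to the argument of \cite{RSLTCurKort}, so your write-up is simply a more explicit version of the same proof.
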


The result follows exactly as in the proof of \cite[Theorem 4.1]{RSLTCurKort} by defining a correspondence between $\L_f$ and $\L_{f_n}$ to consist of all pairs $(t, \lambda_n (t))$, where $\lambda_n$ is the Skorohod homeomorphism that minimises the Skorohod distance between $f_n$ and $f$. The extension to include convergence of measures can be obtained exactly as in \cite[Proposition 4.6]{ArchBMCompactLooptrees}.

Clearly the result of the proposition will hold for functions defined on any compact time interval, not just $[0,1]$. We will use this in Section \ref{sctn:LLT} to prove Theorem \ref{thm:LLT}. Moreover, by extending the coding functions to be constant beyond endpoints where necessary, the result also holds providing the supports of the functions $f_n$ converge to that of $f$.

At some points in this paper, we will refer to the ``corresponding" or ``underlying" stable tree of $\La$, by which we mean the stable tree $\Ta$ coded by the same excursion that codes $\La$. We let $\La$ denote a compact stable looptree conditioned on $\nu(\La)=1$, but at various points we will let $\tilde{\La}$ denote a generic stable looptree coded by an excursion under the \Ito measure but without any conditioning on its total mass. We will also let $\La^{1}$ denote a stable looptree but conditioned so that its underlying tree has height $1$. However, we will make this notation explicit at the time of writing.

The height of a stable tree $\tilde{\Ta}$ is defined as $H_{\text{max}} = \sup_{u \in \tilde{\Ta}} d_{\tilde{\Ta}}(\rho, u)$. As the height process is almost surely continuous, this maximum is almost surely realised by at least one $u \in \tilde{\Ta}$. Moreover, we see from \cite[Equation (23)]{DuqWangDiameter} (and references therein) that there is almost surely a unique $u \in \tilde{\Ta}$ that attains this maximum, which we denote by $u_H$. If $\tilde{\La}$ is the corresponding stable looptree, we define two notions of its height:
\begin{enumerate}[(i)]
\item We define its $L^W$-Height to be the looptree distance from $\rho$ to $u_H$,
\item We define its $L$-Height to be $\sup_{u \in \tilde{\La}} d_{\tilde{\La}}(\rho, u)$.
\item We define its $L^m$-Height to be $\max \tilde{X}_s^{\text{exc}}$, where $\tilde{X}^{\text{exc}}$ is the \Levy excursion coding $\tilde{\La}$.
\end{enumerate}
In general, these are not the same. Note however that the $L^m$-Height is at least as big as the $L$-Height, since $\tilde{X}_s^{\text{exc}}$ gives the distance to the point in $\tilde{\La}$ represented by $s$ but going ``clockwise" around all loops. At times, we will also use the notation $T^W$-Height and $T^m$-Height to denote the length of the corresponding spine in the underlying tree, which we respectively denote by W-spine or m-spine.

\subsubsection{Uniform re-rooting invariance for stable trees and looptrees}
We will also use re-rooting invariance properties of stable trees and looptrees in our arguments. In particular, Duquesne and Le Gall proved in \cite[Proposition 4.8]{DuqLeGPFALT} that stable \Levy trees are invariant under re-rooting at a uniform point. Following on from this, they also proved the stronger result of invariance under re-rooting at a deterministic point $u \in [0,1]$ in \cite[Theorem 2.2]{DuqLeGRerooting}.

In \cite{HPWSpinPart}, the authors provide an alternative proof of uniform re-rooting invariance by considering a spinal decomposition of stable trees and using exchangeability properties of the resulting mass partition. This additionally allows them to show that stable trees are the only fragmentation trees for which this property holds. As a result, we obtain a similar uniform re-rooting invariance property for stable looptrees. This is stated precisely as \cite[Remark 4.6]{RSLTCurKort}, and the principles there show that looptrees are invariant under re-rooting at a uniform leaf, which is an equivalent statement in the limiting case.

We will exploit this in the proof of Theorem \ref{thm:LLT} where we will in fact prove the convergence result for compact stable looptrees rooted at a uniform point.

\subsubsection{Williams' decomposition of stable looptrees}\label{sctn:Williams Decomp}
The Williams' Decomposition for stable trees was given in \cite{AbDelWilliamsDecomp}. There, the authors show that if we define the W-spine of a stable \Levy tree $\tilde{\Ta}$ to be the unique path from its root to $u_H$, then $\tilde{\Ta}$ can be broken along this W-spine and that the resulting fragments form a collection of smaller \Levy trees. As a consequence, we immediately have a similar decomposition result for looptrees.

The Williams' Decomposition for stable trees given in \cite{AbDelWilliamsDecomp} encodes this  decomposition of $\tilde{\Ta}$ along its W-spine in a Poisson process. In the Brownian case of $\alpha = 2$, this corresponds to Williams' decomposition of Brownian motion.  Letting $H_{\text{max}}$ and $u_H$ be as above, we define the Williams' spine (or W-spine) of $\tilde{\Ta}$ to be the segment $[[\rho, u_H]]$, and define the Williams' loopspine (or W-loopspine) in the corresponding looptree $\tilde{\La}$ to be the closure of the set of loops coded by points in $[[\rho, u_H]]$. One of the main results of \cite{AbDelWilliamsDecomp} is a theorem which firstly gives the distribution of the loop lengths along the W-loopspine, and additionally the distribution of the fragments obtained by decomposing along it.

Given the spine from $\rho$ to $u_H$, and conditional on $\Hm = H$, the loops along the W-loopspine can be represented by a Poisson point measure $\sum_{j \in J} \delta (l_j, t_j, u_j)$ on $\R^+ \times [0, H] \times [0,1]$ with a certain intensity. A point $(l,t,u)$ corresponds to a loop of length $l$ in the W-loopspine, occurring on the W-spine at distance $t$ from the root in the underlying tree $\tilde{\Ta}$, and such that a proportion $u$ of the loop is on the ``left" of the W-loopspine, and a proportion $1-u$ is on the ``right". In \cite{AbDelWilliamsDecomp}, this is written in terms of the exploration process on $\tilde{\Ta}$, but we interpret their result below in the context of looptrees.

We note that when stating this result, we are not conditioning on the total mass of $\tilde{\Ta}$: only the maximal height. The mass of $\tilde{\Ta}$ will depend on its height via the joint laws for these under the \Ito excursion measure.

\begin{theorem}(Follows directly from \cite[Lemma 3.1 and Theorem 3.2]{AbDelWilliamsDecomp}).\label{thm:AbDel Williams Decomp}
\begin{enumerate}[(i)]
\item Conditionally on $H_{\text{max}}=H$, the set of loops in the W-loopspine forms a Poisson point process $\mu_{\textsf{W-loopspine}} = \sum_{j \in \mathcal{J}} \delta (l_j, t_j, u_j)$ on the W-spine in the underlying tree with intensity
\[
\mathbb{1}_{\{ [0,1] \}}(u) \mathbb{1}_{\{[0,H]\}}(t) l \exp \{ -l (H-t)^{\frac{-1}{\alpha - 1}} \} du \ dt \ \Pi (dl),
\]
where $\Pi$ is the underlying \Levy measure, with $\Pi(dl) = \frac{1}{|\Gamma(-\alpha)|} l^{-\alpha - 1} \mathbb{1}_{(0, \infty)}(l) dl$ in the stable case. We will denote the atom $\delta (l_j, t_j, u_j)$ by $\textsf{Loop}_j$.
\item Let $\delta (l, t, u)$ be an atom of the Poisson process described above. The set of sublooptrees grafted to the W-loopspine at a point on the corresponding loop can be described by a random measure $M^{(l)} = \sum_{i \in I} \delta^{(l)} (\EE_i, D_i)$, where $\EE_i$ is a \Levy excursion that codes a looptree in the usual way, and $D_i$ represents the distance going clockwise around the loop from the point at which this sublooptree is grafted to the loop, to the point in the loop that is closest to $\rho$. This measure has intensity
\begin{align*}
N( \cdot, \Hm \leq H-t) \times \mathbb{1}_{\{[0,l]\}} dD.
\end{align*}
In particular, since the sublooptrees are coded by the \Ito excursion measure, they are just rescaled copies of our usual normalised compact stable looptrees, and each of these is grafted to the loop on the W-loopspine at a uniform point around the loop lengths.
\end{enumerate}
\end{theorem}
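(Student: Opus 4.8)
\emph{Stage 1: read off the tree statement.} The claim is the Williams' decomposition of stable \Levy trees of \cite{AbDelWilliamsDecomp}, rewritten for looptrees via the coding of $\tilde{\La}$ from the same \Levy excursion (under the \Ito measure $N$) that codes $\tilde{\Ta}$. Since \cite[Lemma 3.1 and Theorem 3.2]{AbDelWilliamsDecomp} are phrased through the exploration process of $\tilde{\Ta}$, I would first restate them as a statement about the branch points sitting on the W-spine $[[\rho, u_H]]$: conditionally on $\Hm = H$, these branch points form, at heights $t \in [0,H]$ and with widths $l$, a Poisson point measure of intensity proportional to $l\exp\{-l(H-t)^{-1/(\alpha-1)}\}\,dt\,\Pi(dl)$; and, conditionally on a branch point of width $l$ at height $t$, the subtrees grafted to the spine there form a Poisson measure of intensity $N(\,\cdot\,,\ \Hm \le H-t)$, each carrying an independent insertion position uniform on $[0,l]$. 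Here the size-biasing in $l$ and the exponential tilt are exactly the cost of the event that none of the $l$-worth of grafted subtrees reaches height $H-t$, i.e.\ that $u_H$ is the unique highest leaf; verifying that this is precisely the factor appearing in \cite{AbDelWilliamsDecomp} is the one slightly delicate point of this stage.

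\emph{Stage 2: transport to looptrees.} A branch point of $\tilde{\Ta}$ of width $l$ is coded in $\tilde{\La}$ by a loop of length exactly $l$ (\cite[Proposition 2]{MiermontSplittingNodes}, recalled in Section \ref{sctn:looptree def}); the W-spine of $\tilde{\Ta}$ is coded by the W-loopspine of $\tilde{\La}$ by definition; and the subtrees grafted to a spinal branch point are coded by the same sub-excursions as the sublooptrees grafted to the corresponding loop. Under this identification the intensity of Stage 1 carries over verbatim to the loops $(l_j,t_j)$, and the grafting mechanism becomes precisely part (ii), with the tree insertion position of each grafted subtree read as the clockwise arc-length $D_i$ of its attachment point around the loop; being coded by $N$, the sublooptrees are rescaled copies of the usual normalised compact stable looptrees. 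The only feature of part (i) with no tree counterpart is the coordinate $u_j \in [0,1]$: the W-loopspine enters and leaves the loop coded by the $j$-th spinal jump at two points that split its length $l_j$ into arcs of lengths $x$ and $l_j - x$, namely the two quantities accumulated on the two sides of that jump along the W-spine (notation of Section \ref{sctn:Useful results}), and the exchangeability of the spinal mass partition underlying \cite[Lemma 3.1]{AbDelWilliamsDecomp} (cf.\ also \cite{HPWSpinPart}) forces the conditional law of $x/l_j$, given everything else, to be \textsf{Uniform}$([0,1])$ and independent across loops. Inserting the factor $\mathbb{1}_{\{[0,1]\}}(u)\,du$ into the intensity then gives part (i), and it also fixes the clockwise orientation relating $u_j$ to the positions $D_i$.

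The argument is thus essentially a dictionary exercise, which is why the theorem ``follows directly''. The two spots that should be written out rather than merely asserted are the verification in Stage 1 that the tilting factor is exactly the non-survival probability under $N$ (so that the truncated measure $N(\cdot,\ \Hm \le H-t)$ in part (ii) is the correct one), and the uniform independent loop-split $u_j$ in Stage 2; I expect the latter to be the genuine obstacle, since it is the one structural statement about the looptree that is not simply inherited from the tree picture.
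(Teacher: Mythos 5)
Your proposal is correct and follows essentially the same route as the paper: the theorem is obtained by quoting the Williams' decomposition of \cite{AbDelWilliamsDecomp} and translating it to looptrees through the jump--loop dictionary, the only genuinely new content being the uniform placement of the grafted sublooptrees and the loop-split variables $u_j$ around each loop. The paper justifies that uniformity by pointing to equation (11) of \cite{DuqLeGPFALT} and the paragraph following it (see the remark after the theorem), rather than by the exchangeability argument you sketch, but this is a difference of citation rather than of substance.
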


\begin{remark}
Point $(ii)$ is a slight extension of the results of \cite{AbDelWilliamsDecomp} since the authors of that paper are only concerned with stable trees, and consequently are not interested in how the sublooptrees are distributed around each loop in the W-loopspine. Instead they write that the subtrees incident to the W-spine at the node corresponding to the atom $\delta(l,t,u)$ are described by a Poisson random measure with intensity $l N( \cdot, \Hm \leq H-t)$. In fact, in our proofs we will only be counting sublooptrees grafted to entire loops so the distribution of these around each individual loop will not matter. However, it should be clear from equation (11) and the paragraph following it in \cite{DuqLeGPFALT} that the sublooptrees are actually distributed uniformly around each loop.
\end{remark}

%Recalling from Section \ref{sctn:looptree def} that loops correspond to jumps in the coding \Levy excursion, an atom $\textsf{Loop}_j$ therefore corresponds to a loop in the W-loopspine.

%If $p(t)$ is a point on the W-loopspine, this definition of the W-loopspine also gives a natural way to define the ``clockwise" or ``anti-clockwise" distance from the root to that point. Formally, if $t \leq H$, we define the clockwise distance from $\rho$ to $p(t)$ to be $\sum_{0 \prec u \preceq t} x_u^t$, and the anticlockwise distance to be $\sum_{0 \prec u \preceq t} (\Delta_u - x_u^t) + \sum_{t \prec u \preceq H} \Delta_u$. If $t > H$ then the clockwise distance is $\sum_{0 \prec u \preceq t} x_u^t + \sum_{t \prec u \preceq H} \Delta_u$, and the anticlockwise distance is $\sum_{0 \prec u \preceq t} (\Delta_u - x_u^t)$. **do we use this?
%
In Proposition \ref{prop:resistance results infinite}, we will have to decompose along the loopspine from the root to a point attaining the distance of the $L^m$-Height from the root. By analogy with the notation above, we will call this the $m$-loopspine, and the corresponding spine in the underlying tree the $m$-spine. We do not prove a specific distribution for the decomposition along this m-loopspine, but note that by similar principles to the Williams' case, the Poisson measure describing the loop lengths along the m-loopspine (analogous to that in Theorem \ref{thm:AbDel Williams Decomp}(i)) will have the form
\begin{equation*}
C_{\alpha} \mathbb{1}_{\{ [0,1] \}}(u) \mathbb{1}_{\{[0,H^m]\}}(t) l^{-\alpha} \textsf{pen}(l,H^m,t) du \ dt \ dl,
\end{equation*}
where $C_{\alpha} = \frac{\alpha (\alpha - 1)}{\Gamma (2-\alpha)}$, as before, $H^m = T^m\text{-Height}(\tilde{\La})$, and \textsf{pen} is a lower order penalty term. In particular, by considering only loops on incident on the first half of the m-spine, it can be bounded above and below by a constant. Moreover, the sublooptrees grafted to the m-loopspine will be coded by a thinned version of the \Ito excursion measure. This can be proved rigorously by applying Proposition \ref{prop:descent PP} for an unconditioned \Levy process and transferring to the excursion via the Vervaat transform (Theorem \ref{thm:Vervaat}) and absolute continuity relation (\ref{eqn:RN deriv levy bridge}).

\subsection{Infinite critical trees and looptrees}\label{sctn:infinite trees bckgrnd}
In this section we introduce Kesten's tree $T_{\infty}$ for a given critical offspring distribution $\xi$. In light of Theorem \ref{thm:Kesten LLT}, it is the natural way to construct such an infinite tree.

\begin{definition}\label{def:Kesten's tree}(\cite[Definition 2.9]{AbDelGWIntro}, adapted from \cite{KestenIICtree}).
Let $\xi$ be a critical offspring distribution, and define its size biased version $\xi^*$ by
\[
\xi^*(n) = {n \xi (n)}.
\]
The \textbf{Kesten's tree} $T_{\infty}$ associated to the probability distribution $\xi$ is a two-type Galton-Watson tree distributed as follows:
\begin{itemize}
\item Individuals are either normal or special.
\item The root of $T_{\infty}$ is special.
\item A normal individual produces only normal individuals according to $\xi$.
\item A special individual produces individuals according to the size-biased distribution $\xi^*$. Of these, one of them is chosen uniformly at random to be special, and the rest are normal.
\end{itemize}
\end{definition}
Almost surely, the special vertices form a unique infinite backbone of $T_{\infty}$. Note that this is one-ended. Aldous in \cite{AldousFringeSinTree} coined the term \textit{sin-trees} for such trees, since they have a single infinite spine.

The following local limit theorem was originally proved by Kesten in \cite{KestenIICtree} under a second moment condition, but was proved with the stated assumptions in \cite[Theorem 7.1]{JansonSurvey}, and demonstrates that this construction is the right one to take.

\begin{theorem}\label{thm:Kesten LLT}(\cite[Lemma 1.14]{KestenIICtree}, \cite[Theorem 2.1.1]{AbDelGWIntro}, \cite[Theorem 7.1]{JansonSurvey}).
Let $\xi$ be a critical offspring distribution with $\xi(0) + \xi(1) < 1$ and define $T_{\infty}$ as in Definition \ref{def:Kesten's tree}. Let $T_n$ be a Galton-Watson tree with offspring distribution $\xi$ conditioned on having height at least $n$. Then
\[
T_n \overset{(d)}{\rightarrow} T_{\infty}
\]
with respect to the Gromov-Hausdorff-vague topology as $n \rightarrow \infty$.
\end{theorem}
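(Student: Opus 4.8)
This is a classical local limit theorem, so I only sketch the standard argument. Since the trees involved are discrete and locally finite, it suffices to work with the \emph{local topology on rooted plane trees}: writing $r_k(T)$ for the plane tree formed by the vertices of $T$ of generation $\leq k$, it is standard (cf.\ \cite{AthLohrWinGromovGap}) that $r_k(T_n) \overset{(d)}{\rightarrow} r_k(T_\infty)$ for every fixed $k$ implies $T_n \to T_\infty$ in the Gromov--Hausdorff-vague topology (each tree carrying, say, the counting measure on its vertices), because for a discrete tree the closed ball $\bar{B}(\rho,r)$ equals $r_{\lfloor r\rfloor}(T)$ and so is locally constant in $r$. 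As $r_k$ takes values in the countable set of plane trees of height $\leq k$, it is in turn enough to prove $\mathbb{P}(r_k(T_n)=t) \to \mathbb{P}(r_k(T_\infty)=t)$ for each such $t$; total-variation, hence distributional, convergence then follows from Scheff\'e's lemma, using that $\sum_t \mathbb{P}(r_k(T_\infty)=t) = 1$.

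First I would compute the left-hand limit. Let $T$ be an unconditioned $\mathrm{GW}(\xi)$ tree and $q_m = \mathbb{P}(\Height(T)\geq m)$; criticality together with $\xi(0)+\xi(1)<1$ gives $q_m \downarrow 0$ with $q_m>0$ for all $m$. Fix $t$ of height exactly $k$, and let $\ell = \ell_k(t)$ be the number of its generation-$k$ vertices. Conditionally on $\{r_k(T)=t\}$ the subtrees hanging from these $\ell$ vertices are i.i.d.\ copies of $T$, so $\mathbb{P}(\Height(T)\geq n\mid r_k(T)=t) = 1-(1-q_{n-k})^\ell$ for $n>k$, whence
\[
\mathbb{P}\big(r_k(T)=t \,\big|\, \Height(T)\geq n\big) \;=\; \mathbb{P}(r_k(T)=t)\cdot\frac{1-(1-q_{n-k})^\ell}{q_n}.
\]
The one analytic input is that $q_{n-k}/q_n \to 1$ for fixed $k$: telescoping reduces this to $q_{n+1}/q_n \to 1$, which follows from the recursion $q_{n+1} = 1 - f(1-q_n)$ (with $f$ the offspring generating function) and $f'(1)=\mathbb{E}[\xi]=1$, giving $q_{n+1}=q_n+o(q_n)$. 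Since $q_{n-k}\to 0$, the displayed fraction tends to $\ell$, so the conditional probability converges to $\ell_k(t)\,\mathbb{P}(r_k(T)=t)$. When $\Height(t)<k$ the same calculation gives limit $0$.

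Finally I would identify $\ell_k(t)\,\mathbb{P}(r_k(T)=t)$ with $\mathbb{P}(r_k(T_\infty)=t)$ using the spinal description of Kesten's tree. Decompose over the generation-$\leq k$ prefix $P=(v_0,\dots,v_k)$ of the spine of $T_\infty$; such prefixes are in bijection with the generation-$k$ vertices of $t$, contributing $\ell_k(t)$ terms. For a fixed $P$, each $v_i$ with $i<k$ is special, and the chance that it has $k_{v_i}(t)$ children with $v_{i+1}$ the designated special one is $\xi^\ast(k_{v_i}(t))/k_{v_i}(t)=\xi(k_{v_i}(t))$, while its remaining children each independently root an ordinary $\mathrm{GW}(\xi)$ tree that must match the corresponding subtree of $t$ down to the appropriate depth; regrouping the resulting product of $\xi(\cdot)$-factors by ancestor shows that $\mathbb{P}(r_k(T_\infty)=t,\ \text{spine prefix}=P)=\mathbb{P}(r_k(T)=t)$, and summing over the $\ell_k(t)$ prefixes gives $\mathbb{P}(r_k(T_\infty)=t) = \ell_k(t)\,\mathbb{P}(r_k(T)=t)$ (and $=0$ when $\Height(t)<k$), as needed. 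The main obstacle is really just the asymptotic lemma $q_{n-k}/q_n\to1$ and the bookkeeping in this last identification; the reduction to the local topology, and from there to the Gromov--Hausdorff-vague topology, is routine.
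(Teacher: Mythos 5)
The paper does not actually prove this statement: it is imported from the cited literature (Kesten, Abraham--Delmas, Janson's survey), with only the remark that the convergence there is stated in a stronger (local) topology than the Gromov--Hausdorff-vague one used here. Your argument is correct and is essentially the standard proof underlying those references: the asymptotic $q_{n-k}/q_n \to 1$ from the recursion $q_{n+1}=1-f(1-q_n)$ with $f'(1)=1$, the resulting limit $\ell_k(t)\,\mathbb{P}(r_k(T)=t)$, and the spinal identification of this with $\mathbb{P}(r_k(T_\infty)=t)$ via $\xi^\ast(m)/m=\xi(m)$ are all sound, and your reduction from local convergence of the truncations $r_k$ to Gromov--Hausdorff-vague convergence (balls in a discrete tree being locally constant in the radius) is precisely the step the paper waves at when it says the original results hold in a stronger topology.
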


The convergence is actually stated in a stronger topology in the original literature, but we are mainly interested in Gromov-Hausdorff-vague convergence in this paper.

Kesten's construction has been imitated in the continuum by Duquesne in \cite{DuqSinTree}, who constructs continuum sin-trees and shows that these arise as the appropriate local limit of compact continuum trees conditioned on being large. By analogy with the compact continuum case, Duquesne's construction involves defining two height functions from two independent \Levy processes in the same way as done with the excursion in (\ref{eqn:height def}). These respectively code the tree structure on the left and right sides of the spine in the usual way.

The construction was further extended to infinite discrete looptrees in \cite{BjornStef}, where the authors define the infinite looptree associated with a critical offspring distribution $\xi$ to simply be $\textsf{Loop}'(T_{\infty})$, where $T_{\infty}$ is constructed as in Definition \ref{def:Kesten's tree}, and $\textsf{Loop}'$ is an operation very close to \textsf{Loop}, as defined in \cite[Section 4]{RSLTCurKort} and which we will introduce later in Section \ref{sctn:scaling lims}. This infinite looptree inherits the structure of having a loopspine with loop sizes determined by a size-biased version of $\xi$, to which usual compact discrete looptrees are grafted. The local limit theorem of Theorem \ref{thm:Kesten LLT} thus passes directly to the looptree case by continuity of the $\textsf{Loop}$ operation (see \cite[Corollary 2.3]{BjornStef}, the proof of which can easily be adapted to \textsf{Loop} rather than $\textsf{Loop}'$).

Finally, Kesten's construction of Definition \ref{def:Kesten's tree} was extended to critical multi-type Galton Watson trees in \cite[Theorem 3.1]{StephensonLocal} along with an analogous local limit theorem. Richier in \cite{RichierIICUIHPT} then used this to define an infinite two-type looptree and showed in \cite[Lemma 5.5]{RichierMapBoundaryLimit} that this arises as a similar local limit under appropriate conditions.

The concept of an infinite stable looptree has thus left a gap in the literature and the purpose of this paper is to fill that gap. The construction is the one suggested in \cite[Section 6]{RichierIICUIHPT} and extends the construction of infinite discrete looptrees in the same way that Duquesne's continuum sin-trees extend the construction of their discrete counterparts. The resulting local limit theorem allows us to prove various volume and heat kernel convergence results for compact stable looptrees in \cite{ArchBMCompactLooptrees}.

\section{Construction of infinite stable looptrees}\label{sctn:construction of infinite stable looptrees}
Our construction uses two stable \Levy processes to code each side of the loopspine, in place of the excursion. This is the approach suggested in \cite[Section 6]{RichierIICUIHPT} and our construction is merely the continuum version of the discrete construction of \cite[Section 3]{RichierIICUIHPT}, except that we have essentially turned this construction ``upside down" to match the original coding mechanism for compact looptrees.

We start by giving an equivalent construction of compact stable looptrees. We give the construction for a looptree of mass $\l$.

\begin{tcolorbox}[colback=white]
\textbf{Two-sided Construction of Compact Stable Looptrees}
\begin{enumerate}
\item Let $\Xbl$ be a spectrally positive, $\alpha$-stable \Levy bridge of lifetime $\l$. Let $m = m_{\l}$ be the (almost surely unique) time at which $\Xbl$ attains its infimum.
\item Let $(X^{(2, \l)}_t)_{t \geq 0}$ be the pre-infimum process, and $(X^{(1, \l)}_t)_{t \geq 0}$ be the time-reversed post-infimum process, extended to stay constant after times $m$ and $1-m$ respectively. That is,
\begin{align*}
X^{(2, \l)}_t = \begin{cases} \Xb_{t} \text{ for } t \in [0,m], \\ \Xb_{m} \text{ for } t > m; \end{cases}
\hspace{10mm}
X^{(1, \l)}_t = \begin{cases} \Xb_{\l-t} \text{ for } t \in [0, 1-m], \\  \Xb_{m} \text{ for } t > \l-m. \end{cases}
\end{align*}
\item Define a function ${X}^{\l}: \R \rightarrow \R$ by
\[
{X}^{\l}_t =
\begin{cases} X^{(2, \l)}_{t} & \text{ if } t \geq 0, \\
X^{(1, \l)}_{-t} & \text{ if } t < 0.
\end{cases}
\]
It should be clear from the Vervaat transform that ${X}^{\l}$ is just a shifted \Levy excursion.
\item For $s, t \in \R$, we define resistances $r^{\l}, R^{\l}_0$ and $R^{\l}$ from ${X}^{\l}$ exactly as in (\ref{eqn:r def}), (\ref{eqn:R0}) and (\ref{eqn:R}), but with the superscript $\l$ on all the quantities involved. We can similarly define distances $\delta^{\l}$, $d^{\l}_0$ and $d^{\l}$ exactly as in (\ref{eqn:d}). Analogously to the normalised case, we then set $\Lal = (\R / \sim, d^{\l})$, and ${\Lal}^R = (\R / \sim, R^{\l})$, and let $p^{\l}: \R \rightarrow \Lal$ denote the canonical projection.
\end{enumerate}
\end{tcolorbox}

Before giving the infinite construction, we give a brief outline of the strategy for proving Theorem \ref{thm:LLT}, which exploits uniform rerooting invariance of stable looptrees. By taking a stable looptree coded by an excursion $X^{\text{exc},\l}$ of length $\l$, and taking the root to be a uniform point in $U \in [0,\l]$, it follows from the Vervaat transformation that the processes $(X^{\text{exc},\l}_t)_{0 \leq t \leq U}$ and $(X^{\text{exc},\l}_t)_{U \leq t \leq \l}$ are distributed respectively as the post- and pre-minimum parts of a stable \Levy bridge. Standard convergence results then imply that on any compact interval, these converge in distribution to stable \Levy processes as $\l \rightarrow \infty$. Moreover, if we think of the loopspine as the sequence of loops coded by jump points at times $0 \preceq t \preceq U$, then $(X^{\text{exc},\l}_t)_{0 \leq t \leq U}$ codes for the loopspine along with everything grafted to the left hand side of it, and $(X^{\text{exc},\l}_t)_{U \leq t \leq \l}$ codes for everything grafted to the right hand side of it. It is therefore natural to replace each of these by unconditioned \Levy process in the infinite volume limit.

Due to the Vervaat transformation, this construction is entirely equivalent to the original construction of looptrees using the \Levy excursion, but we have now split the coding into two functions which define each side of the loopspine. To code the infinite looptree, we will take limits of each of these functions and use these to code each side of the infinite loopspine.

We first give the construction, and then prove Theorem \ref{thm:LLT} in Section \ref{sctn:LLT}.

\begin{tcolorbox}[colback=white]
\textbf{Construction of Infinite Stable Looptrees}
\begin{enumerate}
\item Let $X$ be an $\alpha$-stable, spectrally positive \Levy process, and let $X'$ be an $\alpha$-stable, spectrally negative \Levy process.
\item Define a function $\Xi: \R \rightarrow \R$ by
\[
\Xi_t =
\begin{cases} X_{t} & \text{ if } t \geq 0, \\
X'_{-t^-} & \text{ if } t<0.
\end{cases}
\]
\item Analogously to the compact construction above, if $t$ is a jump point of $\Xi$ with jump size $\Delta_t$ and $a, b \in [0, \Delta_t]$, set
\begin{align*}
{\delta}^{\infty}_t(a,b) &= \text{min} \{|a-b|, \Delta_t - |a-b| \}, \\
{r}^{\infty}_t(a,b) &= {\Big( \frac{1}{|a-b|}+\frac{1}{\Delta_t - |a-b|} \Big) }^{-1}= \frac{|a-b|(\Delta_t - |a-b|)}{\Delta_t}. 
\end{align*}

Additionally, as before, for $s, t \in \R$ with $s \leq t$ set ${I}^{\infty}_{s,t} = \inf_{r \in [s,t]} \Xi_r$, and ${x}^{\infty}_{s,t} = {I}^{\infty}_{s,t} - \Xi_{s^-}$. For $s, t \in \R$ we again write $s \prec t$ if $s \preceq t$ (meaning that ${x}^{\infty}_{s,t} \geq 0$) and $s \neq t$. Then, if $s \preceq t$ set
\begin{align*}
{d}^{\infty}_0(s,t) &= \sum_{s \prec u \preceq t} {\delta}^{\infty}_u(0, x_u^t), \\
{R}^{\infty}_0(s,t) &= \sum_{s \prec u \preceq t} {r}^{\infty}_u(0, x_u^t).
\end{align*}
Then, for general $s, t \in \R$, set 
\begin{align}\label{eqn:infinite d R def}
\begin{split}
{d}^{\infty}(s,t) &= {\delta}^{\infty}_{s \wedge t}(x^{\infty}_{s \wedge t,s},x^{\infty}_{s \wedge t,t}) + {d}^{\infty}_0(s \wedge t, s) + {d}^{\infty}_0(s \wedge t, t),\\
{R}^{\infty}(s,t) &= {r}^{\infty}_{s \wedge t}(x^{\infty}_{s \wedge t,s},x^{\infty}_{s \wedge t,t}) + {R}^{\infty}_0(s \wedge t, s) + {R}^{\infty}_0(s \wedge t, t).
\end{split}
\end{align}
Finally, define an equivalence relation $\sim$ on $\R$ by setting $s \sim t$ if and only if ${d}^{\infty}(s,t)=0$. We define the infinite looptrees $\L^{\infty}_{\alpha}$ and $\L^{\infty, R}_{\alpha}$ by
\begin{align*}
\L^{\infty}_{\alpha} &= (\R / \sim, {d}^{\infty}), \\
\L^{\infty, R}_{\alpha} &= (\R / \sim, {R}^{\infty}).
\end{align*}
\end{enumerate}
\end{tcolorbox}

For ease of notation and intuition, we will focus on
$\L^{\infty}_{\alpha}$ rather than $\L^{\infty, R}_{\alpha}$ in Sections \ref{sctn:vol bounds and spectral dim infinite} and \ref{sctn:LLT}, but the results will hold in the resistance setting by exactly the same arguments.

As in the compact case, we can define the projection $p^{\infty}: \R \rightarrow \La^{\infty}$, which is almost surely continuous, and endow $\La^{\infty}$ with the measure $\nu^{\infty}$ which is defined to be the pushforward of Lebesgue measure on the real line to $\La^{\infty}$ via $p^{\infty}$.

We also have the following proposition, as a direct consequence of the scale invariance of the stable \Levy process.

\begin{proposition}[Scale invariance of $\Lai$]\label{prop:Lai scale inv}
For any $c>0$,
\[
(\Lai, c\d, \rho^{\infty}, c^{\alpha} \nu^{\infty}) \overset{(d)}{=} (\Lai, \d, \rho^{\infty}, \nu^{\infty}),
\]
where $\d$ here can be equal to either $d^{\infty}$ or $R^{\infty}$.
\end{proposition}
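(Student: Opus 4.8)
The plan is to realise the metric scaling at the level of the two coding processes and then push it through the construction of Section~\ref{sctn:construction of infinite stable looptrees}. Fix $c>0$ and set $\widetilde{\Xi}_t := c\,\Xi_{c^{-\alpha}t}$ for $t\in\R$. Since $X$ is spectrally positive $\alpha$-stable and $X'$ is spectrally negative $\alpha$-stable, each satisfies the scaling identity $(c\,Y_{c^{-\alpha}t})_{t\ge 0}\overset{(d)}{=}(Y_t)_{t\ge 0}$, and as $X$ and $X'$ are independent this lifts to the two-sided process, so $\widetilde{\Xi}\overset{(d)}{=}\Xi$. Consequently, if we denote by $\widetilde{\Lai}$ the infinite looptree obtained by running the construction with $\widetilde{\Xi}$ in place of $\Xi$ (with metric $\widetilde{d}^{\infty}$, resp. $\widetilde{R}^{\infty}$, measure $\widetilde{\nu}^{\infty}$ and root $[0]_{\widetilde{\Xi}}$), then $\widetilde{\Lai}$ has the same law as $(\Lai,\d,\rho^{\infty},\nu^{\infty})$; it therefore suffices to identify $\widetilde{\Lai}$, as a measured rooted metric space, with $(\Lai,\,c\,\d,\,\rho^{\infty},\,c^{\alpha}\nu^{\infty})$.

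First I would check that the time-change $t\mapsto c^{\alpha}t$ on $\R$ induces an exact $c$-homothety between the two looptrees. A jump of $\Xi$ at time $t$ of size $\Delta_t$ corresponds to a jump of $\widetilde{\Xi}$ at time $c^{\alpha}t$ of size $c\Delta_t$; the running infima satisfy $\widetilde{I}^{\infty}_{c^{\alpha}s,c^{\alpha}t}=c\,I^{\infty}_{s,t}$, whence also $\widetilde{x}^{\infty}_{c^{\alpha}s,c^{\alpha}t}=c\,x^{\infty}_{s,t}$, and $s\preceq t\iff c^{\alpha}s\preceq c^{\alpha}t$. Feeding these into the definitions of $\delta^{\infty}_t$, $r^{\infty}_t$ and into (\ref{eqn:infinite d R def}), and using that both $\delta^{\infty}_t(a,b)=\min\{|a-b|,\Delta_t-|a-b|\}$ and $r^{\infty}_t(a,b)=|a-b|(\Delta_t-|a-b|)/\Delta_t$ are homogeneous of degree one in $(|a-b|,\Delta_t)$ jointly, one obtains $\widetilde{d}^{\infty}(c^{\alpha}s,c^{\alpha}t)=c\,d^{\infty}(s,t)$ and $\widetilde{R}^{\infty}(c^{\alpha}s,c^{\alpha}t)=c\,R^{\infty}(s,t)$ for all $s,t\in\R$. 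In particular $s\sim_{\Xi}t\iff c^{\alpha}s\sim_{\widetilde{\Xi}}c^{\alpha}t$ (the relation being defined through $d^{\infty}$, and $R^{\infty}$ having the same zero set), so the map $\Psi_c([t]_{\Xi}):=[c^{\alpha}t]_{\widetilde{\Xi}}$ is a well-defined bijection $\Lai\to\widetilde{\Lai}$ with $\widetilde{\d}(\Psi_c(x),\Psi_c(y))=c\,\d(x,y)$, and $\Psi_c$ maps $\rho^{\infty}=p^{\infty}(0)$ to $[0]_{\widetilde{\Xi}}$.

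Next I would track the measure. Since $\nu^{\infty}=(p^{\infty})_*\mathrm{Leb}$ and $\widetilde{p}^{\infty}=\Psi_c\circ p^{\infty}\circ(c^{-\alpha}\,\cdot\,)$, the change of variables $s=c^{-\alpha}r$ gives $\widetilde{\nu}^{\infty}=c^{\alpha}(\Psi_c)_*\nu^{\infty}$. Hence $\Psi_c$ is a root-preserving bijection exhibiting
\[
(\Lai,\, c\,d^{\infty},\, \rho^{\infty},\, c^{\alpha}\nu^{\infty}) \;\;\overset{\Psi_c}{\cong}\;\; (\widetilde{\Lai},\, \widetilde{d}^{\infty},\, [0]_{\widetilde{\Xi}},\, \widetilde{\nu}^{\infty})
\]
as equivalence classes of measured rooted metric spaces, and identically with $R^{\infty}$ in place of $d^{\infty}$. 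Combining this with $\widetilde{\Xi}\overset{(d)}{=}\Xi$, which says the right-hand side has the law of $(\Lai,\d,\rho^{\infty},\nu^{\infty})$, yields the proposition.

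I expect the only delicate point to be the bookkeeping in the second paragraph: one must handle left and right limits at jump times carefully and keep track of the relabelling of jump points inside the sums $\sum_{s\prec u\preceq t}$ under the time-change, and one should record explicitly that $d^{\infty}$ and $R^{\infty}$ induce the same equivalence relation so that $\Lai$ and $\L^{\infty, R}_{\alpha}$ share the same underlying quotient. Beyond that nothing new is required: the statement reduces to degree-one homogeneity of the loop (pseudo)metrics together with the scale invariance of the stable \Levy process, so no new estimates enter.
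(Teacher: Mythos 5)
Your argument is correct and is exactly the route the paper intends: the paper states Proposition \ref{prop:Lai scale inv} without proof, as ``a direct consequence of the scale invariance of the stable \Levy process,'' and your write-up simply makes that consequence explicit via the time-change $t\mapsto c^{\alpha}t$, the degree-one homogeneity of $\delta^{\infty}_t$ and $r^{\infty}_t$, and the $c^{\alpha}$-scaling of Lebesgue measure. Nothing is missing.
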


%\begin{remark}
%As in \cite[Proposition 2.2]{RSLTCurKort}, it follows that both ${R}^{\infty}$ and ${d}^{\infty}$ are almost surely continuous pseudodistances on $\Lai$. Consequently, it also follows that $(\Lai, d^{\infty})$ and $(\Lai, R^{\infty})$ are almost surely Heine-Borel metric spaces, and hence almost surely fall into the separable space of Theorem \ref{thm:GH vague separable HB}.
%\end{remark}

We also record the following result, which arises as a direct consequence of Theorem \ref{thm:LLT}, \cite[Corollary 4.4]{RSLTCurKort} (which gives the same result in the compact case), and \cite[Theorem 8.1.9]{Burago} (which implies that this property is preserved in the limit).

\begin{corollary}\label{cor:length space}
Almost surely, $\Lai$ is a length space.
\end{corollary}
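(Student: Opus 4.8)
\textbf{Proof proposal for Corollary~\ref{cor:length space}.} Here $\Lai$ is understood with its geodesic metric $d^{\infty}$ (the resistance metric $R^{\infty}$ is a genuinely different matter, as a single loop is not a length space for $R^{\infty}$). The idea is the one indicated just before the statement: compact stable looptrees are almost surely length spaces by \cite[Corollary 4.4]{RSLTCurKort} (hence, being compact, geodesic, and by the scaling property of stable looptrees the same holds for $\Lal$ for every $\l$), and Theorem~\ref{thm:LLT} gives $(\Lal, d^{\l}, \nu^{\l}, \rho^{\l}) \to (\Lai, d^{\infty}, \nu^{\infty}, \rho^{\infty})$ in the Gromov--Hausdorff-vague topology, so the length-space property should pass to the limit. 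Because the convergence is only local, I cannot invoke \cite[Theorem 8.1.9]{Burago} as a black box; instead the plan is to verify the approximate-midpoint characterisation of length spaces (\cite[Theorem 2.4.16]{Burago}), using that $\Lai$ is complete since it is a Heine--Borel space.

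First I would fix the coupling. The Gromov--Hausdorff-vague space is Polish by Proposition~\ref{thm:GH vague separable HB}, so by Theorem~\ref{thm:LLT} and the Skorohod representation theorem we may assume the convergence holds almost surely; by the definition of the vague topology and Fubini, on a full-probability event it holds for Lebesgue-a.e.\ $r>0$ that $\dGHP(\B_r(\Lal),\B_r(\Lai)) \to 0$ as $\l \to \infty$. Fixing such an $r$, a standard property of the Gromov--Hausdorff-Prohorov topology (see e.g.\ \cite{AbDelHoschNoteGromov}) lets us isometrically embed the closed balls $\bar B^{\l}(\rho^{\l},r)$, $\l \in \N \cup \{\infty\}$, via maps $\phi_{\l}$ into a common metric space $(M,d_M)$ so that the embedded balls converge in Hausdorff distance and the roots $\phi_{\l}(\rho^{\l})$ converge in $M$.

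The heart of the argument is the midpoint step. Given $x,y \in \Lai$, choose a good radius $r$ with $r > 4\max\{d^{\infty}(\rho^{\infty},x), d^{\infty}(\rho^{\infty},y)\}$ and work in the associated $M$. Using Hausdorff convergence of the balls, pick $x_{\l}, y_{\l} \in \bar B^{\l}(\rho^{\l},r)$ with $\phi_{\l}(x_{\l}) \to \phi_{\infty}(x)$ and $\phi_{\l}(y_{\l}) \to \phi_{\infty}(y)$ in $M$; root convergence then forces $d^{\l}(\rho^{\l},x_{\l}), d^{\l}(\rho^{\l},y_{\l}) \le r/4$ for $\l$ large, hence $d^{\l}(x_{\l},y_{\l}) \le r/2$. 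Since $\Lal$ is geodesic it has a midpoint $z_{\l}$ of $x_{\l}$ and $y_{\l}$, with $d^{\l}(x_{\l},z_{\l}) = d^{\l}(y_{\l},z_{\l}) = \tfrac{1}{2}d^{\l}(x_{\l},y_{\l}) \le r/4$, so $d^{\l}(\rho^{\l},z_{\l}) \le r/2 < r$, i.e.\ $z_{\l} \in \bar B^{\l}(\rho^{\l},r)$. Since the embedded limit ball $\phi_{\infty}(\bar B^{\infty}(\rho^{\infty},r))$ is compact and the embedded balls converge to it in Hausdorff distance, $(\phi_{\l}(z_{\l}))$ has a subsequence converging in $M$ to $\phi_{\infty}(z)$ for some $z \in \bar B^{\infty}(\rho^{\infty},r) \subset \Lai$. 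Passing to the limit along this subsequence, and using that the distances in question are all realised in $M$, yields $d^{\infty}(x,z) = \tfrac{1}{2}d^{\infty}(x,y) = d^{\infty}(y,z)$. Thus every pair of points of $\Lai$ admits a midpoint, and since $\Lai$ is complete this shows $\Lai$ is a length space by \cite[Theorem 2.4.16]{Burago}.

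I expect the only real obstacle to lie in the bookkeeping of the midpoint step: one must choose the radius $r$ so that the midpoints $z_{\l}$ are guaranteed to remain in a ball on which Hausdorff convergence is available (otherwise no limit point $z$ can be extracted), and one must carry $x_{\l}, y_{\l}, z_{\l}$ together with their limits into a single space $M$ in which the relevant distances can be compared simultaneously — this is precisely what allows the midpoint identity to survive in the limit. The remaining ingredients (Skorohod representation, compactness of closed balls in $\Lai$, and the completeness-plus-midpoints criterion) are routine.
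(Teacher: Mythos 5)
Your proposal is correct and follows essentially the same route as the paper, which simply records the corollary as a direct consequence of Theorem \ref{thm:LLT}, the compact case \cite[Corollary 4.4]{RSLTCurKort}, and \cite[Theorem 8.1.9]{Burago}. The only difference is that you decline to use \cite[Theorem 8.1.9]{Burago} and instead re-prove the stability of the length property under pointed Gromov--Hausdorff limits via the midpoint criterion; that result is in fact stated for pointed limits of boundedly compact length spaces, so the black box does apply here, but your hands-on midpoint argument (including the correct care in choosing $r$ large enough that the midpoints $z_{\l}$ stay in a ball on which Hausdorff convergence is available) is a valid and self-contained substitute.
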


\section{Limit theorems}\label{sctn:LLT}
In this Section we prove Theorems \ref{thm:LLT} and \ref{thm:main scaling lim}, and other similar results.
\subsection{Proof of Theorem \ref{thm:LLT}}

%\begin{theorem}
%Let $\Lal, \La^{\infty}$ be as above. Then,
%\begin{equation}
%\Lal \overset{(d)}{\rightarrow} \La^{\infty}
%\end{equation}
%as $\l \rightarrow \infty$, with respect to the Gromov-Hausdorff-vague topology.
%\end{theorem}

Theorem \ref{thm:LLT} is proved by applying Proposition \ref{thm:compact cont inv princ} to the following convergence result. The \Levy processes are all normalised as in Section \ref{sctn:Levy background}. 

\begin{proposition}\label{prop:bridge conv}
Let $\Xbl$ be a spectrally positive, $\alpha$-stable \Levy bridge of lifetime $\l$, let $X$ be an $\alpha$-stable, spectrally positive \Levy process, and let $X'$ be an independent $\alpha$-stable, spectrally negative \Levy process. Also let $m_{\l}$ be the (almost surely unique) time at which $\Xbl$ attains its minimum. Then, for any $T_1, T_2>0$, letting $f$ and $g$ be any bounded continuous functions $D([0,T_i], \R) \rightarrow \R$, we have that
\begin{align*}
\E{ f\Big( (\Xbl_{t \wedge m_{\l}})_{t \in [0,T_1]} \Big) g\Big( (\Xbl_{((\l-t) \vee m_{\l})^-})_{t \in [0,T_2]}\Big)} &\rightarrow \E{f\Big((X_t)_{t \in [0,T_1]}\big)} \E{g\Big((X_t')_{t \in [0,T_2]}\Big)}
\end{align*}
as $\l \rightarrow \infty$.
\end{proposition}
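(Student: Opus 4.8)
\emph{Proof strategy.} The plan is to set aside the looptree picture and work directly with the \Levy bridge $\Xbl$, using only its (inhomogeneous) Markov property, the absolute continuity relation \eqref{eqn:RN deriv levy bridge}, and the scaling property of $X$. First I would remove the ``freezing'' in the two functionals: since $f$ and $g$ are bounded, it is enough to work on the event $\{m_\l > T_1\} \cap \{\l - m_\l > T_2\}$, on which $(\Xbl_{t \wedge m_\l})_{t \in [0,T_1]} = (\Xbl_t)_{t \in [0,T_1]}$ and $(\Xbl_{((\l-t) \vee m_\l)^-})_{t \in [0,T_2]} = (\Xbl_{(\l-t)^-})_{t \in [0,T_2]}$, provided one first checks that $\pr{m_\l \le T_1} \to 0$ and $\pr{\l - m_\l \le T_2} \to 0$ as $\l \to \infty$. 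For the former I would use the inclusion $\{m_\l \le T_1\} \subseteq \{\inf_{[0,\l]} \Xbl \ge -K\} \cup \{\sup_{[0,T_1]} |\Xbl| > K\}$: the first event has vanishing probability for each fixed $K$, because $\l^{-1/\alpha}\Xbl_{\l\,\cdot\,}$ is a lifetime-one stable bridge whose infimum is a.s.\ strictly negative, while the second has probability small uniformly in $\l$ once $K$ is large, by absolute continuity of $\Xbl$ on $[0,T_1]$ relative to $X$ with uniformly bounded Radon--Nikodym derivative (as quantified in the last step below); then let $K \to \infty$. The statement for $\l - m_\l$ follows the same way after time-reversal. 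So the task reduces to proving
\[
\E{ f\big((\Xbl_t)_{t \in [0,T_1]}\big)\, g\big((\Xbl_{(\l-t)^-})_{t \in [0,T_2]}\big)} \to \E{f\big((X_t)_{t \in [0,T_1]}\big)}\,\E{g\big((X'_t)_{t \in [0,T_2]}\big)}.
\]

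The next step is to condition on the pair $\big(\Xbl_{T_1},\,\Xbl_{\l-T_2}\big)$. By the Markov property of the bridge, the path fragments over $[0,T_1]$, $[T_1, \l-T_2]$ and $[\l-T_2,\l]$ become independent given this pair; moreover, since the Radon--Nikodym factor in \eqref{eqn:RN deriv levy bridge} is measurable with respect to the conditioning, the first fragment given $\Xbl_{T_1}=a$ is a spectrally positive $\alpha$-stable bridge from $0$ to $a$ over $[0,T_1]$, a law that does not depend on $\l$. Hence $F(a) := \E{f((\Xbl_t)_{t\le T_1}) \mid \Xbl_{T_1}=a}$ is an $\l$-independent bounded function, namely the $f$-average over such a bridge. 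Similarly, given $\Xbl_{\l-T_2}=b$ the last fragment is a spectrally positive bridge from $b$ to $0$ over $[0,T_2]$; passing to $(\Xbl_{(\l-t)^-})_{t\le T_2}$ keeps us in $D([0,T_2],\R)$ and flips the signs of all jumps, so by \Levy time-reversal duality it is a spectrally negative $\alpha$-stable bridge from $0$ to $b$ over $[0,T_2]$, and correspondingly $G(b):=\E{g((\Xbl_{(\l-t)^-})_{t\le T_2})\mid \Xbl_{\l-T_2}=b}$ is an $\l$-independent bounded function. Writing $\mu_\l$ for the law of $\big(\Xbl_{T_1},\,\Xbl_{\l-T_2}\big)$, the left-hand side above equals $\int F(a)\,G(b)\,\mu_\l(da,db)$.

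It then remains to identify the limit of $\mu_\l$. Applying \eqref{eqn:RN deriv levy bridge} at time $\l-T_2$ gives the explicit density $\mu_\l(da,db) = p_{T_1}(a)\,p_{\l-T_1-T_2}(b-a)\,p_{T_2}(-b)\,p_\l(0)^{-1}\,da\,db$, and by the scaling property of $X$ together with the continuity and boundedness of $p_1$ (recall $\|p_1\|_\infty < \infty$), the ratio $p_{\l-T_1-T_2}(b-a)/p_\l(0)$ tends to $1$ pointwise and is bounded, uniformly in $\l \ge 2(T_1+T_2)$ and in $a,b$, by $2^{1/\alpha}\|p_1\|_\infty/p_1(0)$; the same bound controls the Radon--Nikodym derivatives used in the first step. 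Thus the densities of $\mu_\l$ converge pointwise to $p_{T_1}(a)\,p_{T_2}(-b)$ while remaining dominated by a fixed integrable function, so by dominated convergence $\mu_\l$ converges in total variation to the product of the law of $X_{T_1}$ and the law of $X'_{T_2}$ (the latter having density $b\mapsto p_{T_2}(-b)$, in agreement with $X'_{T_2} \overset{(d)}{=} -X_{T_2}$). Since $F$ and $G$ are bounded, $\int F G\,d\mu_\l \to \E{F(X_{T_1})}\,\E{G(X'_{T_2})}$, and disintegrating the law of $(X_t)_{t\le T_1}$ over $X_{T_1}$ and of $(X'_t)_{t\le T_2}$ over $X'_{T_2}$ turns the right-hand side into $\E{f((X_t)_{t\le T_1})}\,\E{g((X'_t)_{t\le T_2})}$, which together with the first step finishes the argument. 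The only genuinely substantial point is the asymptotic independence of the two ends of a long bridge, which the explicit form of $\mu_\l$ makes transparent once one has the scaling of the stable transition density and $\|p_1\|_\infty < \infty$ in hand; the one thing to be careful with is the left limits in $(\Xbl_{(\l-t)^-})$, which are precisely what keep this a \cadlag path (with negative jumps) and hence force the limiting process to be spectrally negative.
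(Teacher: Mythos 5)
Your argument is correct and follows essentially the same route as the paper's proof: first remove the freezing at $m_{\l}$ by showing $m_{\l}$ and $\l - m_{\l}$ tend to infinity in probability, then express the remaining expectation through the absolute continuity of the two bridge halves with respect to $(X, X')$ and show the Radon--Nikodym factor $p_{\l - T_1 - T_2}(\cdot)/p_{\l}(0)$ tends to $1$ using the scaling identity $p_t(x) = t^{-1/\alpha}p_1(xt^{-1/\alpha})$ together with the continuity and boundedness of $p_1$. Your repackaging via conditioning on the endpoints $(\Xbl_{T_1}, \Xbl_{\l - T_2})$ and total-variation convergence of their joint law is only a cosmetic variant of the paper's step of splitting the expectation on the event $\{|X_{T_1}| \vee |X'_{T_2}| \leq (\l - T_1 - T_2)^{\frac{1}{\alpha}-\epsilon}\}$.
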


Before we prove the proposition, we show how we can apply Proposition \ref{thm:compact cont inv princ} to the functions $X$ and $X'$ on compact time intervals to prove Theorem \ref{thm:LLT}.

\begin{proof}[Proof of Theorem \ref{thm:LLT}, assuming Proposition \ref{prop:bridge conv}]
We need to show that for Lebesgue almost every $r>0$, 
\begin{equation}\label{eqn:R ball LLT}
\mathcal{B}_r(\Lal) \overset{(d)}{\rightarrow} \mathcal{B}_r(\La^{\infty}).
\end{equation}
To this end, take some $r>0$. We define two times $t_g(r)$ and $t_d(r)$ by
\begin{align*}
t_g(r) = \inf \{ s \geq 0: \Delta_{-s} \geq 4r, \delta^{\infty}_{-s}(x^{\infty}_{-s,0}) \geq r \}, \hspace{10mm} t_d(r) = \inf \{s \geq 0: \Xi_s \leq \Xi_{{-t_g(r)}^-} \}.
\end{align*}
The purpose of defining $t_g(r)$ and $t_d(r)$ like this is that $X^{\infty}$ codes a compact looptree on the interval $[-t_g(r), t_d(r)]$, and that $\mathcal{B}_r(\La^{\infty})$ is contained in this.

Note that $t_g(r)$ is $\bPb$-almost surely finite, since letting $L_s$ denote the local time spent by $(\Xi_{-t^+})_{t \geq 0}$ at its infimum by time $s$, normalised so that $\E{e^{\lambda \Xi_{L^{-1}(t)}}} = e^{-\lambda^{\alpha - 1}t}$, we have from Proposition \ref{prop:descent PP} that the measure
\[
\sum_{s \in J} \delta_{(L_s, \Delta_s)}
\]
is a Poisson point measure of intensity $dl \cdot x \mathbb{1} \{ x^{-\alpha} \geq 4r \} dx$, where $J$ is the set $\{ s \geq 0: \Delta_{-s} \geq 4r, \delta^{\infty}_{-s}(x^{\infty}_{-s,0}) \geq r \}$. Moreover, by \cite[Chapter VIII, Lemma 1]{BertoinLevy} we know that $L^{-1}$ is a stable subordinator of parameter $1-\frac{1}{\alpha}$, and hence $L_t \rightarrow \infty$ $\bPb$-almost surely as $t \rightarrow \infty$. It follows that $t_g(r)$ is $\bPb$-almost surely finite for all $r>0$. Similarly, since $\liminf_{t \rightarrow \infty} \Xi_t = -\infty$ $\bPb$-almost surely, $t_d(r)$ is also $\bPb$-almost surely finite for all $r>0$.

For notational convenience, we write $t_g = t_g(r)$ and $t_d = t_d(r)$ from now on.

The compact looptree $\Lal$ is coded by an excursion $\Xl$ of length $\l$. To write this as a two-sided construction as described in the previous section, choose $U_{\l}$ uniform on $[0, \l]$, and define a function $\Xbl: [-U_{\l}, \l-U_{\l}]$ by 
\[
\Xbl_t = \Xl_{t+U_{\l}} - \Xl_{U_{\l}}
\]
for all $t \in [-U_{\l}, \l-U_{\l}]$. Then $\Xbl$ codes $\Lal$. Moreover, we can extend $\Xbl$ to $\R$ by taking it to be constant outside of $[-U_{\l}, \l-U_{\l}]$, and by Proposition \ref{prop:bridge conv}, it is then the case that $(X^{\br, \l}_t)_{t \in [-t_g - 1, t_d + 1]} \overset{(d)}{\rightarrow} (X^{\infty}_t)_{t \in [-t_g - 1, t_d + 1]}$.

Since the interval $[-t_d-1, t_g+1]$ is $\bPb$-almost surely compact, and the space of \cadlag functions with compact support endowed with the Skorohod-$J_1$ topology is separable, it follows by the Skorohod Representation Theorem and Proposition \ref{prop:bridge conv} that there exists a probability space $(\Omega, \F, \mathbb{P})$ on which $(X^{\br, \l}_t)_{t \in [-t_g - 1, t_d + 1]} \rightarrow (X^{\infty}_t)_{t \in [-t_g - 1, t_d + 1]}$ almost surely. We henceforth work in this space.

For each $\l>0$, let $\lambda_{\l}$ be the Skorohod homeomorphism (defined pointwise on $\Omega$) from $[-t_g - 1, t_d + 1] \rightarrow [-t_g - 1, t_d + 1]$ that minimises the Skorohod distance between these $X^{\br, \l}$ and $X^{\infty}$ on this interval. Then set $t_d^{\l} = \lambda_{\l}(t_d)$, and similarly $t_g^{\l} = \lambda_{\l}(t_g)$.

The correspondence consisting of all pairs $[t, \lambda_{\l}(t)]$ for $t \in [-t_g, t_d]$ is a subset of the correspondence used to minimise the Gromov-Hausdorff distance in the proof of Proposition \ref{thm:compact cont inv princ}, so letting $\L_{\alpha}^{\l,r} = p^{\l}((X^{\br, {\l}}_t)_{t \in [-t^{\l}_g, t^{\l}_d]})$ for each $\l>0$ and $\L^{\infty, r}_{\alpha} = p^{\infty}((X_t)_{t \in [-t_g, t_d]})$, it follows from Proposition \ref{thm:compact cont inv princ} that $d_{GHP} (\L_{\alpha}^{\l,r}, \L^{\infty, r}_{\alpha}) \rightarrow 0$ as $\l \rightarrow \infty$. Since $\mathcal{B}_r(\Lal) \subset \L_{\alpha}^{\l,r}$ and $\mathcal{B}_r(\La^{\infty}) \subset \L^{\infty, r}_{\alpha}$, it thus follows that $\mathcal{B}_r(\Lal) \overset{(d)}{\rightarrow} \mathcal{B}_{r'}(\La^{\infty})$ for Lebesgue almost every $r'<r$. By taking a countable sequence $r_n \rightarrow \infty$ we therefore deduce the result for Lebesgue almost-every $r>0$, and the theorem follows.
\end{proof}

We now conclude the proof of Theorem \ref{thm:LLT} by proving Proposition \ref{prop:bridge conv}.

\begin{proof}[Proof of Proposition \ref{prop:bridge conv}]
The key point is that the two sides of the bridge have a density with respect to the laws of $X$ and $X'$, in that for any $f, g$ as in the statement of the proposition, and any $\l > T_1 + T_2$, it follows from a minor modification of (\ref{eqn:RN deriv levy bridge}) that
\begin{align}\label{eqn:density bridge}
\begin{split}
&\E{ f\Big( (\Xbl_{t})_{t \in [0,T_1]} \Big) g\Big( (\Xbl_{(\l-t)^-})_{t \in [0,T_2]}\Big)} \\
&\hspace{2cm}= \E{f\Big((X_t)_{t \in [0,T_1]}\Big) g\Big((X_{t}')_{t \in [0,T_2]}\Big) \frac{p_{\l - T_1 - T_2}(X_{T_2^-}'  -X_{T_1})}{p_{\l}(0)}},
\end{split}
\end{align}
where $p_t(\cdot)$ here denotes the transition density of $X$.
%This can be proved by twice applying the result of \cite[Chapter VIII]{BertoinLevy} (written there for $\l = 1$) that 
%\[
%\E{ f\big((\Xbl_t)_{t \in [0,T_1]}\big)} = \E{f\big((X_t)_{t \in [0,T_1]}\big)\frac{p_{\l-{T_1}}(-X_{T_1})}{p_{\l}(0)}}
%\]
%whenever $T_1 < \l$.
The proof then essentially just uses the fact that $m_{\l}$ and $\l - m_{\l}$ tend to infinity in probability as $\l \rightarrow \infty$, and then the fact that with high probability, $X_{T_1}$ and $X'_{T_2}$ will also not be too large. There are two main steps. We first note that the quantity
\begin{align*}
\E{ f\Big( (\Xbl_{t \wedge m_{\l}})_{t \in [0,T_1]} \Big) g\Big( (\Xbl_{((\l-t) \vee m_{\l})^-})_{t \in [0,T_2]}\Big)} - \E{ f\Big( (\Xbl_{t})_{t \in [0,T_1]} \Big) g\Big( (\Xbl_{(\l-t)^-})_{t \in [0,T_2]}\Big)}
\end{align*}
is upper bounded by
\begin{align*}
&\ \ \ 2 ||f||_{\infty} ||g||_{\infty} \Bigg( \pr{m_1 < \frac{T_1}{\l}} + \pr{m_1 > 1 - \frac{T_2}{\l}} \Bigg),
\end{align*}
which converges to $0$ as $\l \rightarrow \infty$. This allows us to apply (\ref{eqn:density bridge}) as follows. First, note that it follows from the scaling relation $p_t(x) = t^{\frac{-1}{\alpha}}p_1(xt^{\frac{-1}{\alpha}})$ that
\begin{align*}
\frac{p_{\l - T_1 - T_2}(X_{T_2}'  -X_{T_1})}{p_{\l}(0)} = \Bigg( \frac{\l}{\l - T_1 - T_2}\Bigg)^{\frac{1}{\alpha}} \frac{p_1\big( (\l - T_1 - T_2)^{\frac{-1}{\alpha}}(X_{T_2}'  -X_{T_1}) \big) }{p_1(0)}.
\end{align*}
We denote this latter quantity by $p(\l, X, X', T_1, T_2)$, so that
\begin{align*}
\E{ f\Big( (\Xbl_{t})_{t \in [0,T_1]} \big) g\Big( (\Xbl_{(\l-t)^-})_{t \in [0,T_2]}\big)} - \E{f\Big((X_t)_{t \in [0,T_1]}\Big) g\Big((X_t')_{t \in [0,T_2]}\Big)} \hspace{2cm} & \\
=\E{f\Big((X_t)_{t \in [0,T_1]}\Big) g\Big((X_{t}')_{t \in [0,T_2]}\Big) \Bigg( p(\l, X, X', T_1, T_2) - 1 \Bigg)}&.
\end{align*}
Taking some $0 < \epsilon \ll \frac{1}{\alpha}$, we then decompose on the event $\{ |X_{T_1}| \vee |X_{T_2}'| \leq (\l - T_1 - T_2)^{\frac{1}{\alpha} - \epsilon}\}$ and its complement by writing the latter quantity as the sum
\begin{small}
\begin{align}\label{eqn:bridge exp two terms}
\begin{split}
&\E{f\Big((X_t)_{t \in [0,T_1]}\Big) g\Big((X_{t}')_{t \in [0,T_2]}\Big)\Bigg(p(\l, X, X', T_1, T_2) - 1 \Bigg)\mathbb{1} \{ |X_{T_1}| \vee |X_{T_2}'| \leq (\l - T_1 - T_2)^{\frac{1}{\alpha} - \epsilon}\}} \\
&+ \E{f\Big((X_t)_{t \in [0,T_1]}\Big) g\Big((X_{t}')_{t \in [0,T_2]}\Big)\Bigg( p(\l, X, X', T_1, T_2) - 1 \Bigg) \mathbb{1} \{ |X_{T_1}| \vee |X_{T_2}'| > (\l - T_1 - T_2)^{\frac{1}{\alpha} - \epsilon}\} }.
\end{split}
\end{align}
\end{small}
We deal with each of these two terms separately. For the first term, note that by continuity of the transition density \cite[Section VIII.1]{BertoinLevy}, 
\[
\sup_{|x| \leq 2(\l - T_1 - T_2)^{\frac{1}{\alpha} - \epsilon}} \Big\{ p_1\Big(x(\l - T_1 - T_2)^{\frac{-1}{\alpha}}\Big) \Big\} \rightarrow p_1(0)
\]
as $\l \rightarrow \infty$. We apply this by writing:
\begin{align*}
&\Big|\Big|\Bigg(p(\l, X, X', T_1, T_2) - 1 \Bigg)\mathbb{1} \{ |X_{T_1}| \vee |X_{T_2}'| \leq (\l - T_1 - T_2)^{\frac{1}{\alpha} - \epsilon}\} \Big|\Big|_{\infty} \\
&\hspace{1.8cm} \leq \frac{1}{p_1(0)} \Bigg( \Bigg| \Big( \Big( \frac{\l}{\l - T_1 - T_2}\Big)^{\frac{1}{\alpha}} - 1 \Big) \sup_{|x| \leq 2(\l - T_1 - T_2)^{\frac{1}{\alpha} - \epsilon}} \Big\{ p_1\Big({x}{(\l - T_1 - T_2)^{\frac{-1}{\alpha}}}\Big) \Big\} \Bigg| \\
&\hspace{4.8cm} \ \ \ \ + \Bigg| \sup_{|x| \leq 2(\l - T_1 - T_2)^{\frac{1}{\alpha} - \epsilon}} \Big\{ p_1\Big({x}{(\l - T_1 - T_2)^{\frac{-1}{\alpha}}}\Big) \Big\} - p_1(0) \Bigg| \Bigg),
\end{align*}
from which we deduce that the first term in (\ref{eqn:bridge exp two terms}) converges to zero as $\l \rightarrow \infty$, since $f$ and $g$ are also bounded. To deal with the second term, we upper bound it by
\begin{align*}
%&\E{f\Big((X_t)_{t \in [0,T_1]}\Big) g\Big((X'_{t})_{t \in [0,T_2]}\Big)\Bigg( p(\l, X, X', T_1, T_2) - 1 \Bigg) \mathbb{1} \{ |X_{T_1}| \vee |X_{T_2}'| > (\l - T_1 - T_2)^{\frac{1}{\alpha} - \epsilon}\} } \\
%&\hspace{50mm} \leq 
||f||_{\infty} ||g||_{\infty} \frac{1}{p_1(0)} ||p_1||_{\infty} \pr{|X_{T_1}| \vee |X'_{T_2}| > (\l - T_1 - T_2)^{\frac{1}{\alpha} - \epsilon}},
\end{align*}
which also vanishes as $\l \rightarrow \infty$.

It therefore follows by an application of the triangle inequality and the bounds above that

\begin{footnotesize}
\begin{align*}
&\E{ f\Big( (\Xbl_{t \wedge m_{\l}})_{t \in [0,T_1]} \big) g\Big( (\Xbl_{((\l-t) \vee m_{\l})^-})_{t \in [0,T_2]}\Big)} - \E{f\Big((X_t)_{t \in [0,T_1]}\big) g\Big((X'_{t})_{t \in [0,T_2]}\Big)}\\
&\hspace{5mm}\leq \E{ f\Big( (\Xbl_{t \wedge m_{\l}})_{t \in [0,T_1]} \Big) g\Big( (\Xbl_{((\l-t) \vee m_{\l})^-})_{t \in [0,T_2]}\Big)} - \E{ f\Big( (\Xbl_{t})_{t \in [0,T_1]} \Big) g\Big( (\Xbl_{(\l-t)^-})_{t \in [0,T_2]}\Big)} \\
&\hspace{8mm} + \E{ f\Big( (\Xbl_{t})_{t \in [0,T_1]} \big) g\Big( (\Xbl_{(\l-t)^-})_{t \in [0,T_2]}\big)} - \E{f\Big((X_t)_{t \in [0,T_1]}\Big) g\Big((X_t')_{t \in [0,T_2]}\Big)} \hspace{2cm} \\
&\hspace{5mm} \rightarrow 0
\end{align*}
\end{footnotesize}
as $\l \rightarrow \infty$, as claimed. We can then factorise the final term by independence of $X$ and $X'$.
\end{proof}

\subsection{Scaling limits of infinite discrete looptrees}\label{sctn:scaling lims}
In this section, we prove that infinite stable looptrees are scaling limits of infinite discrete looptrees. We start by proving the following proposition, from which Theorem \ref{thm:main scaling lim} will follow. Note the analogy with Proposition \ref{thm:compact disc inv princ res}, and \cite[Theorem 4.1]{RSLTCurKort}.

Given an infinite critical discrete tree $T_{\infty}$, we note that it can be coded by a two-sided Lukasiewicz path indexed by $\Z$ in the same way that an infinite critical continuum tree can be coded by a two-sided \Levy process.

As introduced in Section \ref{sctn:infinite trees bckgrnd}, the infinite discrete looptrees defined by Bj\"ornberg and Stef\'ansson in \cite{BjornStef} are formed by first taking a critical offspring distribution $\xi$ in the domain of attraction of an $\alpha$-stable law, and then forming Kesten's tree $\Tai$ as outlined in Section \ref{sctn:infinite trees bckgrnd}. This tree has a unique infinite spine of vertices with a size-biased version of the offspring distribution. The authors define their looptree as $\Loopp (\Tai)$. Here $\Loopp$ is an operation very similar to $\Loop$, obtained as in Figure \ref{fig:BSLoop}, and $d_{GH}(\Loop (\Tai), \Loopp (\Tai)) \leq 2$ (see \cite[Proof of Theorem 4.1]{RSLTCurKort}). We let $L_{\alpha}^{\infty, 1} = \Loopp(\Tai)$.

\begin{figure}[h]
\includegraphics[width=14cm, height=4cm]{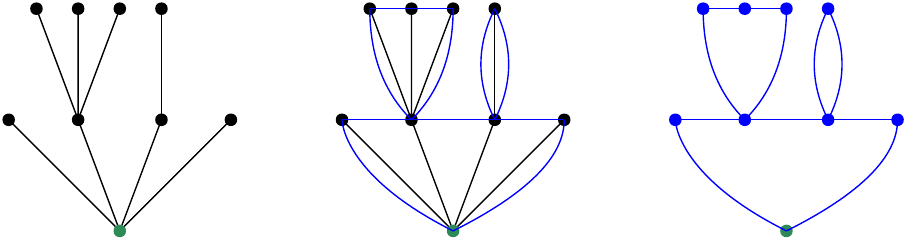}
\centering
\caption{A tree $T$ and $\Loop ' (T)$, for the same underlying tree as in Figure \ref{fig:disc looptree intro}.}\label{fig:BSLoop}
\end{figure}

\begin{remark}
In various places in other literature, the notation for $\Loop$ and $\Loopp$ is interchanged. We have used the notation of \cite{RSLTCurKort} since our paper follows on more naturally from the results there.
\end{remark}

We also make one further definition. Given an infinite critical tree $T_{\infty}$ and $R>0$, we define $\Loop (T_{\infty})^R$ to be the sublooptree of $\Loop (T_{\infty})$ obtained by letting $L$ be the first loop on the infinite loopspine that is of length greater than $4R$, and such that if we let $l_1$ and $l_2$ be the lengths of the two segments of this loop obtained by splitting the loop at the two points where it intersects its neighbouring loops in the infinite loopspine, we have that $\frac{l_1}{l_1 + l_2} \in [\frac{1}{4}, \frac{3}{4}]$. We then let $\Loop (T_{\infty})^R$ be the subset of $\Loop (T_{\infty})$ obtained by removing all descendants of all points in $L$ (but not removing $L$ itself). This definition is the discrete analogue to that of $\La^{\infty,R}$ given in the proof of Theorem \ref{thm:LLT}, and is useful since $\mathcal{B}_R(\Loop (T_{\infty})) \subset \Loop (T_{\infty})^R$, but $\Loop (T_{\infty})^R$ has the advantage of being a full looptree, whereas $\mathcal{B}_R(\Loop (T_{\infty}))$ may contain incomplete loops.

\begin{proposition}\label{prop:inf conv disc loop}
Let $(\tau_n)_{n=1}^{\infty}$ be a sequence of infinite critical trees (in the sense of Kesten) with corresponding two-sided Lukasiewicz paths $(W^n)_{n = 1}^{\infty}$, and let $\d_n$ denote either the shortest-distance or effective resistance metric on $\Loop(\tau_n)$. Additionally let $\nu_n$ be the measure that gives mass $1$ to each vertex in $\Loop(\tau_n)$, and let $\rho_n$ be the root of ${\Loop}(\tau_n)$, defined to be the vertex representing the edge joining the root of $\tau_n$ to its first child. Suppose that $(C_n)_{n=1}^{\infty}$ is a sequence of positive real numbers such that  
\begin{enumerate}[(i)]
\item For any compact interval $K \subset \R$, $\Big( \frac{1}{C_n} W^n_{\lfloor n t \rfloor} \Big)_{t \in K} \overset{(d)}{\rightarrow} (X^{\infty}_t)_{t \in K}$ as $n \rightarrow \infty$,
\item $\frac{1}{C_n} \textsf{Height}(\Tree (\Loop (\tau_n)^{rC_n})) \overset{\mathbb{P}}{\rightarrow} \ 0$ as $n \rightarrow \infty$, for all $r > 0$, where \Tree \ is the inverse operation of \Loop, and $\Loop (\tau_n)^R$ is defined above.
\end{enumerate}
Then
\[
\Big({\Loop}(\tau_n), \frac{1}{C_n}\d_n, \frac{1}{n}\nu_n, \rho_n \Big) \overset{(d)}{\rightarrow} \Big( \Lai, \d^{\infty}, \nu^{\infty}, \rho^{\infty} \Big)
\]
as $n \rightarrow \infty$ with respect to the Gromov-Hausdorff vague topology, where $\d^{\infty}$ can denote either the shortest-distance or effective resistance metric on $\Lai$, as appropriate. Moreover, the result also holds on replacing $\Loop$ by $\Loopp$ in all the statements above.
\end{proposition}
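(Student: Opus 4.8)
The plan is to mirror the proof of Theorem \ref{thm:LLT}, replacing the continuum invariance principle of Proposition \ref{thm:compact cont inv princ} by the discrete one of Proposition \ref{thm:compact disc inv princ res}. It suffices to show that for Lebesgue-almost every $r>0$ we have $(\mathcal{B}_r(\Loop(\tau_n)), C_n^{-1}\d_n, n^{-1}\nu_n, \rho_n) \overset{(d)}{\rightarrow} (\mathcal{B}_r(\Lai), \d^{\infty}, \nu^{\infty}, \rho^{\infty})$ in the pointed Gromov--Hausdorff--Prohorov topology, since this is exactly the content of Gromov--Hausdorff-vague convergence. First I would localise: fixing $r$, recall the continuum cutoff times $t_g(r), t_d(r)$ from the proof of Theorem \ref{thm:LLT}, which are $\mathbf{P}$-almost surely finite and such that $\Xi$ codes a compact looptree $\La^{\infty,r} = p^{\infty}((\Xi_t)_{t \in [-t_g(r), t_d(r)]})$ with $\mathcal{B}_r(\Lai) \subset \La^{\infty,r}$ and $\nu^{\infty}(\La^{\infty,r}) > 0$ almost surely. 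On the discrete side, $\Loop(\tau_n)^{rC_n}$ is, by construction, a finite looptree containing the radius-$rC_n$ ball of $\Loop(\tau_n)$, coded by the restriction of the two-sided Lukasiewicz path $W^n$ to the integer interval $[-t_g^{n}, t_d^{n}]$ delimited by the cutoff loop, read relative to the root $\rho_n$.

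Next I would set up a Skorohod coupling: by condition (i) and separability of the Skorohod space of \cadlag functions on a compact interval, we may pass to a probability space on which $(C_n^{-1}W^n_{\lfloor nt\rfloor})_{t \in [-t_g(r)-1,\, t_d(r)+1]} \to (\Xi_t)_{t \in [-t_g(r)-1,\, t_d(r)+1]}$ almost surely in the $J_1$-topology. Since the geometric criteria defining the cutoff loop (a jump of size $\geq 4r$, with the two incident arcs in a proportion lying in $[\tfrac14,\tfrac34]$) are almost surely non-degenerate for $\Xi$, the times $t_g(r), t_d(r)$ are almost surely continuity points of $\Xi$, and composing with the Skorohod homeomorphisms that realise the $J_1$-distance gives $C_n^{-1}t_g^{n} \to t_g(r)$ and $C_n^{-1}t_d^{n} \to t_d(r)$ almost surely; in particular the rescaled sub-path coding $\Loop(\tau_n)^{rC_n}$ converges to the function coding $\La^{\infty,r}$, and $|\Loop(\tau_n)^{rC_n}| \to \infty$ almost surely.

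Then I would run the argument of Proposition \ref{thm:compact disc inv princ res} pathwise on this coupling, applied to the finite looptrees $\Loop(\tau_n)^{rC_n}$: condition (i) (restricted to the cutoff interval) supplies the Lukasiewicz convergence, while condition (ii) is precisely the hypothesis $C_n^{-1}\textsf{Height}(\Tree(\Loop(\tau_n)^{rC_n})) \overset{\mathbb{P}}{\to} 0$ needed to kill the height-correction term appearing in \eqref{eqn:height correction exp}, so that rescaled distances and resistances converge to those coded by $\Xi$ on $[-t_g(r), t_d(r)]$. This yields $(\Loop(\tau_n)^{rC_n}, C_n^{-1}\d_n, n^{-1}\nu_n, \rho_n) \overset{(d)}{\rightarrow} (\La^{\infty,r}, \d^{\infty}, \nu^{\infty}, \rho^{\infty})$ in the Gromov--Hausdorff--Prohorov topology, the measure part following as in \cite[Proposition 4.6]{ArchBMCompactLooptrees}; restricting to closed balls of radius $r'<r$ gives the desired ball convergence for almost every $r'<r$, and letting $r\to\infty$ along a countable sequence completes the $\Loop$ case. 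For the $\Loopp$ version, since $d_{GH}(\Loop(\tau_n), \Loopp(\tau_n)) \leq 2$ (see the discussion preceding the proposition and \cite[Proof of Theorem 4.1]{RSLTCurKort}), after rescaling by $C_n^{-1}$ the two spaces lie within $2C_n^{-1}$, which tends to $0$, of each other in the pointed Gromov--Hausdorff distance, hence within $o(1)$ Gromov--Hausdorff-vaguely, and their counting measures agree up to a negligible proportion on each ball, so the limit is unchanged.

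The main obstacle is the convergence of the randomly located cutoff loop: one must verify that the geometric selection criteria defining $\Loop(\tau_n)^{rC_n}$ and $\La^{\infty,r}$ are stable under $J_1$ convergence and that $t_g(r), t_d(r)$ are continuity points of $\Xi$, so that the discrete cutoff times rescale to the continuum ones; and one must check carefully that Proposition \ref{thm:compact disc inv princ res}, although stated for a deterministic sequence of trees, may legitimately be applied pathwise along the Skorohod coupling to the random sequence $\Loop(\tau_n)^{rC_n}$, whose sizes tend to infinity almost surely and whose rescaled Lukasiewicz paths and heights behave as required.
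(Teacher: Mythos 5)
Your proposal follows essentially the same route as the paper: localise via the cutoff times $t_g(r), t_d(r)$, pass to a Skorohod coupling where the rescaled two-sided Lukasiewicz paths converge almost surely on the compact window, transport the cutoff times through the Skorohod homeomorphisms, apply Proposition \ref{thm:compact disc inv princ res} pathwise (with hypothesis (ii) supplying the height condition), and handle $\Loopp$ via the bound $d_{GH}(\Loop(\tau_n), \Loopp(\tau_n)) \leq 2$. The only slight imprecision is the claim that $t_g(r)$ is a continuity point of $\Xi$ (the time $-t_g(r)$ is by definition a jump time), but this is immaterial since, as in the paper, the discrete cutoff times are defined as images under the Skorohod homeomorphisms and so converge automatically.
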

\begin{proof}
We start by proving the result for \Loop. We will prove the result with $\d=d$ and note that the corresponding result for $\d=R$ follows by the same arguments. The proof is again a consequence of Proposition \ref{thm:compact disc inv princ res}, given which, the proof is almost identical to the proof of Theorem \ref{thm:LLT} (i.e. by defining an increasing sequence of sublooptrees that exhaust the whole space, to each of which we then apply Proposition \ref{thm:compact disc inv princ res}), so we omit the details. As we did there, take $r>0$, and define two times $t_g(r)$ and $t_d(r)$ by
\begin{align*}
t_g(r) &= \inf \{ s \geq 0: \Delta_{-s} \geq 4r, \delta_{-s}(x_{-s}^0) \geq r \}, \\
t_d(r) &= \inf \{s \geq 0: {X}^{\infty}_s \leq {X}^{\infty}_{{-t_g(r)}^-} \}.
\end{align*}
It then follows by the Skorohod Representation Theorem that there exists a probability space $(\Omega, \mathcal{F}, \mathbb{P})$ upon which $(\frac{1}{C_n}W^n_{nt})_{-(t_g+1) \leq t \leq t_d+1} \rightarrow (X^{\infty})_{-(t_g+1) \leq t \leq t_d+1}$ almost surely with respect to the Skorohod-$J_1$ topology. As in the proof of Theorem \ref{thm:LLT}, for each $n \in \N$ let $\lambda_{n}$ be the Skorohod homeomorphism $[-t_g - 1, t_d + 1] \rightarrow [-t_g - 1, t_d + 1]$ that minimises the Skorohod-$J_1$ distance between these two functions, and set $t_d^{n} = \lambda_{n}(t_d)$, and similarly $t_g^{n} = \lambda_{n}(t_g)$.

By repeating the arguments of the proof of Theorem \ref{thm:LLT}, and noting that condition $(ii)$ above ensures that condition $(ii)$ of Proposition \ref{thm:compact disc inv princ res} is satisfied, we deduce that the looptrees coded by $(\frac{1}{C_n}W^n_{nt})_{-t^n_g \leq t \leq t^n_d}$ converge to the looptree coded by $(X^{\infty})_{t \geq 0}$. The result then follows as in the proof of Theorem \ref{thm:LLT}.

To prove the same result for \Loop$'$ in place of \Loop, note that since $d_{GH}(\Loop (\Tai), \Loopp (T^{\alpha}_\infty)) \leq 2$, the Gromov-Hausdorff convergence of Proposition \ref{thm:compact disc inv princ res} holds with $\Loop (\tau_n)$ replaced by $\Loopp (\tau_n)$, and the Prohorov convergence of measures of that proposition holds by the exactly the same arguments. As a consequence, we can just repeat exactly the same proof for \Loop$'$.
\end{proof}

In particular, the result applies taking $\tau_n = \Tai$ for all $n$, and $C_n = a_n$. In this case, $\frac{1}{C_n} \textsf{Height}(\Tree (\Loop (\tau_n) (rC_n)))$ will be of order $r^{\alpha - 1}n^{-\frac{2-\alpha}{\alpha}}L(n)$ for some slowly-varying function $L$, so point (ii) of Proposition \ref{prop:inf conv disc loop} holds by an appplication of Markov's inequality. We therefore deduce both Theorem \ref{thm:main scaling lim}, and Theorem \ref{thm:BS looptree conv} below, as a corollary.

\begin{theorem}\label{thm:BS looptree conv}
Take $\Loopp(\Tai)$ as above, with $\nu'$ the measure on $\Loopp(\Tai)$ such that $\nu'(x) = 1$ for all $x \in \Loopp(\Tai)$. Then
\[
(\Loopp(\Tai), a_n^{-1} \d, n^{-1} \nu', \rho) \overset{(d)}{\rightarrow} (\Lai, \d^{\infty}, \nu^{\infty}, \rho^{\infty})
\]
with respect to the Gromov-Hausdorff vague topology as $n \rightarrow \infty$. Here $\d$ (respectively $\d^{\infty}$) can denote either the geodesic metric $d$ (respectively $d^{\infty}$), or the effective resistance metric R (respectively $R^{\infty}$).
\end{theorem}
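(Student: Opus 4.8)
\textbf{Proof proposal for Theorem \ref{thm:BS looptree conv}.}

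The plan is to obtain the statement as an immediate corollary of Proposition \ref{prop:inf conv disc loop}, applied with $\tau_n = \Tai$ for every $n$, scaling sequence $C_n = a_n$ (the sequence from the domain-of-attraction assumption \eqref{eqn:dom of att def}), and $\d$ the geodesic metric, the resistance case being identical. Since Proposition \ref{prop:inf conv disc loop} already includes the $\Loopp$ version of the conclusion and since the measure $\nu'$ assigning unit mass to each vertex of $\Loopp(\Tai)$ is exactly the measure $\nu_n$ of that proposition, once its two hypotheses are checked for $\Tai$ the theorem follows verbatim. Thus the only work is in verifying conditions (i) and (ii).

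For condition (i), one needs $(a_n^{-1} W^n_{\lfloor nt\rfloor})_{t \in K}$, the rescaled two-sided Lukasiewicz path of $\Tai$, to converge in distribution in $D(K,\R)$ (with the $J_1$ topology) to the two-sided process $X^{\infty}$ of Section \ref{sctn:construction of infinite stable looptrees}, for every compact $K \subset \R$. This is the discrete analogue of the statement that $X^{\infty}$ codes the infinite sin-looptree, and it holds because $\xi$ lies in the domain of attraction of an $\alpha$-stable law: by the spine (Kesten) decomposition the subtrees grafted to either side of the spine are i.i.d.\ Galton--Watson($\xi$) trees, whose Lukasiewicz excursions converge after rescaling to the excursions of the \Ito measure, while the size-biased offspring numbers along the spine produce the jumps of the spectrally positive process $X$ on $[0,\infty)$ and, after time reversal, those of the spectrally negative process $X'$ on $(-\infty,0]$. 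Equivalently, this invariance principle can be deduced by combining the known rescaled-Lukasiewicz-path convergence for finite Galton--Watson($\xi$) trees conditioned to be large with the local convergence $T_n \overset{(d)}{\rightarrow} \Tai$ of Theorem \ref{thm:Kesten LLT}. I would record this as a lemma but treat the proof as routine.

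Condition (ii) is the quantitative point: one must show $a_n^{-1}\,\Height\big(\Tree(\Loop(\Tai)^{r a_n})\big) \overset{\mathbb{P}}{\rightarrow} 0$ for each fixed $r>0$. The tree $\Tree(\Loop(\Tai)^{r a_n})$ consists of the initial segment of the spine of $\Tai$ up to the first spine vertex whose loop has length exceeding $4ra_n$ (with the prescribed balance condition), together with the ordinary critical Galton--Watson($\xi$) subtrees grafted along that segment. Since $\xi([k,\infty)) \sim ck^{-\alpha}$, the size-biased tail satisfies $\xi^\ast([k,\infty)) \asymp k^{1-\alpha}$ (and the balance condition is met with probability bounded below, the child spine-vertex being uniform among the offspring), so the length of this spine segment is stochastically dominated by a geometric variable of mean of order $(ra_n)^{\alpha-1} \asymp r^{\alpha-1} n^{(\alpha-1)/\alpha}L(n)^{\alpha-1}$; dividing by $a_n \asymp n^{1/\alpha}L(n)$ gives order $n^{-(2-\alpha)/\alpha}$, which vanishes since $\alpha < 2$. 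One must also control the heights of the grafted subtrees: along this segment there are of order $a_n$ of them (each truncated size-biased offspring count having mean of order $(ra_n)^{2-\alpha}$, times a segment length of order $(ra_n)^{\alpha-1}$), and using the standard extinction bound $\mathbb{P}(\Height(T) \geq h) \asymp h^{-1/(\alpha-1)}$ for an unconditioned critical Galton--Watson($\xi$) tree, the maximum of order $a_n$ independent such heights is again of order $n^{(\alpha-1)/\alpha} = o(a_n)$. Combining the two estimates via Markov's inequality and a union bound yields condition (ii).

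The main obstacle is precisely this verification of (ii): making the above counting rigorous requires the tail estimate on the size-biased offspring distribution, a uniform-in-$n$ control on the number and sizes of the subtrees hanging off the relevant spine segment (handling the mild correlation between segment length and subtree sizes by conditioning on the spine loop lengths), and the height tail bound for unconditioned critical Galton--Watson trees in the $\alpha$-stable domain. Once (i) and (ii) are in place, Proposition \ref{prop:inf conv disc loop} delivers the Gromov--Hausdorff-vague convergence, simultaneously for $\Loop$ and $\Loopp$ and for both the geodesic and resistance metrics, which is exactly the assertion of Theorem \ref{thm:BS looptree conv}.
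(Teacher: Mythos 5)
Your proposal is correct and follows essentially the same route as the paper: the paper likewise deduces Theorem \ref{thm:BS looptree conv} (together with Theorem \ref{thm:main scaling lim}) as an immediate corollary of Proposition \ref{prop:inf conv disc loop} with $\tau_n = \Tai$ and $C_n = a_n$, asserting that $a_n^{-1}\Height(\Tree(\Loop(\tau_n)^{ra_n}))$ is of order $r^{\alpha-1}n^{-(2-\alpha)/\alpha}L(n)$ and invoking Markov's inequality for condition (ii). Your write-up simply supplies more detail than the paper on both the verification of condition (i) and the spine-segment/subtree-height estimates behind condition (ii), and those estimates match the paper's claimed order.
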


\subsubsection{Looptrees defined from two-type Galton Watson trees}
In practice in the context of random planar maps, it is often convenient to define discrete looptrees from alternating two-type Galton-Watson trees. In particular, Richier in \cite[Section 3]{RichierIICUIHPT} gives the following definition, illustrated in Figure \ref{fig:RichLoop}. Given an infinite alternating two-type Galton-Watson tree $T$ (as defined in Section \ref{sctn:GW multi}), say with white vertices at even height and black vertices at odd height, draw a loop around each black vertex by connecting its $i^{\text{th}}$ white child to its $(i+1)^{\text{th}}$ white child for all $i$, and join its parent to both its first and last white child. Then delete the black vertices and their incident edges; we denote the resulting structure by $\Loop^2(T)$.

\begin{figure}[h]
\includegraphics[width=13cm]{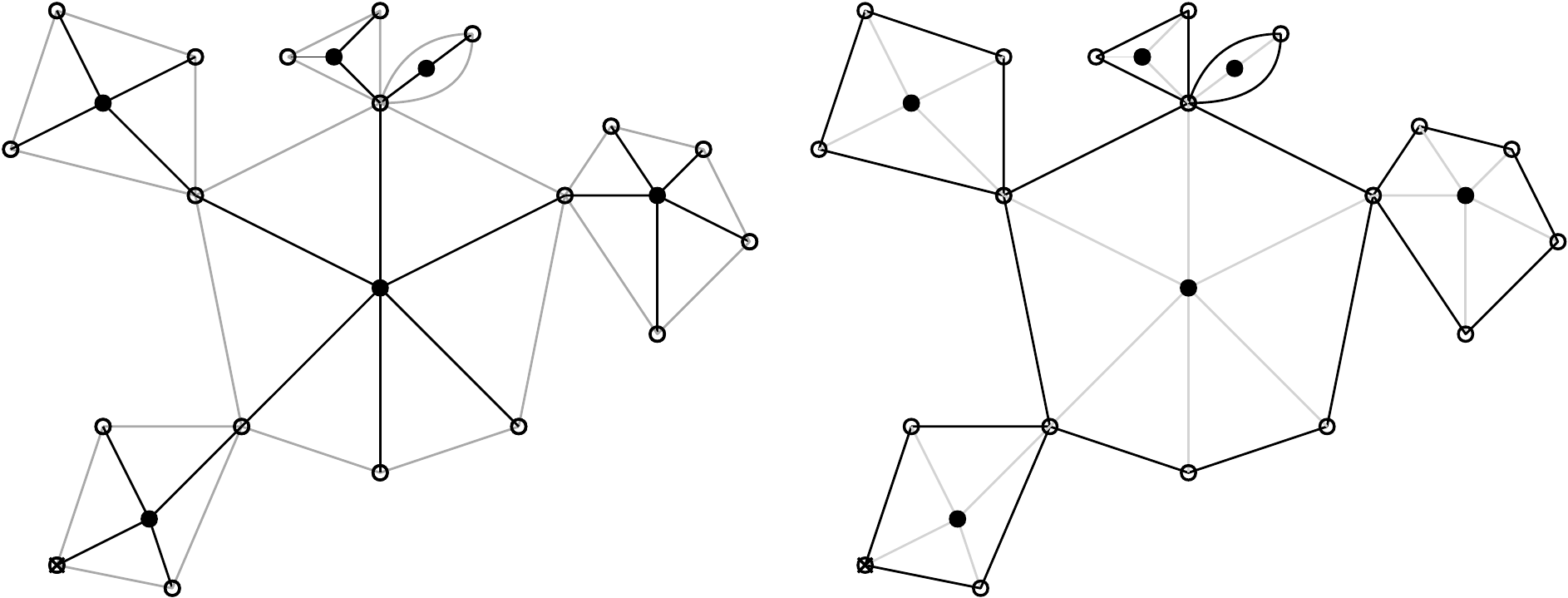}
\centering
\caption{A two-type tree and its looptree.}\label{fig:RichLoop}
\end{figure}

We now take a two-type tree $T_{\alpha}^{\infty, 2}$ with offspring distribution $(\xi_{\circ}, \xi_{\bullet})$ such that:
\begin{itemize}
\item $(\xi_{\circ}, \xi_{\bullet})$ is critical, i.e. $\E{\xi_{\circ}}\E{\xi_{\bullet}}=1$.
\item $\xi_{\circ}$ is shifted geometric with parameter $1-p \in (0,1)$, i.e. $\xi_{\circ}(k) = (1-p)p^k$ for all $k \geq 0$.
\item $\xi_{\bullet}$ is in the domain of attraction of an $\alpha$-stable law.
\end{itemize}

Before stating the scaling result, we briefly introduce two related concepts. One of these is the Janson-Stef\'ansson bijection of \cite{JanStefLargeFace}, which gives a bijection between alternating two-type Galton-Watson trees and one-type Galton-Watson trees. Given an alternating two-type Galton-Watson tree $T$, we denote its image under this bijection by $\Phi_{\text{JS}}(T)$. $\PhiJS$ has the same vertex set as $T$, but different edges, and is constructed as follows: for every white vertex that is not equal to the root, label its offspring as $u_1, \ldots, u_k$ in lexicographical order, and label its parent $u_0$. Then draw an edge joining $u_i$ to $u_{i+1}$ for each $i \in \{0, \ldots, k-1\}$, and draw an edge joining $u_k$ to $u$. See Figure \ref{fig:JS}.

The bijection is such that each white vertex in $T$ is therefore mapped to a leaf in $\PhiJS$, and each black vertex in $T$ with $k$ offspring is mapped to a vertex in $\PhiJS$ with $k+1$ offspring.

The second concept is a (final) related loop operation $\overline{\Loop}$. Given a (one-type) tree $T$, $\overline{\Loop}(T)$ is obtained by first forming $\Loopp(T)$, and then for each vertex $u \in \Loopp(T)$, contracting each edge joining $u$ to its rightmost child. $\overline{\Loop}(T)$ therefore has the property that multiple loops can be grafted at the same vertex, which is not the case with $\Loop(T)$ and $\Loopp(T)$ (but is the case with the two-type operation $\Loop^2$).

\begin{figure}[h]
\begin{subfigure}{.45\textwidth}
\includegraphics[width=6cm]{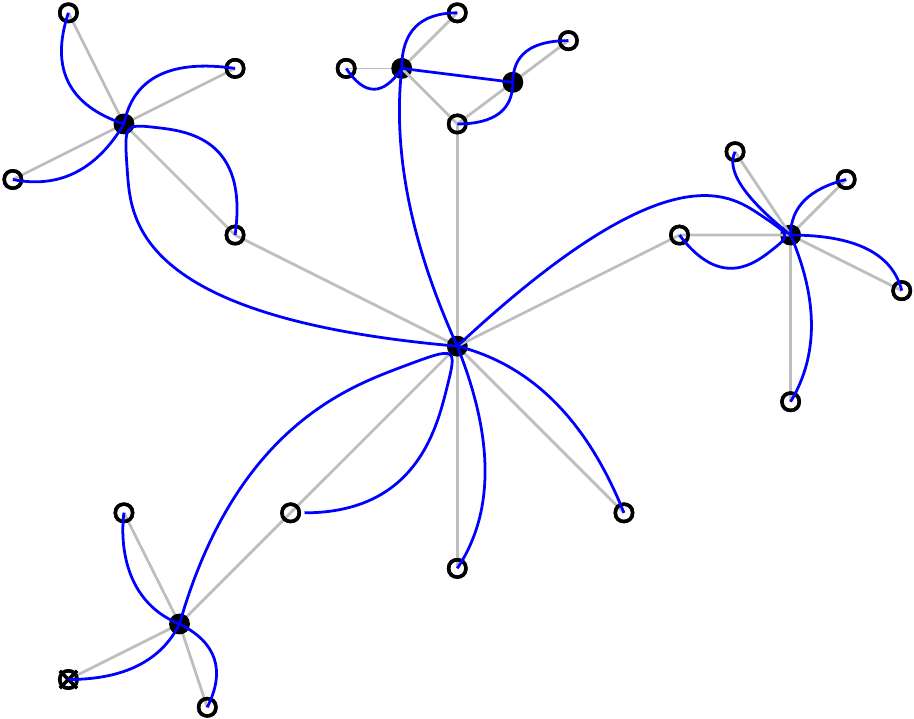}
\centering
\subcaption{$\Phi_{\text{JS}}(T)$.}
\end{subfigure}
\begin{subfigure}{.55\textwidth}
\includegraphics[width=7cm]{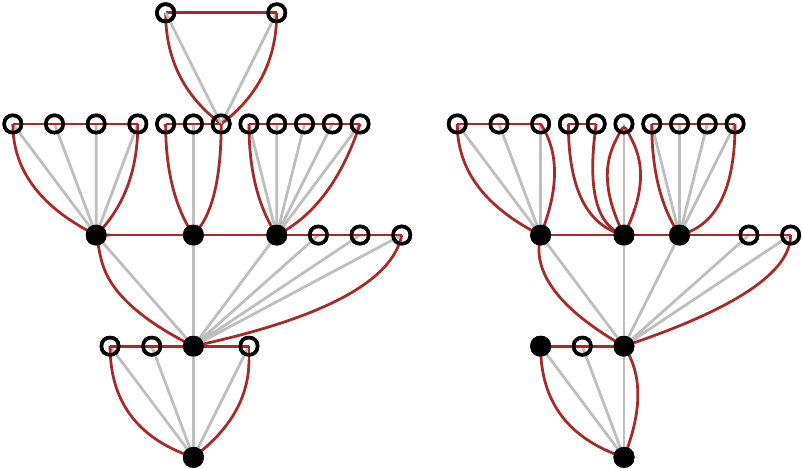}
\centering
\subcaption{$\Loopp(\Phi_{\text{JS}}(T))$ and $\overline{\Loop}(\Phi_{\text{JS}}(T))$.}
\end{subfigure}
\caption{Illustrations for the two-type tree $T$ in Figure \ref{fig:RichLoop}.}\label{fig:JS}
\end{figure}

The proof of the two-type scaling result then proceeds by applying the Janson-Stef\'ansson bijection to the two-type tree, and using the following facts, which we state without proof, but which should be plausible from looking at Figure \ref{fig:JS}.

\begin{enumerate}[(i)]
\item For any plane tree $T$ endowed with a measure giving mass $1$ to every vertex, $\dGHP (\Loopp(T), \overline{\Loop}(T)) \leq 4 \Height(T)$ (see \cite[Equation (48)]{RichierMapBoundaryLimit} for Gromov-Hausdorff version, then the Prohorov bound on measures follows by same reasoning).
\item If $T$ is an alternating two-type tree, then $\Loop^2(T) = \overline{\Loop}(\Phi_{\text{JS}}(T))$ ) (see \cite[Lemma 4.3]{CurKortUIPTPerc}).
\item Let $T$ be an alternating two-type Galton-Watson tree with offspring distributions $\xi_{\circ}$ and $\xi_{\bullet}$ such that $\xi_{\circ}$ is shifted geometric with parameter $1-p \in (0,1)$, i.e. $\xi_{\circ}(k) = (1-p)p^k$ for all $k \geq 0$, and $\E{\xi_{\circ}}\E{\xi_{\bullet}} \leq 1$. Then $\Phi_{\text{JS}}(T)$ is a one-type Galton-Watson tree with offspring distribution $\xi$, where $\xi$ is such that $\xi(0) = 1-p$ and $\xi(k) = p \xi_{\bullet}(k-1)$ for all $k \geq 1$ (see \cite[Appendix A]{JanStefLargeFace}).

Moreover, under the criticality assumption, this implies that
\begin{equation}\label{eqn:dom of att 2 type}
\frac{\sum_{i=1}^n \xi^{(i)} - n}{a_n} \overset{(d)}{\rightarrow} Z_{\alpha}
\hspace{1cm}
\text{if and only if} \hspace{1cm}
\frac{\sum_{i=1}^n \xi^{(i)}_{\bullet} - \frac{1-p}{p}n}{p^{\frac{-1}{\alpha}}a_n} \overset{(d)}{\rightarrow} Z_{\alpha}.
\end{equation}
\end{enumerate}

We are now ready to state and prove the convergence result.

\begin{theorem}\label{thm:Rich looptree conv}
Let $\Loop^2(T_{\alpha}^{\infty, 2})$ be above, with $(a_n)_{n \geq 1}$ as in (\ref{eqn:dom of att 2 type}), and let $\nu^2$ be the measure on $\Loop^2(T_{\alpha}^{\infty, 2})$ such that $\nu^2(x) = 1$ for all $x \in \Loop^2(T_{\alpha}^{\infty, 2})$. Then
\[
(\Loop^2(T_{\alpha}^{\infty, 2}), a_n^{-1} \d, n^{-1} \nu^2, \rho) \overset{(d)}{\rightarrow} (\Lai, \d^{\infty}, \nu^{\infty}, \rho^{\infty})
\]
with respect to the Gromov-Hausdorff vague topology as $n \rightarrow \infty$. Again, here $\d$ (respectively $\d^{\infty}$) can denote either the geodesic metric $d$ (respectively $d^{\infty}$), or the effective resistance metric R (respectively $R^{\infty}$).
\end{theorem}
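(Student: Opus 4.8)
The plan is to transport the statement, via the Janson--Stef\'ansson bijection, to the one-type infinite looptree --- for which Theorem \ref{thm:BS looptree conv} already gives the desired scaling limit --- and then to pass between the loop operations $\Loopp$, $\overline{\Loop}$ and $\Loop^2$ at a cost that vanishes in the limit. All the combinatorial input needed is supplied by the facts (i)--(iii) listed above.

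First I would identify $\Phi_{\text{JS}}(T_{\alpha}^{\infty,2})$. By fact (iii), the image under the bijection of a finite alternating two-type Galton--Watson tree with $\xi_{\circ}$ shifted geometric of parameter $1-p$ is a one-type Galton--Watson tree with offspring law $\xi$ determined by $\xi(0)=1-p$ and $\xi(k)=p\,\xi_{\bullet}(k-1)$ for $k\geq 1$, and (\ref{eqn:dom of att 2 type}) shows that $\xi$ is critical and lies in the domain of attraction of an $\alpha$-stable law with norming sequence $(a_n)_{n\geq 1}$. Since $\Phi_{\text{JS}}$ is a local rewiring of edges and Kesten's tree is characterised by a size-biased spine carrying independent unconditioned copies of the Galton--Watson tree, this identification lifts to the infinite volume, so that $\Phi_{\text{JS}}(T_{\alpha}^{\infty,2})$ has the law of Kesten's tree $T_{\infty}$ associated to $\xi$; this compatibility between the bijection and the multi-type Kesten construction of \cite{StephensonLocal} is what underlies the constructions of \cite{RichierIICUIHPT} and \cite{RichierMapBoundaryLimit}. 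In particular the two-sided Lukasiewicz path $W^n$ of $T_{\infty}$ satisfies $\big(a_n^{-1}W^n_{\lfloor nt\rfloor}\big)_{t\in K}\overset{(d)}{\rightarrow}\big(X^{\infty}_t\big)_{t\in K}$ for every compact $K\subset\R$.

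Next I would apply Theorem \ref{thm:BS looptree conv} directly to $T_{\infty}=\Phi_{\text{JS}}(T_{\alpha}^{\infty,2})$ to obtain
\[
\big(\Loopp(\Phi_{\text{JS}}(T_{\alpha}^{\infty,2})),\,a_n^{-1}\d,\,n^{-1}\nu',\,\rho\big)\overset{(d)}{\rightarrow}\big(\Lai,\,\d^{\infty},\,\nu^{\infty},\,\rho^{\infty}\big)
\]
in the Gromov--Hausdorff-vague topology, where $\nu'$ assigns mass $1$ to every vertex of $\Loopp(T_{\infty})$ (the effective-resistance case is identical throughout). To replace $\Loopp$ by $\overline{\Loop}$, I would localise exactly as in the proof of Theorem \ref{thm:LLT}: for fixed $r>0$ the part of the looptree seen inside the $a_n^{-1}\d$-ball of radius $r$ is contained in a sublooptree whose underlying finite tree has height which, divided by $a_n$, is of order $r^{\alpha-1}n^{-(2-\alpha)/\alpha}L(n)\to 0$ --- the very estimate used to verify condition $(ii)$ of Proposition \ref{prop:inf conv disc loop} for $\Tai$. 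Fact (i), applied to these finite sublooptrees, then bounds the Gromov--Hausdorff-Prohorov distance between the radius-$r$ balls of $a_n^{-1}\Loopp(T_{\infty})$ and $a_n^{-1}\overline{\Loop}(T_{\infty})$ by $4a_n^{-1}\Height(\cdot)\to 0$, so $\overline{\Loop}(\Phi_{\text{JS}}(T_{\alpha}^{\infty,2}))$ converges Gromov--Hausdorff-vaguely to the same limit.

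Finally, fact (ii) identifies $\overline{\Loop}(\Phi_{\text{JS}}(T_{\alpha}^{\infty,2}))$ with $\Loop^2(T_{\alpha}^{\infty,2})$ as graphs, and under this identification the vertex set of $\Loop^2(T_{\alpha}^{\infty,2})$ is exactly the set of white vertices of $T_{\alpha}^{\infty,2}$, i.e.\ the leaves of $\Phi_{\text{JS}}(T_{\alpha}^{\infty,2})$. It then remains to check that $n^{-1}\nu^2$ and the pushforward of $n^{-1}\nu'$ under the edge-contraction map $\Loopp(T_{\infty})\to\overline{\Loop}(T_{\infty})$ share the same Gromov--Hausdorff-vague limit and that the two roots agree up to the local tree height; since a contraction displaces each point by at most that height, and since a law of large numbers along the spine shows that the white-vertex measure and the looptree-vertex measure put asymptotically equal mass on every ball of radius $r$, this again follows from the localisation, and the theorem follows (the effective-resistance statement is word for word the same). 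I expect the main obstacle to be precisely this last measure bookkeeping in the non-compact vague setting, together with the infinite-volume compatibility of the Janson--Stef\'ansson bijection invoked in the first step; the metric part of the theorem is essentially forced once facts (i)--(iii) and Theorem \ref{thm:BS looptree conv} are in hand.
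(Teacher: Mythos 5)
Your proposal follows essentially the same route as the paper: reduce to the one-type result via the Janson--Stef\'ansson bijection, and absorb the discrepancies between $\Loopp$, $\overline{\Loop}$ and $\Loop^2$ into a height term that vanishes after rescaling, with the Prohorov part of the measure comparison already contained in fact (i). The one place where the paper is more careful than you is exactly the obstacle you flagged: rather than asserting that $\Phi_{\text{JS}}$ applied to the whole infinite two-type tree yields Kesten's tree for $\xi$, the paper applies the bijection only to the finite truncations $T_{\alpha}^{2}(ra_n)$ (the two-type tree up to the first $ra_n$-good loop on the loopspine), notes that $\mathcal{B}_r$ of the resulting looptrees stabilises, and identifies the limit $\tilde{T}_{\alpha}=\lim_n\Phi_{\text{JS}}(T_{\alpha}^{2}(ra_n))$ as Kesten's tree --- thereby invoking Proposition \ref{prop:inf conv disc loop} for $\tilde T_\alpha$ rather than Theorem \ref{thm:BS looptree conv} for a directly-transformed infinite tree, and proving the height estimate by a geometric tail bound on the number of loops before the first good one rather than quoting the $r^{\alpha-1}n^{-(2-\alpha)/\alpha}L(n)$ order from the one-type discussion. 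If you adopt that truncate-then-take-limits device, your argument closes.
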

\begin{proof}[Proof of Theorem \ref{thm:Rich looptree conv}]
Using the points above, we will show that there exists a probability space on which we can define both $T_{\alpha}^{\infty, 2}$ and a one-type Galton Watson tree $\tilde{T}_{\alpha}$ satisfying the assumptions of Proposition \ref{prop:inf conv disc loop} such that, for all $r>0$, 
\begin{equation}\label{eqn:two type loop comp r}
\dGHP (\mathcal{B}_r\big((\Loop^2 (T_{\alpha}^{\infty, 2}), a_n^{-1} \d, n^{-1} \nu', \rho)\big), \mathcal{B}_r\big((\Loopp (\tilde{T}_{\alpha}), a_n^{-1} \d, n^{-1} \nu', \rho)\big) \rightarrow 0
\end{equation}
almost surely as $n \rightarrow \infty$. As a result, we deduce that these two looptrees have the same Gromov-Hausdorff-Prohorov vague limit.

To do this, we first make a definition. As in the one-type case, it follows that $T_{\alpha}^{\infty, 2}$ almost surely has a unique infinite spine on which vertices instead have a size-biased offspring distribution (see \cite[Section 3.1]{StephLocalLim}). Analogously to previous definitions, for any $R>0$ we say that a loop on the corresponding loopspine is $R$-good if it has length at least $4R$ and if the two points at which it is connected to adjacent loops on the loopspine are separated by distance at least $R$. We then let $L_{\alpha}^{2}(R)$ denote the subspace obtained by taking the union of all the loops up to and including the first $R$-good loop on the loopspine, along with any sublooptrees grafted to them. The reason for this definition is that $\mathcal{B}_R(\Loop^2 (T_{\alpha}^{\infty, 2})) \subset L_{\alpha}^{2}(R)$, and $L_{\alpha}^{2}(R)$ is a full looptree (i.e. does not contain partial loops). We also let $T_{\alpha}^{2}(R)$ denote the (two-type) tree such that $\Loop^2(T_{\alpha}^{2}(R)) = L_{\alpha}^{2}(R)$ (this is well-defined since $\Loop^2$ is a bijection).

Set $\tilde{T}_{\alpha}^{r,n} = \Phi_{\text{JS}} (T_{\alpha}^{2}(ra_n))$. We make the following observations, based on the facts above.

\begin{enumerate}
\item By Fact (ii) above, $\overline{\Loop} \Big( \tilde{T}_{\alpha}^{r,n}\Big) = L_{\alpha}^2(ra_n)$.
\item By Fact (i) above, $\dGHP \Big(\overline{\Loop} \Big(\tilde{T}_{\alpha}^{r,n}\Big), \Loopp \Big( \tilde{T}_{\alpha}^{r,n}\Big) \Big) \leq 4 \Height \Big(\tilde{T}_{\alpha}^{r,n}\Big)$.

Moreover, $n^{\frac{-1}{\alpha}} \Height \Big(\tilde{T}_{\alpha}^{r,n}\Big) \rightarrow 0$ in probability as $n \rightarrow \infty$
since:
\begin{align*}
\prb{\Height \Big(\tilde{T}_{\alpha}^{r,n}\Big) \geq \epsilon n^{\frac{1}{\alpha}}+1} &\leq 
\prb{\Height \Big(T_{\alpha}^{2}(rn^{\frac{1}{\alpha}})\Big) \geq \epsilon n^{\frac{1}{\alpha}}+1} \\
&= (1-p_{r,n})^{\epsilon n^{\frac{1}{\alpha}}} \\
&\leq \exp \{-Cr^{-\alpha}n^{-\frac{\alpha - 1}{\alpha}} \epsilon n^{\frac{1}{\alpha}}\},
\end{align*}
where $p_{r,n} = \frac{1}{2} \pr{\hat{\xi}_{\bullet} \geq rn^{\frac{1}{\alpha}}} \sim Cr^{\alpha}n^{\frac{\alpha-1}{\alpha}}$ as $n \rightarrow \infty$ by assumption, since $\hat{\xi}_{\bullet}$ is a size-biased version of $\xi_{\bullet}$.
\item By construction and Fact (iii) above, $\mathcal{B}_r \big(\Loopp \big( \tilde{T}_{\alpha}^{r,n}\big)\big) = \mathcal{B}_r \big( \Loopp \big( \tilde{T}_{\alpha}\big)\big)$, where $\tilde{T}_{\alpha} = \lim_{n \rightarrow \infty}\tilde{T}_{\alpha}^{r,n}$ (the Janson-Stef\'ansson bijection is such that this is well-defined). Moreover, $\tilde{T}_{\alpha}$ is distributed as Kesten's critical tree with offspring distribution $\xi$.
%
%By the separability of Proposition \ref{thm:GH vague separable HB}, we can therefore define $\tilde{T}_{\alpha}$ and $T_{\alpha}^{\infty, 2}$ on a common probability space on which $\mathcal{B}_r \big(\Loopp \big( \tilde{T}_{\alpha}^{r,n}\big)\big) = \mathcal{B}_r \big( \Loopp \big( \tilde{T}_{\alpha}\big)\big)$ for all $n, r > 0$ almost surely (note in particular, since the mapping $\Phi_{\text{JS}}$ is invertible, $T_{\alpha}^{\infty, 2}$ is a measurable function of the sequence $(\tilde{T}_{\alpha}^{r,1})_{r \geq 0}$).
\end{enumerate}
These three points imply that (\ref{eqn:two type loop comp r}) holds with $\tilde{T}_{\alpha}$ as in Point 3 above. Then, $\tilde{T}_{\alpha}$ satisfies the conditions of Proposition \ref{prop:inf conv disc loop} (in particular, condition (ii) of the Proposition holds by similar arguments to those in Point 2 above), so $(\Loopp (\tilde{T}_{\alpha}), a_n^{-1} \d, n^{-1} \nu', \rho) \overset{(d)}{\rightarrow} \Lai$ as $n \rightarrow \infty$. Since these $T_{\alpha}^{\infty, 2}$ and $\tilde{T}_{\alpha}$ are defined on a common probability space, (\ref{eqn:two type loop comp r}) therefore implies the same distributional result for $(\Loop^2 (T_{\alpha}^{\infty, 2}), a_n^{-1} \d, n^{-1} \nu', \rho)$.
\end{proof}

\begin{remark}
In \cite{RichierIICUIHPT}, these two-type looptrees are coded by upward skip-free random walks in a similar way to the one-type case. It is also possible to write an analogous result to Proposition \ref{prop:inf conv disc loop} in this case, under more general assumptions on the coding functions.
\end{remark}

\section{Volume bounds and resistance estimates for infinite stable looptrees}\label{sctn:vol bounds and spectral dim infinite}

In this section, we prove precise estimates on the volume and resistance growth properties of infinite stable looptrees. These are of interest in their own right but in Section \ref{sctn:RW consequences} we also use these to obtain bounds on the heat kernel, and use the resistance estimate to verify that the non-explosion conditions of Theorems \ref{thm:scaling lim RW resistance} and \ref{thm:scaling lim RW resistance annealed} are satisfied when we prove Theorems \ref{thm:main RW LLT conv} and \ref{thm:RW conv infinite intro}, along with their annealed counterparts.

In \cite[Section 5]{ArchBMCompactLooptrees}, we conduct a much more detailed study of the volume growth properties of compact stable looptrees, including proving similar results to those in Theorem \ref{thm:vol bounds} below. For this reason we will therefore skip some technical proof details when they are the same as in \cite{ArchBMCompactLooptrees}.

The full results are as follows. The result holds regardless of whether we define the balls in terms of $R^{\infty}$ or $d^{\infty}$, since the two metrics are equivalent. In particular, it is sufficient to prove the result for $d^{\infty}$ only, which is easier to handle. We do this below.

\begin{theorem}(cf \cite[Theorem 1.4]{ArchBMCompactLooptrees}).\label{thm:vol bounds}
$\mathbf{P}$-almost surely, we have:
\begin{align*}
&\limsup_{r \uparrow \infty} \Bigg( \frac{\nu^{\infty}(B^{\infty}(\rho^{\infty}, r))}{r^{\alpha} (\log \log r)^{\frac{4\alpha - 3}{{\alpha - 1}}}} \Bigg) < \infty, &&\limsup_{r \uparrow \infty} \Bigg( \frac{\nu^{\infty} (B^{\infty}(\rho^{\infty}, r))}{r^{\alpha} \log \log r} \Bigg) > 0, \\
&\liminf_{r \uparrow \infty} \Bigg( \frac{\nu^{\infty}(B^{\infty}(\rho^{\infty}, r))}{r^{\alpha}(\log \log r)^{-\alpha}} \Bigg) > 0, &&\liminf_{r \uparrow \infty} \Bigg( \frac{\nu^{\infty} (B^{\infty}(\rho^{\infty}, r))}{r^{\alpha} (\log \log r)^{-(\alpha - 1)}} \Bigg) < \infty.
\end{align*}
Moreover, $\mathbf{P}$-almost surely, for $\nu^{\infty}$-almost every $u \in \Lai$ we have
\begin{align*}
&\limsup_{r \downarrow 0} \Bigg( \frac{\nu^{\infty}(B^{\infty}(u, r))}{r^{\alpha} (\log \log r^{-1})^{\frac{4\alpha - 3}{{\alpha - 1}}}} \Bigg) < \infty, &&\limsup_{r \downarrow 0} \Bigg( \frac{\nu^{\infty} (B^{\infty}(u, r))}{r^{\alpha} \log \log r^{-1}} \Bigg) > 0, \\
&\liminf_{r \downarrow 0} \Bigg( \frac{\nu^{\infty}(B^{\infty}(u, r))}{r^{\alpha}(\log \log r^{-1})^{-\alpha}} \Bigg) > 0, &&\liminf_{r \downarrow 0} \Bigg( \frac{\nu^{\infty} (B^{\infty}(u, r))}{r^{\alpha} (\log \log r^{-1})^{-(\alpha - 1)}} \Bigg) < \infty.
\end{align*}
\end{theorem}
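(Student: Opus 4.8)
The plan is to transfer the corresponding compact looptree estimates from \cite[Theorem 1.4]{ArchBMCompactLooptrees} to the infinite setting, exploiting the explicit two-sided construction of $\Lai$ and the scale invariance of Proposition \ref{prop:Lai scale inv}. The starting observation is that, in the construction of $\Lai$, the ball $B^{\infty}(\rho^{\infty}, r)$ is contained in the sub-looptree coded by $(\Xi_t)_{t \in [-t_g(r), t_d(r)]}$, where $t_g, t_d$ are the stopping times introduced in the proof of Theorem \ref{thm:LLT}. The loopspine of $\Lai$ is a sequence of loops whose lengths, together with the masses of the sub-looptrees grafted to them, are governed by the Poisson point process of Proposition \ref{prop:descent PP}. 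So the volume $\nu^{\infty}(B^{\infty}(\rho^{\infty}, r))$ decomposes as: (a) the mass contributed by loops on the loopspine within distance $r$ of $\rho^{\infty}$, plus (b) the mass of all sub-looptrees grafted to those loops. Part (b) is the dominant term, and the sub-looptrees are, by the thinned \Ito{} measure description (analogous to Theorem \ref{thm:AbDel Williams Decomp}(ii) and the m-loopspine discussion), rescaled copies of compact stable looptrees to which the compact volume bounds apply.

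First I would set up the large-$r$ statements. The number of loops on the loopspine within looptree-distance $r$ of $\rho^{\infty}$ is, by Proposition \ref{prop:descent PP} and the normalisation of local time there (recall $L^{-1}$ is a $(1-\tfrac1\alpha)$-stable subordinator and $\overline X_{L^{-1}(\cdot)}$ an $(\alpha-1)$-stable subordinator), comparable to $r^{\alpha-1}$ up to $\log\log$ fluctuations coming from the law of the iterated logarithm for stable subordinators --- this is exactly where the $(\log\log r)$ powers enter. Conditionally on the loop lengths $(\ell_i)$, the grafted sub-looptree masses are independent, each of order $\ell_i^{\alpha}$ in expectation (by the \Ito{} scaling relation $N\circ\Phi_\lambda^{-1} = \lambda^{1/\alpha}N$ and the mass-height joint law), so summing over $\approx r^{\alpha-1}$ loops of typical length $\approx r$ gives the leading order $r^{\alpha}$. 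The precise $\limsup$/$\liminf$ constants and the $(\log\log r)$ exponents $\tfrac{4\alpha-3}{\alpha-1}$, $1$, $-\alpha$, $-(\alpha-1)$ should then come out by combining: the LIL for the loopspine subordinators (controlling how fast/slow loops accumulate), the fluctuation estimates for individual grafted masses (borrowed verbatim from \cite{ArchBMCompactLooptrees}), and a Borel--Cantelli argument along a geometric subsequence $r_k = 2^k$ together with monotonicity of $r \mapsto \nu^{\infty}(B^{\infty}(\rho^{\infty}, r))$ to fill in between. I expect the bookkeeping to mirror \cite[Section 5]{ArchBMCompactLooptrees} closely enough that most computations can be cited rather than redone.

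For the small-$r$ statements at $\nu^{\infty}$-typical $u$, the cleanest route is re-rooting invariance: by Theorem \ref{thm:LLT}, $\Lai$ is the local limit of compact stable looptrees $\Lal$ rooted at a uniform point, and for $\Lal$ the small-ball behaviour at $\nu^{\l}$-typical points is precisely \cite[Theorem 1.4]{ArchBMCompactLooptrees}. Since the small-ball asymptotics as $r \downarrow 0$ are a local property, they are inherited under the Gromov-Hausdorff-vague local limit (one needs that balls $\mathcal{B}_r$ converge and that the $\limsup/\liminf$ quantities are measurable functionals that pass to the limit for a.e.\ $u$). Alternatively --- and this may be more robust --- one applies scale invariance (Proposition \ref{prop:Lai scale inv}): a small ball around a typical $u$ in $\Lai$, after rescaling by $r^{-1}$, looks like a ball of radius $1$ around a typical point, and the grafted sub-looptree at the loop containing $u$ is an unconditioned compact stable looptree, so the $r\downarrow 0$ asymptotics reduce to the $\ell\uparrow\infty$ asymptotics for a single compact looptree's volume growth, which is again \cite[Theorem 1.4]{ArchBMCompactLooptrees}.

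The main obstacle will be controlling the contribution of sub-looptrees that are grafted to a loop on the loopspine but are themselves \emph{large}: a single atypically big sub-looptree could in principle dominate $\nu^{\infty}(B^{\infty}(\rho^{\infty}, r))$ and spoil the $\limsup$ upper bound. Handling this requires a careful truncation: decompose the Poisson sum of grafted masses into a main part (sub-looptrees of height $\lesssim r$, contributing the stated order) and an exceptional part (taller sub-looptrees, which must be shown to contribute negligibly, or to occur so rarely that Borel--Cantelli kills them). This is precisely the kind of estimate that \cite{ArchBMCompactLooptrees} handles for the compact case via tail bounds on the \Ito{} excursion measure (the height tail $N(\Hm > H) \asymp H^{-1/(\alpha-1)}$ and the joint mass-height law), and Lemma \ref{lem:osc} / Remark \ref{rmk:osc deterministic} provide the exponential moment control on oscillations needed to compare looptree-distances with the coding function, so I anticipate it goes through, but it is where the real work lies and where one must be most careful to keep the $\log\log$ powers sharp.
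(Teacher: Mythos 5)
Your overall framing (reduce to tail estimates for $\nu^{\infty}(B^{\infty}(\rho^{\infty},r))$, Borel--Cantelli along $r_n=2^{\pm n}$, and uniform re-rooting/scaling invariance for the statements at $\nu^{\infty}$-typical $u$) matches the paper, and you correctly locate the main difficulty in the large grafted sublooptrees. But the mechanism you propose for that difficulty --- a single decomposition along the infinite loopspine followed by a truncation into a ``main part'' and an ``exceptional part'' that is either negligible or rare enough for Borel--Cantelli --- does not work, and this is where the paper's argument is genuinely different. A sublooptree of large total mass grafted within distance $r$ of the root is neither negligible nor rare at the relevant scale: with $\lambda$ of order a power of $\log\log r$, the probability that such a sublooptree exists decays only polynomially in $\lambda$, i.e.\ like a negative power of $\log\log r$, which does not sum along $r_n=2^n$; and when it does occur its contribution to the ball cannot be bounded by anything better than its own ball-volume at radius $r$, which is exactly the quantity you are trying to control. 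The paper resolves this by recursion: it builds a subcritical branching process $T_{\text{vol}}\subset\mathcal{U}$ whose nodes index the sublooptrees of mass at least $r^{\alpha}\lambda^{1-\beta_1-\beta_2}$ grafted within distance $r$ of the previous root, decomposes each such node along its W-loopspine in turn, and only sums directly (via the $\alpha^{-1}$-stable subordinator of \Ito excursion lifetimes with large jumps removed, controlled by Lemma \ref{lem:osc}) the masses $S_u$ of the \emph{small} offspring. The bound $\nu^{\infty}(B^{\infty}(\rho^{\infty},r))\leq\sum_{u\in T_{\text{vol}}}S_u\leq|T_{\text{vol}}|\sup_u S_u$ then closes because the offspring law is Poisson with mean a negative power of $\lambda$, so the total progeny has exponential tails. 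Without this iteration you cannot obtain the upper bound with the stated $(\log\log r)^{(4\alpha-3)/(\alpha-1)}$ exponent.

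Two further points are missing. First, the volume lower bounds ($\liminf>0$ and $\limsup>0$) do not come from the loopspine decomposition at all: the paper derives them from the inequality $d^{\infty}(s,t)\leq X^{\infty}_s+X^{\infty}_t-2\inf_{[s,t]}X^{\infty}$, which shows that $\osc_{[0,r^{\alpha}\kappa]}X^{\infty}\leq\frac{1}{2}r$ forces $\nu^{\infty}(B^{\infty}(\rho^{\infty},r))\geq r^{\alpha}\kappa$, combined with standard fluctuation estimates for the unconditioned stable process; your decomposition produces only upper bounds on the volume. Second, the two ``infinitely often'' statements ($\limsup>0$ and $\liminf<\infty$) cannot follow from a first Borel--Cantelli argument plus monotonicity; they require matching lower bounds on the tail probabilities together with a second Borel--Cantelli argument exploiting independence along the subsequence --- the paper does this for the $\liminf<\infty$ bound via the ``short radii'' construction and scale invariance. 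Your treatment of the small-$r$ statements at typical points via re-rooting invariance is, on the other hand, exactly what the paper does, as is your remark that one can lift most computations verbatim from the compact case.
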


\begin{theorem}\label{thm:res bounds}
$\bPb$-almost surely, there exists a constant $c>0$ such that for all $r>0$,
\[
cr (\log \log (r \vee r^{-1}))^{\frac{-(3\alpha - 2)}{\alpha - 1}} \leq R^{\infty}(\rho^{\infty}, B^{\infty}(\rho^{\infty}, r)^c) \leq r.
\]
\end{theorem}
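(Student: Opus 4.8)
The plan is to prove the two inequalities separately: the upper bound is essentially immediate, and the lower bound, with its doubly-logarithmic correction, requires the bulk of the work and is closely modelled on the compact estimates of \cite{ArchBMCompactLooptrees}.

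For the upper bound, recall that the continuum analogue of \cite[Lemma 4.1]{ArchBMCompactLooptrees} (which transfers verbatim to the two-sided construction of Section \ref{sctn:construction of infinite stable looptrees}) gives $\tfrac12 d^{\infty} \le R^{\infty} \le d^{\infty}$ pointwise, and that $\Lai$ is a length space (Corollary \ref{cor:length space}), so $d^{\infty}(\rho^{\infty}, B^{\infty}(\rho^{\infty},r)^c) = r$. Since effective resistance between two points is dominated by the length of any path joining them, $R^{\infty}(\rho^{\infty}, A) \le \inf_{x\in A} R^{\infty}(\rho^{\infty},x) \le \inf_{x\in A} d^{\infty}(\rho^{\infty},x)$ for any set $A$, and taking $A = B^{\infty}(\rho^{\infty},r)^c$ yields the bound $r$.

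For the lower bound, the first step is to reduce to a single scale using scale invariance (Proposition \ref{prop:Lai scale inv}): $r^{-1}R^{\infty}(\rho^{\infty}, B^{\infty}(\rho^{\infty},r)^c)$ has the same law as $\mathcal{R} := R^{\infty}(\rho^{\infty}, B^{\infty}(\rho^{\infty},1)^c)$, so it suffices to prove a tail bound of the form $\bPb(\mathcal{R} \le \lambda^{-1}) \le \exp(-c\lambda^{(\alpha-1)/(3\alpha-2)})$, the exponent here being exactly what is needed to produce the final $(\log\log)$ power after a Borel--Cantelli argument. The tail bound itself comes from the loopspine decomposition. The point is that $\mathcal{R}$ is small only when there are many disjoint \emph{escape routes} out of the unit ball: decomposing $\Lai$ along its bi-infinite loopspine $\Gamma$, the complement of the unit ball is contained in the union, over the finitely many loops of $\Gamma$ meeting $B^{\infty}(\rho^{\infty},1)$, of the sublooptrees grafted to those loops together with the two one-ended continuations of $\Gamma$, and each of these pieces attaches to $\Gamma$ at a single point. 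Shorting all these attachment points to $\rho^{\infty}$ (Rayleigh monotonicity) and then applying the parallel-law / cutset lower bound for effective resistance gives $\mathcal{R} \ge c/N$, where $N$ counts those grafted sublooptrees (plus the two continuations) that attach within distance $\tfrac12$ of $\rho^{\infty}$ and have height at least $\tfrac12$ --- since only these can reach the complement of the unit ball. Finally, the loop lengths along $\Gamma$ and the locations at which sublooptrees are grafted are governed by the Poisson structure of Proposition \ref{prop:descent PP} (transferred to the excursion picture via Vervaat and the absolute continuity relation, exactly as in the $m$-loopspine discussion following Theorem \ref{thm:AbDel Williams Decomp}), while the grafted sublooptrees are thinned rescaled copies of compact stable looptrees coded by the It\^o measure and so have polynomial-tailed heights; combining these inputs with the fact that the cumulative (shorter-arc) distance along $\Gamma$ is itself built from the loop lengths, a computation essentially identical to the corresponding one for compact looptrees in \cite[Section 5]{ArchBMCompactLooptrees} yields $\bPb(N \ge \lambda) \le \exp(-c\lambda^{(\alpha-1)/(3\alpha-2)})$.

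The argument is then completed by a Borel--Cantelli argument over the dyadic scales $r = 2^j$, $j \in \Z$: the scaling identity plus the tail bound gives, almost surely and for all but finitely many $j$, a lower bound $R^{\infty}(\rho^{\infty}, B^{\infty}(\rho^{\infty},2^j)^c) \ge c\,2^{j}(\log\log(2^{|j|}))^{-(3\alpha-2)/(\alpha-1)}$, and monotonicity of $r \mapsto R^{\infty}(\rho^{\infty}, B^{\infty}(\rho^{\infty},r)^c)$ together with the slow variation of $(\log\log(r\vee r^{-1}))^{-(3\alpha-2)/(\alpha-1)}$ interpolates this to all $r>0$ simultaneously; the regimes $r\to 0$ and $r\to\infty$ are treated identically, which is why $r\vee r^{-1}$ appears. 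The main obstacle is the tail estimate on $N$: one must make precise both (a) the cutset and shorting step yielding $\mathcal{R} \gtrsim 1/N$ --- in particular checking that the sublooptrees really do attach at genuine cut-points and that no additional parallel routes are hidden inside individual loops of $\Gamma$ --- and (b) the combinatorial computation pinning down the exponent $\tfrac{\alpha-1}{3\alpha-2}$ from the interplay between the $\alpha$-dependent loop-length intensity along the spine and the heavy-tailed heights of the grafted sublooptrees. This is precisely where the proof runs closest to the more detailed analysis of \cite{ArchBMCompactLooptrees}, and where one adapts that computation rather than redoing it from scratch; the only genuinely new feature relative to the compact case is that the loopspine is now bi-infinite and one-ended at each end, so the two spine continuations are always available as escape routes and must be included in the count $N$.
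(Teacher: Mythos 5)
Your upper bound and your overall architecture (scale invariance to reduce to a single scale, a stretched-exponential lower tail for $\mathcal{R}:=R^{\infty}(\rho^{\infty},B^{\infty}(\rho^{\infty},1)^c)$, then Borel--Cantelli over dyadic scales) agree with the paper, which derives the theorem from Proposition \ref{prop:resistance results infinite}. The gap is in the central claim $\mathcal{R}\ge c/N$. After shorting the attachment points to $\rho^{\infty}$, the parallel law gives $\mathcal{R}\ge\big(\sum_i R_i^{-1}\big)^{-1}$, where $R_i$ is the effective resistance \emph{within the $i$-th grafted sublooptree} from its root $\rho_i$ to the part of $B^{\infty}(\rho^{\infty},1)^c$ it contains. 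To get $c/N$ you need each $R_i\ge c$, and you appear to infer this from the fact that every target point is at geodesic distance at least $\tfrac12$ from $\rho_i$, via $R^{\infty}\ge\tfrac12 d^{\infty}$. But that is a point-to-point inequality; the point-to-\emph{set} resistance $R_i$ is not bounded below by the distance to the set, and it is precisely the quantity the proposition is trying to control, one level down: if many tall sub-sublooptrees are attached near $\rho_i$, they create many parallel escape routes inside branch $i$ and drive $R_i$ below any prescribed constant, which happens with positive probability. Your single-level decomposition therefore cannot close, and this is exactly why the paper's proof of Proposition \ref{prop:resistance results infinite} is recursive: it builds a subcritical Galton--Watson-type tree $T_{\text{res}}$ indexing nested sublooptrees of $L$-Height at least $\tfrac12$ attached near their parents' roots, places the cutset $A$ on ``goodish'' loops at \emph{every} level of the recursion, bounds the total progeny, and only then applies the parallel bound $R^{\infty}(\rho^{\infty},A)\ge(|A|\lambda^{2t})^{-1}$ before optimising over $t$.

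There are also smaller inaccuracies in the same step. The complement of the unit ball is not contained in the grafted sublooptrees plus the spine continuation: a single large loop of the loopspine based near $\rho^{\infty}$ can itself extend beyond radius $1$; and your count $N$ omits genuine escape routes, namely sublooptrees attached at distance between $\tfrac12$ and $1$ from $\rho^{\infty}$ whose height exceeds the remaining distance to the boundary. (Note also that the loopspine of $\Lai$ is singly infinite --- the two \Levy processes code the two \emph{sides} of one spine, not two spines --- so there is one continuation, not two.) Finally, the target tail exponent $\tfrac{\alpha-1}{3\alpha-2}$ is reverse-engineered from the statement rather than derived; the paper in fact proves the stronger, $\alpha$-independent tail $Ce^{-c\lambda^{1/4}}$, which implies the stated $(\log\log)$ power since $\tfrac{3\alpha-2}{\alpha-1}=3+\tfrac{1}{\alpha-1}>4$ for all $\alpha\in(1,2)$.
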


These results are obtained as a consequence of the following propositions.

\begin{proposition}\label{prop:vol results infinite}
There exist constants $c, c', C, C' \in (0, \infty)$ such that for all $r>0$, $\lambda >1$:
\begin{align*}
C\exp \{-c\lambda^{\frac{1}{\alpha - 1}}\} \leq \prb{ \nu^{\infty} (B^{\infty}(\rho^{\infty}, r)) < r^{\alpha} \lambda^{-1}} &\leq  C'\exp \{-c'\lambda^{\frac{1}{\alpha}}\} \\
C e^{-c \lambda} \leq \prb{ \nu^{\infty} (B^{\infty}(\rho^{\infty}, r)) \geq r^{\alpha} \lambda} &\leq C' \lambda^{\frac{\alpha - 1}{4\alpha - 3}} e^{-c'\lambda^{\frac{\alpha - 1}{4\alpha - 3}}}.
\end{align*}
\end{proposition}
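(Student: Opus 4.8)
The plan is to reduce everything to the case $r=1$ and to the geometry of \emph{compact} stable looptrees, which is already understood in \cite{ArchBMCompactLooptrees}. First, Proposition~\ref{prop:Lai scale inv} shows that the law of $r^{-\alpha}\nu^{\infty}(B^{\infty}(\rho^{\infty},r))$ does not depend on $r$, so it suffices to prove the four inequalities with $r=1$. Next I would record the loopspine decomposition implicit in the construction of Section~\ref{sctn:construction of infinite stable looptrees}: near $\rho^{\infty}$ the space consists of the singly infinite loopspine, whose loop lengths form a Poisson point process (via the descent identity of Proposition~\ref{prop:descent PP}, transferred from the two-sided process exactly as in the $m$-loopspine discussion of Section~\ref{sctn:Williams Decomp}), together with a family of i.i.d.\ rescaled compact stable looptrees grafted at Lebesgue-uniform points along each loop, as in Theorem~\ref{thm:AbDel Williams Decomp}(ii). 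Since the loops themselves carry no $\nu^{\infty}$-mass (they form a skeleton whose $p^{\infty}$-preimage is Lebesgue-null), one has the identity
\[
\nu^{\infty}\big(B^{\infty}(\rho^{\infty},1)\big)=\sum_i \nu_i\big(B_i(\rho_i,(1-h_i)_+)\big),
\]
where the sum runs over the grafted compact sublooptrees, with $\nu_i$, $\rho_i$, $B_i$ their measures, roots and balls, and $h_i$ the distance from $\rho^{\infty}$ to the grafting point of the $i$-th sublooptree. Via the scaling property this lets me import from \cite[Theorem~1.4 and its proof]{ArchBMCompactLooptrees} the single-sublooptree estimates: $\nu_i(B_i(\rho_i,\delta))/\delta^{\alpha}$ has stretched-exponential upper and lower tails with exponent $\tfrac{\alpha-1}{4\alpha-3}$, and lower-value (small-ball) bounds with exponents $\tfrac{1}{\alpha-1}$ and $\tfrac{1}{\alpha}$ — that is, precisely the four bounds of the proposition for one sublooptree.

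For the two \emph{lower} bounds on the probabilities it then suffices to produce one favourable grafted sublooptree. Paying an $O(1)$ price to have, near the root, a loop of length of order $1$ carrying a sublooptree grafted at distance of order $1$, and invoking the single-sublooptree lower bounds, gives $\prb{\nu^{\infty}(B^{\infty}(\rho^{\infty},1))\ge\lambda}\ge Ce^{-c\lambda}$ directly; and, after additionally forcing the loop near the root to be long and balanced enough that $B^{\infty}(\rho^{\infty},1)$ is confined to one arc of it (so that only boundedly many sublooptrees can contribute, which we also suppress), one obtains $\prb{\nu^{\infty}(B^{\infty}(\rho^{\infty},1))<\lambda^{-1}}\ge Ce^{-c\lambda^{1/(\alpha-1)}}$, the exponent $\tfrac{1}{\alpha-1}$ being inherited from the compact single-sublooptree small-ball lower bound.

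For the two \emph{upper} bounds one argues in the opposite direction. The total arc-length $A$ of the loopspine within distance $1$ of the root is deterministically at least $1$ (a geodesic ray contributes it) but has an exponential upper tail: a long loop near the root only contributes an arc of length at most $2$, so $A$ is at most twice the number of spine-loops reached within distance $1$, and accumulating many such loops forces the uniform splitting parameters $u_j$ to be atypically small. Consequently the number $N$ of grafted sublooptrees within distance $1$ whose truncated mass exceeds a fixed threshold also has an exponential tail, and on $\{N=k\}$ the budgets $(1-h_i)_+$ contribute a total of at most a constant times $k$. A union bound over $k$ against the single-sublooptree upper tail with exponent $\tfrac{\alpha-1}{4\alpha-3}$ yields $\prb{\nu^{\infty}(B^{\infty}(\rho^{\infty},1))\ge\lambda}\le C'\lambda^{\frac{\alpha-1}{4\alpha-3}}e^{-c'\lambda^{\frac{\alpha-1}{4\alpha-3}}}$, the polynomial prefactor absorbing the entropy of the union bound. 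For the small-value upper bound, observe that $\nu^{\infty}(B^{\infty}(\rho^{\infty},1))<\lambda^{-1}$ forces every sublooptree grafted within distance $\tfrac12$ to have $\tfrac12$-ball-mass below $\lambda^{-1}$; since such sublooptrees are grafted at rate proportional to arc-length along a spine-segment of length at least $\tfrac12$, the number of them with $\tfrac12$-ball-mass at least $\lambda^{-1}$ is Poisson with parameter of order $\lambda^{1/\alpha}$, so the probability that there is none — hence the probability of the event — is at most $C'e^{-c'\lambda^{1/\alpha}}$.

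The main obstacle is the bookkeeping in the upper bounds: one must control how the total remaining-distance budget $\sum_i(1-h_i)_+^{\alpha}$ is apportioned among the grafted sublooptrees after travelling distance $1$ along a loopspine with heavy-tailed loop lengths, and rule out a "conspiracy" in which a loop lying within the ball carries anomalously many or anomalously large sublooptrees, so that the resulting tail matches the single-sublooptree tail up to the stated polynomial factor. This is exactly the point at which the analysis runs parallel to, and borrows the multi-scale volume estimates of, \cite{ArchBMCompactLooptrees}, which is why the argument here can be kept short.
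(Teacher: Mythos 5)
Your overall strategy tracks the paper's quite closely. The paper also reduces to $r=1$ by Proposition \ref{prop:Lai scale inv}, decomposes $B^{\infty}(\rho^{\infty},r)$ along the infinite loopspine into grafted It\^o-excursion sublooptrees, controls the arc length of loopspine met by the ball, bounds the aggregate of small sublooptrees by a truncated $\alpha^{-1}$-stable subordinator with exponential moments (Lemma \ref{lem:osc}), and treats the large sublooptrees separately. The real difference is that the paper iterates this decomposition into the subcritical branching structure $T_{\text{vol}}$ (recursing into each large sublooptree along its W-loopspine and bounding the total progeny), whereas you stop after one level and cite the compact-case ball tail of \cite{ArchBMCompactLooptrees} as a black box for each large sublooptree. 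Since that compact tail is itself proved by the same recursion, your shortcut is legitimate, provided you note that the compact bound holds uniformly over the random masses of the It\^o sublooptrees (the paper records exactly this uniformity in the proof of Theorem \ref{thm:main vol conv}). Your Poisson-void argument for the small-volume upper bound (no grafted sublooptree with $\tfrac12$-ball mass $\geq\lambda^{-1}$ on an arc of length $\geq\tfrac12$, a Poisson$(c\lambda^{1/\alpha})$ void event) is genuinely different from the paper's route via oscillation of the coding \Levy process and \cite[Sections 5.1 and 5.3]{ArchBMCompactLooptrees}, and is arguably cleaner; it only needs $N(\zeta\geq\lambda^{-1})\asymp\lambda^{1/\alpha}$ together with the fact that such a sublooptree keeps a constant fraction of its mass within distance $\tfrac12$ of its root with probability bounded below.

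Two steps fail as written. First, your control of the arc length $A$: you bound $A$ by twice the number of spine-loops reached within distance $1$ and argue that accumulating many loops forces the $u_j$ to be small. But the loop lengths along the loopspine form a Poisson process with intensity $x\,\Pi(dx)$ per unit local time, which is infinite near $0$, so the number of loops met within distance $1$ is almost surely infinite and that bound is vacuous. The correct mechanism (Lemma \ref{lem:segment length bound infinite} here, \cite[Lemma 5.5]{ArchBMCompactLooptrees}) conditions on reaching a good (long, balanced) loop, notes that the number of goodish loops before it is geometric, and bounds the total length of the small loops in between by a truncated $(\alpha-1)$-stable subordinator; this gives $|I_R|\leq R\lambda^{\beta}$ only with probability $1-Ce^{-c\lambda^{\beta'}}$, which is what your union bound actually consumes. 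Second, in the small-volume lower bound it is not that "boundedly many sublooptrees contribute" — infinitely many do — but that their aggregate mass over an arc of length $O(1)$ is a $\tfrac1\alpha$-stable subordinator whose small-deviation probability $\asymp\exp(-c\lambda^{1/(\alpha-1)})$ supplies the exponent; it is not inherited from the compact single-sublooptree small-ball bound. Both repairs are available in the paper, so the plan is salvageable, but these are the points where your write-up would break.
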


\begin{proposition}\label{prop:resistance results infinite}
There exist constants $C, c \in (0, \infty)$ such that for all $r>0, \lambda > 1$:
\[
\prb{\Refi(\rho^{\infty}, B^{\infty}(\rho^{\infty}, r)^c) \leq r\lambda^{-1}} \leq 
Ce^{-c\lambda^{\frac{1}{4}}}.
\]
\end{proposition}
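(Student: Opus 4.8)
The plan is to first remove the radius $r$ by scaling and then show that the resistance $\Refi(\rho^{\infty}, B^{\infty}(\rho^{\infty},1)^c)$ can only be small when $\Lai$ is unusually bushy near $\rho^{\infty}$, an event whose probability we control combinatorially. By the scale invariance of Proposition~\ref{prop:Lai scale inv} (which holds for $R^{\infty}$ exactly as for $d^{\infty}$, since resistances and distances both scale linearly), $\Refi(\rho^{\infty}, B^{\infty}(\rho^{\infty},r)^c)$ has the same law as $r\,\Refi(\rho^{\infty}, B^{\infty}(\rho^{\infty},1)^c)$ for every $r>0$, so it is enough to bound $\prb{\Refi(\rho^{\infty}, B^{\infty}(\rho^{\infty},1)^c) \leq \lambda^{-1}}$ uniformly in $\lambda>1$. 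Throughout I would use the pointwise comparison $\tfrac12 d^{\infty}\leq R^{\infty}\leq d^{\infty}$ (the same elementary inequality $\tfrac12\delta^{\infty}_t\leq r^{\infty}_t\leq\delta^{\infty}_t$ that underlies \cite[Lemma~4.1]{ArchBMCompactLooptrees}), which lets me pass between the two metrics up to a factor $2$ in radii; in particular the resistance accumulated along any injective path is at least half its $d^{\infty}$-length, so the only mechanism that can make $\Refi(\rho^{\infty}, A)$ small, where $A:=B^{\infty}(\rho^{\infty},1)^c$, is the presence of many essentially parallel escape routes issuing from near $\rho^{\infty}$.

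\textbf{Step 1: reduction to a count of deep sublooptrees.} Decompose $\Lai$ along its infinite loopspine $(\ell_i)_{i\geq 1}$, whose loop lengths and attachment ratios form the Poisson point measure of Proposition~\ref{prop:descent PP}, and off which hang independent rescaled compact stable looptrees, grafted uniformly around the loops with intensity governed by a thinned \Ito excursion measure (the $m$-loopspine picture of Section~\ref{sctn:Williams Decomp}, in the spirit of Theorem~\ref{thm:AbDel Williams Decomp}). Shorting together all points at $d^{\infty}$-distance $\geq\tfrac12$ from $\rho^{\infty}$, and shorting every sublooptree grafted at $d^{\infty}$-distance in $[\tfrac14,\tfrac12)$ to its graft point, can only decrease resistance; in the shorted network the only routes to the target are the loopspine itself and the sublooptrees grafted within $d^{\infty}$-distance $\tfrac14$ of $\rho^{\infty}$ that reach $d^{\infty}$-distance $\tfrac12$. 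Merging the graft points of the latter (again shorting) turns the network into a star with $N^{*}+1$ branches, each of resistance bounded below by a constant -- the loopspine branch because reaching $d^{\infty}$-distance $\tfrac12$ costs resistance at least $\tfrac14$, and each sublooptree branch because its portion meeting the target lies at $d_\tau$-distance at least $\tfrac14$ from the graft point -- provided no single grafted sublooptree is itself so bushy that the same estimate fails inside it. This yields
\[
\Refi(\rho^{\infty}, B^{\infty}(\rho^{\infty},1)^c) \;\geq\; \frac{c}{N^{*}+1},
\]
with $N^{*}$ the number of sublooptrees grafted within $d^{\infty}$-distance $\tfrac14$ of $\rho^{\infty}$ reaching $d^{\infty}$-distance $\tfrac12$; note that the infinitely many sublooptrees grafted between $d^{\infty}$-distances $\tfrac14$ and $\tfrac12$ that barely reach $A$ attach near the end of the backbone and, by the above shorting, cost resistance bounded away from $0$, so they are harmless.

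\textbf{Step 2: tail of $N^{*}$ and closing the recursion.} The loopspine local time $L$ elapsed before $\rho^{\infty}$-distance reaches $\tfrac14$ is the inverse at a fixed level of a stable subordinator of index $\alpha-1$ (the cumulative short-arc length along the spine, read off from Proposition~\ref{prop:descent PP}), hence has a stretched-exponential tail, $\prb{L> x}\leq C e^{-c x^{1/(2-\alpha)}}$. Conditionally on the loopspine, $N^{*}$ is stochastically dominated by a Poisson variable whose mean is $L$ times the finite total mass, under the \Ito excursion measure, of looptrees with $L^m$-height exceeding $\tfrac14$ (using that the $L^m$-height dominates the actual reach, together with Lemma~\ref{lem:osc} and Remark~\ref{rmk:osc deterministic} to control the relevant coding excursions). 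A Poisson tail estimate combined with the tail of $L$ then gives $\prb{N^{*}\geq\lambda}\leq C e^{-c\lambda^{1/4}}$, and it remains to absorb the bushiness proviso of Step~1: since each grafted sublooptree is a rescaled compact stable looptree, the bad event inside it is precisely the compact analogue of the present statement, established in \cite[Section~5]{ArchBMCompactLooptrees}, so a union bound over the (stochastically controlled) number of relevant sublooptrees closes the argument. The exponent $\tfrac14$ is inherited from the compact estimate and is deliberately not optimised.

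\textbf{Main obstacle.} The delicate point is Step~1: making rigorous, uniformly across the continuum of possible graft distances in $(0,1)$, that a small value of $\Refi(\rho^{\infty},A)$ forces many near-$\rho^{\infty}$ escape routes -- that is, that the many sublooptrees attached close to distance $1$ and barely reaching $A$, as well as a single bushy sublooptree, are genuinely harmless, which is exactly what the shorting/star reduction and the recursion onto the compact case are designed to handle. This is the part of the proof whose details coincide with those in \cite{ArchBMCompactLooptrees}, and which I would accordingly present in compressed form, pointing the reader there for the full bookkeeping.
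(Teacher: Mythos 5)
Your high-level strategy --- reduce to $r=1$ by scaling, exhibit a cutset of escape routes near $\rho^{\infty}$, and bound its cardinality using the Poissonian structure of the loopspine decomposition together with stretched-exponential tails --- is the same as the paper's, but two steps in your reduction do not hold up. First, the shorting scheme of Step 1 does not produce a star all of whose branches have resistance bounded below by a constant. When you short a sublooptree grafted at distance $d\in[\tfrac14,\tfrac12)$ that reaches $B^{\infty}(\rho^{\infty},1)^c$ down to its graft point, that graft point becomes identified with the target blob; after you then merge $\rho^{\infty}$ with the graft points of the deep sublooptrees attached within distance $\tfrac14$, the source blob can extend along the loopspine up to distance $\tfrac14$ while the target now contains loopspine points at distance $\tfrac14+\epsilon$, so the ``loopspine branch'' of your star can have resistance arbitrarily close to $0$. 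These intermediate sublooptrees are therefore not harmless: they must be counted as well (their number does have the right tail), or the cutset must be chosen differently.

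Second, and more seriously, the ``bushiness proviso'' is essentially the content of the proposition itself and cannot be discharged by citing a compact analogue in \cite{ArchBMCompactLooptrees}: the paper states explicitly that this resistance argument is \emph{not} given there. A single grafted sublooptree can have arbitrarily small internal resistance from its graft point to the relevant sphere, so the one-level bound $\Refi(\rho^{\infty},B^{\infty}(\rho^{\infty},1)^c)\geq c/(N^{*}+1)$ is not established, and a naive union bound over sublooptrees runs into the same statement at a comparable scale, where the bad probability is not small. The paper resolves this with a genuinely multi-generation construction: a subcritical Galton--Watson tree $T_{\text{res}}$ indexing, over all generations, the sublooptrees of $L$-Height at least $\tfrac12$ grafted within distance $R=\lambda^{-2t}$ of their parent's root; Lemma \ref{lem:segment length bound infinite} controls the loopspine length $|I^m_R|$ and hence the offspring distribution, a total-progeny estimate bounds $|T_{\text{res}}|$ by $\tfrac12\lambda^{t}$, and the cutset then consists of at most $\lambda^{2t}$ points, each at resistance at least $\lambda^{-2t}$ from $\rho^{\infty}$, giving $\Refi\geq\tfrac12\lambda^{-4t}$ by the parallel bound. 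The exponent $\tfrac14$ comes from optimising $t>\frac{\alpha}{2(\alpha-1)}$ in this scheme, not from any compact estimate. To repair your proof you would need to set up this recursion (or an equivalent induction on scales) explicitly.
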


By applying Borel-Cantelli arguments along the sequence $r_n = 2^n$ (respectively $r_n=2^{-n}$) in Propositions \ref{prop:vol results infinite} and \ref{prop:resistance results infinite},  we obtain the results of Theorems \ref{thm:vol bounds} and \ref{thm:res bounds} for the regime $r \uparrow \infty$ (respectively $r \downarrow 0$). For any $R \in (0, \infty)$, the local results can then be extended to $\nu^{\infty}$-almost every $u \in \L_{\alpha}^{\infty,R}$ by uniform re-rooting invariance (recall that $(\L_{\alpha}^{\infty, R})_{R \geq 0}$ is a sequence of nested compact looptrees that exhaust $\Lai$). Taking $R \rightarrow \infty$ then gives the result.

Before outlining the proofs of Propositions \ref{prop:vol results infinite} and \ref{prop:resistance results infinite}, we briefly explain how the fractal structure of $\Lai$ can be encoded using the Ulam-Harris tree. This will be useful in the proofs of both propositions. This representation is very similar to the one described for compact looptrees in \cite[Section 5.2.1]{ArchBMCompactLooptrees}, except that at the first level we will decompose along the infinite loopspine rather than the W-loopspine.

\subsection{Encoding the looptree structure in a branching process}
The Williams' decomposition of Section \ref{sctn:Williams Decomp} suggests a natural way to encode the fractal structure of $\Lai$ in a branching process, which we will label using the Ulam-Harris numbering convention of Section \ref{sctn:trees background discrete}. Although the Williams' decomposition is defined along the maximal spine from the root of a compact tree, it follows from uniform rerooting invariance of stable trees that we can apply the same procedure from a uniform point instead, without changing the distribution of the decomposition.

Specifically, we let $\emptyset$ denote the root vertex of our branching process. This will represent the whole looptree $\Lai$ (in particular, $\emptyset$ should not be confused with $\rho^{\infty}$, which is the root of $\Lai$). We decompose $\Lai$ by removing the infinite loopspine, and denote the resulting fragments by $(\La^{(i,o)})_{i=1}^{\infty}$. Moreover, we let ${\La^{(i)}}$ denote the closure of $\La^{(i,o)}$ in $\Lai$, and remark that it follows from standard properties of the \Ito excursion measure that $\bPb$-almost surely, $\La^{(i)} = \La^{(i,o)} \cup \{\rho_i\}$ for each $i$. We call $\rho_i$ the root of $\La(i)$ as it is the point at which $\La^{(i)}$ is grafted to the infinite loopspine. It again follows from standard properties of the \Ito excursion measure that each fragment $\La^{(i)}$ is an independent (unconditioned) copy of a compact stable looptree, coded by an instance of the \Ito measure. We will view the set $(\La^{(i)})_{i=1}^{\infty}$ as the children of $\emptyset$ in our branching process, and we will index them by $\N$. Moreover, to each edge joining $\emptyset$ to one of its offspring $i$, we associate a random variable $m_i = m (\emptyset,i)$ which gives the mass of the sublooptree corresponding to index $i$.

We then repeat this decomposition along each of the sublooptrees $\La^{(i)}$, with the minor modification that we decompose along the W-loopspine rather than the infinite loopspine. More precisely, if $i$ is a child of $\emptyset$, we can decompose along its W-loopspine from its root to its point of maximal tree height to obtain a countable collection of fragments. By taking the appropriate closures, these fragments are sublooptrees and will form the offspring of $i$ in our branching process. We label the offspring as $(ij)_{j \geq 1}$. By repeating this procedure again and again on the resulting subsublooptrees, we can keep iterating to obtain an infinite branching process.

\begin{remark}
The spinal decomposition of \cite{HPWSpinPart} obtained by taking the loopspine to be from $p(U)$ (or the root) to an independent uniform point $p(V)$ is perhaps the most natural candidate to use as the basis of this iterative procedure, but when using this to bound the mass of small balls in $\La$ this leads to technical difficulties in the case when $V$ is chosen so that $p(V)$ is a point too close to $p(U)$. This difficulty is avoided by instead picking the maximal spine in the underlying tree.
\end{remark}

We index this process using the Ulam-Harris tree
\[
\mathcal{U} = \bigcup_{n=0}^{\infty}\N^{n}
\]
defined in Section \ref{sctn:trees background discrete}. Using the notation of \cite{Neveu}, an element of our branching process will be denoted by $u = u_1 u_2 u_3 \ldots u_j$, and corresponds to a sublooptree which we denote by $\La^{(u)} \subset \Lai$. Its offspring will all be of the form $(u i)_{i \in \N}$, where $ui$ here abbreviates the concatenation $u_1 u_2 u_3 \ldots u_j i$, and each will correspond to one of the further sublooptrees obtained on performing a Williams' decomposition of $\La^{(u)}$.

For each element $u \in \mathcal{U}$, we set
\[
M_u := \nu^{\infty} (\La^{(u)}),
\]
by viewing $\La^{(u)}$ as a subset of $\Lai$.

In the proofs of Propositions \ref{prop:vol results infinite} and \ref{prop:resistance results infinite}, we will select subtrees $T_{\text{vol}}, T_{\text{res}} \subset \mathcal{U}$ which index sublooptrees of large mass or large diameter. We make this more precise in the box below, where we describe the procedure used to obtain $T_{\text{vol}}$.

\subsection{Volume bounds}
To maintain consistency with the notation of \cite{ArchBMCompactLooptrees}, we take:
\begin{align*}
\bF = \frac{\alpha - 1}{4\alpha - 3}, \hspace{5mm} \bg = \frac{\alpha - 1}{4\alpha - 3}, \hspace{5mm} \bE = \frac{2\alpha - 1}{2\alpha (4\alpha - 3)}, \hspace{5mm} \bd = \frac{1}{4\alpha - 3}.
\end{align*}
The main point to remember is that $\beta_i \in (0,1)$ for all $i$. These precise values have been chosen to optimise the final exponent on $\lambda$, but are otherwise not important.

\begin{tcolorbox}[colback=white]
\textbf{Iterative Algorithm}\\

Start by taking $\emptyset$ to be the root of $T_{\text{vol}}$. Recall this represents the whole looptree $\Lai$.
\begin{enumerate}
\item Perform a decomposition of $\Lai$ along its infinite loopspine.
\item Consider the resulting fragments. To choose the offspring of $\emptyset$, select the fragments that have mass at least $r^{\alpha} \lambda^{1-\bF-\bg}$, and such that the roots of the corresponding sublooptrees are within distance $r$ of the root of $\emptyset$.
\item Repeat this process to construct $T_{\text{vol}}$ in the usual Galton-Watson way. Given an element $u = u_1 u_2 \ldots u_j \in T_{\text{vol}}$, there is a corresponding sublooptree $\La^{(u)}$ in $\Lai$ with root $\rho_u$ and $M_u \geq r^{\alpha} \lambda^{1-\bF-\bg}$. Consider the fragments obtained in a Williams' decomposition of $\La^{(u)}$, and select those that correspond to further sublooptrees that are within distance $r$ of $\rho_u$, and also such that $M_{u_1 u_2 \ldots u_j u_{j+1}} \geq r^{\alpha} \lambda^{1-\bF-\bg}$, to be the offspring of $u$.
\item For each $u = u_1 u_2 \ldots u_j \in T_{\text{vol}}$, set 
\[
S_u = \sum_{i=1}^{\infty} M_{ui} \mathbb{1} \Big\{ \rho_{ui} \in B (\rho_u, r) \Big\} \mathbb{1} \Big\{ M_{ui} < r^{\alpha} \lambda^{1-\bF-\bg} \Big\}.
\]
\end{enumerate}
\end{tcolorbox}

By the discussion above, this algorithm is $\bPb$-almost surely well defined, and is very similar to the decomposition of compact stable looptrees used in \cite[Section 5.2.2]{ArchBMCompactLooptrees}. As explained there, in the event that $T_{\text{vol}}$ is finite we then have that:
\begin{equation}\label{eqn:vol bound iter}
\nu^{\infty}(B^{\infty}(\rho^{\infty}, r)) \leq \sum_{u \in T_{\text{vol}}} S_u.
\end{equation}
Using this, we can now prove Theorem \ref{thm:vol bounds}. We skip some technical details since they are quite lengthy and can be carried out exactly as in the compact case, which is explained fully in \cite[Section 5]{ArchBMCompactLooptrees}, but comment on any necessary modifications for the infinite case.

\begin{proof}[Proof of Theorem \ref{thm:vol bounds}, outline only]
We start by proving the volume lower bounds, since the proof strategy is simpler than for the upper bounds. We use the \Levy coding mechanism of Section \ref{sctn:looptree def} and known fluctuation results for stable \Levy processes. It is not to hard to see (perhaps with the help of a picture, though this is proved formally in \cite[Lemma 2.1(ii)]{RSLTCurKort} in the compact case), that for any $[s,t]$ in $[0,1]$, 
\begin{equation}\label{eqn:osc dist relation}
d^{\infty}(s,t) \leq X^{\infty}_s + X^{\infty}_t -2\inf_{r \in [s,t]} X^{\infty}_r.
\end{equation}
Recall also from Section \ref{sctn:Useful results} that 
\[
\osc_{[a,b]} X^{\infty} := \sup_{s, t \in [a,b]} |X^{\infty}_t - X^{\infty}_s|.
\]
We deduce from (\ref{eqn:osc dist relation}) that if $\osc_{[0, r^{\alpha}\kappa]} X^{\infty} \leq \frac{1}{2}r$, then $B^{\infty}(\rho^{\infty}, r) \geq r^{\alpha}\kappa$. By applying the Vervaat transform and absolute continuity relation of (\ref{eqn:RN deriv levy bridge}), and taking either $\kappa = \lambda$, or $\kappa = \lambda^{-1}$, we are then able to use standard results for fluctuations of unconditioned \Levy processes to control the behaviour of \osc, and obtain the volume lower bounds. This is done rigorously in \cite[Sections 5.1 and 5.3]{ArchBMCompactLooptrees}. The only difference in the arguments used there is that in the compact place, we have to replace $X^{\infty}$ with $\X$ in (\ref{eqn:osc dist relation}). However, all the proofs of \cite{ArchBMCompactLooptrees} proceed by using the Vervaat transform and absolute continuity relation to compare $\X$ with an unconditioned \Levy process $X$. In the infinite case the proof is therefore simpler since we are already working with the unconditioned process.

The \Levy process picture is not so useful for proving precise volume upper bounds since the relation (\ref{eqn:osc dist relation}) is not an equality. In fact, the upper bound it gives on the distance is quite rough since any single jump in $\X$ contributes quite heavily to \osc, but does not immediately contribute to distances in looptrees. In particular, an entire jump corresponds to traversing an entire loop and therefore (initially) contributes zero overall distance in the looptree.

Set $p(\lambda) = \lambda^{\frac{\alpha - 1}{4\alpha - 3}} e^{-c'\lambda^{\frac{\alpha - 1}{4\alpha - 3}}}$. To obtain the volume upper bounds, or, more precisely, to show that $\prb{ \nu^{\infty} (B^{\infty}(\rho^{\infty}, r)) \geq r^{\alpha} \lambda} \leq p(\lambda)$, we therefore use the approach indicated by (\ref{eqn:vol bound iter}) above. The proof consists of two main steps:
\begin{enumerate}[(i)]
\item Bounding the progeny of $T_{\text{vol}}$;
\item Bounding each of the terms $(S_u)_{u \in T_{\text{vol}}}$.
\end{enumerate}

Again, these can be broken down into smaller steps. For $(i)$, we first show that the length of loopspine (or W-loopspine) contained in $B^{\infty}(\rho_u, r)$ is upper bounded by $r\lambda^{\bE}$ with probability at least $1-C'p(\lambda)$ (cf \cite[Lemma 5.5]{ArchBMCompactLooptrees}). Conditional on this, using the Poisson property of successive \Ito excursions, the number of offspring of $\La^{(u)}$ can essentially be stochastically dominated by a \textsf{Poisson}$(K_{\alpha}\lambda^{2\bE - \frac{1}{\alpha}({1-\bF-\bg})})$ random variable, where $K_{\alpha}$ is just a constant (cf \cite[Lemma 5.6]{ArchBMCompactLooptrees}). This is a subcritical offspring distribution, and by applying the main theorem of \cite{DwassProg} we deduce that, with probability at least $1-C'p(\lambda)$, $|T_{\text{vol}}| \leq \lambda^{\bF}$.

We now discuss a bound for a single term of the form $S_u$, as in point $(ii)$. We use the fact that the sum of the lifetimes of successive \Ito excursions (recall that these represent the volumes of successive sublooptrees arranged around the loopspine) can be represented as an $\alpha^{-1}$-stable subordinator with jump sizes corresponding to the original excursion lengths (e.g. see \cite[proof of Proposition 5.6]{GoldHaasExtinctionStable}), which we denote by $\textsf{Sub}$. In particular, since (as above) the relevant length of loopspine (or W-loopspine) contained in $B^{\infty}(\rho_u, r)$ is upper bounded by $r\lambda^{\bE}$, we can upper bound $S_u$ by $\textsf{Sub}_{r\lambda^{\bE}}$. Moreover, all jumps greater than $r\lambda^{1-\bF-\bg}$ have been removed from $S$ as a result of the construction of $T_{\text{vol}}$, which allows us to apply Lemma \ref{lem:osc} to deduce that, with probability at least $1-C'p(\lambda)$, for all $u \in T_{\text{vol}}$:
\[
S_u \leq \textsf{Sub}_{r\lambda^{\bE}} \leq r^{\alpha}\lambda^{1-\bF}.
\]
By taking a union bound and summing up, we therefore deduce that, with probability at least $1-C'p(\lambda)$,
\begin{equation*}
\nu^{\infty}(B^{\infty}(\rho^{\infty}, r)) \leq \sum_{u \in T_{\text{vol}}} S_u \leq |T_{\text{vol}}| \sup_{u \in T_{\text{vol}}} S_u \leq \lambda^{\bF}r^{\alpha}\lambda^{1-\bF} = r^{\alpha}\lambda.
\end{equation*}

The method to obtain the infimal volume upper bound is simpler and does not require reiterating around subsequent levels. We will say that a radius $r \in (0, \infty)$ is ``short" if the length of loopspine contained within $B^{\infty}(\rho^{\infty}, r)$ is at most $3r$. By scaling invariance of $\Lai$, the probability that $r$ is short is a (non-zero) constant that is independent of $r$ (or more usefully for an application of a generalised version of the second Borel-Cantelli Lemma, $\prcondb{r \text{ short}}{2r \text{ not short}}{}$ and $\prcondb{r \text{ short}}{\frac{1}{2}r \text{ not short}}{}$ are independent of $r$). On the event that $r$ is short, and using the same logic as above, we can bound the sum of the volumes of all the incident sublooptrees by $\textsf{Sub}_{3r}$, which is independent of the loopspine structure. Therefore, by repeating this argument along a subsequence $r_n \downarrow 0$ or $r_n \uparrow \infty$ of short radii, the infimal volumes will be upper bounded by the infimal behaviour of \textsf{Sub}, i.e. with fluctuations at least of order $(\log \log r^{-1})^{-(\alpha - 1)}$ as $r \downarrow 0$, and  $(\log \log r)^{-(\alpha - 1)}$ as $r \uparrow \infty$.
\end{proof}

\subsection{Applications to volume limits in compact stable looptrees}\label{sctn:infinite looptrees unit balls}
As a result of Theorem \ref{thm:LLT}, we are able to prove various volume convergence results that are exploited in \cite{ArchBMCompactLooptrees} to study Brownian motion on compact stable looptrees. The main applicable result is the following theorem. Here we let $\nu$ denote the intrinsic measure on a compact stable looptree $\La$ as defined in Section \ref{sctn:looptree def}, conditioned so that $\nu(\La)=1$. We also let $B(\rho, r)$ denote the open ball of radius $r$ around the root in $\La$, and $\bar{B}(\rho, r)$ its closure.

\begin{theorem}\label{thm:main vol conv}
There exists a random variable $(V_t)_{t \geq 0}: \Omega \rightarrow D([0, \infty), [0, \infty))$ such that the finite dimensional distributions of the process
\begin{align*}
\big( r^{-\alpha} \nu(\bar{B}(\rho, rt)) \big)_{t \geq 0}
\end{align*}
converge to those of $\big( V_t \big)_{t \geq 0}$ as $r \downarrow 0$, and $V_t$ denotes the volume of a closed ball of radius $t$ around the root in $\Lai$. Moreover, for any $p \in [1,\infty)$, setting $V:=V_1$ we have that $\Eb{V^p} < \infty$, and that
\[
r^{-\alpha p} \Eb{\nu(\bar{B}(\rho, r))^p} {\rightarrow} \Eb{V^p}
\]
as $r \downarrow 0$.
\begin{remark}
We have taken closed balls rather than open ones simply so that $V$ is c\`adl\`ag. We conjecture that the volume processes are in fact continuous, and that the convergence of the theorem can be extended to hold uniformly on compacts. However, due to the complex nature of looptrees, this is not straightforward to prove. In particular it is difficult to replicate the argument used to prove a similar result for stable trees, since looptrees do not have such a straightforward regeneration structure around the boundary of a ball of radius $r$.
\end{remark}
\begin{proof}
%As indicated in the statement of the theorem, let $V_t = \nu^{\infty}(B^{\infty}(\rho^{\infty}, t))$ for all $t \geq 0$. We start by showing that $V$ is continuous. First note that it follows from Proposition \ref{prop:Lai scale inv} that $V$ is a $\frac{1}{\alpha}$-stable process: that is, for any $c>0$, $(c^{-\alpha}V_{ct})_{t \geq 0} \overset{(d)}{=} (V_t)_{t \geq 0}$. Moreover, since we established in Proposition \ref{prop:vol results infinite} that $V$ has exponential upper tails, it follows from \cite[Proposition 4, Section VIII.1]{BertoinLevy} that $V$ (or at least its \cadlag modification, a priori obtained by considering closed balls rather than open ones) cannot have any positive jumps. We deduce that $V$ is almost surely continuous on $[0, \infty)$.

%We now prove the convergence result.
By the separability of Proposition \ref{thm:GH vague separable HB}, we can work on a probability space on which $\Lal \rightarrow \Lai$ almost surely as $\l \rightarrow \infty$. By standard results on metric space convergence, it follows that almost surely on this space, $\nu^{\l}(B^{\l}(\rho^{\l}, t)) \rightarrow \nu^{\infty}(B^{\infty}(\rho^{\infty}, t))$ for all $t$ such that $\nu^{\infty}(\partial B^{\infty}(\rho^{\infty}, t)) = 0$ (e.g. see \cite[Lemma 2.11]{GwynneMillerUIHPQscaling}), and therefore for Lebesgue almost every $t$. Moreover, by scaling invariance of $\Lai$, there are no ``special" values of $t$, so we deduce that for any fixed sequence $0<t_0<t_1 < \ldots < t_n < \infty$, the convergence almost surely holds simultaneously for all of the points $t_i, 0 \leq i \leq n$.

Since $(\nu^{\l}(B^{\l}(\rho, t)))_{t \geq 0} \overset{(d)}{=} ({\l} \nu B(\rho, {\l}^{\frac{-1}{\alpha}} t))_{t \geq 0}$, by writing $\l = r^{-\alpha}$ we therefore deduce the result as stated. In particular, it follows that $\nu^{\l} ({B^{\l}(\rho^{\l}, 1)}) \overset{(d)}{\rightarrow} V$ as $\l \rightarrow \infty$.

We claim that $V \in (0, \infty)$ almost surely, with all moments finite. This follows immediately from the exponential upper tails of Proposition \ref{prop:vol results infinite}, namely that
\begin{align}\label{eqn:V exp tails}
\prb{ V \geq \lambda} \leq C \lambda^{\frac{\alpha - 1}{4\alpha - 3}} e^{-c\lambda^{\frac{\alpha - 1}{4\alpha - 3}}}.
\end{align}
We now prove that the moments of $r^{-\alpha} \nu_1(B(\rho_1, r))$ converge to those of $V$. To see this, we observe that the arguments used to prove (\ref{eqn:V exp tails}) and the compact analogue in \cite[Proposition 5.4]{ArchBMCompactLooptrees} can be applied uniformly along the sequence $\Lal$ to give constants $c, C \in (0, \infty)$ such that 
\begin{align*}
\prbl{ \nu^{\l} (B^{\l}(\rho, r)) \geq r^{\alpha} \lambda} \leq C \lambda^{\frac{\alpha - 1}{4\alpha - 3}} e^{-c\lambda^{\frac{\alpha - 1}{4\alpha - 3}}}
\end{align*}
for all $\l \geq 1$. It follows that the sequence $(r^{-\alpha p} (\nu^{\l} (B^{\l}(\rho, r)))^p )_{\l \geq 1}$ is uniformly integrable
for all $p \geq 1$ and so setting $C_p = \Eb{V^p}$ we deduce that
\begin{align*}
r^{-\alpha p} \Eb{(\nu_1(B(\rho_1, r)))^p} \rightarrow C_p
\end{align*}
for all $p \geq 1$.
\end{proof}
\end{theorem}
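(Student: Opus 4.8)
The plan is to deduce the theorem from the local limit Theorem~\ref{thm:LLT}, the scale invariance of $\Lai$ (Proposition~\ref{prop:Lai scale inv}), and the exponential volume tails of Proposition~\ref{prop:vol results infinite}. First I would use Proposition~\ref{thm:GH vague separable HB}: the Gromov--Hausdorff-vague topology is Polish, so by the Skorohod representation theorem the convergence $\Lal \overset{(d)}{\rightarrow} \Lai$ of Theorem~\ref{thm:LLT} can be realised almost surely on one probability space. On that space, standard facts about measured Gromov--Hausdorff-vague convergence (e.g.\ \cite[Lemma~2.11]{GwynneMillerUIHPQscaling}) give $\nu^{\l}(B^{\l}(\rho^{\l},t)) \to \nu^{\infty}(B^{\infty}(\rho^{\infty},t))$ at every $t$ with $\nu^{\infty}(\partial B^{\infty}(\rho^{\infty},t))=0$. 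To turn this into convergence of finite-dimensional distributions I would argue that there are no distinguished radii: by Proposition~\ref{prop:Lai scale inv}, for each fixed $t$ the event $\{\nu^{\infty}(\partial B^{\infty}(\rho^{\infty},t))=0\}$ has probability one, so for any finite set $0<t_0<\dots<t_n$ the convergence holds simultaneously at all the $t_i$ almost surely. Combining this with the scaling identity $(\nu^{\l}(B^{\l}(\rho,t)))_{t\ge0}\overset{(d)}{=}(\l\,\nu(B(\rho,\l^{-1/\alpha}t)))_{t\ge0}$ and setting $\l=r^{-\alpha}$ yields the finite-dimensional convergence of $(r^{-\alpha}\nu(\bar{B}(\rho,rt)))_{t\ge0}$ to a process $(V_t)_{t\ge0}$, which is $D([0,\infty),[0,\infty))$-valued precisely because we take closed balls.

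For the moment statement I would first note that $V:=V_1$ has all moments finite, which is immediate from Proposition~\ref{prop:vol results infinite}: it gives $\prb{V\ge\lambda}\le C\lambda^{\bF}e^{-c\lambda^{\bF}}$ with $\bF=\tfrac{\alpha-1}{4\alpha-3}$, a stretched-exponential upper tail. To upgrade the distributional convergence $\nu^{\l}(B^{\l}(\rho,1))\overset{(d)}{\rightarrow}V$ to convergence of $p$-th moments I would check that the branching-process argument behind that tail bound (Section~\ref{sctn:vol bounds and spectral dim infinite}, together with its compact counterpart \cite[Proposition~5.4]{ArchBMCompactLooptrees}) applies uniformly in $\l$, giving $\prbl{\nu^{\l}(B^{\l}(\rho,r))\ge r^{\alpha}\lambda}\le C\lambda^{\bF}e^{-c\lambda^{\bF}}$ for all $\l\ge1$ with constants not depending on $\l$. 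This makes $(r^{-\alpha p}(\nu^{\l}(B^{\l}(\rho,r)))^p)_{\l\ge1}$ uniformly integrable for each $p\in[1,\infty)$, hence $r^{-\alpha p}\Eb{\nu(\bar{B}(\rho,r))^p}\to\Eb{V^p}$ and $\Eb{V^p}<\infty$.

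The main obstacle is the ``no special radii'' step: one must genuinely verify that for the limit space $\partial B^{\infty}(\rho^{\infty},t)$ almost surely has zero $\nu^{\infty}$-mass for each fixed $t$, and scale invariance is precisely the tool that rules out an a priori atom of the distribution function at a deterministic $t$. A secondary technical point is confirming that the constants in the compact volume estimate of \cite{ArchBMCompactLooptrees} do not degenerate as $\l\to\infty$, which is needed for the uniform integrability. I do not expect this approach to give uniform-on-compacts convergence of the volume process: as the remark notes, looptrees lack the clean regeneration structure across a ball boundary that makes the corresponding statement tractable for stable trees.
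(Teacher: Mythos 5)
Your proposal is correct and follows essentially the same route as the paper's proof: Skorohod representation via the Polishness of the Gromov--Hausdorff-vague topology, convergence of ball masses at continuity radii, scale invariance to rule out special radii, the scaling identity $\l = r^{-\alpha}$, and uniform integrability from the uniform-in-$\l$ stretched-exponential volume tails. No substantive differences to report.
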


\subsection{Resistance bounds}
We now turn to proving the resistance bounds. We use a version of the iterative procedure described above, which we again index by a subcritical branching process, to count the number of sublooptrees intersecting the boundary of a ball of radius $r$. More formally, we will define another subtree $T_{\text{res}} \subset \mathcal{U}$, but this time selecting sublooptrees of large diameter, rather than of large volume, to form the offspring at each step. Since this argument is not given in \cite{ArchBMCompactLooptrees}, we write it more carefully.

We first recall from Section \ref{sctn:looptree def} that the $L$-Height of a compact looptree $\tilde{\La}$ is given by $\sup_{u \in \tilde{\La}} d_{\tilde{\La}}(\rho, u)$, and the $L^m$-Height is given by $\max \tilde{X}^{\text{exc}}$. The $L^m$-Height is $\bPb$-almost surely realised by a unique point in $\tilde{\La}$, which we denote $u_m$. We refer to (the closure of) the set of loops coded by the ancestors of $u_m$ as the $m$-loopspine. As described in Section \ref{sctn:Williams Decomp}, the Poisson measure describing the loop lengths along the loopspine will have the form
\begin{equation}\label{eqn:m loopspine loop measure}
C_{\alpha} \mathbb{1}_{\{ [0,1] \}}(u) \mathbb{1}_{\{[0,H^m]\}}(t) l^{-\alpha} \textsf{pen}(l,H^m,t) du \ dt \ dl,
\end{equation}
where $C_{\alpha} = \frac{\alpha (\alpha - 1)}{\Gamma (2-\alpha)}$, as before, $H^m = T^m\text{-Height}(\tilde{\La})$, and \textsf{pen} is a penalty term that is bounded above and below by a constant on the first half of the m-spine. Moreover, the sublooptrees grafted to the m-loopspine will be coded by a thinned version of the \Ito excursion measure.

We now define some terminology, in keeping with that used in \cite[Section 5.2]{ArchBMCompactLooptrees} wherever possible.

Firstly, given $R>0$, we say that a loop on the m-loopspine is ``good" if it has length at least $4R$, and if the associated uniform random variable (that dictates the ratio of the two segments it splits into on either side of the loopspine) is in the interval $[\frac{1}{4}, \frac{3}{4}]$. We say the a loop is ``goodish" if it just has length at least $4R$. Additionally, for any $R>0$, and any (unconditioned) compact looptree $\tilde{\La}$ (respectively any infinite looptree $\Lai$), we let $I^m_R$ be the closure in $\tilde{\L_{\alpha}}$ (respectively $\Lai$) of the union of all the loops in the m-loopspine (respectively infinite loopspine) that intersect $\tilde{B}(\tilde{\rho}, R)$ (respectively $B^{\infty}(\rho^{\infty}, R)$). Additionally, we let $|I^m_R|$ be the sum of the lengths of these loops.% We use the superscript `W' to denote the corresponding quantities along the W-loopspine.**use this?

We start by giving a technical lemma, the proof of which may be skipped on a first reading.

\begin{lemma}(cf \cite[Lemma 5.5]{ArchBMCompactLooptrees}).\label{lem:segment length bound infinite}
For any $h>0, \lambda > 1, R< \lambda^{-1-\frac{h}{\alpha - 1}}$,
\begin{equation*}
\prcondb{|I^m_{R}| \geq 3R \lambda}{L^m\textup{-Height}(\tilde{\La}) \geq \frac{1}{2}}{} \leq Ce^{-c\lambda^{h \wedge 1}}.
\end{equation*}
\end{lemma}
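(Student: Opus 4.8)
The plan is to reduce the statement to a tail bound for a functional of the Poisson point measure (\ref{eqn:m loopspine loop measure}) describing loop lengths along the $m$-loopspine, conditioned on $L^m$-Height at least $\frac12$. Recall that $|I^m_R|$ is the total length of the loops of the $m$-loopspine that intersect $B^{\infty}(\rho^{\infty},R)$ (or $\tilde B(\tilde\rho,R)$ in the compact case). First I would observe that a loop of length $l$ on the loopspine contributes its full length $l$ to $|I^m_R|$ only if it is within distance $R$ (measured in $d^\infty$, i.e.\ along the loopspine, which only makes the event larger) of $\rho^\infty$; so $|I^m_R|$ is bounded above by $\sum_{j} l_j \mathbbm{1}\{t_j^{\mathrm{ls}} \le R\}$, where $t_j^{\mathrm{ls}}$ is the loopspine-distance from the root to loop $j$, and this loopspine distance is itself a partial sum of the \emph{shorter halves} of the preceding loops (the quantity $\delta$ of (\ref{eqn:delta def})). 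Since we are only summing loops near the root — the first half of the $m$-spine — the penalty term $\textsf{pen}$ in (\ref{eqn:m loopspine loop measure}) is bounded above and below by a constant, so the relevant loop-length measure is comparable to $C_\alpha l^{-\alpha}\,\mathbbm{1}_{[0,1]}(u)\,du\,dt\,dl$ on the appropriate range of $t$.

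The core estimate is then the following: if $\sum_k \delta_{(l_k,u_k)}$ is a Poisson point measure with the loop lengths $l_k$ having intensity $\asymp l^{-\alpha}\,dl$ and $u_k$ uniform on $[0,1]$, and we accumulate loops in loopspine-order until the running loopspine-distance first exceeds $R$, then the total accumulated loop length $\mathsf{Sub}^{\mathrm{loop}}$ satisfies $\prb{\mathsf{Sub}^{\mathrm{loop}} \ge 3R\lambda} \le Ce^{-c\lambda^{h\wedge 1}}$ under the stated constraint $R < \lambda^{-1-h/(\alpha-1)}$. I would prove this in two pieces. (a) \emph{Removing large loops.} A single loop of length $> 4R$ already has loopspine-contribution (its shorter half) possibly as large as $2R$, but a \emph{good} loop (shorter half $\ge R$) terminates the accumulation; the probability that among the first $O(\lambda)$ loops of length $>4R$ none is good decays like $e^{-c\lambda}$ (each has probability $\ge \frac12$ independently of being good via the uniform variable), giving the $e^{-c\lambda}$ part. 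So on the complementary event, up to the stopping loop all loops have length $\le 4R$ except possibly the terminal one, and we may discard it, losing at most an additive $4R$. (b) \emph{Small loops.} Having truncated at scale $4R$, the total mass of loops we can accumulate before the running shorter-half-sum exceeds $R$ is controlled: the number of loops is dominated by a Poisson variable with parameter $\asymp R^{1-\alpha}\cdot(\text{something})$ coming from $\int_{[\cdot,4R]} l^{-\alpha}dl \asymp R^{1-\alpha}$ over a time-window of length $O(R)$ from the $dt$ factor (this is where the constraint $R<\lambda^{-1-h/(\alpha-1)}$ enters, ensuring the height-to-$R$ ratio is large enough that we really are on the first half of the $m$-spine and $\textsf{pen}$ is comparable to a constant), and then $\mathsf{Sub}^{\mathrm{loop}}$ restricted to loops of length $\le 4R$ is, after rescaling by $R$, a sum of $\mathrm{Poisson}(\asymp 1)$-many i.i.d.\ bounded-below-tailed variables, to which I would apply Lemma \ref{lem:osc} (or Remark \ref{rmk:osc deterministic}) exactly as in the volume argument: an exponential moment bound plus Markov gives $\prb{\mathsf{Sub}^{\mathrm{loop}}_{\le 4R} \ge R\lambda} \le Ce^{-c\lambda^{h\wedge 1}}$. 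Adding the two contributions and the discarded terminal loop gives $3R\lambda$ with the claimed probability.

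The comparison with the compact case is then essentially bookkeeping: this is the exact analogue of \cite[Lemma 5.5]{ArchBMCompactLooptrees}, and as emphasised in the discussion preceding Theorem \ref{thm:vol bounds}, in the infinite setting we are directly working with unconditioned \Levy processes — the two independent processes $X$ and $X'$ coding the two sides — so we do not even need the Vervaat transform / absolute-continuity step (\ref{eqn:RN deriv levy bridge}) that \cite{ArchBMCompactLooptrees} uses to pass from the excursion to an unconditioned process; the conditioning on $L^m\text{-Height} \ge \frac12$ is handled via the joint law of height and the loopspine measure under the \Ito-type description, just as in the compact proof. I expect the main obstacle to be making precise the claim that, under the conditioning on $L^m$-Height $\ge \frac12$ and the constraint on $R$, the \emph{first half of the $m$-spine} is the relevant region and there the penalty term $\textsf{pen}(l,H^m,t)$ in (\ref{eqn:m loopspine loop measure}) is genuinely bounded above and below by a constant uniformly — i.e.\ controlling the interaction between the height-conditioning and the spatial location $t$ of the loops we accumulate. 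Once that is pinned down (it is where $R < \lambda^{-1-h/(\alpha-1)}$ is used: it guarantees $R \ll H^m$ so that $(H^m - t)^{-1/(\alpha-1)}$-type factors are negligible for the loops in question), the rest follows the compact template line by line.
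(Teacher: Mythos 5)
Your skeleton — a good loop terminates the accumulation, the number of goodish loops before the first good one is geometrically dominated, and the small loops between successive goodish loops are handled by an $(\alpha-1)$-stable subordinator with jumps above $4R$ removed plus the exponential moment of Lemma \ref{lem:osc} — is exactly the first half of the paper's argument, and that half indeed yields $Ce^{-c\lambda}$ bounds. But there is a genuine gap: this whole scheme only works if a good loop actually occurs on the $m$-loopspine before the accumulation leaves $B(\rho,R)$, and you never bound the probability that it does not. The paper does this by splitting on the \emph{tree}-length of the spine: if $T^m\text{-Height}(\tilde\La) \geq R^{\alpha-1}\lambda^h$, the number of good loops dominates a $\textsf{Poisson}(c\lambda^h)$ variable, so $\prb{\nexists \text{ good loop}} \leq e^{-c\lambda^h}$ — and this is the sole source of the $\lambda^{h\wedge 1}$ exponent in the statement, which your write-up never derives (your part (a) gives $e^{-c\lambda}$ only on the event that at least $O(\lambda)$ goodish loops exist, and your part (b) asserts the exponent $h\wedge 1$ without producing it; your Poisson parameter ``$R^{1-\alpha}$ over a time-window of length $O(R)$'' is also dimensionally off, since the $dt$ in (\ref{eqn:m loopspine loop measure}) is tree-distance, so the relevant window is of order $R^{\alpha-1}$, not $R$).

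The second, complementary case is where the real work and the hypothesis on $R$ live, and you have misattributed its role. When $T^m\text{-Height}(\tilde\La) \leq R^{\alpha-1}\lambda^h$, the conditioning $L^m\text{-Height} \geq \tfrac12$ forces a total loop length of order $1$ to accumulate over a very short stretch of spine, which could in principle produce many huge loops within distance $R$ of the root; the paper shows this event (intersected with $M' \leq \lambda$ goodish loops) has probability at most $Ce^{-c\lambda}$ by removing the goodish jumps, rescaling the subordinator by $k = R^{-1}\lambda^{-h/(\alpha-1)}$, and using $R < \lambda^{-1-\frac{h}{\alpha-1}}$ precisely to guarantee $k > \lambda$ so that the rescaled subordinator must exceed order $\lambda$ with all jumps bounded, whence Remark \ref{rmk:osc deterministic} applies. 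This is not a matter of the penalty term $\textsf{pen}$ being bounded or of ``being on the first half of the $m$-spine'' — that is a separate, minor point — but of controlling the distortion that the height-conditioning imposes when the spine is short. Without this case your bound is conditional on an event whose probability you have not estimated, so the proof as proposed does not close. (A smaller slip: on your complementary event in (a) there may still be up to $O(\lambda)$ goodish non-good loops before the terminal good one, each of length $>4R$; these are neither ``discarded'' nor covered by your truncation at scale $4R$, and must be accounted for via the $2RN$-type term as in the paper.)
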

\begin{proof}
We use a similar strategy to \cite[Lemma 5.5]{ArchBMCompactLooptrees}. Indeed, we first condition on existence of a good loop in the m-loopspine. We then select the closest good loop to $\rho$. Given such a loop, the number of goodish loops between $\rho$ and the first good loop is stochastically dominated by $N-1$, where $N$ is a Geometric($\frac{1}{2}$) random variable. $|I^m_R|$ can then be upper bounded by the random variable
\begin{equation}\label{eqn:loopspine length decomp}
2RN + \sum_{i=1}^{N} Q^{(i)},
\end{equation}
where $Q^{(i)}$ denotes the sum of the lengths of all the smaller loops on the m-loopspine that are between the $(i-1)^{\text{th}}$ and $i^{\text{th}}$ goodish loops, and the term $2RN$ comes from selecting a segment of length at most $R$ in each direction round each of the goodish loops. Each $Q^{(i)}$ can be independently approximated by an $(\alpha - 1)$-stable subordinator run up until an exponential time and conditioned not to have any jumps greater than $4R$.

Since we model the loop lengths by a subordinator indexed by the m-spine of the underlying tree, we upper bound the probability in question by:
\begin{align}\label{eqn:length height decomp}
\begin{split}
&\prcondb{|I^m_{R}| \geq 3R \lambda, T^m\text{-Height}(\tilde{\La}) \geq R^{\alpha -1}\lambda^{h}}{L^m\text{-Height}(\tilde{\La}) \geq \frac{1}{2}}{} \\
&+ \prcondb{|I^m_{R}| \geq 3R \lambda, T^m\text{-Height}(\tilde{\La}) \leq R^{\alpha -1}\lambda^{h}}{L^m\text{-Height}(\tilde{\La}) \geq \frac{1}{2}}{}.
\end{split}
\end{align}
The first of these terms can be upper bounded by $Ce^{-c\lambda}$ using exactly the same arguments as in \cite[Lemma 5.5]{ArchBMCompactLooptrees}, the point being that if the m-spine in the underlying tree is long enough, then there is plenty of time for a good loop to occur in the corresponding subordinator (though note that to do this formally, we have to deal with the penalty term of (\ref{eqn:m loopspine loop measure}), but this is minor and can be treated as in \cite[Lemma 5.5]{ArchBMCompactLooptrees}). To summarise more concretely:
\begin{itemize}
\item The number of good loops on the m-loopspine is stochastically dominated by a \textsf{Poisson}(c$\lambda^h$) random variable, so $\prb{\nexists \text{ a good loop }} \leq e^{-c \lambda^h}$.
\item $N$ is \textsf{Geometric}($\frac{1}{2}$), so $\prb{N \geq \lambda} \leq Ce^{-c\lambda}$.
\item $\prb{\sum_{i=1}^{N} Q^{(i)} \geq R \lambda} \leq C e^{-c \lambda}$. Indeed, by (\ref{eqn:m loopspine loop measure}), we can (independently for each $i$) stochastically dominate each term $Q^{(i)}$ by an $(\alpha - 1)$-stable subordinator $\textsf{Sub}^{(i)}$ with all jumps greater than $4R$ removed, run up until a time $\EE_R \sim \textsf{exp}(cR^{\frac{-1}{\alpha - 1}}$). We also let $\textsf{Sub}^{(i)'}$ denote a rescaled version of $\textsf{Sub}^{(i)}$, instead with all jumps greater than $4$ removed, and let $\EE \sim \textsf{exp}(c)$. By rescaling $\textsf{Sub}^{(i)}$ and choosing $\theta$ so that $\Eb{e^{\theta \textsf{Sub}^{(i)'}}} < \frac{3}{2}$ (which we can do by Lemma \ref{lem:osc}), we then have that
\begin{align}\label{eqn:Qi bound}
\begin{split}
\prb{\sum_{i=1}^{N} Q^{(i)} \geq R \lambda} &= \sum_{n=1}^{\infty}\prcondb{\sum_{i=1}^{N} \textsf{Sub}^{(i)'}_{\mathcal{E}} \geq \lambda}{N=n}{} \prb{N=n} \\
&\leq \sum_{n=1}^{\infty} \Big(\frac{3}{2}\Big)^n e^{-\theta \lambda} \Big(\frac{1}{2}\Big)^n \\
&= C_{\theta} e^{-\theta \lambda}.
\end{split}
\end{align}
\end{itemize}

This deals with the first term in (\ref{eqn:length height decomp}). If the m-spine is prohibitively short, then this logic cannot be applied, however we can remedy this by noting that if the $T^m$-Height is unusually small in relation to the $L^m$-Height, then this essentially forces the loop sizes to be large compared to what we would normally expect.

More concretely, in this case, let $M'$ be the total number of goodish loops on the m-loopspine (i.e. the total number of loops of length at least $4R$). Using the subordinator representation of the loop lengths, we then have that 
\begin{align*}
&\prcondb{M' \leq \lambda, T^m\text{-Height}(\tilde{\La}) \leq  R^{\alpha -1}\lambda^{h}}{L^m\text{-Height}(\tilde{\La}) \geq \frac{1}{2}}{} \\
&\hspace{20mm}\leq c\prb{M' \leq \lambda, L^m\text{-Height}(\tilde{\La}) \geq \frac{1}{2},T^m\text{-Height}(\tilde{\La}) \leq  R^{\alpha -1}\lambda^{h}} \\
&\hspace{20mm}\leq c\prcondb{\textsf{Sub}_{ R^{\alpha -1}\lambda^{h}} \geq \frac{1}{2} - 4R\lambda}{\text{no jumps of size at least } 4R}{},
\end{align*}
where the third line follows by removing any jumps corresponding to goodish loops from \textsf{Sub}, and \textsf{Sub} is a subordinator with (time-dependent) jump measure 
\[
C_{\alpha} \mathbb{1}_{\{ [0,1] \}}(u) \mathbb{1}_{\{[0,H^m]\}}(t) l^{-\alpha} \textsf{pen}(l,H^m,t) du \ dt \ dl,
\]
as in (\ref{eqn:m loopspine loop measure}). Note that $\textsf{Sub}$ is almost an $(\alpha - 1)$-stable subordinator, but with the extra penalty against larger jumps. We therefore let $\textsf{Sub}^{\alpha - 1}$ denote an $(\alpha - 1)$-stable subordinator. It follows that for any $k>0$, and any $t, x, y > 0$:
\begin{align*}
\prcondb{\textsf{Sub}_{t} \geq x}{\text{no jumps of size at least } y}{} &\leq \prcondb{\textsf{Sub}^{\alpha - 1}_{t} \geq x}{\text{no jumps of size at least } y}{} \\
&= \prcondb{\textsf{Sub}^{\alpha-1}_{k^{\alpha - 1}t} \geq kx}{\text{no jumps of size at least } ky}{}.
\end{align*}
Taking $k=R^{-1}\lambda^{\frac{-h}{\alpha-1}}$, we therefore see that 
\begin{align*}
&\prcondb{M' \leq \lambda, T^m\text{-Height}(\tilde{\La}) \leq  R^{\alpha -1}\lambda^{h}}{L^m\text{-Height}(\tilde{\La}) \geq \frac{1}{2}}{} \\
&\hspace{20mm}\leq \prcondb{\textsf{Sub}^{\alpha-1}_{1} \geq \frac{1}{2} R^{-1}\lambda^{\frac{-h}{\alpha-1}} - \lambda^{1 - \frac{h}{\alpha-1}} }{\text{no jumps at least } 4\lambda^{-\frac{h}{\alpha-1}}}{} \\
&\hspace{20mm}\leq \Eb{e^{\theta \textsf{Sub}^{\alpha-1}_1}} e^{-\theta \lambda}
\end{align*}
for sufficiently small $\theta > 0$, where the existence of the exponential moment in the last line follows from Remark \ref{rmk:osc deterministic}, and we recall that $R< \lambda^{-1-\frac{h}{\alpha - 1}}$ by assumption.

We can then proceed exactly as in the second and third bullet points above to deduce that the second term in (\ref{eqn:length height decomp}) is upper bounded by $Ce^{-c\lambda}$. This completes the proof.
\end{proof}

\begin{proof}[Proof of Proposition \ref{prop:resistance results infinite}]
By scaling invariance of $\Lai$, it is sufficient to prove the result for $r=1$.

Take $R=\lambda^{-2t}$, for some positive constant $t$ that will be specified later. The aim will be to bound the cardinality of a set $A \subset \Lai$ such that any path from $B^{\infty}(\rho^{\infty}, R)$ to $B^{\infty}(\rho^{\infty}, 1)^c$ must pass through at least one point in $A$. Do to this, we will define a tree $T_{\text{res}} \subset \mathcal{U}$, obtained similarly to $T_{\text{vol}}$ in the box above, but with two important differences:
\begin{itemize}
\item Rather than decomposing along the W-loopspine in the second and subsequent steps, we decompose along the m-loopspine.
\item Rather than reiterating around sublooptrees of larger mass, we reiterate around those with large $L$-Height: specifically, those that are grafted to the m-loopspine within distance $R$ of the root, and with $L$-Height at least $\frac{1}{2}$. We decompose along the m-loopspine rather than the loopspine to the point achieving the $L$-Height purely because it is more convenient to write down an expression of the form (\ref{eqn:m loopspine loop measure}) in this case. However, an expression of the form of (\ref{eqn:m loopspine loop measure}) should also be true in the case of this loopspine.
\end{itemize}

We will show that, with sufficiently high probability, the total progeny of $T_{\text{res}}$ is at most $\frac{1}{2}\lambda^{t}$, and that, on this event, we can pick a set $A$ of cardinality at most $\lambda^{2t}$. In this case we are done: since $A$ is a cutset, we then have that
\begin{equation}\label{eqn:Reff decomp}
\Refi(\rho^{\infty}, B^{\infty}(\rho^{\infty}, 1)^c) \geq \Refi(\rho^{\infty}, A),
\end{equation}
and due to the underlying tree structure this latter quantity is lower bounded by the resistance of $2|A|$ edges connected in parallel, each of resistance $\lambda^{-2t}$. More precisely:
\[
\Refi(\rho^{\infty}, A) \geq (|A| \lambda^{2t})^{-1} \geq \frac{1}{2}\lambda^{-4t}.
\]
We will then optimise over $t$ to obtain the result.

To this end, we now turn to bounding $|T_{\text{res}}|$. As commented under (\ref{eqn:m loopspine loop measure}), the sequence of sublooptrees incident to the m-loopspine at a point in $I^m_{R}$ can be stochastically dominated by those coded by the classical (unthinned) \Ito excursion measure along this segment, so the offspring distribution of a particular $u \in T_{\text{res}}$ will be \textsf{Poisson}($\tilde{C}|I^{m,u}_{R}|$), where $\tilde{C} = N(L^m \text{-Height} \geq \frac{1}{2})$, and we have added an extra superscript $u$ to denote the dependence on $u$. By applying Lemma \ref{lem:segment length bound infinite} with $h = (\alpha - 1)(2t-1)$, it then follows exactly as in \cite[Lemma 5.7]{ArchBMCompactLooptrees} that
\begin{align*}
\prb{|T_{\text{res}}| \geq \lambda^t} &\leq \lambda^t \prcondb{|I^m_{R}| \geq R \lambda^{t}}{L^m\text{-Height}(\tilde{\La}) \geq \frac{1}{2}}{} + \prb{|\hat{T}| \geq \frac{1}{2}\lambda^{t}} \\
&\leq C \lambda^{t}Ce^{-c\lambda^{t(h \wedge 1)}} + Ce^{-c\lambda^t},
\end{align*}
where $\hat{T}$ is a Galton-Watson tree with \textsf{Poisson}($\tilde{C}\lambda^{-t}$) offspring distribution.

Assuming now that $|T_{\text{res}}| < \frac{1}{2}\lambda^{t}$, we claim that we can pick a set $A$ of cardinality at most $\lambda^{2t}$. In fact, rather than just assuming that $|T_{\text{res}}| < \frac{1}{2}\lambda^{t}$, we can assume that all of the events we conditioned on in order to construct the event $\{|T_{\text{res}}| < \frac{1}{2}\lambda^{t}\}$ do indeed occur. In particular, we can assume that:
\begin{enumerate}[(i)]
\item For each $u \in T_{\text{res}}$, letting $N_u$ be the number of goodish loops on the m-loopspine between $\rho_u$ and the first good loop, we have that $N_u < \lambda^{t}$.
\item For each $u \in T_{\text{res}}$, letting $Q^{(i)}_u$ denote the sum of the length of the shorter loops between successive goodish loops on the m-loopspine,
\[
\sum_{i=1}^{N_u} Q_u^{(i)} < R\lambda^t = \lambda^{-t}.
\]
\item $|T_{\text{res}}| < \frac{1}{2}\lambda^{t}$.
\end{enumerate}

Assuming this, we now describe how we select the set $A$. This is illustrated in Figure \ref{fig:loopspine cutset} below which represents the m-loopspine of some $u \in T_{\text{res}}$. In particular, on this m-loopspine, we can pick two points on each of the goodish loops, and two points on the first good loop, to be in $A$. Moreover, these points can be chosen so that they are within distance $R+\lambda^{-t}$ of the ``base point" of the loop (see Figure \ref{fig:loopspine cutset}). If one of the goodish loops violates the condition that the length of its shorter segment is less than $R$, we can instead treat it as the first good loop.

From the assumptions above, we deduce the following:
\begin{enumerate}[(i)$'$]
\item For all $u \in T_{\text{res}}$, the number of points of $A$ contained in $\La^{(u)}$ is at most $2N_u$ which by $(i)$ above is in turn at most $2 \lambda^{t}$.
\item $|A| \leq |T_{\text{res}}|2 \lambda^{t} =  \lambda^{2t}$.
\item Points in $A$ that are selected as points in the looptree corresponding to $u$ are within distance $|I_R^m| + \lambda^{-t}$ of $\rho_u$, i.e. distance $2\lambda^{-t}$ of $\rho_u$.
\item All points in $A$ are within distance $|T_{\text{res}}|\lambda^{-t} + \lambda^{-t}$ of $\rho^{\infty}$, which is at most $\frac{1}{2}$ by $(iii)$ above.
\item Therefore, any sublooptree grafted to the m-loopspine of $\La^{(u)}$ for some $u \in T_{\text{res}}$ that has $L$-Height less than $\frac{1}{2}$, will not intersect $B(\rho, 1)^c$. In other words, $A$ is really a cutset.
\end{enumerate}
%
%and that all of these points lie outside $B^{\infty}(\rho^{\infty}, \lambda^{-\alpha})$, but within $B^{\infty}(\rho^{\infty}, \lambda^{-1}+\frac{1}{2})$. Moreover, any path from $B^{\infty}(\rho^{\infty}, \lambda^{-\alpha})$ to $B^{\infty}(\rho^{\infty}, \lambda^{-1}+\frac{1}{2})$ must pass through at least one point in $A$, and the cardinality of $A$ is at most $2\lambda^{2(\alpha - 1)}$.

\begin{figure}[h]
\includegraphics[width=14cm]{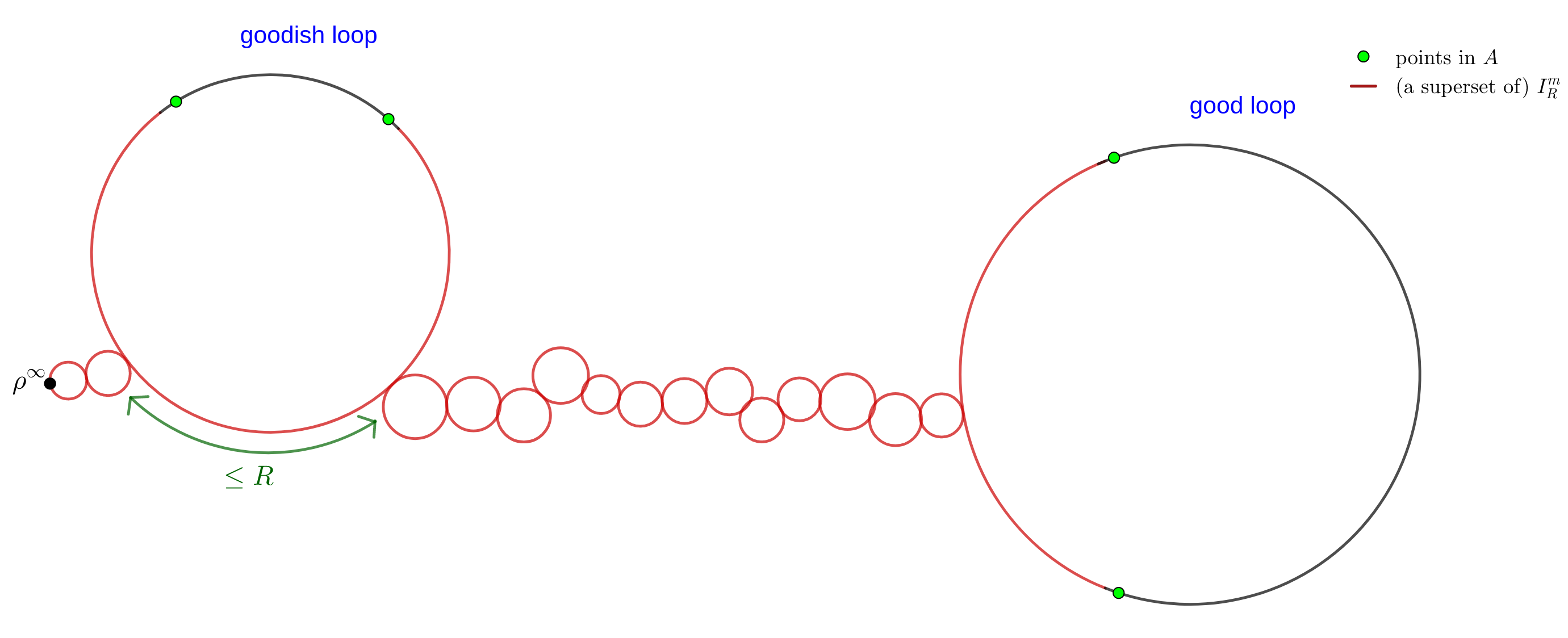}
\centering
\caption{How to select $A$. The red segment contains the portion of $B(\rho^{\infty}, R)$ intersecting the m-loopspine.}\label{fig:loopspine cutset}
\end{figure}

From the probabilistic bounds above, and since we set $h = (\alpha - 1)(2t-1)$, we therefore deduce that
\[
\prb{\Refi(\rho^{\infty}, B^{\infty}(\rho^{\infty}, 1)^c) \leq \frac{1}{2}\lambda^{-4t}} \leq 
C \lambda^{t}Ce^{-c\lambda^{t(h \wedge 1)}} + e^{-c\lambda^t} \leq C \lambda^{t}Ce^{-c\lambda^{t(2t-1)(\alpha -1)}} + Ce^{-c\lambda^t}.
\]
In particular, choosing $t> \frac{\alpha}{2(\alpha - 1)}$, we obtain
\[
\prb{\Refi(\rho^{\infty}, B^{\infty}(\rho^{\infty}, 1)^c) \leq \frac{1}{2}\lambda^{-4t}} \leq 
Ce^{-c\lambda^t},
\]
or equivalently, 
\[
\prb{\Refi(\rho^{\infty}, B^{\infty}(\rho^{\infty}, 1)^c) \leq \lambda^{-1}} \leq 
Ce^{-c\lambda^{\frac{1}{4}}}.
\]
\end{proof}

\section{Random walk limits}\label{sctn:RW consequences}
\subsection{Brownian motion and spectral dimension of $\Lai$}

As in the case of compact looptrees, the looptree convergence results can be used to give a collection of limit results for random walks and Brownian motion on sequences of looptrees. Before we do this, we have to show that $R^{\infty}$ is in fact a resistance metric, and that the resistance form associated with the metric space $(\Lai, R^{\infty})$ is regular, which implies that it is also a regular Dirichlet form on the space $L^2(\Lai, \nu)$ and so is naturally associated with a stochastic process. This is done in the following two propositions.

\begin{proposition}\label{prop:res metric}
$\bPb$-almost surely, $R^{\infty}$ is a resistance metric in the sense of Definition \ref{def:eff resistance metric}.
\end{proposition}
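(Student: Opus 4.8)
The plan is to verify Definition \ref{def:eff resistance metric} directly: for every finite $V \subset \Lai$ I must exhibit a weighted graph on $V$ whose effective resistance reproduces $R^{\infty}|_{V \times V}$. The key observation is that the infinite looptree, like its compact counterpart, is built from loops glued in a tree-like fashion, and $R^{\infty}$ as defined in (\ref{eqn:infinite d R def}) is exactly the resistance metric obtained by wiring each loop of length $\Delta_t$ as a cycle of two parallel arcs (the function $r^{\infty}_t$ in the construction is precisely the parallel-resistance formula). So the natural strategy is to first reduce to a compact sublooptree, where the corresponding statement is already known from \cite[Proposition 4.4]{ArchBMCompactLooptrees}, and then pass to the limit.

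Concretely, first I would fix a finite set $V \subset \Lai$. Since $\Lai = \bigcup_{R} \La_{\alpha}^{\infty, R}$ is an increasing union of compact sublooptrees (as used repeatedly in Section \ref{sctn:vol bounds and spectral dim infinite}), and $V$ is finite, there exists $R$ with $V \subset \La_{\alpha}^{\infty, R}$. The sublooptree $\La_{\alpha}^{\infty, R}$ is itself a compact stable looptree coded by a \cadlag\ excursion-type function, namely the restriction $X^{\infty}|_{[-t_g(R), t_d(R)]}$ (suitably recentred), where $t_g, t_d$ are the cutoff times defined in the proof of Theorem \ref{thm:LLT}. Crucially, $R^{\infty}$ restricted to $\La_{\alpha}^{\infty, R} \times \La_{\alpha}^{\infty, R}$ agrees with the intrinsic resistance metric of that compact looptree: this is immediate from the formulas (\ref{eqn:R0}), (\ref{eqn:R}) versus (\ref{eqn:infinite d R def}), because for $s, t$ both coded within $[-t_g(R), t_d(R)]$ the sums $R^{\infty}_0(s,t)$ only involve jumps in that interval. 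Hence $R^{\infty}|_{V \times V}$ equals the intrinsic resistance metric of $\La_{\alpha}^{\infty, R}$ restricted to $V$, and by \cite[Proposition 4.4]{ArchBMCompactLooptrees} (which states that the intrinsic metric on any compact stable looptree is a resistance metric) there is a weighted graph on $V$ realising it. This is exactly what Definition \ref{def:eff resistance metric} requires.

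The main obstacle, and the point requiring care, is the claim that $R^{\infty}$ on $\La_{\alpha}^{\infty, R}$ literally coincides with the intrinsic resistance metric of $\La_{\alpha}^{\infty, R}$ viewed as a standalone compact looptree. There are two subtleties: (i) the base point / rooting conventions differ (the compact construction in Section \ref{sctn:construction of infinite stable looptrees} uses the post/pre-infimum decomposition of a bridge, whereas here the relevant segment of $X^{\infty}$ is just a chunk of a two-sided process), and (ii) one must check that geodesics and current flows between points of $V$ never "leave" $\La_{\alpha}^{\infty, R}$ — i.e. that removing the descendants beyond the first $R$-good loop does not change resistances between points inside. Point (ii) follows because $\La_{\alpha}^{\infty, R}$ is a full looptree whose complement in $\Lai$ is attached at single points (the roots $\rho_u$ of the excised sublooptrees), so those excised pieces are "dangling" and contribute nothing to effective resistance between points of $V$ — a standard fact about resistance networks (removing a pendant subgraph attached at one vertex does not change resistances among the remaining vertices), which can be invoked directly or checked from the series/parallel formulas (\ref{eqn:R0})--(\ref{eqn:R}). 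Point (i) is handled by noting that the coding function of $\La_{\alpha}^{\infty, R}$ is a genuine element of $D^{\exc}$ on a compact interval (after recentring at the infimum, exactly as in the two-sided construction box), so \cite[Proposition 4.4]{ArchBMCompactLooptrees} applies verbatim. Once these are in place the proposition follows, with the almost-sure qualifier coming from the $\bPb$-almost sure finiteness of $t_g(R), t_d(R)$ for all $R$ established in the proof of Theorem \ref{thm:LLT} and the $\bPb$-almost sure validity of the branching decomposition from the previous subsection.
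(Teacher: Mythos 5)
Your proposal is correct and follows essentially the same route as the paper: reduce to the compact sublooptree $p^{\infty}([-t_g(r),t_d(r)])$ containing $V$ (using the almost-sure finiteness of $t_g(r)$ and $t_d(r)$) and invoke \cite[Proposition 4.4]{ArchBMCompactLooptrees} to produce the required weighted network on $V$. The extra care you take over the rooting convention and the fact that the excised ``dangling'' pieces do not affect resistances among points of $V$ is consistent with, and slightly more explicit than, the paper's argument.
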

\begin{proof}
This follows from \cite[Proposition 4.4]{ArchBMCompactLooptrees}, in which we prove the same result for compact stable looptrees. In particular, any finite set of points $V$ in $\Lai$ is contained in $B(\rho^{\infty}, r)$ for some $r>0$. Taking such an $r$, we then define $t_g(r)$ and $t_d(r)$ exactly as we did in the proof of Theorem \ref{thm:LLT}; that is, we set
\begin{align*}
t_g(r) = \inf \{ s \geq 0: \Delta_{-s} \geq 4r, \delta^{\infty}_{-s}(x^{\infty}_{-s,0}) \geq r \}, \hspace{10mm} t_d(r) = \inf \{s \geq 0: \Xi_s \leq \Xi_{{-t_g(r)}^-} \}.
\end{align*}
As in previous proofs, it then follows that $B(\rho_{\infty}, r) \subset p^{\infty}([-t_g(r), t_d(r)])$, and $p^{\infty}(-t_g(r)) = p^{\infty}(t_d(r))$. Moreover, $p^{\infty}([-t_g(r), t_d(r)])$ codes a compact stable looptree, which, in keeping with earlier notation, we denote by $\La (r)$. We endow it with a metric and a measure by restricting $R^{\infty}$ and $\nu^{\infty}$ to $\La (r)$.

It then follows exactly as in \cite[Proposition 4.4]{ArchBMCompactLooptrees} that $R^{\infty}$ restricted to $\La(r)$ is a resistance metric on $\La (r)$, and that we can therefore construct a weighted network with vertex set $V$ with matching effective resistance. The same network will therefore work for $\Lai$.
\end{proof}

\begin{proposition}\label{prop:regular DF}
$\bPb$-almost surely, the resistance form associated with the metric space $(\Lai, R^{\infty})$ is regular.
\begin{proof}
We let $(\Ei, \Fi)$ denote the resistance form on $\Lai$ associated with the resistance metric $R^{\infty}$ as in (\ref{eqn:resistance def variational}). According to Definition \ref{def:reg res form}, we need to show that for any $f \in C_0(\Lai)$ and any $\epsilon > 0$, we can find $g' \in \Fi \cap C_0 (\Lai)$ such that $||f - g'||_{\infty} \leq \epsilon$. The key point is that by cutting off the infinite loopspine of $\Lai$ at an appropriate cutpoint, any such $f$ is also a compactly supported function on a compact stable looptree, and therefore approximable on this compact looptree, since all resistance forms on compact spaces are regular. Formally, we proceed as follows.

First, note that since $f$ is compactly supported, then its support must be contained in $B(\rho^{\infty}, r)$ for some $r>0$. Taking such an $r$, we then define $t_g(r)$ and $t_d(r)$ exactly as we did in the proof of Theorem \ref{thm:LLT}; that is, we set
\begin{align*}
t_g(r) = \inf \{ s \geq 0: \Delta_{-s} \geq 4r, \delta^{\infty}_{-s}(x^{\infty}_{-s,0}) \geq r \}, \hspace{10mm} t_d(r) = \inf \{s \geq 0: \Xi_s \leq \Xi_{{-t_g(r)}^-} \}.
\end{align*}
As in previous proofs, it then follows that $B(\rho_{\infty}, r) \subset p^{\infty}([-t_g(r), t_d(r)])$, and $p^{\infty}(-t_g(r)) = p^{\infty}(t_d(r))$. We denote this projected point by $v_r$. Moreover, $p^{\infty}([-t_g(r), t_d(r)])$ codes a compact stable looptree, which, in keeping with earlier notation, we denote by $\La (r)$. We endow it with a metric and a measure by restricting $R^{\infty}$ and $\nu^{\infty}$ to $\La (r)$, and denote the associated resistance form by $(\Er, \Fr)$.

The key point is the following: by \cite[Theorem 8.4]{KigamiResistanceFormsMono}, and the one-to-one correspondence given by (\ref{eqn:resistance def variational}) and its continuum extension on compact spaces, $(\Er, \Fr)$ is obtained as the trace of $(\Ei, \Fi)$ on $\La (r)$, and is such that for any $f \in \Fr$, $\Er (f, f) = \Ei (h(f), h(f))$, where $h(f)$ is the unique harmonic extension of $f$ to $\Lai$.

Now take $f \in \Fi$. Note that, necessarily, $f(v_r)=0$, since $f$ is continuous. Moreover, $v_r$ is a point on the infinite loopspine that cuts $\rho^{\infty}$ off from $\infty$. Arbitrarily, we now choose a new point $v_r'$ on the loopspine, coded by a jump point of $X^{\infty}$, that also separates $\rho^{\infty}$ from $\infty$, but such that $R^{\infty} (\rho^{\infty}, v_r') > R^{\infty}(\rho^{\infty}, v_r)$. It follows that $v_r'$ is coded by jump point of $X^{\infty}$ at a time that we denote by $-t_{g,2}(r)$, where $t_{g,2}(r) > t_g(r)$ and $-t_{g,2}(r) \preceq 0$. For any $s$ with $-t_{g,2}(r) \preceq s \prec -t_g(r)$, set $a_s = \delta_s ( x_{s,0}^{\infty})$, and $b_s = \Delta_s - \delta_s ( x_{s,0}^{\infty})$, so that $a_s$ gives the length of the ``shorter" segment of the corresponding loop in the loopspine, and $b_s$ gives the length of the ``longer" segment (see Figure \ref{fig:regularRF}). Set
\[
d_{\text{min}} = \sum_{-t_{g,2}(r) \preceq s \prec -t_g(r)} a_s, \hspace{10mm} d_{\text{max}} = \sum_{-t_{g,2}(r) \preceq s \prec -t_g(r)} b_s.
\]
These are defined so that $d_{\text{min}}$ gives the looptree distance between $v_r$ and $v_r'$, and $d_{\text{max}}$ gives the ``longer distance" between them, which is the length of the path between them that traverses the longer side of all the loops in the loopspine that lie between $v_r$ and $v_r'$ (see Figure \ref{fig:regularRF}).

\begin{figure}[h]
\includegraphics[width=14cm, height=5.3cm]{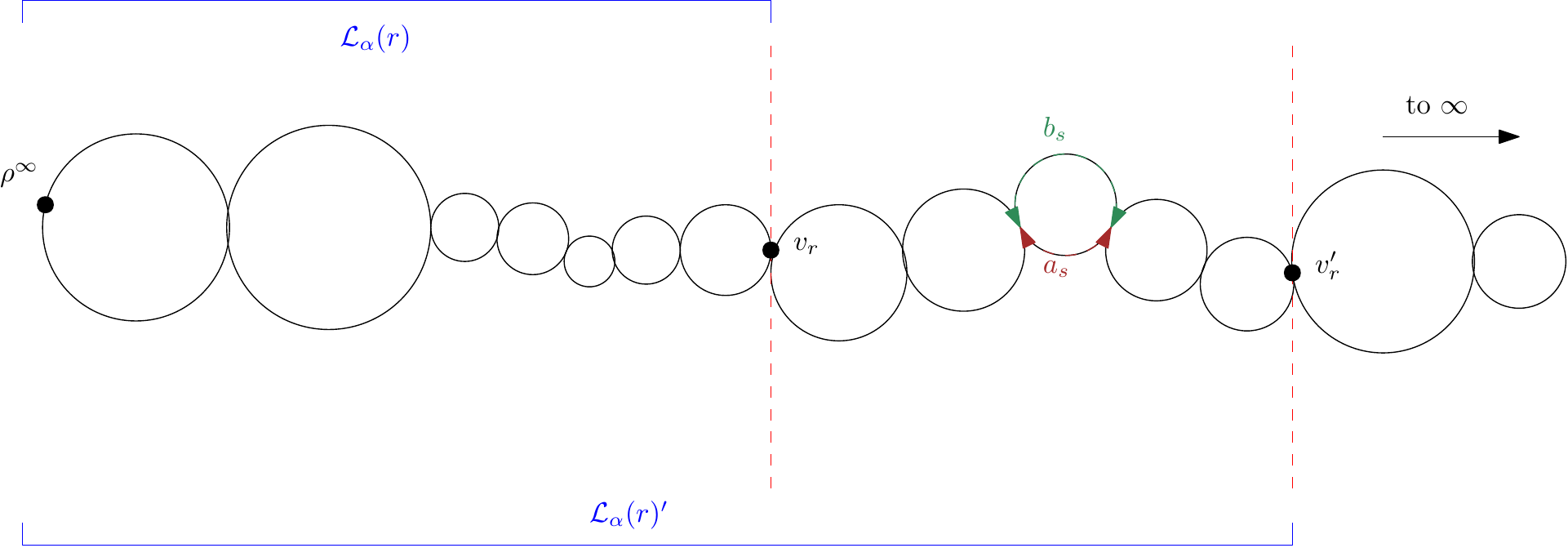}
\centering
\caption{Illustration of how we cut the infinite loopspine.}\label{fig:regularRF}
\end{figure}

Additionally, let $t_{d,2}(r) = \inf \{s \geq 0: \Xi_s \leq \Xi_{{-t_{g,2}(r)}^-} \}$. Then $p^{\infty}([ -t_{g,2}(r), t_{d,2}(r)])$ codes another compact stable looptree which we denote by $\La (r)'$, satisfying $\La (r) \subset \La (r)' \subset \Lai$.

Since $\La (r)$ is compact, it follows that $(\Er, \Fr)$ is regular, so there exists $g \in \Fr \cap C_0(\La(r))$ with $||f|_{\La (r)}-g||_{\infty, \La (r)} \leq \epsilon$. We therefore define a function $g' \in C_0(\Lai)$ by setting $g' = g$ on $\La (r)$, $g'=0$ on $\Lai \setminus \La (r)'$, and extending harmonically on $\La (r)' \setminus \La (r)$.

Since $g$ approximates $f_{\La (r)}$ in the supremum norm, it follows that $|g(v_r)| \leq \epsilon$, and moreover it then follows by the maximum principle for harmonic functions that $||g'_{\La (r)' \setminus \La (r)}||_{\infty} \leq \epsilon$. Consequently, $||f-g'||_{\infty} \leq \epsilon$. It therefore just remains to show that $\Ei (g', g') < \infty$.

Let $(\Err, \Frr)$ denote the restriction of $(\Ei, \Fi)$ to $\La(r)'$. Since the spaces $\La (r), \La (r)' \setminus \La (r)$ and $\Lai \setminus \La (r)'$ are disjoint, and $g'$ is the harmonic extension of $g'|_{\La (r)'}$ to $\Lai$, it follows by bilinearity and from consistency properties of resistance forms and their traces given in \cite[Section 8]{KigamiResistanceFormsMono} that
\begin{equation}\label{eqn:energy decomp}
\Ei (g', g') = \Err (g'|_{\La (r)'}, g'|_{\La (r)'}).
\end{equation}
However, since $\La(r)'$ is simply a compact looptree, this is automatically finite.

\end{proof}
\end{proposition}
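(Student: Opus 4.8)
The plan is to verify the density criterion of Definition \ref{def:reg res form} head-on: writing $(\Ei,\Fi)$ for the resistance form associated with $R^\infty$ (which is a resistance metric by Proposition \ref{prop:res metric}, so this form exists via the correspondence recalled in Section \ref{sctn:res forms}), I need to show that for every $f\in C_0(\Lai)$ and every $\epsilon>0$ there is $g'\in\Fi\cap C_0(\Lai)$ with $\|f-g'\|_\infty\le\epsilon$. The only feature distinguishing $\Lai$ from a compact resistance space is its infinite loopspine, so the strategy is to truncate that spine and reduce to the compact case, where regularity is automatic (see Section \ref{sctn:res forms}). Everything below takes place on the $\bPb$-full event on which the truncation construction used in the proof of Theorem \ref{thm:LLT} is well defined.

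Since $f$ is compactly supported, $\mathrm{supp}(f)\subseteq\overline B^\infty(\rho^\infty,r)$ for some $r>0$. I would re-use the cut from the proof of Theorem \ref{thm:LLT}: set $t_g(r)=\inf\{s\ge 0:\Delta_{-s}\ge 4r,\ \delta^\infty_{-s}(x^\infty_{-s,0})\ge r\}$ and $t_d(r)=\inf\{s\ge 0:\Xi_s\le\Xi_{{-t_g(r)}^-}\}$, both $\bPb$-a.s. finite, so that $\La(r):=p^\infty([-t_g(r),t_d(r)])$ is a compact sublooptree containing $\overline B^\infty(\rho^\infty,r)$, with $p^\infty(-t_g(r))=p^\infty(t_d(r))=:v_r$ a point of the loopspine separating $\rho^\infty$ from infinity, and such that, in particular, $f$ vanishes at $v_r$. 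Equip $\La(r)$ with the restrictions of $R^\infty$ and $\nu^\infty$; by \cite[Theorem 8.4]{KigamiResistanceFormsMono} the associated form $(\Er,\Fr)$ is the trace of $(\Ei,\Fi)$ onto $\La(r)$, and since $(\La(r),R^\infty)$ is compact this form is regular. Hence there is $g\in\Fr\cap C_0(\La(r))$ with $\|f|_{\La(r)}-g\|_\infty\le\epsilon$, and in particular $|g(v_r)|\le\epsilon$.

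Next I would extend $g$ to $\Lai$ without destroying finiteness of energy. Pick a further loopspine cut point $v_r'$ (coded by a jump of $X^\infty$) with $R^\infty(\rho^\infty,v_r')>R^\infty(\rho^\infty,v_r)$, obtaining a strictly larger compact sublooptree $\La(r)'\supseteq\La(r)$, and define $g':=g$ on $\La(r)$, $g':=0$ on $\Lai\setminus\La(r)'$, and $g'$ harmonic (with respect to $(\Ei,\Fi)$) on the ``collar'' $\La(r)'\setminus\La(r)$, which has the two-point boundary $\{v_r,v_r'\}$ with boundary data $g(v_r)$ and $0$. Then $g'$ is continuous, $\mathrm{supp}(g')\subseteq\La(r)'$ is compact, and on $\La(r)$ we have $\|f-g'\|_\infty\le\epsilon$ while off $\La(r)$ we have $f\equiv 0$ together with, by the maximum principle, $|g'|\le|g(v_r)|\le\epsilon$ on the collar and $g'\equiv 0$ beyond $v_r'$; so $\|f-g'\|_\infty\le\epsilon$ and $g'\in C_0(\Lai)$.

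The remaining and principal point is $\Ei(g',g')<\infty$. Here I would appeal to the consistency of resistance forms with their traces (\cite[Section 8]{KigamiResistanceFormsMono}): since $g'\equiv 0$, hence harmonic, on $\Lai\setminus\La(r)'$, the entire energy is carried by $\La(r)'$, so $\Ei(g',g')=\Err(g'|_{\La(r)'},g'|_{\La(r)'})$ with $(\Err,\Frr)$ the trace of $(\Ei,\Fi)$ on the compact looptree $\La(r)'$. Splitting $\La(r)'$ at $v_r$ into $\La(r)$ and the collar, $g'|_{\La(r)'}$ is $g$ (with $\Er(g,g)<\infty$) glued to a function that is harmonic on the collar; since the collar involves only finitely many loops and has two boundary points, the harmonic piece has finite energy (it is affine along the edges of the associated finite network), and by additivity over the disjoint pieces the total energy is finite. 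I expect the bookkeeping with traces, harmonic extensions and the maximum principle on these collar regions to be the only real source of friction; the rest is a routine reduction to the compact theory already developed in \cite{ArchBMCompactLooptrees} and \cite{KigamiResistanceFormsMono}.
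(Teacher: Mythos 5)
Your proposal is correct and follows essentially the same route as the paper: the same truncation at $v_r$ via $t_g(r)$ and $t_d(r)$, approximation on the compact sublooptree $\La(r)$ using regularity of its trace form, extension by zero beyond a second cutpoint $v_r'$ with harmonic interpolation on the collar, the maximum principle for the sup-norm bound, and trace consistency for finiteness of energy. The only quibble is your parenthetical claim that the collar involves ``only finitely many loops'' (it may contain countably many), but this is immaterial since, as in the paper, finiteness of $\Ei(g',g')$ already follows from $\La(r)'$ being a compact looptree.
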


As a result, we deduce that the resistance metric space is naturally associated with a Hunt process on $(\Lai, R^{\infty})$, which we call Brownian motion on $\Lai$ and denote by $B^{\infty}$.

\subsection{Quenched results}
We can apply Theorem \ref{thm:scaling lim RW resistance} to the results of Theorems \ref{thm:LLT} and \ref{thm:main scaling lim} to deduce convergence results for stochastic processes on the corresponding spaces. The only additional detail in the proofs of these results is that we have to check that the non-explosion condition at (\ref{eqn:nonexplosion}) is satisfied, i.e. that
\begin{equation*}
\lim_{r \rightarrow \infty} \liminf_{{\l} \rightarrow \infty} R^{\l} (\rho^{\l}, B^{\l} (\rho^{\l}, r)^c) = \infty
\end{equation*}
almost surely, where $R^{\l}$ here denotes the resistance metric on $\Lal$.

\subsubsection{Local limits}\label{sctn:RW limit local infinite}
The local limit theorem of Theorem \ref{thm:LLT} immediately allows us to apply Theorem \ref{thm:scaling lim RW resistance} to deduce that Brownian motion on $\Lal$ converges in distribution to Brownian motion on $\Lai$ as $\l \rightarrow \infty$ on compact time intervals. Indeed, it follows from Theorem \ref{thm:GH vague separable HB} and the Skorohod Representation Theorem that there exists a probability space on which the convergence on Theorem \ref{thm:LLT} holds almost surely. Moreover, the explosion condition is satisfied as an immediate consequence of Proposition \ref{prop:resistance results infinite}. In particular, the arguments used to prove Proposition \ref{prop:resistance results infinite} are also valid for compact stable looptrees, so we deduce that the resistance bounds of Proposition \ref{prop:resistance results infinite} almost surely hold along the sequence $(\Lal)_{\l \in \N}$.

Theorem \ref{thm:main RW LLT conv} then follows by a direct application of Theorem \ref{thm:scaling lim RW resistance}.

\subsubsection{Scaling limits}
We can also deduce similar results from Theorems \ref{thm:RW conv infinite intro}, \ref{thm:BS looptree conv} and \ref{thm:Rich looptree conv}. In this case, the non-explosion condition is satisfied as a result of \cite[Lemma 3.5]{BjornStef}, which says that for $\Loopp(\Tai)$, there exist $q, C \in (0, \infty)$ such that
\begin{equation}\label{eqn:BS nonexp}
\prb{\Reff (\rho, B(\rho, r)^c) \leq r\lambda^{-1}} \leq C\lambda^{-q}.
\end{equation}
In light of Proposition \ref{prop:resistance results infinite}, we conjecture that there should in actual fact be exponential tail decay, but polynomial decay is sufficient for our purposes here. Indeed, to verify (\ref{eqn:nonexplosion}), we need to show that 
\[
\lim_{r \rightarrow \infty} \liminf_{n \rightarrow \infty} n^{\frac{-1}{\alpha}} \Reff (\rho, B(\rho, rn^{\frac{1}{\alpha}})^c) = \infty
\]
$\bPb$-almost surely. This follows directly from applying a Borel-Cantelli argument along a suitable subsequence using the probabilistic bound (\ref{eqn:BS nonexp}). Moreover, the same arguments apply for $\Loop(\Tai)$ since $\Reff (\rho, \mathcal{B}_r(\Loop(\Tai))^c) \geq \Reff (\rho, B_{r-1}(\Loopp(\Tai))^c)$.

Similarly, the result also holds for the two-type looptree $\Loop^2(T_{\alpha}^{\infty, 2})$, since $\Reff (\rho, \mathcal{B}_{r}(\overline{\Loop}(\Tai))^c) \geq \Reff (\rho, B_{r - \Height(\textsf{Tree}(\Loopp (\Tai)^r))}(\Loopp(\Tai))^c)$, and $r^{-1}\Height(\textsf{Tree}(\Loopp (\Tai)^r)) \rightarrow 0$ in probability, with exponential tail decay (as in Point 2 of the proof of Theorem \ref{thm:Rich looptree conv}), allowing further Borel-Cantelli arguments.

In all the different versions of infinite looptrees that we have considered, the Gromov-Hausdorff-Prohorov convergence holds with the uniform measure on vertices of the looptree, and the associated stochastic process is therefore a variable speed random walk.

In the case of $\Loop(\Tai)$, all vertices have degree $4$, so in this case the stochastic process is actually a constant speed random walk, with \textsf{exp}($4$) waiting times at each vertex.
However, by applying Kolmogorov's Maximal Inequality to the time index of this stochastic process (as in the proof of \cite[Theorem 1.1]{ArchBMCompactLooptrees}) we can show that the waiting times average out sufficiently well over time so the scaling limit result will also hold for a simple random walk on $\textsf{Loop}(\Tai)$ (although sped up deterministically by a factor of $4$). 

Theorem \ref{thm:RW conv infinite intro} therefore follows by an immediate application of Theorem \ref{thm:scaling lim RW resistance} to Proposition \ref{prop:inf conv disc loop}.

In the case of $\Loopp(\Tai)$, all internal vertices have degree $4$, and all leaf vertices have degree $2$. This corresponds to the fact the the only significant difference between $\Loop(\Tai)$ and $\Loopp(\Tai)$ is that in $\Loopp(\Tai)$ the loops corresponding to leaves are missing, and has the effect that (on average) the random walk waits twice as long at leaf vertices compared to internal vertices. This reflects the fact that the random walks on $\Loop(\Tai)$ and $\Loopp(\Tai)$ can (almost, technically only after adding one extra vertex to the loop containing the root in $\Loop (\Tai)$) be coupled so that they move identically at internal vertices, but so that a random walk on $\Loopp(\Tai)$ remains in its present position whenever the random walk on $\Loop(\Tai)$ traverses a loop corresponding to a leaf vertex (note this can be traversed in either direction). It therefore makes sense that we should be taking a scaling limit of the variable speed random walk on $\Loopp(\Tai)$, rather than the constant speed one.

We similarly have to take a variable speed random walk on $\Loop^2(T_{\alpha}^{\infty, 2})$, although there is not such a simple coupling in this case. In the next theorem, we let $L_{\alpha}^{\infty, 1} = \Loopp(\Tai)$, $L_{\alpha}^{\infty, 2} = \Loop^2(T_{\alpha}^{\infty, 2})$, $Y^{\text{var}, i}$ denote a variable speed random walk on $L_{\alpha}^{\infty, i}$, and $\nu^i$ denote the measure giving mass $1$ to each vertex. The non-explosion condition is again satisfied by the same arguments as in Section \ref{sctn:RW limit local infinite} above. We then have the following analogues of Theorem \ref{thm:RW conv infinite intro}.

\begin{theorem}\label{thm:RW conv infinite BjornStef}
Take $i \in \{1, 2\}$. There exists a probability space $(\Omega', \mathcal{F}', \mathbf{P}')$ on which we can almost surely define a common metric space $(M, R_M)$ in which the spaces $(L_{\alpha}^{\infty, i}, a_n^{-1} \d, n^{-1} \nu', \rho)$ and $(\La^{\infty}, \d^{\infty}, \nu^{\infty}, \rho^{\infty})$ can all be isometrically embedded and such that
\[
(L_{\alpha}^{\infty, i}, a_n^{-1}\d, n^{-1} \nu^i, \rho) \overset{(d)}{\rightarrow} (\Lai, \d^{\infty}, \nu^{\infty}, \rho^{\infty})
\]
with respect to the Gromov-Hausdorff-vague topology, and the convergence specifically holds on the metric space $(M, R_M)$. Letting $Y^{\text{var},i}$ and $B^{\infty}$ be as above, we have that 
\[
(a_n^{-1}Y^{\text{var},i}_{\lfloor n a_n t \rfloor})_{t \geq 0} \overset{(d)}{\rightarrow} (B^{\infty}_t)_{t \geq 0}
\]
on the space $D(\R^+, M$) as $n \rightarrow \infty$.
\end{theorem}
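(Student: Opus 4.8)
The plan is to follow the same three-step template used for Theorem~\ref{thm:RW conv infinite intro}, since all of the analytic input is already available; only the identification of the correct time change is new. Fix $i \in \{1,2\}$. \textbf{Step 1 (almost sure convergence on a common space).} First I would invoke Theorem~\ref{thm:BS looptree conv} when $i=1$ and Theorem~\ref{thm:Rich looptree conv} when $i=2$ to obtain
\[
\left(L_{\alpha}^{\infty, i}, a_n^{-1}\d, n^{-1}\nu^i, \rho\right) \overset{(d)}{\longrightarrow} \left(\Lai, \d^{\infty}, \nu^{\infty}, \rho^{\infty}\right)
\]
in the Gromov--Hausdorff-vague topology. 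By Proposition~\ref{thm:GH vague separable HB} this topology is Polish, so the Skorohod Representation Theorem places all of the spaces $(L_{\alpha}^{\infty, i}, a_n^{-1}\d, n^{-1}\nu^i, \rho)$, $n \geq 1$, and $(\Lai, \d^{\infty}, \nu^{\infty}, \rho^{\infty})$ on a single probability space $(\Omega', \mathcal{F}', \mathbf{P}')$ on which the convergence holds almost surely; isometrically embedding the disjoint union of these spaces into its metric completion produces the metric space $(M, R_M)$ in which the required Hausdorff convergence holds. This is exactly as in the proof of Theorem~\ref{thm:RW conv infinite intro}.

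\textbf{Step 2 (non-explosion).} Next I would verify the hypothesis~(\ref{eqn:nonexplosion}) of Theorem~\ref{thm:scaling lim RW resistance}. For $i=1$ this is delivered by the bound~(\ref{eqn:BS nonexp}) of \cite[Lemma 3.5]{BjornStef} via the Borel--Cantelli argument along a geometric subsequence already outlined in Section~\ref{sctn:RW consequences}; alternatively Proposition~\ref{prop:resistance results infinite}, whose proof transfers essentially verbatim to the discrete infinite looptrees, gives the same conclusion with exponential tails. For $i=2$ I would use the comparison between $\Reff(\rho, \cdot)$ on $\Loop^2(T_{\alpha}^{\infty,2})$ and on $\Loopp(\Tai)$ recorded in Section~\ref{sctn:RW consequences}, together with the exponential decay of $r^{-1}\Height(\Tree(\Loopp(\Tai)^r))$ established in Point~2 of the proof of Theorem~\ref{thm:Rich looptree conv}, which reduces the matter to the $i=1$ case.

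\textbf{Step 3 (Croydon's theorem and the time change).} Granted Steps 1 and 2, Theorem~\ref{thm:scaling lim RW resistance} shows that the process associated, in the sense of Section~\ref{sctn:res forms}, with $(L_{\alpha}^{\infty, i}, a_n^{-1}\d, n^{-1}\nu^i, \rho)$ converges in distribution on $D(\R^+, M)$ with the Skorohod-$J_1$ topology to the process associated with $(\Lai, \d^{\infty}, \nu^{\infty}, \rho^{\infty})$, which is $B^{\infty}$ by definition. What remains is to recognise the prelimit as a time-rescaled variable speed walk. Equipping $L_{\alpha}^{\infty, i}$ with unit conductances and the vertex-counting measure $\nu^i$, the resistance form is regular (the graph being locally finite) and its associated Hunt process is $Y^{\text{var}, i}$, the continuous-time chain that waits an exponential time of rate $\deg(x)$ at a vertex $x$ and then jumps to a uniformly chosen neighbour. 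Passing to $a_n^{-1}\d$ multiplies every edge conductance by $a_n$, while passing to $n^{-1}\nu^i$ divides every vertex mass by $n$; since the jump rates of the associated process are conductance divided by mass, the net effect is to run time at speed $n a_n$ with the embedded jump chain unchanged. Hence the process associated with $(L_{\alpha}^{\infty, i}, a_n^{-1}\d, n^{-1}\nu^i, \rho)$ is $(a_n^{-1} Y^{\text{var}, i}_{n a_n t})_{t \geq 0}$, which agrees in the $J_1$-limit with $(a_n^{-1} Y^{\text{var}, i}_{\lfloor n a_n t\rfloor})_{t \geq 0}$, and the theorem follows. I expect this last bookkeeping of the time normalisation to be the only delicate point; everything else is a direct appeal to results established earlier.
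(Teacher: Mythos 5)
Your proposal matches the paper's own (rather terse) argument step for step: Gromov--Hausdorff-vague convergence from Theorems \ref{thm:BS looptree conv} and \ref{thm:Rich looptree conv} together with the Skorohod representation, non-explosion via a Borel--Cantelli argument along a geometric subsequence using the bound (\ref{eqn:BS nonexp}) (with the $i=2$ case reduced to $i=1$ by the height comparison $\Reff (\rho, \mathcal{B}_{r}(\overline{\Loop}(\Tai))^c) \geq \Reff (\rho, B_{r - \Height(\textsf{Tree}(\Loopp (\Tai)^r))}(\Loopp(\Tai))^c)$), and then Theorem \ref{thm:scaling lim RW resistance} applied to the variable speed walk, whose associated process under the rescaled conductances and measure is exactly the $na_n$-sped-up walk. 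The one caveat is your aside that Proposition \ref{prop:resistance results infinite} ``transfers essentially verbatim'' to the discrete looptrees: the paper only \emph{conjectures} exponential tails in the discrete setting, so the polynomial bound of \cite{BjornStef} should remain the primary route, as you indeed take it to be.
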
 

\begin{remark}
We could also prove other convergence results, for example by taking increasing sequences of increasingly rescaled discrete looptrees to approximate $\Lai$, in some sense combining Theorems \ref{thm:LLT} and \ref{thm:compact disc inv princ res}, and deduce similar convergence results for random walks, exactly as we did in the cases above. This corresponds to the diagonal line in Figure \ref{fig:commuting diag intro}.
\end{remark}

\subsection{Annealed results}

We can also prove similar results in the annealed setting by embedding into the Urysohn space, where we recall that if $(F, R, \mu, \rho, \phi)$ is a random element of $\mathbb{F}$ with law $\bPb$ such that $\phi: F \rightarrow U$ is a (possibly random) isometric embedding, and $(Y_t)_{t \geq 0}$ is a stochastic process on $F$, we define its annealed law by
\[
\prtilnstart{\phi(Y_t)_{t \geq 0} \in \cdot}{\phi(\rho)}{} = \int \prnstart{\phi(Y_t)_{t \geq 0} \in \cdot}{\phi(\rho)}{}  d\bPb,
\]
as introduced in Section \ref{sctn:res forms}.

Again we will restrict to the subsequence of integral $\l$ in Theorem \ref{thm:RW conv local annealed} below, but the result holds along any countable subsequence diverging to infinity.

\begin{theorem}\label{thm:RW conv local annealed}
Let $(\Lal, \d^{\l}, \nu^{\l}, \rho^{\l})_{\l \geq 1}$ be as in Theorem \ref{thm:LLT}. Then there exist (random) embeddings $\phi_{\l}: (\Lal, \d^{\l}, \nu^{\l}, \rho^{\l}) \rightarrow U, \phi: (\Lai, \d^{\infty}, \nu^{\infty}, \rho^{\infty}) \rightarrow U$ such that
\[
\prtilnstart{\phi_{\l}(B^{\l}_t)_{t \geq 0} \in \cdot}{\phi_{\l}(\rho^{\l})}{\l} \rightarrow \prtilnstart{(B^{\infty}_t)_{t \geq 0} \in \cdot}{\phi(\rho)}{}
\]
weakly as probability measures on the space $D(\R^+, U)$ endowed with the Skorohod-$J_1$ topology as $n \rightarrow \infty$.
\end{theorem}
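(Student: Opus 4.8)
The plan is to obtain Theorem \ref{thm:RW conv local annealed} by applying the annealed invariance principle Theorem \ref{thm:scaling lim RW resistance annealed} to the local limit Theorem \ref{thm:LLT}, taken with $\d = R$ so that the limiting metric is the resistance metric on $\Lai$ and the associated process is $B^{\infty}$ (cf.\ Propositions \ref{prop:res metric} and \ref{prop:regular DF}), in exact parallel with the way Theorem \ref{thm:main RW LLT conv} was deduced from Theorem \ref{thm:scaling lim RW resistance} in Section \ref{sctn:RW limit local infinite}. Concretely, three inputs are needed: (i) upgrading the Gromov-Hausdorff-vague convergence of Theorem \ref{thm:LLT} to convergence in distribution in the \emph{spatial} Gromov-Hausdorff-vague topology, by choosing isometric embeddings $\phi_{\l}$ of $\Lal$ and $\phi$ of $\Lai$ into the Urysohn space $U$; (ii) verification of the annealed non-explosion condition (\ref{eqn:nonexplosion annealed}) along the sequence $(\Lal)_{\l \in \N}$; and (iii) identification of the limiting process, which is already supplied by the construction of $B^{\infty}$ in Section \ref{sctn:RW consequences} and the definition of $B^{\l}$ in \cite{ArchBMCompactLooptrees}.

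For (i), recall from Proposition \ref{thm:GH vague separable HB} that the Gromov-Hausdorff-vague topology is Polish, so by the Skorokhod Representation Theorem we may assume the convergence $(\Lal, R^{\l}, \nu^{\l}, \rho^{\l}) \to (\Lai, R^{\infty}, \nu^{\infty}, \rho^{\infty})$ of Theorem \ref{thm:LLT} holds $\mathbf{P}$-almost surely. On this space, for each realisation we isometrically embed the spaces into $U$, sending roots to the distinguished point $u_0$ and realising, for Lebesgue-a.e.\ radius $r$, the Hausdorff and Prohorov distances between the closed balls $\mathcal{B}_r(\Lal)$ and $\mathcal{B}_r(\Lai)$; as recalled in Section \ref{sctn:res forms} (following \cite[Section 7]{DavidResForms}), the universality of $U$ makes this possible, and since $d_U^{sp}$ of (\ref{eqn:spat U dist}) induces a separable topology the resulting map of realisations to embedded spaces can be taken measurable. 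Transferring the joint law back yields (random) embeddings $\phi_{\l}$, $\phi$ with $(\Lal, R^{\l}, \nu^{\l}, \rho^{\l}, \phi_{\l}) \to (\Lai, R^{\infty}, \nu^{\infty}, \rho^{\infty}, \phi)$ in distribution in the spatial Gromov-Hausdorff-vague topology.

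For (ii), the proof of Proposition \ref{prop:resistance results infinite} goes through verbatim for a compact stable looptree conditioned to have mass $\l$: the iterative decomposition along the $m$-loopspine, the stochastic domination of segment lengths in Lemma \ref{lem:segment length bound infinite}, Lemma \ref{lem:osc}, and the subordinator estimates are all insensitive to conditioning on the total mass. Hence there are constants $C, c \in (0, \infty)$, independent of $\l$, with $\prbl{R^{\l}(\rho^{\l}, B^{\l}(\rho^{\l}, r)^c) \leq r\mu^{-1}} \leq Ce^{-c\mu^{1/4}}$ for all $r > 0$ and $\mu > 1$. Taking $\mu = r/\lambda$ for $r > \lambda$ gives $\prbl{R^{\l}(\rho^{\l}, B^{\l}(\rho^{\l}, r)^c) \geq \lambda} \geq 1 - Ce^{-c(r/\lambda)^{1/4}}$ uniformly in $\l$, so that
\[
\lim_{r \to \infty} \liminf_{\l \to \infty} \prbl{R^{\l}(\rho^{\l}, B^{\l}(\rho^{\l}, r)^c) \geq \lambda} = 1
\]
for every $\lambda > 0$, which is precisely the non-explosion hypothesis (\ref{eqn:nonexplosion annealed}) for the sequence $(\Lal)_{\l \in \N}$. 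With (i) and (ii) in place, Theorem \ref{thm:scaling lim RW resistance annealed} — with the objects there played by $(\Lal, R^{\l}, \nu^{\l}, \rho^{\l}, \phi_{\l})$ and $(\Lai, R^{\infty}, \nu^{\infty}, \rho^{\infty}, \phi)$ — delivers the asserted weak convergence of annealed laws on $D(\R^+, U)$ with the Skorokhod-$J_1$ topology.

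The main obstacle is step (i): one must ensure that the embeddings $\phi_{\l}$ and $\phi$ can be chosen \emph{jointly} and \emph{measurably} so that the spatial convergence genuinely holds in distribution — Gromov-Hausdorff-vague convergence a priori only furnishes, for each fixed pair of spaces, \emph{some} embedding into \emph{some} common metric space, and the role of $U$ is precisely to make this choice uniform. Within the framework of \cite[Section 7]{DavidResForms} this is bookkeeping, but it is the only non-routine ingredient beyond the resistance estimate of (ii); note also that the restriction to integer $\l$ (or any countable subsequence diverging to infinity) enters exactly here, so that the Skorokhod Representation Theorem is applied to a sequence rather than to a continuum of spaces.
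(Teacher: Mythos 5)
Your proposal is correct and follows essentially the same route as the paper: realise the convergence of Theorem \ref{thm:LLT} almost surely on a common probability space (the paper builds the common metric space $M = \Lai \sqcup \La^1 \sqcup \La^2 \sqcup \ldots$ and composes with a single isometry $\psi: M \to U$ sending $\rho^{\infty}$ to $u_0$, which handles the joint/measurable choice of embeddings you flag as the main obstacle), then invoke Theorem \ref{thm:scaling lim RW resistance annealed}, with the non-explosion condition (\ref{eqn:nonexplosion annealed}) supplied by Proposition \ref{prop:resistance results infinite} holding uniformly along $(\Lal)_{\l \geq 1}$. Your uniform-in-$\l$ tail bound and the substitution $\mu = r/\lambda$ are a slightly more explicit rendering of the same verification.
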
 
\begin{proof}
The proof of Theorem \ref{thm:LLT} implies that there exists a probability space on which $(\Lal, \d^{\l}, \nu^{\l}, \rho^{\l}) \rightarrow (\Lai, \d^{\infty}, \nu^{\infty}, \rho^{\infty})$ almost surely in the Gromov-Hausdorff vague topology. In particular, there exists a metric space $M = \Lai \sqcup \La^1 \sqcup \La^2 \sqcup \ldots$ defined on this probability space such that $\Lal \rightarrow \Lai$ almost surely. Moreover, by properties of the Urysohn space discussed in Section \ref{sctn:res forms}, there exists an isometry $\psi: M \rightarrow U$ such that $\psi(\rho^{\infty}) = u_0$.

For each $\l \in \N \cup \{\infty\}$, let $\phi_{\l}$ be the canonical isometry embedding $\Lal$ into $M$. It then follows that $\psi_{\l} := \psi \circ \phi_{\l}$ is an isometry from $\Lal$ to $U$ and moreover that almost surely, $\psi_{\l} (\Lal) \rightarrow \psi_{\infty} (\Lai)$ Gromov-Hausdorff vaguely as $\l \rightarrow \infty$. Viewing $(\phi_{\l})_{\l \geq 1}$ and $\psi_{\infty}$ as spatial embeddings, this therefore automatically implies that the spaces converge in the metric introduced at (\ref{eqn:spat U dist}). Since the topology induced by this metric is a particular instance of the spatial Gromov-Hausdorff topology used in \cite[Section 7]{DavidResForms}, we are in the right setting to apply Theorem \ref{thm:scaling lim RW resistance annealed}. Indeed, the non-explosion condition (\ref{eqn:nonexplosion annealed}) is satisfied as a direct consequence of Proposition \ref{prop:resistance results infinite}, which also uniformly holds along the sequence $(\Lal)_{l \geq 1}$. The theorem then follows by a direct application of Theorem \ref{thm:scaling lim RW resistance annealed}.
%
%Using the separability of Proposition \ref{thm:GH vague separable HB}, we can apply the Skorohod Representation theorem to the result of Theorem \ref{thm:main scaling lim} to deduce that there exists a probability space on which the metric spaces converge almost surely with respect to the Gromov-Hausdorff vague topology. As explained carefully in Section \ref{sctn:res forms}, this implies existence of isometries $(\psi_{\l})_{\l \geq 1}$ and $\psi$ (defined pointwise on the probability space) such that
%\[
%d_U^{sp} \big( (\Lal, \d^{\l}, \nu^{\l}, \rho^{\l}), (\Lai, \d^{\infty}, \nu^{\infty}, \rho^{\infty}, \psi)\big) \rightarrow 0
%\]
%as $n \rightarrow \infty$, where $d_U^{sp}(\cdot, \cdot)$ is as defined at (\ref{eqn:spat U dist}).
%
%We deduce that the metric spaces therefore naturally induce embeddings $\psi_{\l}: \Lal \rightarrow U, \psi: \Lai \rightarrow U$ such that 
%\[
%(\Lal, \d^{\l}, \nu^{\l}, \rho^{\l}, \psi_{\l}) \overset{(d)}{\rightarrow} (\Lai, \d^{\infty}, \nu^{\infty}, \rho^{\infty}, \psi)
%\]
%with respect to the spatial Gromov-Hausdorff topology used in \cite[Section 7]{DavidResForms} (which is a topology induced by a generalised version of the metric in (\ref{eqn:spat U dist})). Moreover, the non-explosion condition (\ref{eqn:nonexplosion annealed}) is satisfied as a direct consequence of Proposition \ref{prop:resistance results infinite}, which also uniformly holds along the sequence $(\Lal)_{l \geq 1}$. The theorem then follows by a direct application of Theorem \ref{thm:scaling lim RW resistance annealed}.
\end{proof}

We can also prove a similar results for the spaces $\Loop(\Tai)$, $L_{\alpha}^1$ and $L_{\alpha}^2$. We omit the proofs since they are essentially identical to that of Theorem \ref{thm:RW conv local annealed} above.

\begin{theorem}\label{thm:RW conv scaling annealed}
Let $(\textsf{Loop}(\Tai), a_n^{-1}\d, n^{-1} \nu^{\text{disc}}, \rho)$ be as in Theorem \ref{thm:main scaling lim}. Then there exist (random) embeddings $\phi_n: (\textsf{Loop}(\Tai), a_n^{-1}\d, n^{-1} \nu^{\text{disc}}, \rho) \rightarrow U, \phi: (\Lai, \d^{\infty}, \nu^{\infty}, \rho^{\infty}) \rightarrow U$ such that
\[
\prtilnstart{\phi_n\big(a_n^{-1} Y^{(n)}_{\lfloor 4n a_n t \rfloor}\big)_{t \geq 0} \in \cdot}{\phi_n(\rho_n)}{n} \rightarrow \prtilnstart{(B^{\infty}_t)_{t \geq 0} \in \cdot}{\phi(\rho)}{}
\]
weakly as probability measures on the space $D(\R^+, U)$ endowed with the Skorohod-$J_1$ topology as $n \rightarrow \infty$.
\end{theorem}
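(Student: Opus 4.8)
The plan is to follow the proof of Theorem~\ref{thm:RW conv local annealed} with only the obvious changes, substituting the scaling limit of Theorem~\ref{thm:main scaling lim} for the local limit of Theorem~\ref{thm:LLT}. First I would invoke Proposition~\ref{thm:GH vague separable HB}, which says that the space of Heine--Borel boundedly finite measured metric spaces equipped with the Gromov--Hausdorff-vague topology is Polish; combined with Theorem~\ref{thm:main scaling lim} and the Skorohod Representation Theorem, this produces a single probability space on which $(\Loop(\Tai), a_n^{-1}\d, n^{-1}\nu^{\text{disc}}, \rho) \to (\Lai, \d^{\infty}, \nu^{\infty}, \rho^{\infty})$ almost surely in the Gromov--Hausdorff-vague topology, with all of the spaces isometrically embedded in a common metric space $M$. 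Fixing an isometry $\psi : M \to U$ into the Urysohn space with $\psi(\rho^{\infty}) = u_0$ and composing with the canonical embeddings of $\Loop(\Tai)$ (with its $n$-th rescaling) and of $\Lai$ into $M$ yields random embeddings $\phi_n, \phi$ into $U$ for which the rescaled spaces converge in the metric of~(\ref{eqn:spat U dist}); since that metric induces a special case of the spatial Gromov--Hausdorff-vague topology of \cite[Section~7]{DavidResForms}, we land in the setting of Theorem~\ref{thm:scaling lim RW resistance annealed}.

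Next I would check the annealed non-explosion condition~(\ref{eqn:nonexplosion annealed}). Since $\tau_n = \Tai$ for every $n$, the effective resistance estimate for $\Loopp(\Tai)$ of \cite[Lemma~3.5]{BjornStef}, namely~(\ref{eqn:BS nonexp}), transfers to $\Loop(\Tai)$ via the comparison $\Reff(\rho, \mathcal{B}_r(\Loop(\Tai))^c) \geq \Reff(\rho, B_{r-1}(\Loopp(\Tai))^c)$ recorded in Section~\ref{sctn:RW consequences}. Scaling by $a_n^{-1}$ then gives $\prb{a_n^{-1}\Reff(\rho, B(\rho, ra_n)^c) < \lambda} \leq C(\lambda/r)^q$, uniformly in $n$ (trivially so, as the tree does not depend on $n$), which vanishes as $r \to \infty$ for each fixed $\lambda > 0$; hence~(\ref{eqn:nonexplosion annealed}) holds. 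Applying Theorem~\ref{thm:scaling lim RW resistance annealed} to the sequence $(\Loop(\Tai), a_n^{-1}\d, n^{-1}\nu^{\text{disc}}, \rho, \phi_n)$ then yields weak convergence on $D(\R^+, U)$ of the annealed laws of the stochastic processes naturally associated with these resistance-metric-measure spaces to the annealed law of $B^{\infty}$.

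Finally I would pass from the process naturally associated with the resistance form and the measure $n^{-1}\nu^{\text{disc}}$ to the simple random walk $Y^{(n)}$ on $\Loop(\Tai)$: as every vertex of $\Loop(\Tai)$ has degree $4$, the associated process is a constant-speed random walk with $\textsf{exp}(4)$ holding times, and, exactly as in the proof of \cite[Theorem~1.1]{ArchBMCompactLooptrees}, an application of Kolmogorov's maximal inequality to the clock process shows that the holding times average out over time, so that $(a_n^{-1}Y^{(n)}_{\lfloor 4na_n t\rfloor})_{t\geq 0}$ has the same scaling limit $B^{\infty}$. None of these steps is individually difficult given the results already established; the point that requires the most care is essentially bookkeeping --- checking that the random isometries $\phi_n$ and $\phi$ can be chosen measurably on the same probability space that carries the Skorohod coupling, and that the convergence in~(\ref{eqn:spat U dist}) genuinely holds there --- together with the clock-process comparison in the last step, which is why the authors only sketch it.
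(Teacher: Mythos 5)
Your proposal is correct and follows essentially the same route as the paper, which explicitly omits this proof on the grounds that it is identical to that of Theorem \ref{thm:RW conv local annealed}: Skorohod coupling via Proposition \ref{thm:GH vague separable HB}, embedding into the Urysohn space, verification of (\ref{eqn:nonexplosion annealed}) from (\ref{eqn:BS nonexp}) together with the comparison between $\Loop$ and $\Loopp$, an application of Theorem \ref{thm:scaling lim RW resistance annealed}, and the passage from the variable-speed walk to the simple random walk via the degree-$4$ observation and the clock-process argument. All the ingredients you cite are the ones the paper relies on in Section \ref{sctn:RW consequences}.
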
 

\begin{theorem}\label{thm:RW conv infinite BjornStef annealed}
Take $i \in \{1, 2\}$, and let $(L_{\alpha}^{\infty, i}, a_n^{-1}\d, n^{-1} \nu^i, \rho)$ be as in Theorem \ref{thm:RW conv infinite BjornStef}. Then there exist (random) embeddings $\phi_n: (L_{\alpha}^{\infty, i}, a_n^{-1}\d, n^{-1} \nu^{i}, \rho) \rightarrow U, \phi: (\Lai, \d^{\infty}, \nu^{\infty}, \rho^{\infty}) \rightarrow U$ such that
\[
\prtilnstart{\phi_n(a_n^{-1}Y^{\text{var},i}_{\lfloor n a_n t \rfloor})_{t \geq 0} \in \cdot}{\phi_n(\rho_n)}{n} \rightarrow \prtilnstart{(B^{\infty}_t)_{t \geq 0} \in \cdot}{\phi(\rho)}{}
\]
weakly as probability measures on the space $D(\R^+, U)$ endowed with the Skorohod-$J_1$ topology as $n \rightarrow \infty$.
\end{theorem}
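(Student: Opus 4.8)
The argument follows the pattern of the proof of Theorem~\ref{thm:RW conv local annealed}, now with the scaling convergences of Theorems~\ref{thm:BS looptree conv} and~\ref{thm:Rich looptree conv} (for $i=1$ and $i=2$ respectively) playing the role of the local limit Theorem~\ref{thm:LLT}. Fix $i \in \{1,2\}$. Since the space of Heine--Borel boundedly finite measure spaces with the Gromov--Hausdorff-vague topology is Polish (Proposition~\ref{thm:GH vague separable HB}), the Skorohod Representation Theorem lets us work on a probability space on which $(L_{\alpha}^{\infty, i}, a_n^{-1}\d, n^{-1}\nu^i, \rho) \to (\Lai, \d^{\infty}, \nu^{\infty}, \rho^{\infty})$ holds almost surely in the Gromov--Hausdorff-vague topology; in particular there is a common metric space $M$ defined on this space into which the $L_{\alpha}^{\infty, i}$ (at the relevant scales) and $\Lai$ all embed isometrically, and in which the required Hausdorff convergence of closed balls holds.

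Next I would transfer everything to the Urysohn space. By universality of $U$ there is an isometric embedding $\psi\colon M \to U$, which may be chosen so that $\psi$ maps $\rho^{\infty}$ to the distinguished point $u_0$; composing $\psi$ with the canonical isometric embeddings of $L_{\alpha}^{\infty, i}$ (at scale $n$) and of $\Lai$ into $M$ produces isometries $\phi_n$ and $\phi$ of these spaces into $U$. Because $\psi$ is an isometry, the almost sure Gromov--Hausdorff-vague convergence in $M$ carries over to convergence in the spatial metric~(\ref{eqn:spat U dist}), so that $(L_{\alpha}^{\infty, i}, a_n^{-1}\d, n^{-1}\nu^i, \rho, \phi_n) \to (\Lai, \d^{\infty}, \nu^{\infty}, \rho^{\infty}, \phi)$ in the spatial Gromov--Hausdorff-vague topology, a particular instance of the topology used in \cite[Section~7]{DavidResForms}. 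This supplies the (distributional) convergence hypothesis of Theorem~\ref{thm:scaling lim RW resistance annealed}.

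It then remains to verify the annealed non-explosion condition~(\ref{eqn:nonexplosion annealed}), namely that for every $\lambda > 0$,
\[
\lim_{r \to \infty}\ \liminf_{n \to \infty}\ \prb{a_n^{-1}\Reff\big(\rho, B(\rho, r a_n)^c\big) \geq \lambda} = 1.
\]
For $i=1$ this is immediate from the polynomial lower-tail bound~(\ref{eqn:BS nonexp}) of \cite[Lemma~3.5]{BjornStef}: rescaling distances by $a_n$ in that estimate gives $\prb{a_n^{-1}\Reff(\rho, B(\rho, ra_n)^c) \geq \lambda} \geq 1 - C(\lambda/r)^q$ uniformly in $n$, and the right-hand side tends to $1$ as $r \to \infty$. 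For $i=2$ one combines the comparison $\Reff\big(\rho, \mathcal{B}_{r}(\Loop^2(T_{\alpha}^{\infty, 2}))^c\big) \geq \Reff\big(\rho, B_{r - \Height(\Tree(\Loopp(\Tai)^r))}(\Loopp(\Tai))^c\big)$ with the exponential tail decay of $r^{-1}\Height(\Tree(\Loopp(\Tai)^r))$ established in Point~2 of the proof of Theorem~\ref{thm:Rich looptree conv}, reducing the bound to the $i=1$ case up to a negligible correction; these are exactly the estimates already used to check the quenched non-explosion condition in the ``Scaling limits'' discussion above. With the spatial convergence and the non-explosion condition in hand, Theorem~\ref{thm:scaling lim RW resistance annealed} applies and yields the asserted weak convergence of annealed laws on $D(\R^+, U)$ with the Skorohod-$J_1$ topology.

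There is essentially no serious obstacle beyond this bookkeeping, since the genuinely substantial inputs (the scaling limit itself, the resistance lower bounds of Proposition~\ref{prop:resistance results infinite} and \cite[Lemma~3.5]{BjornStef}, and the abstract annealed invariance principle of \cite{DavidResForms}) are already available. The one place requiring a little care is matching the variable-speed random-walk clock on $L_{\alpha}^{\infty,i}$ with the normalisation $(a_n^{-1}\d, n^{-1}\nu^i)$, but this is precisely the $n a_n$ time change appearing in Theorem~\ref{thm:RW conv infinite BjornStef}, and is handled automatically by the resistance-form framework.
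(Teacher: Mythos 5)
Your proposal is correct and follows essentially the same route as the paper, which simply states that the proof is identical to that of Theorem \ref{thm:RW conv local annealed}: Skorohod representation for the almost sure Gromov--Hausdorff-vague convergence, isometric embedding of the common space $M$ into the Urysohn space to obtain spatial convergence, verification of the annealed non-explosion condition via (\ref{eqn:BS nonexp}) (and the height comparison for $i=2$), and an application of Theorem \ref{thm:scaling lim RW resistance annealed}. Nothing further is needed.
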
 

\subsection{Heat kernel convergence and spectral dimension}\label{sctn:annealed HK from infinite looptrees}
To conclude, we now show how Theorem \ref{thm:LLT} can be applied to give results on the heat kernel of Brownian motion on compact stable looptrees. First, note that it follows from the scaling invariance of Proposition \ref{prop:Lai scale inv} that the annealed heat kernel for $\Lai$ satisfies the scaling relation
\begin{equation}\label{eqn:HK scaling}
\Eb{p_t^{\infty}(\rho, \rho)} = k^{\frac{\alpha}{\alpha + 1}} \Eb{p_{kt}^{\infty}(\rho, \rho)}
\end{equation}
for any $k>0$. Similarly, if we let $p_t^{\l}$ denote the transition density of Brownian motion on a looptree coded by an excursion of length $\l$, we have that
\[
\Eb{{p_t^{1}(\rho, \rho)}} = k^{\frac{\alpha}{\alpha + 1}} \Eb{p_{kt}^{k^{\frac{1}{\alpha + 1}}}(\rho, \rho)}. %\overset{(d)}{=} \l^{-\alpha} p_{\l^{-(\alpha + 1)}t}^{\l}(\l x).
\]
Setting $k=t^{-1}$ we see that
\[
t^{\frac{\alpha}{\alpha + 1}} \Eb{p_t^{1}(\rho, \rho)} {=} \Eb{ p_{1}^{t^{\frac{-1}{\alpha + 1}}}(\rho, \rho)}.
\]
Moreover, since we are in a resistance framework, it follows from \cite[Theorem 2 and Proposition 14]{CroyHamLLT} that
\[
t^{\frac{\alpha}{\alpha + 1}} p_t^{1}(\rho, \rho) \overset{(d)}{\rightarrow} p_{1}^{\infty}(\rho, \rho)
\]
as $t \downarrow 0$. To deduce that the corresponding expectations also converge, we just need to show that $\Eb{p_{1}^{\infty}(\rho, \rho)}$ is finite. However, since the transition density can be bounded by bounding the volume and resistance growth (by a continuum version of \cite[Proposition 1.4]{KumMisumiHKStronglyRecurrent}, for example), the exponential tail decay of Propositions \ref{prop:vol results infinite} and \ref{prop:resistance results infinite} also give an upper exponential tail decay for the transition density. We therefore deduce that $\Eb{p_{1}^{\infty}(\rho, \rho)}$ is finite, so we can apply similar arguments to those in the previous section to deduce that
\[
t^{\frac{\alpha}{\alpha + 1}} \Eb{p_t^{1}(\rho, \rho)} \rightarrow \Eb{p_{1}^{\infty}(\rho, \rho)}
\]
as $t \rightarrow \infty$. This is stated as \cite[Theorem 1.8]{ArchBMCompactLooptrees}, where Brownian motion on $\La$ is studied more closely.

Similarly, it also follows from \cite[Theorem 1.5, Part II]{KumMisumiHKStronglyRecurrent} (adapted to the continuum) that the heat kernel $p^{\infty}_t(\rho, \rho)$ almost surely experiences at most log-logarithmic fluctuations around a leading term of $t^{\frac{-\alpha}{\alpha + 1}}$ as $t \uparrow \infty$ and as $t \downarrow 0$, and therefore that the quenched spectral dimension of $\La$ is almost surely equal to $\frac{2 \alpha}{\alpha + 1}$.

To establish the annealed spectral dimension, we take $k=t^{-1}$ in (\ref{eqn:HK scaling}) to deduce that
\[
\Eb{p_t^{\infty}(\rho, \rho)} = t^{\frac{-\alpha}{\alpha + 1}} \Eb{p_{1}^{\infty}(\rho, \rho)}.
\]
Since $\Eb{p_{1}^{\infty}(\rho, \rho)}$ is finite, this implies that the annealed spectral dimension is also equal to $\frac{2 \alpha}{\alpha + 1}$. This concludes the proof of Theorem \ref{thm:main spec dim annealed}.

\bibliographystyle{alpha}

\bibliography{biblio}

\end{document}